 \DeclareSymbolFont{AMSb}{U}{msb}{m}{n}
 \DeclareSymbolFontAlphabet{\mathbb}{AMSb}
\let\SSec\S
\newcommand\lopen{\mathopen{]}}
\newcommand\ropen{\mathclose{[}}
\newcommand\rmd{\mathrm{d}}
\newcommand\rme{\mathrm{e}}
\newcommand\rmi{\mathrm{i}}
\newcommand\cA{\mathcal{A}}
\newcommand\cC{\mathcal{C}}
\newcommand\cO{\mathcal{O}}
\newcommand\cS{\mathcal{S}}
\newcommand\cX{\mathcal{X}}
\newcommand\bF{\mathbf{F}}
\newcommand\bI{\mathbf{I}}
\newcommand\bJ{\mathbf{J}}
\renewcommand\AA{\mathbb{A}}
\newcommand\CC{\mathbb{C}}
\newcommand\FF{\mathbb{F}}
\newcommand\QQ{\mathbb{Q}}
\newcommand\RR{\mathbb{R}}
\newcommand\ZZ{\mathbb{Z}}
\newcommand\mf[1]{\mathfrak{#1}}
\newcommand\A{\mathsf{A}}
\newcommand\B{\mathsf{B}}
\newcommand\G{\mathsf{G}}
\newcommand\M{\mathsf{M}}
\newcommand\N{\mathsf{N}}
\renewcommand\S{\mathsf{S}}
\newcommand\T{\mathsf{T}}
\newcommand\Z{\mathsf{Z}}
\newcommand\GL{\mathsf{GL}}
\newcommand\GSp{\mathsf{GSp}}
\newcommand\triv{\mathbbm{1}}
\newcommand\dpii{2\uppi\rmi}
\newcommand\bs{\backslash}
\renewcommand\Re{\mathop{\mathrm{Re}}}
\DeclareMathOperator\Tr{Tr}
\DeclareMathOperator\sgn{sgn}
\DeclareMathOperator\supp{supp}
\DeclareMathOperator\res{res}
\DeclareMathOperator\fp{f.p.}
\DeclareMathOperator\pv{p.v.}
\DeclareMathOperator\rad{rad}
\DeclareMathOperator\Res{Res}
\DeclareMathOperator\Ind{Ind}
\newcommand\fin{\mathrm{fin}}
\newcommand\id{\mathrm{id}}
\newcommand\el{\mathrm{ell}}
\newcommand\hyp{\mathrm{hyp}}
\newcommand\unip{\mathrm{unip}}
\newcommand\reg{\mathrm{reg}}
\newcommand\Kl{\mathrm{Kl}}
\DeclareMathOperator\vol{vol}
\DeclareMathOperator\orb{orb}
\renewcommand\leq{\leqslant}
\renewcommand\geq{\geqslant}
\newcommand\legendresymbol[2]{\genfrac{(}{)}{}{}{#1}{#2}}
\def\subsection{\@startsection{subsection}{2}
  \z@{3pt\@plus0pt}{-.5em}%
  {\normalfont\bfseries}}
\def\@seccntformat#1{%
  \protect\textup{\protect\@secnumfont
    \ifnum\pdfstrcmp{subsection}{#1}=0 \bfseries\fi
    \csname the#1\endcsname
    \protect\@secnumpunct
  }%
}  
\newtheoremstyle{THEOREM}
{2.5pt}
{2pt}
{\itshape}
{}
{\bfseries}
{.}
{.5em}
{\thmname{#1}\thmnumber{ #2}\thmnote{ (#3)}}
\newtheoremstyle{DEFINITION}
{2.5pt}
{2pt}
{}
{}
{\bfseries}
{.}
{.5em}
{\thmname{#1}\thmnumber{ #2}\thmnote{ (#3)}}%
\newtheoremstyle{EXERCISE}
{2pt}
{2pt}
{}
{}
{\scshape} 
{.} 
{.5em}
{\thmname{#1}\thmnumber{ #2}\thmnote{ (#3)}}
\theoremstyle{THEOREM}
\newtheorem{theorem}{Theorem}[section]
\newtheorem{lemma}[theorem]{Lemma}
\newtheorem{proposition}[theorem]{Proposition}
\newtheorem{corollary}[theorem]{Corollary}
\theoremstyle{DEFINITION}
\theoremstyle{EXERCISE}
\newtheorem{remark}[theorem]{Remark}
\numberwithin{equation}{section}
\renewenvironment{proof}[1][\proofname]{\par
  \vspace{-6pt}
  \pushQED{\qed}
  \normalfont \topsep6\p@\@plus6\p@\relax
  \trivlist
  \item[\hskip\labelsep\rmfamily\bfseries
    #1\@addpunct{:}]\ignorespaces
}{
  \popQED\endtrivlist\@endpefalse
  \vspace{-6pt}
}
\begin{document}
\setlength \lineskip{3pt}
\setlength \lineskiplimit{3pt}
\setlength \parskip{1pt}
\setlength \partopsep{0pt}

\title{Beyond endoscopy for $\GL_2$ over $\QQ$ with ramification 2: bounds towards the Ramanujan conjecture}
\author{Yuhao Cheng}
\address{Qiuzhen College, Tsinghua University, 100084, Beijing, China}
\email{chengyuhaomath@gmail.com}
\keywords{beyond endoscopy, trace formula}
\subjclass[2020]{11F30; 11F72}
\date{17 February, 2026}
\begin{abstract}
We continue generalizing Altu\u{g}'s work on $\GL_2$ over $\QQ$  in the unramified setting for \emph{Beyond Endoscopy} to the ramified case where ramification occurs at $S=\{\infty,q_1,\dots,q_r\}$ with $2\in S$, after generalizing the first step. We establish a new proof of the $1/4$ bound towards the Ramanujan conjecture for the trace of the cuspidal part in the ramified case, which is also provided by adapting Altuğ's original approach. The proof proceeds in three stages: First, we estimate the contributions from the non-elliptic parts of the trace formula. Then, we apply the main result from our the previous work to isolate the $1$-dimensional representations within the elliptic part. Finally, we employ technical analytic estimates to bound the remainder terms in the elliptic part.
\end{abstract}

\maketitle
 
\tableofcontents

\section{Introduction}
\subsection{Background}
Let $\G$ be a connected reductive algebraic group over the field $\QQ$ of rational numbers and let $\AA$ denote the ad\`ele ring of $\QQ$. Let $^L\G$ be the $L$-group of $\G$. Let $\pi$ be an automorphic representation of $\G$ over $\QQ$ and $\rho$ be a finite dimensional complex representation of $^L\G$. For a finite set $S$ of places of $\QQ$ containing the archimedean place 
$\infty$, the partial $L$-function $L^{S}(s,\pi,\rho)$ can be defined (cf. \cite{getz2024} for details) and expressed as a Dirichlet series
\[
L^{S}(s,\pi,\rho)=\sum_{\substack{n=1\\ \gcd(n,S)=1}}^{+\infty}\frac{a_{\pi,\rho}(n)}{n^s}
\]
for $\Re s$ sufficiently large.

The famous \emph{Ramanujan conjecture} states that 
\[
|a_{\pi,\rho}(n)|\ll_{\rho,\varepsilon} n^{\varepsilon}
\]
for any $\varepsilon>0$.

While the conjecture is expected to hold for $\GL_m$, counterexamples exist for certain representations of groups like $\GSp_4$ \cite{howe1979counterexample}. For holomorphic modular forms, Deligne's proof \cite{deligne1974,deligne1980} via the \emph{Weil conjectures} establishes the bound. The primary challenge lies in nonholomorphic Maass forms. A common approach is to establish bounds of the form 
\[
|a_{\pi,\rho}(n)|\ll_{\rho,\varepsilon} n^{\varrho+\varepsilon},
\]
where $\varrho\geq 0$ is called a \emph{bound towards the Ramanujan conjecture}. The trivial bound $\varrho=1/2$ has been progressively improved: For $\G=\GL_m$, the current bounds are $\varrho=7/64$ ($m=2$), $5/14$ ($m=3$), $9/22$ ($m=4$), and $\varrho=1/2-1/(m^2+1)$ for general $m$. We refer to \cite{blomer2013role,sarnak2005} for comprehensive surveys.

Applying the trace formula gives
\begin{equation}\label{eq:traceformulacusp}
\sum_{\pi}m_\pi a_{\pi,\rho}(n)\prod_{v\in S}\Tr(\pi_v(f_v))=I_{\mathrm{cusp}}(f^{n,\rho})
\end{equation}
for $\gcd(n,S)=1$,
where $f^{n,\rho}=\bigotimes_{v\in S}f_v\otimes\bigotimes_{v\notin S}'f_v^{n,\rho}$, with $f_v^{n,\rho}$ spherical for $v\notin S$, and $\Tr(\pi_p(f_p^{n,\rho}))=a_{\pi,\rho}(p^{v_p(n)})$ for $p\notin S$.

Any bound $\varrho$ towards the Ramanujan conjecture implies
\begin{equation}\label{eq:boundramanujan}
I_{\mathrm{cusp}}(f^{n,\rho})\ll_{\rho,\varepsilon} n^{\varrho+\varepsilon}.
\end{equation}
Thus, \eqref{eq:boundramanujan} provides a trace formula analog of these bounds.

\subsection{The main result in this paper and proof strategy}
In this paper, we will generalize the main result in \cite{altug2017} by considering the case over $\QQ$ with ramification and generalize the function $f^{n}$ for any $n$ such that $\gcd(n,S)=1$.
Let $S=\{\infty,q_1,\dots,q_r\}$ for primes $q_1,\dots,q_r$ such that $2\in S$.
For any prime number $p$ and $m\in \ZZ_{\geq 0}$, we define
\[
\cX_p^{m}=\{X\in \M_2(\ZZ_p)\,|\, \mathopen{|}\det X\mathclose{|}_p = p^{-m}\}.
\]
For example, for $m=0$, we have $\cX_p^{m}=\GL_2(\ZZ_p)$. 

For any $\gcd(n,S)=1$, we set $f^{n}=\bigotimes_{v\in \mf{S}}f^{n}_v$, where $\mf{S}$ denotes the set of places of $\QQ$, and
\begin{enumerate}[itemsep=0pt,parsep=0pt,topsep=2pt,leftmargin=0pt,labelsep=3pt,itemindent=9pt,label=\textbullet]
  \item If $v=p\notin S$, we define $f^n_v=p^{-n_p/2}\triv_{\cX_p^{n_p}}$, where $n_p=v_p(n)$.
  \item If $v=q_i\in S$ and is a prime, we define $f^n_v=f_{q_i}\in C_c^\infty(\G(\QQ_{q_i}))$.
  \item If $v=\infty$, we define $f^n_v=f_\infty\in C_c^\infty(Z_+\bs \G(\RR))$. 
\end{enumerate}
By Hecke operator theory, $f_v^{n}$ spherical for $v\notin S$, and $\Tr(\pi_p(f_p^{n}))=a_{\pi}(p^{n_p})$ for all $p\notin S$, so that \eqref{eq:traceformulacusp} holds.

Unlike the previous paper \cite{cheng2025}, we require $f_\infty$ to be compactly supported due to technical constraints in handling the hyperbolic part of the trace formula. (Actually, this assumption is not important by using a limit process.)

Our main theorem in this paper is
\begin{theorem}\label{thm:totalestimate}
For any $\varepsilon>0$ and $f^n$ as above, we have
\[
I_{\mathrm{cusp}}(f^n)\ll n^{\frac 14+\varepsilon},
\]
where the implied constant only depends on $f_\infty$, $f_{q_i}$ and $\varepsilon$.
\end{theorem}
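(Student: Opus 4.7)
The plan is to expand $I_{\mathrm{cusp}}(f^n)$ via the Arthur--Selberg trace formula. Equating the geometric and spectral sides and solving for the cuspidal piece gives
\[
I_{\mathrm{cusp}}(f^n) = I_{\el}(f^n) + I_{\hyp}(f^n) + I_{\unip}(f^n) + I_{\mathrm{cent}}(f^n) - I_{\mathrm{res}}(f^n) - I_{\mathrm{Eis}}(f^n),
\]
and the goal is to bound each of the six pieces by $O(n^{1/4+\varepsilon})$. Only the elliptic term should actually require the full exponent $1/4$; the other five should all be smaller.

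The non-elliptic contributions should be controlled by relatively soft arguments. The central piece is nonzero only when $n$ is a perfect square, where it reduces to $O(n^{\varepsilon})$. The unipotent piece reduces to a divisor-type sum after evaluating the weighted orbital integrals at the identity, yielding $O(n^{\varepsilon}\log n)$. For the Eisenstein piece one expands $I_{\mathrm{Eis}}$ into a contour integral over unitary principal series whose Hecke eigenvalues are Dirichlet convolutions $\sum_{d\mid n} d^{2it}$, and trivial bounds for $\zeta(1+2it)$ on the critical line give $O(n^{\varepsilon})$. The residual contribution is finite-dimensional and $O(1)$. The hyperbolic contribution is the most sensitive among these; exploiting the assumed compact support of $f_\infty$, the hyperbolic parameters are confined to a bounded region, and a direct expansion of the weighted orbital integrals then yields $O(n^{1/4+\varepsilon})$.

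For the elliptic piece I would follow the Beyond Endoscopy strategy of Altu\u{g}, adapted to the ramified setting as in \cite{cheng2025}. Writing $I_{\el}(f^n)$ as a sum over $t\in\ZZ$ with $t^2-4n<0$ of products of local orbital integrals and applying Poisson summation in $t$, the main result of \cite{cheng2025} splits the resulting dual expansion into the contribution of the $1$-dimensional automorphic representations (which cancels against the corresponding parts of $I_{\mathrm{res}}$ and $I_{\mathrm{Eis}}$) plus an oscillatory remainder indexed by nonzero dual frequencies. The remainder is a sum over nonzero integers $m$ of an archimedean oscillatory integral (coming from the real orbital weight and from $f_\infty$) times a product of non-archimedean local weights at the ramified primes in $S$ and classical Sali\'e- or Kloosterman-type character sums at the primes outside $S$.

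The main obstacle is the bound on this oscillatory remainder by $O(n^{1/4+\varepsilon})$. I expect the $1/4$ saving to come from combining stationary-phase analysis of the real integral (which produces a power-saving factor in $|m|$) with Weil's bound on the character sums and a simple averaging argument, producing a square-root saving against the natural length $\sqrt{n}$ of the dual sum. The most delicate technical point is the uniform control of the ramified local weights at the primes of $S$, and in particular at $2$, where the usual unramified orbital-integral formulas break down; the local preparation carried out in \cite{cheng2025} is what makes these weights compatible with the dual-side manipulation and allows the $O(n^{1/4+\varepsilon})$ estimate to go through uniformly in $n$.
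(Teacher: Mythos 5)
Your plan follows the same architecture as the paper: bound every non-elliptic term of the trace formula by soft arguments, use the main theorem of \cite{cheng2025} to strip the $1$-dimensional contribution out of the elliptic part, and then beat the remainder down to $n^{1/4+\varepsilon}$ by combining decay of the (singular) archimedean/ramified Fourier transforms with Weil's bound on the generalized Kloosterman sums. The non-elliptic estimates you describe match the paper's (the paper actually gets $O(n^\varepsilon)$ for the hyperbolic term, not just $O(n^{1/4+\varepsilon})$, via the divisor-count $\bm{d}(n)$ of contributing split $\gamma$ and a $\log n$ bound on each weighted orbital integral), and your description of the $\Sigma(\xi\neq 0)$ analysis is the right one.

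There is, however, one concrete gap. The dual expansion produced by \cite{cheng2025} is \emph{not} just ``$1$-dimensional contribution plus an oscillatory remainder over nonzero frequencies'': it also contains a zero-frequency term $\Sigma(0)$ and, more importantly, a square-discriminant term $\Sigma(\square)$ coming from those $T$ with $T^2\mp 4nq^\nu$ a perfect square, for which the approximate functional equation was inserted formally even though the Zagier $L$-function degenerates to (a finite Dirichlet polynomial times) $\zeta(s)$, which has a pole at $s=1$. These terms are not oscillatory and are not controlled by your stationary-phase/Weil-bound mechanism. The paper handles $\Sigma(\square)$ by running the approximate functional equation \emph{in reverse}: resumming the $k,f$-sums back into $\res_{s=0}\widetilde{F}(s)L^{S}(1+s,T^2\mp 4nq^\nu)|T^2\mp 4nq^\nu|_{\infty,q}'^{s/2}$, computing the residue and finite part of the degenerate $L$-function explicitly via M\"obius/von Mangoldt identities, and only then using the divisor bound on the number of factorizations $n=n_1n_2$; this yields $O(n^\varepsilon)$. $\Sigma(0)$ likewise needs a separate contour-shift/principal-value argument. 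Without these steps your decomposition does not close, so you should add an explicit treatment of these two non-oscillatory pieces to the plan.
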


This establishes a new proof of the $1/4$ bound towards the Ramanujan conjecture using \emph{only} the Arthur-Selberg trace formula over 
$\QQ$ with ramification, generalizing the result of \cite{altug2017} for the unramified case, although in some sense is weaker. (Altu\u{g} established the bound in the unramified case with $\varepsilon=0$ for primes $n=p$.) Kuznetsov \cite{kuznetsov1981} derived this $1/4$ bound via the Petersson-Kuznetsov formula and Weil's Kloosterman sum bound \cite{weil1948}.

The proof of \autoref{thm:totalestimate} is a generalization of \cite{altug2017}. 
The key point is to isolate the traces of the nontempered representations in the spectral side in the elliptic part of the trace formula which gives the main contribution $n^{1/2}$, which was proved in \cite{cheng2025} that generalized the main result of \cite{altug2015}.

The proof is divided into the following steps:
\begin{enumerate}[itemsep=0pt,parsep=0pt,topsep=0pt,leftmargin=0pt,labelsep=3pt,itemindent=9pt,label=\textbullet]
  \item \textbf{Estimates of non-elliptic parts.} We estimate all the terms in both sides of the Arthur-Selberg trace formula except for the elliptic term in the geometric side and the nontempered part (that is, the sum of traces of $1$-dimensional representations) in the spectral side. We find all the contributions are $O(n^\varepsilon)$.
  \item \textbf{Estimate of the easy terms in the elliptic part.}  Using the main result of \cite{cheng2025}, we can split the elliptic term in the geometric side into the following terms: the $1$-dimensional term, the Eisenstein term, $\Sigma(0)$, $\Sigma(\square)$, and $\Sigma(\xi\neq 0)$. The $1$-dimensional term is cancelled by the corresponding term in the spectral side, the Eisenstein term and $\Sigma(0)$ are both $O(n^\varepsilon)$, and $\Sigma(\square)$ is also $O(n^\varepsilon)$ by using the approximate functional equation in the reverse direction.
  \item \textbf{Estimate of $\Sigma(\xi\neq 0)$.} This is the hardest part. First, we need to estimate the Fourier transform of functions with singularities in the semilocal space $\QQ_S$. We first do local estimates. The archimedean part was mainly done by Altu\u{g} in \cite{altug2017}. The nonarchimedean part is just direct computations. By combining them together, we obtain the result for the estimate in $\QQ_S$. Secondly, we need to bound the generalized Kloosterman sums, which was mainly done in \cite{altug2017}. Finally, we do direct estimates to obtain the result $O(n^{1/4+\varepsilon})$.
\end{enumerate}

\begin{remark}
Further improvement beyond $O(n^{1/4+\varepsilon})$ would require exploiting cancellations in generalized Kloosterman sums, rather than relying on individual bounds.
\end{remark}

\subsection{Notations}
\begin{enumerate}[itemsep=0pt,parsep=0pt,topsep=0pt,leftmargin=0pt,labelsep=3pt,itemindent=9pt,label=\textbullet]
  \item $\# X$ denotes the number of elements in a set $X$.
  \item For $A\subseteq X$, $\triv_A$ denotes the characteristic function of $A$, defined by $\triv_A(x)=1$ for $x\in A$ and $\triv_A(x)=0$ for $x\notin A$.
  \item $\triv$ also denotes the trivial character or the trivial representation.
  \item For $x\in \RR$, $\lfloor x\rfloor$ denotes the greatest integer that is less than or equal to $x$, $\lceil x\rceil$ denotes the least integer that is greater than or equal to $x$.
  \item We often use the notation $a\equiv b\,(n)$ to denote $a\equiv b\pmod n$.
  \item For $D\equiv 0,1\pmod 4$, $\legendresymbol{D}{\cdot}$ denotes the Kronecker symbol.
  \item If $R$ is a ring (which we \emph{always} assume to be commutative with $1$), $R^\times$ denotes its group of units.
  \item For $S=\{\infty, q_1,\dots,q_r\}$, $\ZZ^S$ denotes the ring of $S$-integers in $\QQ$, that is
    \[
    \ZZ^S=\{\alpha\in \QQ\ |\ v_\ell(\alpha)\geq 0\ \text{for all}\ \ell\notin S\}.
    \]
    Additionally, we define
    \[
    \QQ_S=\prod_{v\in S}\QQ_v=\RR\times \QQ_{q_1}\times\dots\times\QQ_{q_r}\qquad\QQ_{S_\fin}=\prod_{v\in S_\fin}\QQ_v=\QQ_{q_1}\times\dots\times\QQ_{q_r},
    \]
  and
    \[
    \ZZ_{S_\fin}=\prod_{v\in S_\fin}\ZZ_v=\ZZ_{q_1}\times\dots\times\ZZ_{q_r}.
    \]
  \item For $n\in \ZZ$, we write $\gcd(n,S)=1$, or $n\in \ZZ_{(S)}$, if 
  $p\nmid n$ for all $p\in S$.
  We write $n\in \ZZ_{(S)}^{>0}$ if additionally $n>0$.
  \item $\mf{S}$ denotes the set of all places of $\QQ$.
  \item Let $p$ be a prime. For $a\in\QQ$, we define $a_{(p)}$ to be the $p$-part of $a$, that is, $a_{(p)}=p^{v_p(a)}$, and we define $a^{(p)}=a/a_{(p)}$. Moreover, we define 
  \[
    a_{(q)}=\prod_{i=1}^{r}a^{v_{q_i}(a)}\quad\text{and}\quad a^{(q)}=\frac{a}{a_{(q)}}=\prod_{p\notin S}a^{v_{p}(a)}.
  \]
  \item Let $n\in \ZZ_{>0}$. $\bm{d}(n)$ denotes the number of divisors of $n$. $\bm{\omega}(n)$ denotes the number of prime divisors of $n$. $\bm{\mu}(n)$ denotes the M\"obius function, defined by
  \[
  \bm{\mu}(n)=\begin{cases}
    1, & \text{if $n=1$}, \\
    (-1)^m, & \text{if $n$ is a product of $m$ distinct primes}, \\
    0, & \text{otherwise}.
  \end{cases}
  \]
  $\bm{\Lambda}(n)$ denotes the von Mangoldt function, defined by
  \[
  \bm{\Lambda}(n)=\begin{cases}
    \log p, & \text{if $n=p^m$ with $m\in \ZZ_{>0}$}, \\
    0, & \text{otherwise}.
  \end{cases}
  \]
  \item $\Gamma(s)$ denotes the gamma function, defined by
  \[
  \Gamma(s)=\int_{0}^{+\infty}\rme^{-x}x^s\frac{\rmd x}{x}
  \]
  for $\Re s>0$, analytically continued to $\CC$. $\zeta(s)$ denotes the Riemann zeta function, defined by
  \[
    \zeta(s)=\sum_{n=1}^{+\infty}\frac{1}{n^s}
  \]
  for $\Re s>1$, analytically continued to $\CC$.
  \item $(\sigma)$ denotes the vertical contour from $\sigma-\rmi\infty$ to $\sigma+\rmi\infty$.
  \item Let $f$ be a meromorphic function near $z=z_0$ and suppose that 
  \[
  f(z)=\sum_{n\in \ZZ}a_n(z-z_0)^n
  \]
  is its Laurent expansion near $z_0$, we denote $\fp_{z=z_0}f(z)=a_0$.
  \item We define $\rme(x)=\rme_\infty(x)=\rme^{\dpii x}$. For a prime $p$ and $x\in \QQ_p$, we define $\langle x\rangle_p$ to be the "fractional part" of $x$. Namely, if $x=\sum_{n\geq -N}a_np^n\in \QQ_p$, then 
      \[
      \langle x\rangle_p=\sum_{n=-N}^{-1}a_np^n\in \QQ.
      \]
      We then define $\rme_p(x)=\rme(-\langle x\rangle_p)$ for $x\in \QQ_p$. \emph{Note the minus sign}.
  \item We use $f(x)=O(g(x))$ or $f(x)\ll g(x)$ to denote that there exists a constant $C$ such that $|f(x)|\leq C|g(x)|$ for all $x$ in a specified set. If the constant depends on other variables, they will be subscripted under $O$ or $\ll$.
  \item The notation $f(x)\asymp g(x)$ indicates that $f(x)\ll g(x)$ and $g(x)\ll f(x)$. If the constant depends on other variables, they will be subscripted under $\asymp$.
\end{enumerate}

\section{Preliminaries}\label{sec:preliminaries}
In this section we recall some useful definitions and properties in the previous paper \cite{cheng2025}. Readers familiar with \cite{cheng2025} may skip this section except for \autoref{subsec:modifiednorm} and \autoref{subsec:singularities}.

\subsection{The modified $p$-adic norm and properties}\label{subsec:modifiednorm}
For any prime $p$, the \emph{modified norm} $|\cdot|_p'$ is defined as follows: 

For $p\neq 2$ and $y\in \QQ_p$, we define $|y|_p' = p^{-2\lfloor v_p(y)/2\rfloor}$.  
This satisfies $|y|_p' = |y|_p$ if $v_p(y)$ is even, and $|y|_p' = p |y|_p$ if $v_p(y)$ is odd.

For $p=2$ and $y\in \QQ_p$, we define
\[
|y|_p' = \begin{cases}
  p^{-v_p(y)}=|y|_p & \text{if $v_p(y)$ is even, $y_0\equiv 1\,(4)$}, \\
  p^{-v_p(y)+2}=p^2|y|_p & \text{if $v_p(y)$ is even, $y_0\equiv 3\,(4)$}, \\
  p^{-v_p(y)+3}=p^3|y|_p & \text{if $v_p(y)$ is odd},
\end{cases}
\]
where $y_0=yp^{-v_p(y)}$.

For any regular element $\gamma\in\G(\QQ_p)$, we denote $T_\gamma=\Tr\gamma$ and $N_\gamma=\det\gamma$. $k_\gamma$ is defined such that $p^{k_\gamma}=|T_\gamma^2-4N_\gamma|_p'^{-1/2}$. This coincides with the original definition in \cite{cheng2025} (see  Proposition 2.6 of loc. cit.).

We now establish some properties of the modified norms:
\begin{lemma}\label{lem:modifynorm}
We have the following results for modified norms:
\begin{enumerate}[itemsep=0pt,parsep=0pt,topsep=0pt, leftmargin=0pt,labelsep=2.5pt,itemindent=15pt,label=\upshape{(\arabic*)}]
  \item $|x|_p'\asymp_p |x|_p$ for all $x\in \QQ_p$. 
\item If $a\in 1+p^2\ZZ_p$, then $|ax|_p'=|x|_p'$ for all $x\in \QQ_p$.
\item  If $a$ is a square in $\QQ_p^\times$, then $|ax|_p'=|a|_p|x|_p'$ for all $x\in \QQ_p$.
\end{enumerate}
\end{lemma}
\begin{proof}
These results follows immediately from the definition.
\end{proof}

\begin{corollary}
$|ax|_p'\asymp_p |a|_p'|x|_p'$ for all $a,x\in \QQ_p$.
\end{corollary}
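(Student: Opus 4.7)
The plan is to deduce this from the preceding lemma, which shows that the modified norm $|\cdot|_\ell'$ differs from the standard $\ell$-adic norm $|\cdot|_\ell$ by a bounded factor depending only on $\ell$. Concretely, unwinding the piecewise definitions gives $|y|_\ell'/|y|_\ell \in \{1,\ell,\ell^2,\ell^3\}$ for every $y\in\QQ_\ell^\times$, so the ratio is bounded above and below by positive constants depending only on $\ell$.

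Given this, the corollary is essentially immediate: I would chain the two-sided estimates
\[
|ax|_\ell' \asymp_\ell |ax|_\ell = |a|_\ell\,|x|_\ell \asymp_\ell |a|_\ell'\,|x|_\ell',
\]
where the middle equality uses multiplicativity of the ordinary $\ell$-adic absolute value, and the two $\asymp_\ell$ steps each invoke \autoref{lem:modifynorm}. The zero case is trivial since both sides vanish.

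There is no real obstacle here — the content is packaged entirely in the previous lemma. The only thing to check, if one wanted to be careful about constants, is that the implied constants in $|y|_\ell' \asymp_\ell |y|_\ell$ are sharp enough, but since we only claim an $\asymp_\ell$ relation (constants allowed to depend on $\ell$), this is automatic from the finite list of possible ratios above.
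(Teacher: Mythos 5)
Your proof is correct and is exactly the argument in the paper: both use \autoref{lem:modifynorm} to pass from $|\cdot|_\ell'$ to $|\cdot|_\ell$, apply multiplicativity of the ordinary norm, and pass back. No further comment is needed.
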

\begin{proof}
By \autoref{lem:modifynorm}, $|ax|_p' \asymp |ax|_p = |a|_p |x|_p \asymp |a|_p' |x|_p'$.
\end{proof}

\begin{lemma}\label{lem:ladiconverge}
If $\Re s>-1$, then the integrals
\[
\int_{\ZZ_p} |x|_p^{s}\rmd x  \qquad\text{and}\qquad \int_{\ZZ_p} |x|_p'^{s}\rmd x 
\]
both converge.
\end{lemma}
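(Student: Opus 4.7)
The approach is standard and both integrals reduce to a geometric series.

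For the first integral, the plan is to decompose $\ZZ_\ell$ as the disjoint union $\{0\}\cup\bigsqcup_{k=0}^{\infty}\ell^k\ZZ_\ell^\times$, noting that the singleton has Haar measure zero and $\ell^k\ZZ_\ell^\times$ has Haar measure $(1-\ell^{-1})\ell^{-k}$. Since $|x|_\ell=\ell^{-k}$ on $\ell^k\ZZ_\ell^\times$, and since $|x|_\ell^s$ denotes the complex power of a positive real so that $\bigl||x|_\ell^s\bigr|=|x|_\ell^{\Re s}$, the integral becomes
\[
\int_{\ZZ_\ell}|x|_\ell^s\,\rmd x=(1-\ell^{-1})\sum_{k=0}^{\infty}\ell^{-k(s+1)},
\]
a geometric series whose absolute convergence is equivalent to $\Re(s+1)>0$, i.e.\ $\Re s>-1$.

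For the second integral, rather than redoing the computation, I would simply invoke \autoref{lem:modifynorm} to compare. Since $|x|_\ell'$ is a positive real, $\bigl||x|_\ell'^s\bigr|=|x|_\ell'^{\Re s}$, and the equivalence $|x|_\ell'\asymp_\ell|x|_\ell$ gives constants $c,C>0$ with $c|x|_\ell\leq |x|_\ell'\leq C|x|_\ell$; hence $|x|_\ell'^{\Re s}\leq\max(c^{\Re s},C^{\Re s})\,|x|_\ell^{\Re s}$. Convergence of the second integral for $\Re s>-1$ then follows immediately from convergence of the first.

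There is essentially no obstacle here; the only minor care required is to observe that the modified norm takes real positive values so that $|\,\cdot\,|_\ell'^s$ makes sense as a complex power and the standard inequality $\bigl|z^s\bigr|=z^{\Re s}$ for $z>0$ applies. I would not attempt to extract a closed form for the second integral, since only convergence is asserted.
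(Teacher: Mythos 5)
Your proof is correct and follows essentially the same route as the paper: the first integral is evaluated as the geometric series $(1-\ell^{-1})\sum_{k\geq 0}\ell^{-k(s+1)}$, convergent precisely for $\Re s>-1$, and the second is handled by comparison with the first via \autoref{lem:modifynorm}. Your added care about $\bigl|z^{s}\bigr|=z^{\Re s}$ for $z>0$ and the explicit constants in the comparison is a harmless elaboration of the same argument.
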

\begin{proof}
The first integral is the infinite sum
\[
\sum_{u=0}^{+\infty}p^{-su}p^{-u}\left(1-\frac1p\right),
\]
which converges absolutely for $\Re s>-1$. The second integral converges by \autoref{lem:modifynorm} and the comparison $|x|_p'^{s} \asymp|x|_p^{s}$.
\end{proof}

\subsection{The elliptic part of the trace formula and measure normalizations}
Let $\mf{S}$ be the set of places of $\QQ$ and let $\AA$ be the ad\`ele ring of $\QQ$. For $f\in C_c^\infty(\G(\AA)^1)=C_c^\infty(Z_+\bs \G(\AA))$ of the form $f=\bigotimes_{v\in \mf{S}}' f_v$ the elliptic part of the trace formula is
\begin{equation}\label{eq:elliptictrace}
I_\el(f)=\sum_{\gamma\in \G(\QQ)^\#_\el}\vol(\gamma)\prod_{v\in \mathfrak{S}}\orb(f_v;\gamma),
\end{equation}
where $ \G(\QQ)^\#_\el$ denotes the set of elliptic conjugacy classes in $\G(\QQ)$, and
\[
\orb(f_v;\gamma)=\int_{\G_\gamma(\QQ_v)\bs \G(\QQ_v)} f_v(g^{-1}\gamma g)\rmd g,
\qquad
\vol(\gamma)=\int_{Z_+\G_\gamma(\QQ)\bs \G_\gamma(\AA)} \rmd g,
\]
where $Z_+$ denotes the connected component of the identity matrix in the center $\Z(\RR)$ of $\G(\RR)$ and $\G_\gamma$ denotes the centralizer of $\gamma$. The measures of $\G_\gamma(\QQ_v)$ and $\G(\QQ_v)$ are normalized as follows. For $\G(\QQ_v)$, 
\begin{enumerate}[itemsep=0pt,parsep=0pt,topsep=0pt,leftmargin=0pt,labelsep=3pt,itemindent=9pt,label=\textbullet]
  \item If $v=p$ is a prime, we normalize the Haar measure on $\G(\QQ_p)$ such that the volume of $\G(\ZZ_p)$ is $1$.
  \item If $v=\infty$, we choose an arbitrary Haar measure (which does not affect the following discussions).
\end{enumerate} 

By Proposition 2.1 of \cite{cheng2025}, for elliptic $\gamma$, there exists a quadratic extension $E/\QQ$ such that $\G_\gamma\cong\Res_{E/\QQ}\GL_1$. For hyperbolic $\gamma$, we may take $E=\QQ\times \QQ$ so that such identification still holds. Hence $\G_\gamma(\QQ_v)\cong E_v^\times$.
\begin{enumerate}[itemsep=0pt,parsep=0pt,topsep=0pt,leftmargin=0pt,labelsep=3pt,itemindent=9pt,label=\textbullet]
  \item If $v=p$ is a prime, we know that $E_p$ is either $\QQ_p^2$ or a quadratic extension of $\QQ_p$. In the first case, we let $\G_\gamma(\ZZ_p)=\ZZ_p^\times \times \ZZ_p^\times$. In the second case, we let $\G_\gamma(\ZZ_p)=\cO_{E_p}^\times$.
  In each case, we normalize the Haar measure on $\G_\gamma(\QQ_p)=E_p^\times$ such that the volume of $\G_\gamma(\ZZ_p)$ is $1$.
  \item If $v=\infty$, we know that $E_v$ is either $\RR^2$ or $\CC$. If $E_v=\RR^2$, we define the measure on $E_v^\times=\RR^\times \times \RR^\times$ as $\rmd x\rmd y/|xy|$, where $(x,y)$ is the coordinate of $\RR^2$. The action of $Z_+$ on $E_v^\times$ is $a\cdot(x,y)=(ax,ay)$, and we define the measure on $Z_+\bs E_v^\times$ to be the measure of the quotient of $E_v^\times$ by the measure $\rmd a/a$ on $Z_+$. If $E_v=\CC$, we choose the  measure on $E_v^\times=\CC^\times$ to be $2\rmd r\rmd \theta/r$, where we use the polar coordinate $(r,\theta)$ on $\CC^\times$. The action of $Z_+$ on $E_v^\times$ is $a\cdot z=az$, and we define the measure on $Z_+\bs E_v^\times$ to be the measure of the quotient of $E_v^\times$ by the measure $\rmd a/a$ on $Z_+$.
\end{enumerate} 

\begin{remark}
These measure normalizations are taken from \cite{finis2011} which is used in the previous paper \cite{cheng2025}, instead of that in \cite{langlands2004} which is used in Altu\u{g}'s first two papers \cite{altug2015,altug2017}.
\end{remark}

\subsection{Singularities of the orbital integrals}\label{subsec:singularities}
For any $\nu=(\nu_1,\nu_2,\cdots,\nu_r)\in \ZZ^r$ we write $q^\nu=q_1^{\nu_1}q_2^{\nu_2}\cdots q_r^{\nu_r}$.  For a fixed sign $\pm$ and $\nu\in \ZZ^r$, we define
\[
\omega_\infty(x)=\begin{cases}
             0, & x^2\mp 1>0, \\
             1, & x^2\mp 1<0
           \end{cases}
\]
for $x\in \RR$ with $x^2\neq \pm 1$ and
\[
\omega_p(y)=\legendresymbol{(y^2\mp 4nq^\nu)|y^2\mp 4nq^\nu|_{p}'}{p}
\]
for $p\in S$ and $y\in \QQ_p$ with $y^2\neq \pm 4nq^\nu$. If $p=q_i$, we often write $\omega_p=\omega_i$. For $\iota\in \{0,1\}$, we define
\[
X_{\iota}=\{x\in \RR\ |\ \omega_\infty(x)=\iota\}
\]
and for $\epsilon_i\in \{0,\pm 1\}$, we define
\[
Y_{\epsilon_i}=\{y_i\in \QQ_{q_i}\ |\ \omega_i(y_i)=\epsilon_i\}. 
\]
Note that $\omega_\infty$ and $\omega_p$ \emph{do} depend on $n$, $\pm$ and $\nu$, but we usually omit them from writing.

\subsubsection{Nonarchimedean case}
For any prime $p\in S$, we define
\[
\theta_{p}(\gamma)=\frac{1}{\mathopen{|}\det\gamma\mathclose{|}_p^{1/2}}\left(1-\frac{\chi(p)}{p}\right)^{-1}p^{-{k_\gamma}}\orb(f_{p};\gamma),
\]
where $\chi(p)=\chi_\gamma(p)$ is the quadratic character depending on whether $E_p/\QQ_p$ is split, inert, or ramified. According to  Corollary 2.12 of \cite{cheng2025}, the local behavior of $\theta_{p}$ at $z=aI$ is
\begin{equation}\label{eq:shalikalocal}
\theta_{p}(\gamma)=\lambda_1\left(1-\frac{\chi(p)}{p}\right)^{-1}p^{-{k_\gamma}} \frac{1-\chi(p)}{1-p}+\lambda_2.
\end{equation}

Clearly $\theta_{p}(\gamma)$ is invariant under conjugation. Thus $\theta_{p}(\gamma)$ can be parametrized by $T=\Tr\gamma$ and $N=\det\gamma$, i.e., $\theta_{p}(\gamma)=\theta_p(T,N)$. 
Since $\theta_p(\gamma)$ is smooth away from the center, $\theta_p(T,\pm nq^\nu)$ is smooth except at $T^2=\pm 4nq^\nu$. Moreover, for any $\epsilon\in \{0,\pm 1\}$, by \eqref{eq:shalikalocal} we have
\[
\theta_p(y,\pm nq^\nu)=\theta_{p,1}(y,\pm nq^\nu)+\theta_{p,0}(y,\pm nq^\nu),
\]
where $\theta_{p,\tau}(y,\pm nq^\nu)={|y^2\mp 4nq^\nu|_p'}^{\tau/2}\psi_\tau(y)$ for $\tau\in \{0,1\}$ with $\psi_\tau(y)$ a smooth, compactly supported function on $Y_\epsilon$.

\subsubsection{Archimedean case}
We recall the following result for the archimedean orbital integral (cf. \cite[Theorem 2.13]{cheng2025}):

\begin{theorem}\label{thm:archimedeanintegral}
For any $f_\infty\in C_c^\infty(\G(\RR))$, any maximal torus $\T(\RR)$ in $\G(\RR)$ and any $z$ in the center of $\G(\RR)$, there exists a neighborhood $N$ in $\T(\RR)$ of $z$ and smooth functions $g_1,g_2\in C^\infty(N)$ (depending on $f_\infty$ and $z$) such that
\begin{equation}\label{eq:archimedeanintegral}
\orb(f_\infty;\gamma)=g_1(\gamma)+\frac{|\gamma_1\gamma_2|^{1/2}}{|\gamma_1-\gamma_2|}g_2(\gamma)
\end{equation}
for any $\gamma\in \T(\RR)$, where $\gamma_1$ and $\gamma_2$ are the eigenvalues of $\gamma$.  Moreover, $g_1$ and $g_2$ can be extended smoothly to all split and elliptic elements, remaining invariant under conjugation, with $g_1(\gamma)=0$ if $\T(\RR)$ is split, and $g_2$ can further be extended smoothly to the center. If $f_\infty$ is $Z_+$-invariant, then $g_1$ and $g_2$ are also $Z_+$-invariant.
\end{theorem}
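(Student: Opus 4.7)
The plan is to establish the decomposition by an explicit change-of-variables computation in coordinates adapted to each type of maximal torus, treating the split and elliptic cases separately. In the split case the calculation produces the decomposition directly; in the elliptic case I would invoke the Harish-Chandra--Shalika germ expansion.

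\textbf{Split case.} Suppose $\T(\RR)$ is the diagonal torus and $\gamma = \mathrm{diag}(\gamma_1,\gamma_2)$. Using the decomposition $\G(\RR) = \T(\RR)U(\RR)K$ with $U$ the upper unipotent and $K = \O(2)$, I parametrize $\T(\RR) \bs \G(\RR)$ by $U(\RR) \times K$ (up to an irrelevant compact identification). For $u = \begin{pmatrix} 1 & x \\ 0 & 1 \end{pmatrix}$ one has $u^{-1}\gamma u = \begin{pmatrix} \gamma_1 & (\gamma_1-\gamma_2)x \\ 0 & \gamma_2 \end{pmatrix}$; the substitution $y = (\gamma_1-\gamma_2)x$ extracts the factor $|\gamma_1-\gamma_2|^{-1}$ and leaves an integral manifestly smooth in $(\gamma_1, \gamma_2)$. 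Tracking the measure normalization on $\G_\gamma(\RR) = \RR^\times \times \RR^\times$ (given by $\rmd x\,\rmd y/|xy|$) against the Haar measure on $\G(\RR)$ produces the $|\gamma_1\gamma_2|^{1/2}$ factor, yielding the claimed form with $g_1 = 0$ and $g_2$ smooth.

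\textbf{Elliptic case.} For $\T(\RR) \cong \CC^\times$ with $\gamma$ having complex-conjugate eigenvalues $\gamma_1, \gamma_2 = \bar\gamma_1$, I would use the Iwasawa decomposition $\G(\RR) = P(\RR)K$ together with $\T(\RR) \cap K = \SO(2)$ to obtain a (two-sheeted) parametrization of $\T(\RR)\bs\G(\RR)$. Away from the central locus the orbital integral is smooth by standard submersion arguments, so the local content lies near a central element $z = aI$. There I would invoke the Harish-Chandra--Shalika germ expansion: since the only unipotent classes in $\GL_2$ are the trivial class and the regular Jordan block $u$, one gets
\[
\orb(f_\infty;\gamma) = \Gamma_1(\gamma)\, f_\infty(z) + \Gamma_u(\gamma)\,\orb(f_\infty;u) + R(\gamma),
\]
where $R \in C^\infty(N)$, the regular germ $\Gamma_u(\gamma)$ is proportional to $|\gamma_1\gamma_2|^{1/2}/|\gamma_1-\gamma_2|$, and $\Gamma_1$ is locally constant. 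Packaging $\Gamma_1 f_\infty(z) + R$ into $g_1$ and $\Gamma_u\orb(f_\infty;u)$ (divided by the singular prefactor) into $g_2$ yields the decomposition.

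\textbf{Extensions and main obstacle.} To finish, I would extend $g_1, g_2$ to smooth, conjugation-invariant functions on all split (respectively elliptic) elements, and further extend $g_2$ across the central locus. Smoothness on the regular set follows from the manifold structure of the quotient, and the extension of $g_2$ across the center works because the singularity has been completely absorbed into the explicit prefactor; the $Z_+$-invariance is immediate since scaling by $a \in Z_+$ commutes with the orbital integral construction. The main obstacle is the elliptic case: the absence of a convenient unipotent parametrization of $\T(\RR)\bs\G(\RR)$ forces either a delicate Cayley/descent calculation on the Lie algebra or an appeal to the germ machinery above, and verifying that the two germ functions vary smoothly across the central locus (and not merely as formal germs) is the key analytic content.
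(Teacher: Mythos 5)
First, a remark on provenance: the paper does not prove this statement here — it is recalled from \cite[Theorem 2.12]{cheng2025}, and the underlying archimedean analysis is that of \cite[Lemme 3.3]{langlands2013} (which the paper invokes again for the expansions \eqref{eq:thetaasymp}). So your proposal is being measured against that standard argument rather than against a proof in this document. Your split case is complete and correct: the unipotent change of variables $y=(\gamma_1-\gamma_2)x$ is exactly the standard computation (it reappears in the proof of \autoref{prop:hypestimate} via \eqref{eq:conjugateunipotent}), and it yields $g_1=0$ with $g_2$ manifestly smooth.

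The genuine gap is in the elliptic case. The ``germ expansion with smooth remainder $R\in C^\infty(N)$'' that you invoke is not an available black box over $\RR$; it is essentially the statement to be proved. The $p$-adic Shalika expansion is an exact finite identity on a neighborhood depending on $f$, but no such finite exact expansion holds archimedeanly, and what Harish-Chandra's theory gives directly is that $|\gamma_1-\gamma_2|\,\orb(f_\infty;\gamma)$ extends continuously to the center with smooth one-sided limits and prescribed jump relations — not that the two coefficient functions are smooth across the central locus. To get genuinely smooth $g_1,g_2$ on a full neighborhood of $z$ in $\T(\RR)\cong\CC^\times$ one needs the full one-sided asymptotic expansion of $h(b)=|b|\,\orb(f_\infty;\gamma)$ in powers of $|b|$ (writing $\gamma$ with eigenvalues $a\pm b\rmi$), the parity observation that $\orb$ is even in $b$ (since $b\mapsto -b$ is realized by conjugation by $\mathrm{diag}(1,-1)$) so that the even-index part of the expansion feeds $g_2$ and the odd-index part feeds $g_1$, and a remainder estimate (or a direct descent computation on the Lie algebra) showing these are actual smooth functions rather than formal series. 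You correctly flag this as ``the key analytic content,'' but the proposal defers it rather than supplying it, so as written the elliptic case is circular: the smoothness of $\Gamma_1$, $\Gamma_u$ and $R$ across the center \emph{is} the theorem. A secondary omission is the global statement: the locally constructed pairs $(g_1,g_2)$ near each central point must be assembled into a single conjugation-invariant pair on all split and elliptic elements satisfying \eqref{eq:archimedeanintegral} everywhere (e.g.\ by fixing $g_1$ via a partition of unity near the two central rays and then defining $g_2$ by the identity on the regular set); your last paragraph gestures at this but does not address the consistency of the choices.
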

Since $g_1$ and $g_2$ are invariant under conjugation, we can parametrize them by $T_\gamma$ and $N_\gamma$ as in the nonarchimedean case.

Since $T_{z\gamma}=aT_\gamma$ and  $N_{z\gamma}=a^2N_\gamma$ for $z=aI$ with $a>0$, we have $g_i(T_\gamma,N_\gamma)=g_i(aT_\gamma,a^2N_\gamma)$
for $i=1,2$ and any $a>0$. If we let $a=1/(2\sqrt{|N_\gamma|})$, we know that $g_i$ only depends on $T_\gamma/(2\sqrt{|N_\gamma|})$ and $\sgn N_\gamma$ for $i=1,2$. Thus we can write
\[
\orb(f_\infty;\gamma)=g_1^{\sgn N_\gamma}\legendresymbol{T_\gamma}{2\sqrt{|N_\gamma|}}+\left|\frac{T_\gamma^2-4N_\gamma}{N_\gamma}\right|^{-1/2}g_2^{\sgn N_\gamma}\legendresymbol{T_\gamma}{2\sqrt{|N_\gamma|}}.
\]
Hence
\begin{equation}\label{eq:orbittheta}
\orb(f_\infty;\gamma)\left|\frac{T_\gamma^2-4N_\gamma}{N_\gamma}\right|^{1/2} 
      = \theta_\infty^{\sgn N_\gamma}\legendresymbol{T_\gamma}{2\sqrt{|N_\gamma|}},
\end{equation}
where
\begin{equation}\label{eq:deftheta}
\theta_\infty^\pm(x)=2|x^2\mp 1|^{1/2}g_1^\pm(x)+g_2^\pm(x)=\theta_{\infty,1}^\pm(x)+\theta_{\infty,0}^\pm(x).
\end{equation}
Then $\theta_{\infty,0}^\pm(x)\in C_c^\infty(\RR)$, and $\theta_{\infty,1}^+(x)=|x^2\mp 1|^{1/2}\varphi_1(x)$ with $\varphi_1(x)$ supported in $[-1,1]$ and is smooth inside and up to the boundary, $\theta_{\infty,1}^-(x)=0$. Moreover, near $x=\pm 1$, there exist asymptotic expansions \cite[Lemme 3.3]{langlands2013}
\begin{equation}\label{eq:thetaasymp}
\theta^+_{\infty,1}(\pm(1-x))\sim|x|^{\frac12}\sum_{j=0}^{+\infty}a_j^{\pm}x^j,\quad \theta^+_{\infty,0}(\pm(1-x))\sim\sum_{j=0}^{+\infty}b_j^{\pm}x^j.
\end{equation}

\subsection{Global notations} For $\nu\in \ZZ^r$, we usually denote $\nu_i$ by its $i^{\mathrm{th}}$ component. We define $q^\nu=q_1^{\nu_1}\dots q_r^{\nu_r}$.
For $\alpha,\beta\in \ZZ^r$, we define $\min\{\alpha,\beta\}\in \ZZ^r$ such that the $i^{\mathrm{th}}$ component of $\min\{\alpha,\beta\}$ is $\min\{\alpha_i,\beta_i\}$.

For any $y=(y_1,\dots,y_r)\in \QQ_{S_\fin}=\QQ_{q_1}\times\dots\times\QQ_{q_r}$ and $N\in \QQ$, we define
\[
\theta_{q}(y,N)=\prod_{i=1}^{r}\theta_{q_i}(y_i,N),\qquad|y|_q'=\prod_{i=1}^{r}|y_i|_{q_i}',\qquad \rme_q(y)=\prod_{i=1}^{r}\rme_{q_i}(y_i).
\]
Moreover, for $\tau=(\tau_1,\dots,\tau_r)\in \{0,1\}^r$, we define
\[
\theta_{q,\tau}(y,N)=\prod_{i=1}^{r}\theta_{q_i,\tau_i}(y_i,N).
\]
We usually embed $\QQ$ in $\QQ_S$ or $\QQ_{S_\fin}$ diagonally. 

\subsection{The partial Zagier $L$-function and the approximate functional equation} 
For $\delta\in \ZZ^S$, we define the \emph{partial Zagier $L$-function} as
\[
L^{S}(s,\delta)=\sum_{f^2\mid \delta}\frac{1}{f^{2s-1}}L^{S}\left(s,\legendresymbol{\delta/f^2}{\cdot}\right),
\]
where the sum of $f$ is over all $\ZZ_{(S)}^{>0}$ such that $\delta/f^2\in \ZZ^S$, and
\[
L^{S}\left(s,\legendresymbol{\delta/f^2}{\cdot}\right)=\sum_{k\in \ZZ_{(S)}^{>0}}\frac{1}{k^s}\legendresymbol{\delta/f^2}{k}=\prod_{p\notin S}\left(1-\legendresymbol{\delta/f^2}{p}p^{-s}\right)^{-1}
\]
for $s$ such that the series converges absolutely, extended to $\CC$ by analytic continuation. 

Write $\delta=\sigma^2D$, where $D\equiv 0,1\pmod 4$ is the fundamental discriminant. Finally, we set $\tau_\delta=(\sigma^{(q)})^2D=\delta/\sigma_{(q)}^2$.
\begin{theorem}[Functional equation of $L^{S}(s,\delta)$, \cite{cheng2025} Theorem 3.3]\label{thm:funceqndelta}
Suppose that $\delta$ is not a square. We have
\begin{equation}\label{eq:funceqndelta}
L^{S}(s,\delta)=\legendresymbol{|\delta|_{\infty,q}'}{\uppi}^{\frac12-s} \prod_{i=1}^{r}\frac{1-\epsilon_i q_i^{-s}}{1-\epsilon_i q_i^{s-1}} \frac{\Gamma((\iota+1-s)/2)}{\Gamma((\iota+s)/2)}L^{S}(1-s,\delta),
\end{equation}
where $|\delta|_{\infty,q}'=|\tau_\delta|=|\delta|_\infty\prod_{i=1}^{r}|\delta|_{q_i}'$, $\epsilon_i=\epsilon_{i,\delta}=\legendresymbol{\tau_\delta}{q_i}\in \{0,\pm 1\}$, 
$\iota=\iota_\delta=0$ if $\delta>0$, and $\iota=\iota_\delta=1$ if $\delta<0$.
\end{theorem}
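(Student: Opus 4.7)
The plan is to reduce the partial Zagier $L$-function to a primitive Dirichlet $L$-function whose functional equation is classical, and to transplant that functional equation back through the reduction. Writing $\delta=\sigma^2 D$ with $D$ the fundamental discriminant and $\chi_D=\legendresymbol{D}{\cdot}$, the first step establishes the Euler-product factorization
\[
L^{S}(s,\delta) = L^{S}(s,\chi_D)\prod_{p\mid \sigma^{(q)}} H_p(s),
\]
where $H_p(s)$ is an explicit finite polynomial in $p^{\pm s}$ depending only on $a_p=v_p(\sigma)$ and $\chi_D(p)\in\{0,\pm 1\}$. This uses the Kronecker-symbol identity $\chi_{(\sigma/f)^2 D}(n)=\chi_D(n)\triv_{\gcd(n,\sigma/f)=1}$, valid for $n$ coprime to $S$ (hence odd, since $2\in S$); the observation that $f^2\mid\delta$ with $f\in\ZZ_{(S)}^{>0}$ forces $f\mid\sigma^{(q)}$; and a M\"obius inversion that factors the resulting double sum multiplicatively over the primes dividing $\sigma^{(q)}$.

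Next, I apply the classical functional equation of the primitive Dirichlet $L$-function $L(s,\chi_D)$, of conductor $|D|$ and with gamma factor determined by $\iota=\iota_\delta$, and then pass to the partial $L^{S}(s,\chi_D)$ by multiplying and dividing by the missing Euler factors at each $q_i\in S_\fin$. This produces
\[
L^{S}(s,\chi_D) = |D|^{\frac12-s}\uppi^{s-\frac12}\prod_{i=1}^{r}\frac{1-\chi_D(q_i)q_i^{-s}}{1-\chi_D(q_i)q_i^{s-1}}\frac{\Gamma((\iota+1-s)/2)}{\Gamma((\iota+s)/2)}L^{S}(1-s,\chi_D),
\]
and since $\sigma^{(q)}$ is coprime to every $q_i$, one has $\chi_D(q_i)=\legendresymbol{\tau_\delta}{q_i}=\epsilon_i$, so the ramified local factors already match the target.

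It remains to verify the algebraic identity $\prod_{p\mid\sigma^{(q)}} H_p(s)/H_p(1-s)=(\sigma^{(q)})^{1-2s}$, which reduces to the prime-by-prime claim $H_p(1-s)=p^{a_p(2s-1)}H_p(s)$. A direct manipulation of the finite geometric-type sums defining $H_p$ confirms this uniformly in the three possible values $\chi_D(p)\in\{0,\pm 1\}$. Feeding this into the previous step converts the factor $|D|^{\frac12-s}$ coming from the Dirichlet functional equation into $(|D|(\sigma^{(q)})^2)^{\frac12-s}=|\tau_\delta|^{\frac12-s}=(|\delta|_{\infty,q}')^{\frac12-s}$, which together with the surviving $\uppi^{s-\frac12}$ assembles into $\legendresymbol{|\delta|_{\infty,q}'}{\uppi}^{\frac12-s}$ and yields the claim.

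The main obstacle is a bookkeeping check that the modified norm $|\cdot|'_{\ell}$, especially at $\ell=2$ where its definition distinguishes $y_0\equiv 1\,(4)$, $y_0\equiv 3\,(4)$, and odd $v_2(y)$, really produces the $\ell$-part of the conductor $|\tau_\delta|$ appearing in the Dirichlet functional equation. This comes down to matching the fundamental-discriminant convention $D\equiv 0,1\pmod 4$ via a short case analysis separating $v_2(D)\in\{0,2,3\}$; once this is verified, the rest of the argument is uniform across the primes of $S$.
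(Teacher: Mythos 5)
Your proposal is correct and is essentially the standard (Zagier-style) argument that the cited reference uses: factor $L^{S}(s,\delta)$ as $L^{S}(s,\chi_D)$ times a finite multiplicative Dirichlet polynomial supported on $\sigma^{(q)}$, invoke the primitive functional equation adjusted by the Euler factors at the $q_i$, and check the self-duality $H_p(1-s)=p^{a_p(2s-1)}H_p(s)$ together with the conductor identity $|\delta|_\infty\prod_i|\delta|'_{q_i}=|\tau_\delta|$ built into the modified norm. Note that the present paper only imports this theorem from \cite{cheng2025} rather than reproving it, but your route coincides with the proof given there.
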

\begin{remark}
The theorem also remains valid when $\delta\in \ZZ^S$ is a square, with an identical proof.
\end{remark}
Let
\[
F(x)=\frac{1}{2K_0(2)}\int_{x}^{+\infty}\rme^{-t-1/t}\frac{\rmd t}{t},
\]
where $K_s(y)$ is the \emph{modified Bessel function of the second kind}. The \emph{Mellin transform} of $F$ is
\begin{equation}\label{eq:mellinf}
\widetilde{F}(s)=\frac{1}{s}\frac{K_s(2)}{K_0(2)}.
\end{equation}
As shown in \cite[Proposition 3.7]{cheng2025}, $\widetilde{F}$ is an odd function, and for $s=\sigma+\rmi t\in \CC$ such that $\sigma_1\leq \sigma\leq \sigma_2$, we have
\begin{equation}\label{eq:frapiddecay}
\widetilde{F}(s)\ll_{\sigma_1,\sigma_2} |s|^{|\sigma|-1}\rme^{-\uppi|t|/2}.
\end{equation}

\begin{theorem}[Approximate functional equation for $L^{S}(s,\delta)$, \cite{cheng2025} Theorem 3.8]\label{thm:afe}
For $\delta\in \ZZ^S$ that is not a square, and any $A>0$, we have
\begin{equation}\label{eq:approximatefe}
\begin{split}
   L^{S}(s,\delta) & =\sum_{f^2\mid \delta}\sum_{k\in \ZZ_{(S)}^{>0}}\frac{1}{f^{2s-1}}\frac{1}{k^s}\legendresymbol{\delta/f^2}{k}F\legendresymbol{kf^2}{A}\\
     & +|\delta|_{\infty,q}'^{\frac12-s}\sum_{f^2\mid \delta}\sum_{k\in \ZZ_{(S)}^{>0}}\frac{1}{f^{1-2s}}\frac{1}{k^{1-s}}\legendresymbol{\delta/f^2}{k}V_{\iota,\epsilon,s}\legendresymbol{kf^2A}{|\delta|_{\infty,q}'},
\end{split}
\end{equation}
where $\epsilon=(\epsilon_1,\dots,\epsilon_n)\in \{-1,0,1\}^n$, and
\[
V_{\iota,\epsilon,s}(x)=\frac{\uppi^{s-1/2}}{2\uppi \rmi}\int_{(\sigma)}\widetilde{F}(u)\prod_{i=1}^{r}\frac{1-\epsilon_iq_i^{-s+u}}{1-\epsilon_iq_i^{s-u-1}}\frac{\Gamma(\frac{\iota+1-s+u}{2})}{\Gamma(\frac{\iota+s-u}{2})}(\uppi x)^{-u}\rmd u,
\]
$\sigma\in \RR$ is chosen such that $\sigma+\Re s>1$, and $(\sigma)$ denotes the vertical contour form $\sigma-\rmi\infty$ to $\sigma+\rmi\infty$.
\end{theorem}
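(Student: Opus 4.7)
\medskip
\noindent\textbf{Proof proposal.}
The plan is to derive \eqref{eq:approximatefe} by the standard Mellin–contour shift technique, using the Mellin pair $(F,\widetilde{F})$ from \eqref{eq:mellinf} as a smooth cutoff and the functional equation \eqref{eq:funceqndelta} to connect $L^{S}(s,\delta)$ to $L^{S}(1-s,\delta)$. The starting object is the contour integral
\[
\cI(s)=\frac{1}{2\uppi\rmi}\int_{(\sigma)} \widetilde{F}(u)\,L^{S}(s+u,\delta)\,A^{u}\,\rmd u,
\]
with $\sigma>0$ chosen large enough that $\Re(s+u)$ lies strictly to the right of the abscissa of absolute convergence for every inner Dirichlet $L$-function appearing in the Zagier sum; the rapid decay $\widetilde{F}(u)\ll |u|^{|\sigma|-1}\rme^{-\uppi|t|/2}$ in \eqref{eq:frapiddecay} guarantees absolute convergence and legitimates all of the manipulations below.

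First I would compute $\cI(s)$ by plugging in the Dirichlet-series expansion of
\[
L^{S}(s+u,\delta)=\sum_{f^{2}\mid \delta}\frac{1}{f^{2s-1}}\sum_{k\in \ZZ_{(S)}^{>0}}\frac{1}{k^{s}}\legendresymbol{\delta/f^{2}}{k}(kf^{2})^{-u},
\]
interchanging sum and integral, and invoking Mellin inversion for the pair $(F,\widetilde F)$ at the point $x=kf^{2}/A$. This produces precisely the first double sum appearing in \eqref{eq:approximatefe}.

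Next I would shift the contour from $(\sigma)$ to $(-\sigma)$. The only pole of the integrand crossed is the simple pole of $\widetilde{F}$ at $u=0$ (which has residue $1$ since $\widetilde F(u)=\frac1u\cdot \frac{K_u(2)}{K_0(2)}$ and $K_u$ is even in $u$); the residue there contributes exactly $L^{S}(s,\delta)$, and the tails at height $|t|\to\infty$ vanish by \eqref{eq:frapiddecay} combined with polynomial convexity bounds for $L^{S}(s+u,\delta)$ in vertical strips. Thus
\[
L^{S}(s,\delta)=\cI(s)-\frac{1}{2\uppi\rmi}\int_{(-\sigma)} \widetilde{F}(u)\,L^{S}(s+u,\delta)\,A^{u}\,\rmd u.
\]
On the shifted line I substitute $u\mapsto -u$, use that $\widetilde F$ is odd so the overall sign becomes positive, and then apply the functional equation \eqref{eq:funceqndelta} to $L^{S}(s-u,\delta)$; this replaces $L^{S}(s-u,\delta)$ by $L^{S}(1-s+u,\delta)$ multiplied by the archimedean gamma-ratio, the local factors $\prod_{i}(1-\epsilon_i q_i^{-s+u})/(1-\epsilon_i q_i^{s-u-1})$, and the quantity $(|\delta|_{\infty,q}'/\uppi)^{1/2-s+u}$.

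Finally I would expand the resulting $L^{S}(1-s+u,\delta)$ again through its Zagier expansion, interchange sum and integral once more, and recognize what remains inside the $u$-integral — namely $\widetilde F(u)$ times the local factors, gamma ratio, and $(\uppi kf^{2}A/|\delta|_{\infty,q}')^{-u}$ — as exactly $\uppi^{-(s-1/2)}V_{\iota,\epsilon,s}(kf^{2}A/|\delta|_{\infty,q}')$. Collecting the prefactor $(|\delta|_{\infty,q}'/\uppi)^{1/2-s}\cdot \uppi^{s-1/2}=|\delta|_{\infty,q}'^{\,1/2-s}$ yields the second sum in \eqref{eq:approximatefe}. The main technical obstacle is not conceptual but bookkeeping: verifying that the contour shift is legitimate (i.e., that no poles of the gamma ratio or of the Euler factors at the bad primes $q_i$ are crossed for $\sigma$ in a suitable range, and that the horizontal tails vanish) and that each swap of sum and integral is justified uniformly — both of which follow from the exponential decay in \eqref{eq:frapiddecay} together with Stirling's formula and the absolute convergence of the Zagier series when $\Re(s+u)$ is large.
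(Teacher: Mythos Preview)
Your proposal is correct and follows exactly the approach the paper has in mind: although the paper cites this theorem from \cite{cheng2025} without reproducing the proof, its proof of the companion result \autoref{thm:afesquare} explicitly says it is obtained by ``mov[ing] the contour integral with integrand $\widetilde{F}(u)L^{S}(1+u,\delta)A^u$,'' which is precisely your $\cI(s)$ at $s=1$. Your handling of the residue (using $\res_{u=0}\widetilde F(u)=1$), the sign from the oddness of $\widetilde F$ after the substitution $u\mapsto -u$, and the identification of the remaining integral with $V_{\iota,\epsilon,s}$ are all accurate.
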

\begin{corollary}[\cite{cheng2025}, Corollary 3.9]\label{cor:afe1}
For $\delta\in \ZZ^S$ that is not a square, and any $A>0$, we have
\begin{equation}\label{eq:approximatefeat1}
L^{S}(1,\delta)=\sum_{f^2\mid \delta}\sum_{k\in \ZZ_{(S)}^{>0}}\frac{1}{kf}\legendresymbol{\delta/f^2}{k}\left(F\legendresymbol{kf^2}{A}+\frac{kf^2}{\sqrt{|\delta|_{\infty,q}'}}V_{\iota,\epsilon}\legendresymbol{kf^2A}{|\delta|_{\infty,q}'}\right),
\end{equation}
where 
\[
V_{\iota,\epsilon}(x)=V_{\iota,\epsilon,1}(x)=\frac{\uppi^{1/2}}{2\uppi \rmi}\int_{(1)}\widetilde{F}(s)\prod_{i=1}^{r}\frac{1-\epsilon_i q_i^{s-1}}{1-\epsilon_i q_i^{-s}}\frac{\Gamma((\iota+s)/2)}{\Gamma((\iota+1-s)/2)}(\uppi x)^{-s}\rmd s.
\]
\end{corollary}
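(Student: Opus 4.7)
The plan is to obtain the corollary by a direct specialization of the approximate functional equation in \autoref{thm:afe} at $s=1$, followed by a small algebraic rearrangement to bring it into the displayed form.

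First I would verify that the specialization $s=1$ is legitimate. Since $\delta\in \ZZ^S$ is not a square and $f^2\mid \delta$ with $f\in \ZZ_{(S)}^{>0}$, the quotient $\delta/f^2$ is also not a square, so each character $\legendresymbol{\delta/f^2}{\cdot}$ is a non-trivial real Dirichlet character. Hence the partial $L$-functions $L^{S}(s,\legendresymbol{\delta/f^2}{\cdot})$ are entire, and the Mellin-type identity \eqref{eq:approximatefe} has no pole at $s=1$. One also needs to check that the contour $(\sigma)$ in the definition of $V_{\iota,\epsilon,s}$ remains admissible when $s=1$, i.e.\ that $\sigma+\Re s>1$ can still be chosen (take e.g.\ $\sigma=1$), and that no poles of the gamma factors cross the contour during the specialization; this is routine given the asymptotic decay \eqref{eq:frapiddecay}.

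Next I would substitute $s=1$ directly into \eqref{eq:approximatefe}. In the first term, the weights collapse as
\[
\frac{1}{f^{2s-1}}\frac{1}{k^s}\bigg|_{s=1}=\frac{1}{fk},
\]
which already matches the desired form. In the second term, the weights at $s=1$ become $f^{-(1-2s)}k^{-(1-s)}\mid_{s=1}=f$ and the power of $|\delta|_{\infty,q}'$ becomes $|\delta|_{\infty,q}'^{-1/2}$, so the summand equals
\[
\frac{f}{\sqrt{|\delta|_{\infty,q}'}}\legendresymbol{\delta/f^2}{k}\,V_{\iota,\epsilon,1}\!\legendresymbol{kf^2 A}{|\delta|_{\infty,q}'}.
\]
The trivial identity $\frac{f}{\sqrt{|\delta|_{\infty,q}'}}=\frac{1}{kf}\cdot\frac{kf^2}{\sqrt{|\delta|_{\infty,q}'}}$ then lets me pull out the common prefactor $(kf)^{-1}\legendresymbol{\delta/f^2}{k}$ from both terms, after which the two sums combine into the single sum appearing in \eqref{eq:approximatefeat1} with $V_{\iota,\epsilon}:=V_{\iota,\epsilon,1}$.

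There is essentially no substantial obstacle here: the entire argument is a specialization plus algebraic bookkeeping, and all the analytic work (convergence of the $k$-sum via the rapid decay of $F$ and $V_{\iota,\epsilon,1}$, holomorphy at $s=1$ coming from the non-square hypothesis on $\delta$) has already been done in \autoref{thm:afe}. The only point requiring care is keeping track of the factor $kf^2/\sqrt{|\delta|_{\infty,q}'}$ when refactoring the second summand, which is purely bookkeeping.
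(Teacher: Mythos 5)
Your proposal is correct and is exactly the intended argument: the corollary is \autoref{thm:afe} specialized at $s=1$ (with the contour choice $\sigma=1$ admissible since $\sigma+\Re s=2>1$), followed by the bookkeeping step $f=(kf)^{-1}\cdot kf^2$ to merge the two sums. No further comment is needed.
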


We end this subsection by establishing the rapid decay properties of 
$F(x), V(x)$ and $\widetilde{F}(s), \widetilde{V}(s)$.  Let $\cS$ be the set of smooth functions $\Phi$ on $\lopen 0,+\infty\ropen$ such that for any $k\in \ZZ_{\geq 0}$, $\Phi^{(k)}(x)$ are of rapid decay as $x\to +\infty$ (analog of Schwartz functions). Let $s=\sigma+\rmi t $ be a complex number.
\begin{proposition}\label{prop:propertyf}
We have $F\in \cS$. Moreover, $\widetilde{F}(s)$ has a meromorphic continuation to the whole complex plane, holomorphic except for a simple pole at $s=0$, and is of rapid decay for $t$ when $\sigma$ is fixed.
\end{proposition}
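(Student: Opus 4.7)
The plan is to establish the three assertions of the proposition separately. For the claim $F \in \cS$, I would differentiate directly: by the fundamental theorem of calculus,
\[
F'(x) = -\frac{\rme^{-x-1/x}}{2K_0(2)\,x},
\]
which is smooth on $\lopen 0,+\infty\ropen$ and majorized by $\rme^{-x}$ as $x\to+\infty$, so of rapid decay. Higher derivatives follow by induction: each $F^{(k)}$ with $k\geq 1$ is a product of $\rme^{-x-1/x}$ and a rational function in $x$, hence remains rapidly decreasing. For $F$ itself, the crude bound $F(x)\leq \rme^{-x}/(2K_0(2)x)$ for $x\geq 1$, obtained by dropping the $\rme^{-1/t}$ factor and integrating, shows that $F$ also decays rapidly.

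For the meromorphic continuation of $\widetilde F$, I would invoke the standard integral representation
\[
K_s(2) = \tfrac12 \int_{0}^{+\infty} t^{s-1}\rme^{-t-1/t}\,\rmd t,
\]
whose integrand has super-exponential decay at both $t=0^+$ and $t=+\infty$, uniformly in $s$ on compact subsets of $\CC$. A standard Morera/uniform convergence argument then shows that $K_s(2)$ is entire in $s$. Consequently $\widetilde F(s) = K_s(2)/(s\,K_0(2))$ is meromorphic on $\CC$ with at most a simple pole at $s=0$. Since $K_0(2)>0$, the residue there equals $K_0(2)/K_0(2)=1\neq 0$, so the pole is genuinely simple, and there are no other singularities.

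Finally, the rapid decay in $t$ for fixed $\sigma$ is an immediate consequence of the quantitative bound \eqref{eq:frapiddecay}: taking $\sigma_1=\sigma_2=\sigma$ yields $\widetilde F(\sigma+\rmi t)\ll_\sigma |t|^{|\sigma|-1}\rme^{-\uppi|t|/2}$ as $|t|\to\infty$, which is super-polynomially small and thus stronger than the claim. In all, the only step requiring any care is verifying that $K_s(2)$ is entire in $s$; but this reduces at once to uniform convergence of a super-exponentially convergent integral in the parameter, so no genuine obstacle arises and the rest of the argument is a direct unwinding of the definitions.
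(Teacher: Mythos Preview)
Your proposal is correct and follows essentially the same route as the paper: the paper simply cites an external proposition for $F\in\cS$ and then appeals to \eqref{eq:mellinf} and \eqref{eq:frapiddecay} for the remaining claims, while you spell out the underlying computations (direct differentiation for $F\in\cS$, and the entireness of $K_s(2)$ from its integral representation to justify the pole structure via \eqref{eq:mellinf}). The arguments are the same in substance; yours is just more self-contained.
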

\begin{proof}
By  Proposition B.5 of \cite{cheng2025} we have $F\in \cS$. Other assertions follow from \eqref{eq:mellinf} and \eqref{eq:frapiddecay}.
\end{proof}
\begin{proposition}\label{prop:propertyh}
For any $\iota$ and $\epsilon$, we have $V_{\iota,\epsilon}\in \cS$. Moreover, $\widetilde{V_{\iota,\epsilon}}(s)$ is holomorphic on $\Re s>0$, and is of rapid decay in $t$ when $\sigma>0$ is fixed.
\end{proposition}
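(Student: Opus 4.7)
The plan is to read the defining integral for $V_{\iota,\epsilon}(x)$ as an inverse Mellin transform and extract everything from the explicit integrand
\[
\Phi(s)=\uppi^{1/2-s}\widetilde{F}(s)\prod_{i=1}^{r}\frac{1-\epsilon_i q_i^{s-1}}{1-\epsilon_i q_i^{-s}}\frac{\Gamma((\iota+s)/2)}{\Gamma((\iota+1-s)/2)}.
\]
Once I show that $\Phi$ is holomorphic on $\{\Re s>0\}$ and of rapid decay in $t=\Im s$ uniformly for $\sigma=\Re s$ in compact subsets of $(0,+\infty)$, both halves of the proposition will follow: the assertion about $\widetilde{V_{\iota,\epsilon}}$ by Mellin inversion (which also identifies $\widetilde{V_{\iota,\epsilon}}=\Phi$ on $\Re s>0$), and the assertion $V_{\iota,\epsilon}\in\cS$ by shifting the contour in the defining integral.

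To verify the properties of $\Phi$ I analyze each factor on $\{\Re s>0\}$. By \autoref{prop:propertyf}, $\widetilde{F}$ is holomorphic there (its only pole being at $s=0$) and satisfies $|\widetilde{F}(\sigma+\rmi t)|\ll(1+|t|)^{|\sigma|-1}\rme^{-\uppi|t|/2}$, uniformly on compact $\sigma$-ranges. Each Euler-type factor has entire numerator, while the denominator $1-\epsilon_i q_i^{-s}$ vanishes only on the imaginary axis; since $|q_i^{-s}|=q_i^{-\sigma}<1$ for $\sigma>0$, it is bounded below by $1-q_i^{-\sigma_1}>0$ on $\sigma\geq\sigma_1>0$, making the ratio uniformly bounded on compact $\sigma$-ranges in $(0,+\infty)$. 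The gamma ratio has its poles only at $s=-\iota-2k$, $k\geq 0$, all in $\Re s\leq 0$, and Stirling gives
\[
\left|\frac{\Gamma((\iota+s)/2)}{\Gamma((\iota+1-s)/2)}\right|\ll(1+|t|)^{\sigma-1/2}
\]
uniformly on compact $\sigma$-ranges. Multiplying these bounds,
\[
|\Phi(\sigma+\rmi t)|\ll(1+|t|)^{2\sigma-3/2}\rme^{-\uppi|t|/2}
\]
uniformly for $\sigma\in[\sigma_1,\sigma_2]\subset(0,+\infty)$; the exponential factor ensures rapid decay in $t$, and holomorphicity is visible from the factorwise analysis.

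To conclude $V_{\iota,\epsilon}\in\cS$, I shift the contour in the defining integral from $\Re s=1$ to any $\sigma>0$: holomorphicity of $\Phi$ on the open half-plane combined with the exponential decay makes the horizontal segments vanish as $|t|\to\infty$, so by Cauchy's theorem
\[
V_{\iota,\epsilon}(x)=\frac{1}{2\uppi\rmi}\int_{(\sigma)}\Phi(s)x^{-s}\rmd s
\]
for every $\sigma>0$, giving $|V_{\iota,\epsilon}(x)|\ll_\sigma x^{-\sigma}$. Differentiation under the integral sign is legitimate because the polynomial factor $s(s+1)\cdots(s+k-1)$ produced by $\partial_x^k x^{-s}$ is absorbed by the exponential decay of $\Phi$, yielding $|V_{\iota,\epsilon}^{(k)}(x)|\ll_{\sigma,k} x^{-\sigma-k}$. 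Since $\sigma>0$ is arbitrary, $V_{\iota,\epsilon}$ and every derivative are of rapid decay, so $V_{\iota,\epsilon}\in\cS$; Mellin inversion then identifies its Mellin transform with $\Phi$. The main technical care is the Stirling asymptotic for the gamma ratio and, most importantly, the justification of the contour shift and of differentiation under the integral, both of which hinge on the exponential factor $\rme^{-\uppi|t|/2}$ in $\widetilde{F}$ dominating the polynomial growth of the remaining factors.
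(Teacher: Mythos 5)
Your proposal is correct, and it takes a somewhat more self-contained route than the paper. For the first assertion the paper simply cites Proposition B.5 of \cite{cheng2025} to get $V_{\iota,\epsilon}\in\cS$, whereas you reprove it from scratch by shifting the contour in the defining integral to $(\sigma)$ for arbitrary $\sigma>0$ and differentiating under the integral sign; this is a legitimate and standard argument, justified exactly as you say by the holomorphicity of the integrand on $\Re s>0$ together with the factor $\rme^{-\uppi|t|/2}$ coming from \eqref{eq:frapiddecay}. For the second assertion both proofs use the same three ingredients --- the explicit product formula \eqref{eq:mellinv} for $\widetilde{V_{\iota,\epsilon}}$, the Stirling bound \eqref{eq:mellinrapid2} for the gamma ratio, and the elementary bound \eqref{eq:mellinrapid3} for the Euler factors --- but you obtain holomorphicity on $\Re s>0$ directly from the factorwise analysis of $\Phi$, while the paper deduces it from the membership $V_{\iota,\epsilon}\in\cS$. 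Your route has the advantage of identifying $\widetilde{V_{\iota,\epsilon}}=\Phi$ and its analytic properties in one stroke, at the cost of having to say a word (as you do, tersely) about why Mellin inversion applies; in particular your bound $|V_{\iota,\epsilon}(x)|\ll_\sigma x^{-\sigma}$ for every $\sigma>0$ is what guarantees convergence of $\int_0^\infty V_{\iota,\epsilon}(x)x^{s-1}\,\rmd x$ near $x=0$ for all $\Re s>0$, which is the point the paper's shorter deduction leaves implicit. No gaps.
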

\begin{proof}
By  Proposition B.5 of \cite{cheng2025} we have $V\in \cS$. Now we prove the second assertion.

The Mellin transform of $V_{\iota,\epsilon}$ is
\begin{equation}\label{eq:mellinv}
\widetilde{V_{\iota,\epsilon}}(s)=\widetilde{F}(s)\prod_{i=1}^{r}\frac{1-\epsilon_i q_i^{s-1}}{1-\epsilon_i q_i^{-s}}\frac{\Gamma(\frac{\iota+s}{2})}{\Gamma(\frac{\iota+1-s}{2})}\uppi^{-s+\frac 12}.
\end{equation}

By Stirling formula \cite[Chapter II.0]{tenenbaum2015analytic} we have
\begin{equation}\label{eq:mellinrapid2}
\frac{|\Gamma(\frac{\iota+s}{2})|}{|\Gamma(\frac{\iota+1-s}{2})|}\asymp_A \frac{|t/2|^{\frac{\iota+\sigma-1}{2}}\rme^{-\uppi|t|/4}}{|t/2|^{\frac{\iota-\sigma}{2}} \rme^{-\uppi|t|/4}}=\left|\frac{t}{2}\right|^{\sigma-\frac 12}.
\end{equation}
Also, we have
\begin{equation}\label{eq:mellinrapid3}
\frac{|1-\epsilon_i q_i^{s-1}|}{|1-\epsilon_i q_i^{-s}|}\leq \frac{1+q_i^{A-1}}{1-q_i^{-A}}\ll_A q_i^A\ll_{A,q_i} 1
\end{equation}
for any $s$ such that $\Re s=A$ and $i=1,\dots,r$. 

Since $V_{\iota,\epsilon}\in \cS$, $\widetilde{V_{\iota,\epsilon}}(s)$ is holomorphic on the half plane $\Re s>0$.
Moreover, by the definition \eqref{eq:mellinv} and \eqref{eq:frapiddecay}, \eqref{eq:mellinrapid2}, \eqref{eq:mellinrapid3}, $\widetilde{V_{\iota,\epsilon}}(s)$ is of rapid decay for $t$ when $\sigma>0$ is fixed. 
\end{proof}

\subsection{The generalized Kloosterman sum}\label{subsec:kloosterman}
 The \emph{partial generalized Kloosterman sum} is defined by
\[
\Kl_{k,f}^S(\xi,m)=
  \sum_{\substack{a \bmod kf^2\\ a^2-4m\equiv 0\,(f^2)}}\legendresymbol{(a^2-4m)/f^2}{k}\rme\legendresymbol{a\xi}{kf^2}\rme_{q}\legendresymbol{a\xi}{kf^2}
\]
for $k,f\in \ZZ_{(S)}$ and $\xi,m\in \ZZ^S$. 

For any prime $p\notin S$ and $\xi,m\in \ZZ_p$, we define the \emph{local generalized Kloosterman sum} to be
\begin{equation}\label{eq:localgeneralizedkloosterman}
\Kl_{p^u,p^v}^{(p)}(\xi,m)=
\sum_{\substack{a \bmod p^{u+2v}\\ a^2-4m\equiv 0\,(p^{2v})}}\legendresymbol{(a^2-4m)/p^{2v}}{p^u}\rme_p\legendresymbol{-a\xi}{p^{u+2v}}.
\end{equation}

Suppose that  $\xi\in \ZZ^S$. Since $\rme(x)\prod_{p}\rme_p(x)=1$ for $x\in \QQ$ and $a\xi/p^{u+2v}\in \ZZ_{\ell}$ if $\ell\notin S\cup\{p\}$, we obtain
\[
\rme_p\legendresymbol{-a\xi}{p^{u+2v}}=\prod_{\ell\notin S}\rme_{\ell}\legendresymbol{a\xi}{p^{u+2v}}^{-1}= \rme\legendresymbol{a\xi}{p^{u+2v}}\rme_{q}\legendresymbol{a\xi}{p^{u+2v}}.
\]
Hence for $\xi\in \ZZ^S$, we have
\begin{equation}\label{eq:localgeneralizedkloostermaneq}
\Kl_{p^u,p^v}^{(p)}(\xi,m)=
\sum_{\substack{a \bmod p^{u+2v}\\ a^2-4m\equiv 0\,(p^{2v})}}\legendresymbol{(a^2-4m)/p^{2v}}{p^u} \rme\legendresymbol{a\xi}{p^{u+2v}}\rme_{q}\legendresymbol{a\xi}{p^{u+2v}}.
\end{equation}

\begin{proposition}\label{prop:equalkloosterman}
Suppose that $\xi,m\in \ZZ_p$. Let $\xi',m'\in \ZZ$ such that $\xi\equiv \xi'\pmod {p^{u+2v}}$ and $m\equiv m'\pmod {p^{u+2v}}$. Then
\[
\Kl_{p^u,p^v}^{(p)}(\xi,m)=\Kl_{p^u,p^v}^{(p)}(\xi',m')=
                               \sum_{\substack{a \bmod p^{u+2v}\\ a^2-4m'\equiv 0\,(p^{2v})}}\legendresymbol{(a^2-4m')/p^{2v}}{p^u} \rme\legendresymbol{a\xi'}{p^{u+2v}}.
\]
\end{proposition}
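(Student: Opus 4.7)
The plan is to separate the claim into two independent pieces: (a) the Kloosterman sum depends only on the residues of $\xi$ and $m$ modulo $\ell^{u+2v}$, so that we may replace $(\xi,m)$ by $(\xi',m')$ without changing the value; (b) once $\xi'\in\ZZ$, the twisted exponential $\rme_\ell(-a\xi'/\ell^{u+2v})$ coincides with $\rme(a\xi'/\ell^{u+2v})$. These two steps together give both equalities in the proposition.

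For (a), I would argue term by term in the defining sum \eqref{eq:localgeneralizedkloosterman}. The index set $\{a\bmod\ell^{u+2v}\}$ is intrinsic. The condition $a^2-4m\equiv 0\,(\ell^{2v})$ only uses $m\bmod\ell^{2v}$, so it is preserved. Since $2\in S$ by assumption, every $\ell\notin S$ is odd, hence the Kronecker symbol $\legendresymbol{(a^2-4m)/\ell^{2v}}{\ell^u}$ factors as $\legendresymbol{(a^2-4m)/\ell^{2v}}{\ell}^u$ and therefore depends only on $(a^2-4m)/\ell^{2v}\bmod\ell$, i.e.\ on $a^2-4m\bmod\ell^{2v+1}$; in particular it is unchanged when $m$ is replaced by $m'$ with $m\equiv m'\,(\ell^{u+2v})$ (the case $u=0$ is trivial). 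Finally, since $\rme_\ell(x)$ only depends on $x\bmod\ZZ_\ell$, the character value $\rme_\ell(-a\xi/\ell^{u+2v})$ is unchanged when $\xi$ shifts by an element of $\ell^{u+2v}\ZZ_\ell$. Combining these observations yields $\Kl_{\ell^u,\ell^v}^{(\ell)}(\xi,m)=\Kl_{\ell^u,\ell^v}^{(\ell)}(\xi',m')$.

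For (b), let $y=a\xi'/\ell^{u+2v}\in\QQ$ with $a,\xi'\in\ZZ$. For any prime $p\neq\ell$ the denominator $\ell^{u+2v}$ is a unit in $\ZZ_p$ and $a\xi'\in\ZZ\subset\ZZ_p$, so $y\in\ZZ_p$ and therefore $\langle y\rangle_p=0$, i.e.\ $\rme_p(y)=1$. The adelic product formula $\rme(y)\prod_p\rme_p(y)=1$ for $y\in\QQ$ then collapses to $\rme(y)\rme_\ell(y)=1$, which gives
\[
\rme_\ell\legendresymbol{-a\xi'}{\ell^{u+2v}}=\rme_\ell(y)^{-1}=\rme(y)=\rme\legendresymbol{a\xi'}{\ell^{u+2v}}.
\]
Substituting this into the formula from (a) yields the second equality of the proposition.

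There is no substantive obstacle; the statement is essentially a consistency check packaging two elementary facts (invariance of each factor of the summand under shifts mod $\ell^{u+2v}$, and the product formula for additive characters on $\QQ$). The only mildly subtle point is verifying the Kronecker-symbol invariance, which works cleanly precisely because the convention $2\in S$ rules out $\ell=2$.
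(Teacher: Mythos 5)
Your proposal is correct and follows essentially the same route as the paper: the first equality is a term-by-term check that each factor of the summand depends only on $\xi,m$ modulo $\ell^{u+2v}$ (which the paper dismisses as ``immediate from the definition'' and you usefully spell out, including the Kronecker-symbol point via $m\equiv m'\ (\ell^{2v+1})$ when $u\geq 1$), and the second equality is the identity $\rme_\ell(-y)=\rme(y)$ for $y\in\ZZ[1/\ell]$, which you derive from the product formula $\rme(y)\prod_p\rme_p(y)=1$ while the paper unwinds the definition $\rme_\ell(-y)=\rme(\langle y\rangle_\ell)=\rme(y)$ directly --- an equivalent one-line computation.
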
 
The right hand side is precisely the local generalized Kloosterman sum defined by Altu\u{g} in \cite{altug2017}.
\begin{proof}
The first equality follows immediately from the definition \eqref{eq:localgeneralizedkloosterman}. For the second equality, by definition we have
\[
\rme_p\legendresymbol{-a\xi'}{p^{u+2v}}=\rme\left(\left\langle \frac{a\xi'}{p^{u+2v}}\right\rangle_{\!\!p}\right)=\rme\legendresymbol{a\xi'}{p^{u+2v}}.
\]
The result then follows from \eqref{eq:localgeneralizedkloosterman}.
\end{proof}

\section{Contribution of non-elliptic parts}\label{sec:nonelliptic}
Let $\mf{S}$ be the set of places of $\QQ$ and let $\AA$ be the ad\`ele ring of $\QQ$. The (noninvariant) Arthur-Selberg trace formula of $\G=\GL_2$ takes the following form:
\begin{equation}\label{eq:traceformulagl2}
J_{\mathrm{geom}}(f)=J_{\mathrm{spec}}(f),
\end{equation}
where the geometric side is
\begin{equation}\label{eq:geometric}
J_{\mathrm{geom}}(f)=I_\id(f)+I_\el(f)+J_\hyp(f)+J_\unip(f)
\end{equation}
and the spectral side is
\begin{equation}\label{eq:spectral}
J_{\mathrm{spec}}(f)=I_{\mathrm{cusp}}(f)+J_{\mathrm{cont}}(f)+\sum_{\mu}\Tr(\mu(f))+\frac14\sum_{\mu}\Tr(M(0,\mu)(\xi_0\otimes\mu)(f)).
\end{equation}
The sum $\mu=\mu_1\boxtimes\mu_2$ runs over all $1$-dimensional representations of $\A(\QQ)\bs \A(\AA)^1= \QQ^\times\bs (\AA^\times)^1\times \QQ^\times\bs (\AA^\times)^1$ with $\mu_1=\mu_2$ (equivalently, runs over all $1$-dimensional representations of $\G(\QQ)\bs \G(\AA)^1$), where $\A$ denotes the diagonal torus of $\G$.

The Arthur-Selberg trace formula for appears in numerous references. For example, \cite[Chapter 16]{langlands1970}, \cite[Theorem 6.33]{gelbart1979}, \cite[Theorem 7.14]{knapp1997} and \cite[Theorem 1]{finis2011}. 

We first explain the geometric side. The identity part is
\[
I_\id(f)=\sum_{z\in \Z(\QQ)}\vol(\G(\QQ)\bs\G(\AA)^1)f(z).
\]

The elliptic part is
\[
I_\el(f)=\sum_{\gamma\in \G(\QQ)^\#_\el}\vol(\gamma)\orb(f;\gamma),
\]
where $ \G(\QQ)^\#_\el$ denotes the set of elliptic conjugacy classes in $\G(\QQ)$, and
\[
\orb(f;\gamma)=\int_{\G_\gamma(\AA)\bs \G(\AA)} f(g^{-1}\gamma g)\rmd g,
\qquad
\vol(\gamma)=\int_{Z_+\G_\gamma(\QQ)\bs \G_\gamma(\AA)} \rmd g,
\]
where $Z_+$ denotes the connected component of the identity matrix in the center $\Z(\RR)$ of $\G(\RR)$.

The hyperbolic part is
\[
J_\hyp(f)=-\frac{1}{2}\sum_{\gamma\in \A(\QQ)_{\reg}}\int_{\A(\AA)\bs \G(\AA)}f^n(g^{-1}\gamma g)\alpha(H_\B(wg)+H_\B(g))\rmd g,
\]
where $\A$ is the diagonal torus, $\A(\QQ)_\reg$ denotes all the regular elements in $\A(\QQ)$, $w$ denotes the nontrivial element in the Weyl group of $(\G,\A)$, $\alpha$ denotes the positive root in $\mf{sl}_2$, and $H_\B$ denotes the Harish-Chandra map. 

The unipotent part is
\[
J_\mathrm{unip}(f)=\sum_{z\in \Z(\QQ)}\fp_{s=1}Z(s,\triv,F_z),
\]
where $\fp$ denotes the finite part, $Z$ denotes the zeta integral in Tate's thesis and for $t\in \AA$,
\[
F_z(t)=\int_{K}f\left(k^{-1}z\begin{pmatrix}
                                 1 & t \\
                                 0 & 1 
                               \end{pmatrix}k\right)\rmd k,
\]
where $K$ denotes the standard maximal compact subgroup of $\G(\AA)$.

Next we provide an explicit description of the spectral side.
We follow the notations of \cite{langlands1970}. The continuous part is the sum of the two following terms
\begin{equation}\label{eq:continuouspart1}
-\frac{1}{4\uppi\rmi}\sum_{\mu}\int_{(0)}\frac{m'(s,\mu)}{m(s,\mu)} \Tr\left(\Ind_{\B(\AA)}^{\G(\AA)}(s,\mu)(f)\right)\rmd s
\end{equation}
and
\begin{equation}\label{eq:continuouspart2}
-\frac{1}{4\uppi\rmi}\sum_{v\in \mf{S}}\sum_{\mu}\int_{(0)}\Tr \left(R_v(s,\mu_v)^{-1}R_v'(s,\mu_v)\Ind_{\B(\QQ_v)}^{\G(\QQ_v)}(s,\mu_v)(f_v)\right)\cdot \prod_{w\neq v}\Tr\left(\Ind_{\B(\QQ_w)}^{\G(\QQ_w)}(s,\mu_w)(f_w)\right)\rmd s,
\end{equation}
where $\mu=\mu_1\boxtimes\mu_2$ runs over all $1$-dimensional representations of $\A(\QQ)\bs \A(\AA)^1$. We do \emph{not} require $\mu_1=\mu_2$ for the sum in the continuous part.

The space of induced representations is defined as follows:
As a vector space, $\Ind_{\B(\QQ_v)}^{\G(\QQ_v)}(s,\mu_v)$ is the space of all smooth functions $\psi$ on $\G(\QQ_v)$ that satisfy
\[
\psi(bg)=\mu_1(x)\mu_2(y)\left|\frac{x}{y}\right|_v^{s+1/2}\psi(g)
\] 
for any $g\in \G(\QQ_v)$ and $b=(\begin{smallmatrix} x & z \\ 0 & y \end{smallmatrix})\in \B(\QQ_v)$, and $\G(\QQ_v)$ acts by right translation. The global induced representation is defined similarly. 
$\mu=\mu_1\boxtimes\mu_2$ runs over all $1$-dimensional representations of $\A(\QQ)\bs \A(\AA)^1$, and
\[
m(s,\mu)=\prod_{v}m_v(s,\mu_v)=\prod_{v}\frac{L_v(2s,\mu_1/\mu_2)}{L_v(1+2s,\mu_1/\mu_2) \varepsilon_v(1-2s,\mu_2/\mu_1,\rme_v)} =\frac{\Lambda(1+2s,\mu_2/\mu_1)}{\Lambda(1-2s,\mu_1/\mu_2)},
\]
where $\varepsilon_v$ denotes the $\varepsilon$-factor.
$M_v(s,\mu_v)$ denotes the intertwining operator from the space $\Ind_{\B(\QQ_v)}^{\G(\QQ_v)}(s,\mu_v)$ to $\Ind_{\B(\QQ_v)}^{\G(\QQ_v)}(-s,w(\mu_v))$, where $w(\mu)=\mu_2\boxtimes\mu_1$ if $\mu=\mu_1\boxtimes\mu_2$, and
\[
M(s,\mu)=\prod_{v\in \mf{S}}M_v(s,\mu_v).
\]
Finally, 
$R_v(s,\mu_v)$ is the normalized intertwining operator 
\[
R_v(s,\mu_v)=m_v(s,\mu)^{-1}M_v(s,\mu_v).
\]
If $s=0$ and $\mu$ is trivial, we also denote this representation by $\xi_0$.

\begin{remark}
We use the letter $J$ instead of $I$ in hyperbolic, unipotent and the continuous parts since these terms are not invariant under conjugation, while other terms are invariant distributions. There is also an \emph{invariant trace formula} for $\GL_2$ but we do not need it in this paper.
\end{remark}

The main goal of this section is to prove the following theorem:
\begin{theorem}\label{thm:cuspidalestimate}
For any $\varepsilon>0$, we have
\[
I_{\mathrm{cusp}}(f^n)=I_\el(f^n)-\sum_{\mu}\Tr(\mu(f^n))+O_{f_\infty,f_{q_i},\varepsilon}(n^\varepsilon).
\]
\end{theorem}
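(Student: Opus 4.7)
The plan is to rearrange the trace formula \eqref{eq:traceformulagl2} as
\[
I_{\mathrm{cusp}}(f^n) = I_\el(f^n) - \sum_{\mu}\Tr(\mu(f^n)) + \cE(f^n),
\]
where
\[
\cE(f^n) := I_\id(f^n)+J_\hyp(f^n)+J_\unip(f^n) - J_{\mathrm{cont}}(f^n) - \tfrac14\sum_\mu \Tr(M(0,\mu)(\xi_0\otimes\mu)(f^n)),
\]
and to show $\cE(f^n)=O_{f_\infty,f_{q_i},\varepsilon}(n^\varepsilon)$ by bounding each of the five summands separately. The identity term is immediate: $I_\id(f^n)$ is a constant multiple of $f^n(1)$, and at each unramified prime $p$ the factor $p^{-n_p/2}\triv_{\cX_p^{n_p}}(I)$ vanishes unless $n_p=0$. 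Since $\gcd(n,S)=1$, this forces $f^n(1)=0$ for $n>1$, so $I_\id(f^n)=O(1)$.

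For the hyperbolic part, I would use Arthur's explicit formula writing $J_\hyp(f^n)$ as a sum of weighted orbital integrals over hyperbolic classes $\gamma=\mathrm{diag}(a,b)$ with $a\ne b$. The product of local orbital integrals forces $|ab|\asymp n$, while the compact support of $f_\infty$ restricts the ratio $a/b$ to a fixed compact subset of $\RR^\times$, so only $O(\bm{d}(n))=O(n^\varepsilon)$ global pairs contribute. At each unramified $p\mid n$ a direct Bruhat-type calculation shows the local orbital integral of $p^{-n_p/2}\triv_{\cX_p^{n_p}}$ is bounded polynomially in $n_p$; the product over $p\mid n$ is again $O(n^\varepsilon)$, and Arthur's truncation weight $v(g)$ contributes only a further $O((\log n)^A)$ factor. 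A parallel decomposition handles $J_\unip(f^n)$: for $\GL_2$ it is a combination of $f^n(1)$ (vanishing as above), of an unramified Fourier integral that reduces to the von Mangoldt function at $n$ (so $O(\log n)$), and of a derivative piece involving $\zeta'/\zeta$, each of which is $O((\log n)^A) = O(n^\varepsilon)$.

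For the continuous spectrum \eqref{eq:continuouspart1}--\eqref{eq:continuouspart2}, I would first evaluate $\Tr(\Ind_{\B(\QQ_v)}^{\G(\QQ_v)}(s,\mu_v)(f^n_v))$ at an unramified $p$ via the Satake/Macdonald formula: it is a divisor-type sum in $\mu_i(p)p^{\pm s/2}$, bounded by $\bm{d}(n_p)$ uniformly for $\Re s$ in a compact set. Shifting the contour $(0)$ slightly and using standard bounds on $\Lambda'/\Lambda(1\pm s,\mu_2/\mu_1)$ coming from the zero-free region of Dirichlet $L$-functions then gives $O(n^\varepsilon)$ for the integrals over the shifted line; residues of $m'/m$ at $s=0$, existing only when $\mu_1=\mu_2$, form a finite sum again controlled by the same divisor bound. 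The same Hecke-eigenvalue argument shows $\tfrac14\sum_\mu \Tr(M(0,\mu)(\xi_0\otimes\mu)(f^n))=O(n^\varepsilon)$, since the only contributing $\mu$ are those unramified outside $S$ with $\mu_1=\mu_2$, a finite set determined by the fixed $f_{q_i}$.

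The main obstacle I expect is the hyperbolic term: Arthur's truncation weight $v(g)$ is not compactly supported in $g$, and obtaining a bound uniform in $n$ demands a delicate control of how $v$ interacts with the sum over hyperbolic classes. This is precisely why the paper insists $f_\infty\in C_c^\infty(Z_+\bs\G(\RR))$: compactness at infinity cuts the range of hyperbolic eigenvalues down to a compact set and removes the most serious uniformity problem. A secondary difficulty lies in the ramified intertwining operators $R_v(s,\mu_v)$ at $v\in S$ in \eqref{eq:continuouspart2}, which I would handle via their standard expression as ratios of local $L$- and $\varepsilon$-factors that depend only on $f_v$ and not on $n$.
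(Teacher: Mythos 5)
Your proposal is correct and follows essentially the same route as the paper: rearrange the trace formula \eqref{eq:traceformulagl2} so that only $I_\el(f^n)$ and $\sum_\mu\Tr(\mu(f^n))$ are retained, and bound the identity, unipotent, hyperbolic, continuous, and residual intertwining contributions separately by $O(n^\varepsilon)$, which is precisely what Propositions \ref{prop:idestimate}--\ref{prop:continuouspart} do (including your key observations that compact support of $f_\infty$ confines the hyperbolic eigenvalue ratio to a compact set and that the $n$-dependence of the continuous part sits entirely in the unramified Hecke factors, giving the divisor bound). The one slip is in the identity (and unipotent) term: here $I_\id(f^n)$ is a sum over all of $\Z(\QQ)$, not a multiple of $f^n(1)$, so when $n$ is a perfect square the central elements $\pm n^{1/2}q^{\nu/2}I$ do contribute; the compact support of the $f_{q_i}$ restricts $\nu$ to a finite set independent of $n$, so the bounds $O(1)$ and $O(\log n)$ survive, but the vanishing argument alone does not suffice.
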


\subsection{Estimates of the geometric side}
First we estimate the non-elliptic parts of the geometric side for the function $f^n$.

\begin{proposition}\label{prop:idestimate}
We have
\[
I_\mathrm{id}(f^n)\ll 1,
\]
where the implied constant only depends on $f_\infty$ and $f_{q_i}$.
\end{proposition}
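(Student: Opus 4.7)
The plan is to unwind the definition of $I_\id(f^n)$ and evaluate the central contribution directly. Specializing the elliptic formula \eqref{eq:elliptictrace} to central $\gamma\in\Z(\QQ)$, for which the centralizer is all of $\G$ and each local orbital integral collapses to point evaluation $\orb(f_v;\gamma)=f_v(\gamma)$, the identity term takes the form
\[
I_\id(f^n)=\vol(Z_+\G(\QQ)\bs\G(\AA))\sum_{\gamma}f^n(\gamma),
\]
where $\gamma$ ranges over representatives of $\Z(\QQ)/(Z_+\cap\Z(\QQ))=\QQ^\times/\QQ_{>0}^\times$. This quotient has exactly two elements, represented by $\pm I$, so the sum collapses to at most two terms and the volume factor is a fixed positive constant.

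Next I would analyze each factor $f^n_v(\pm I)$ place by place. For a prime $\ell\notin S$, the definition gives $f^n_\ell(\pm I)=\ell^{-n_\ell/2}\triv_{\cX_\ell^{n_\ell}}(\pm I)$; since $|\det(\pm I)|_\ell=1$, the indicator is nonzero precisely when $n_\ell=0$, in which case $f^n_\ell(\pm I)=1$. Consequently $\prod_{\ell\notin S}f^n_\ell(\pm I)$ equals $1$ if $n^{(q)}=1$ and vanishes otherwise. At the places in $S$, the scalars $f_\infty(\pm I)$ and $f_{q_i}(\pm I)$ are bounded by $\|f_\infty\|_\infty$ and $\|f_{q_i}\|_\infty$ independently of $n$.

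Combining these observations, $|I_\id(f^n)|$ is uniformly bounded by a constant built only from $\vol(Z_+\G(\QQ)\bs\G(\AA))$, $\|f_\infty\|_\infty$, and the $\|f_{q_i}\|_\infty$, which yields the proposition. There is no substantive obstacle here: the spherical Hecke condition at every $\ell\notin S$ already forces $I_\id(f^n)$ to vanish unless all prime divisors of $n$ lie in $S$, and on that restricted set the estimate is immediate from the $L^\infty$-norms of the fixed local test functions.
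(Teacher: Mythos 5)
Your reduction of the central sum to the two classes $\pm I$ is incorrect, and it causes you to miss the central elements that actually contribute. The subgroup $Z_+$ is $\RR_{>0}\cdot I$ embedded in $\G(\AA)$ \emph{at the archimedean place only}, whereas $\Z(\QQ)=\QQ^\times$ sits in $\G(\AA)$ diagonally; hence $Z_+\cap\Z(\QQ)=\{I\}$ and the identity term is a sum over \emph{all} of $\QQ^\times$, not over $\QQ^\times/\QQ^\times_{>0}$. Even if one tried to fold $\QQ^\times_{>0}$ into the volume factor, this fails because $f^n$ is not invariant under multiplication by positive rationals at the finite places: the components $f^n_p=p^{-n_p/2}\triv_{\cX_p^{n_p}}$ see the determinant valuation and change under $z\mapsto az$ for $a\in\QQ^\times_{>0}$.

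Because of this, evaluating only at $\pm I$ leads you to the false conclusion that $I_\id(f^n)=0$ unless $n=1$ (your condition $n^{(q)}=1$ is just $n=1$ since $\gcd(n,S)=1$). In fact, if $n$ is a perfect square, the elements $z=\pm n^{1/2}q^{\nu/2}I$ satisfy $zI\in\cX_p^{n_p}$ for every $p\notin S$ (e.g.\ for $n=9$ with $3\notin S$, the element $3I$ lies in $\cX_3^{2}$ and gives $f^9_3(3I)=3^{-1}\neq 0$), so they genuinely contribute. The correct argument, as in the paper, runs: $f^n(z)\neq 0$ forces $\det z=\pm nq^\nu$, which must be a square since $z$ is central, so $I_\id(f^n)=0$ unless $n$ is a square; when $n$ is a square the contributing $z$ are exactly $\pm n^{1/2}q^{\nu/2}I$ with $\nu/2\in\ZZ^r$, the compact support of the $f_{q_i}$ confines $\nu$ to a finite set independent of $n$, and each of the finitely many terms is $O_{f_\infty,f_{q_i}}(1)$. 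Your final bound is true, but the step that the sum ``collapses to at most two terms'' is where the proof breaks.
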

\begin{proof}
Recall that 
\[
I_\id(f^n)=\sum_{z\in \Z(\QQ)}\vol(\G(\QQ)\bs \G(\AA)^1)f^n(z).
\] 
Suppose that $f^n(z)\neq 0$. Then $\mathopen{|}\det z\mathclose{|}_p=p^{-n_p}$ for all $p\notin S$. Hence $\det z=\pm nq^\nu$ for some $\nu\in \ZZ^r$. 

If $n$ is not a square, then $\det z$ is not a square, contradicting $z\in \Z(\QQ)$. Consequently, $f^n(z)= 0$ for all $z\in \Z(\QQ)$ and thus $I_\mathrm{id}(f^n)=0$.

Now suppose that $n$ is a square. In this case, $f^n(z)\neq 0$ only if $z=(\begin{smallmatrix} a & 0 \\  0 & a \end{smallmatrix})$ with $a=\pm n^{1/2}q^{\nu/2}$ for some $\nu/2\in \ZZ^r$. Since
$f_{q_i}(z)$ is compactly supported, there exists $M_i>0$ such that $f_{q_i}(z)=0$ if $\mathopen{|}\det z\mathclose{|}_{q_i}=q_i^{\nu_i}$ does not belong to $[q_i^{-M_i},q_i^{M_i}]$. Hence the choice of $\nu\in \ZZ^r$ is finite and the number is independent of $n$.  
The conclusion then follows from the estimate $f^n(z)\ll_{f_\infty,f_{q_i}} 1$.
\end{proof}

\begin{proposition}\label{prop:unipestimate}
For any $\varepsilon>0$, we have
\[
J_\mathrm{unip}(f^n)\ll n^\varepsilon,
\]
where the implied constant only depends on $f_\infty$, $f_{q_i}$ and $\varepsilon$.
\end{proposition}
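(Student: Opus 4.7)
The plan is to exploit the same central-element restriction used in \autoref{prop:idestimate}, combined with the well-known logarithmic size of weighted unipotent orbital integrals. For $\GL_2$ the unipotent contribution decomposes (cf.\ \cite{gelbart1979,langlands1970}) into a sum over $z\in \Z(\QQ)$ of a volume term proportional to $f(z)$ plus a weighted-orbital-integral term evaluated at $zu_0$, where $u_0=\left(\begin{smallmatrix}1 & 1\\ 0 & 1\end{smallmatrix}\right)$. Both summands vanish at $z$ unless $\det z=\pm nq^\nu$ for some $\nu\in \ZZ^r$; repeating the argument of \autoref{prop:idestimate} verbatim, this forces $n$ to be a perfect square, and otherwise $J_\unip(f^n)=0$.

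So assume $n=m^2$. The sum reduces to central elements $z=aI$ with $a=\pm mq^{\nu/2}$ and $\nu/2\in \ZZ^r$; compact support of the $f_{q_i}$ restricts $\nu$ to a finite set whose cardinality is bounded independently of $n$. For each such $z$, the normalization $f^n_p=p^{-n_p/2}\triv_{\cX_p^{n_p}}$ together with the boundedness of $f_\infty, f_{q_i}$ yields
\[
|f^n(z)|\ll_{f_\infty,f_{q_i}} n^{-1/2},
\]
which disposes of the volume term.

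It remains to bound the weighted-orbital-integral term at each fixed $z$. At a prime $p\notin S$ the ordinary unipotent orbital integral $\orb(f_p^n;zu_0)$ can be evaluated by the Iwasawa decomposition and is bounded by $O_\varepsilon(p^{\varepsilon n_p})$, so the product over $p\notin S$ is $O_\varepsilon(n^\varepsilon)$. At the distinguished place $v$ the Arthur truncation weight adds a multiplicative factor of size $O((n_p+1)\log p)$ when $v=p\notin S$ and of size $O_{f_v}(1)$ when $v\in S$. Summing over $v\in \mf{S}$ and using
\[
\sum_{p\mid n}(n_p+1)\log p \ll \log n\cdot \bm{\omega}(n)\ll_\varepsilon n^\varepsilon,
\]
the full weighted-orbital-integral contribution at each $z$ is $O_\varepsilon(n^\varepsilon)$.

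The main technical obstacle is verifying the logarithmic size of the nonarchimedean weighted unipotent orbital integral of the spherical Hecke function $p^{-n_p/2}\triv_{\cX_p^{n_p}}$ at $au_0$ with $v_p(a)=n_p/2$; the $\log$ in $n_p$ arises from the noncompactness of the integration domain and is precisely what the Arthur weight cuts off. Once this local computation is in hand, summing over the $O(1)$ admissible central elements $z$ yields $J_\unip(f^n)\ll_{f_\infty,f_{q_i},\varepsilon} n^\varepsilon$, as claimed.
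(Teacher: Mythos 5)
Your route is genuinely different from the paper's. The paper takes $J_\unip(f^n)=\sum_{z\in\Z(\QQ)}\fp_{s=1}Z(s,\triv,F_z)$ with $F_z(t)=\int_K f^n(k^{-1}z(\begin{smallmatrix}1&t\\0&1\end{smallmatrix})k)\,\rmd k$, computes the local Tate zeta factors explicitly ($Z_p(s)=p^{-n_p(1-s)/2}/(1-p^{-s})$ for $p\notin S$, holomorphic on $\Re s>0$ for $v\in S$), so that the global zeta integral is $\zeta(s)\,n^{(s-1)/2}$ times a factor holomorphic near $s=1$, and then reads off $\fp_{s=1}\ll\log n$ directly — the $\log n$ comes from $\frac{\rmd}{\rmd s}n^{(s-1)/2}\big|_{s=1}=\tfrac12\log n$. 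You instead unpack the same finite part into Arthur's form (volume term plus a weighted orbital integral split as a sum over places of one local weight times unweighted local integrals elsewhere). Both are legitimate; the paper's packaging avoids ever invoking the splitting formula or local weighted unipotent orbital integrals, which is why its proof is shorter. Your reduction to $n$ a perfect square and to finitely many central $z$ is identical to the paper's.

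Two points need attention before your argument is complete. First, you explicitly defer the one computation where all the $n$-dependence lives — the weighted unipotent orbital integral of $p^{-n_p/2}\triv_{\cX_p^{n_p}}$ at $zu_0$. It does work out: by the Iwasawa decomposition one finds $F_{z,p}(t)=p^{-n_p/2}$ for $|t|_p\leq p^{n_p/2}$ and $0$ otherwise (this is the computation in the paper's Case 1), so the unweighted integral $\int F_{z,p}(t)\,\rmd t$ equals exactly $1$ and the weighted one is $\ll (n_p+1)\log p$; summing over $p\mid n$ gives $\ll\log n$. Until this is written out, the proof has a hole at its center. Second, your bound "$\orb(f_p^n;zu_0)\ll_\varepsilon p^{\varepsilon n_p}$, so the product over $p\notin S$ is $O_\varepsilon(n^\varepsilon)$" is too weak as stated: an infinite product of factors bounded by $C_\varepsilon>1$ at every place diverges. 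You need that the local unweighted integral equals exactly $1$ at all $p\nmid n$ (which it does, since $f_p^n=\triv_{\GL_2(\ZZ_p)}$ there and the orbit meets $K_p$ in volume $1$), not merely that it is $O(1)$.
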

\begin{proof}
Recall that 
\[
J_\mathrm{unip}(f^n)=\sum_{z\in \Z(\QQ)}\fp_{s=1}Z(s,\triv,F_z),
\]
where $\fp$ denotes the finite part, $Z$ denotes the zeta integral in Tate's thesis and for $t\in \AA$,
\[
F_z(t)=\int_{K}f^n\left(k^{-1}z\begin{pmatrix}
                                 1 & t \\
                                 0 & 1 
                               \end{pmatrix}k\right)\rmd k,
\]
where $K$ denotes the standard maximal compact subgroup of $\G(\AA)$. First consider the case when $n$ is not a square.
Since
\[
\det\left(k^{-1}z\begin{pmatrix}
                                 1 & t \\
                                 0 & 1 
                               \end{pmatrix}k\right)=\det z,
\]
which is a square, it does not lie in the support of $f^n$ since $f^n(\gamma)\neq 0$ only if $\det \gamma=\pm nq^\nu$, which cannot be a square. Thus $F_z$ is identically $0$ for all $z\in \Z(\QQ)$. Hence $J_\mathrm{unip}(f^n)=0$.

Next, we treat the case when $n$ is a square. In this case, $f^n(z)\neq 0$ (equivalently, $F_z\not\equiv 0$) only if $z=(\begin{smallmatrix} a & 0 \\  0 & a \end{smallmatrix})$ with $a=\pm n^{1/2}q^{\nu/2}$ for some $\nu/2\in \ZZ^r$. Since
$f_{q_i}$ is compactly supported, there exists $M_i>0$ such that 
\[
f_{q_i}\left(k_{q_i}^{-1}z\begin{pmatrix}
                                 1 & t_{q_i} \\
                                 0 & 1 
                               \end{pmatrix}k_{q_i}\right)=0
\] 
if $\mathopen{|}\det z\mathopen{|}_{q_i}=q_i^{\nu_i}\notin[q_i^{-M_i},q_i^{M_i}]$. Hence the choice of $\nu\in \ZZ^r$ is finite and the number is independent of $n$. 

We have 
\[
Z(s,\triv,F_z)=\prod_{v\in \mf{S}}Z_v(s,\triv,F_{z,v})
\]
with
\[
F_{z,v}(t_v)=\int_{K_v}f_v^n\left(k_v^{-1}z\begin{pmatrix}
                                 1 & t_v \\
                                 0 & 1 
                               \end{pmatrix}k_v\right)\rmd k_v,
\]
and
\[
Z_v(s,\triv,F_{z,v})=\int_{\QQ_v^\times}F_{z,v}(t_v)|t_v|_v^{s}\rmd^\times t_v.
\]

\underline{\emph{Case 1:}}\ \ $v=p\notin S$.

In this case, $f_v^n$ is bi-$K_v$-invariant. Hence 
\[
F_{z,v}(t_v)=f_v^n\left(n^{1/2}q^{\nu/2}\begin{pmatrix}
                                 1 & t_v \\
                                 0 & 1 
                               \end{pmatrix}\right).
\]
Thus $F_{z,v}(t_v)=p^{-n_p/2}$ if $|t_p|_p\leq p^{n_p/2}$, and is $0$ otherwise. Hence
\[
Z_v(s,\triv,F_{z,v})=p^{-n_p/2}\sum_{u=-n_p/2}^{+\infty}p^{-us}=\frac{p^{-n_p(1-s)/2}}{1-p^{-s}}.
\]

\underline{\emph{Case 2:}}\ \ $v\in S$.

In this case, the integral defining $Z_v(s,\triv,F_{z,v})$ converges absolutely on each compact subset on $\Re s>0$. Hence $Z_v(s,\triv,F_{z,v})$ is holomorphic on $\Re s>0$. 

Therefore
\begin{align*}
Z(s,\triv,F_z)&=\prod_{v\in S}Z_v(s,\triv,F_{z,v})\prod_{p\notin S}\frac{p^{-n_p(1-s)/2}}{1-p^{-s}} \\
&=Z_\infty(s,\triv,F_{z,\infty})\prod_{p\in S}Z_p(s,\triv,F_{z,p})(1-p^{-s})\zeta(s)n^{(s-1)/2}.
\end{align*}
Hence
\begin{align*}
\fp_{s=1}Z(s,\triv,F_z)=&Z_\infty(1,\triv,F_{z,\infty})\prod_{p\in S}Z_p(1,\triv,F_{z,p})(1-p^{-1})\fp_{s=1}\zeta(s)\\
+&\left.\frac{\rmd}{\rmd s}\right|_{s=1}\left(Z_\infty(s,\triv,F_{z,\infty})\prod_{p\in S}Z_p(s,\triv,F_{z,p})(1-p^{-s})n^{(s-1)/2}\right)\\
\ll &1+\left.\frac{\rmd}{\rmd s}\right|_{s=1}\left(Z_\infty(s,\triv,F_{z,\infty})\prod_{p\in S}Z_p(s,\triv,F_{z,p})(1-p^{-s})\right)\\
+&Z_\infty(1,\triv,F_{z,\infty})\prod_{p\in S}Z_p(1,\triv,F_{z,p})(1-p^{-1})\log n 
\ll \log n\ll n^\varepsilon,
\end{align*}
where the implied constant only depends on $f_\infty$, $f_{q_i}$ and $\varepsilon$. Since we have proved that the sum over $\nu$ is finite, we obtain our conclusion.
\end{proof}

\begin{proposition}\label{prop:hypestimate}
For any $\varepsilon>0$, we have
\[
J_\hyp(f^n)\ll n^\varepsilon,
\]
where the implied constant only depends on $f_{\infty}$, $f_{q_i}$ and $\varepsilon$.
\end{proposition}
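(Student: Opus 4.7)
The plan is to exploit the support constraints imposed by $f^n$ to show that only $O(n^\varepsilon)$ hyperbolic conjugacy classes contribute, and each contributes $O(1)$ up to logarithmic factors. According to the explicit formula (see for instance \cite{langlands1970,gelbart1979,knapp1997}), $J_\hyp(f^n)$ is a sum over Weyl-equivalence classes of regular split elements $\gamma=\mathrm{diag}(\alpha,\beta)\in\G(\QQ)$ with $\alpha\neq\beta$, of a weighted orbital integral that decomposes, modulo logarithmic weights arising from derivatives of intertwining operators, into a product of local orbital integrals of $f_v^n$.

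First I would identify the admissible $\gamma$. The support condition on $f_p^n$ for $p\notin S$ forces $g^{-1}\gamma g\in\M_2(\ZZ_p)$ with $v_p(\det)=n_p$; since eigenvalues are conjugation invariants, $\alpha,\beta$ are $S$-integers with $v_p(\alpha\beta)=n_p$ at all $p\notin S$, hence $\alpha\beta=\pm nq^\nu$ for some $\nu\in\ZZ^r$. Compact support of $f_{q_i}$ provides a uniform bound $v_{q_i}(\text{entries})\geq -C_i$ on its support; applying this to $g^{-1}\gamma g$ and passing to eigenvalues of $q_i^{C_i}g^{-1}\gamma g$ (an integer matrix with integral eigenvalues) yields $v_{q_i}(\alpha),v_{q_i}(\beta)\geq -C_i$, while boundedness of $|\det|_{q_i}$ on the support constrains $\nu_i$ to a finite range. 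Hence denominators of $\alpha,\beta$ lie in a finite set and only finitely many $\nu$ occur. Compact support of $f_\infty$ modulo $Z_+$ forces $g^{-1}\gamma g\in Z_+\cdot K_0$ for some fixed compact $K_0\subset\G(\RR)$; writing $g^{-1}\gamma g=aI\cdot k$ shows that $\alpha/\beta$ equals the ratio of eigenvalues of $k$ and therefore lies in a fixed compact subset of $\RR^\times$ bounded away from $\{0,1\}$. Combined with $\alpha\beta=\pm nq^\nu$, this gives $|\alpha|_\infty\asymp|\beta|_\infty\asymp\sqrt n$. The admissible $\alpha$ is therefore an $S$-integer of bounded denominator whose integer numerator divides $nq^\nu$, so the number of admissible classes is at most $\bm d(nq^\nu)\ll_\varepsilon n^\varepsilon$ by the divisor bound.

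Second, I would bound each local orbital integral uniformly over this family. At $v=p\notin S$, bi-invariance of $\triv_{\cX_p^{n_p}}$ under $\GL_2(\ZZ_p)$ together with the Iwasawa decomposition reduces the orbital integral to
\[
\orb(f_p^n;\gamma)=p^{-n_p/2}\int_{\QQ_p}\triv_{v_p(x(\alpha-\beta))\geq 0}\,\rmd x=p^{v_p(\alpha-\beta)-n_p/2}
\]
up to a Weyl symmetrization. Taking the product over $p\notin S$ and applying the product formula $|\alpha-\beta|_\infty\prod_v|\alpha-\beta|_v=1$ yields
\[
\prod_{p\notin S}\orb(f_p^n;\gamma)\asymp\frac{|\alpha-\beta|_\infty}{\sqrt n},
\]
with bounded implicit constants from the factors $|\alpha-\beta|_{q_i}$. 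At $v=\infty$, \autoref{thm:archimedeanintegral} specialized to the split torus gives $\orb(f_\infty;\gamma)=\frac{|\alpha\beta|^{1/2}}{|\alpha-\beta|}g_2(\gamma)$ with $g_2$ bounded on the compact family of $T_\gamma/(2\sqrt{|N_\gamma|})$, so $\orb(f_\infty;\gamma)\asymp\sqrt n/|\alpha-\beta|_\infty$. At $v=q_i$, compactness of $f_{q_i}$ and the bounded $q_i$-adic range of $\gamma$ give $\orb(f_{q_i};\gamma)\ll_{f_{q_i}}1$. Multiplying the three local estimates, the singular factors $|\alpha-\beta|_\infty$ and the scales $\sqrt n$ cancel, leaving an $O(1)$ contribution per class; the logarithmic intertwining weight is bounded by $O(\log n)$ and is absorbed into $n^\varepsilon$.

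Combining yields $J_\hyp(f^n)\ll n^\varepsilon$. The main technical obstacle is the careful accounting of the non-invariant weighted component of the hyperbolic term; the key cancellation is that of $|\alpha-\beta|_\infty$ between the unramified product (via the product formula) and the archimedean singular factor in the split orbital integral, which keeps the per-class contribution bounded despite the individual local factors growing like $\sqrt n$.
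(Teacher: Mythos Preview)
Your outline matches the paper's strategy: count contributing $\gamma$ via the divisor bound, and show each weighted orbital integral is $O(\log n)$ by a product-formula cancellation of the $|\alpha-\beta|$ factors against the $\sqrt n$ scale. Two points need repair.

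First, the claim $\orb(f_{q_i};\gamma)\ll_{f_{q_i}}1$ is false as stated: the split orbital integral at a nonarchimedean place has the same $|\alpha-\beta|_{q_i}^{-1}$ singularity as at $\infty$ (this is the germ expansion, cf.\ \eqref{eq:shalikalocal}), and $v_{q_i}(\alpha-\beta)$ is not bounded above uniformly in $n$---it can be as large as $O(\log n)$. The fix is to keep the honest bound $\orb(f_{q_i};\gamma)\ll|\alpha-\beta|_{q_i}^{-1}$; these factors then cancel exactly against the $\prod_i|\alpha-\beta|_{q_i}$ you absorbed into ``bounded implicit constants'' in the unramified product, and the total unweighted adelic orbital integral is still $O(1)$.

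Second, the weighted orbital integral is not literally (unweighted integral)$\times$(log weight); the weight $\alpha(H_\B(wg)+H_\B(g))$ sits inside the integral. What makes your heuristic work is the pointwise inequality: on the support $D$ of $g\mapsto f^n(g^{-1}\gamma g)$, the weight is bounded by $O(\log n)$. You have not justified this. The paper takes the more direct route: it writes the integral via Iwasawa as $\int_{\N(\AA)}F^n(u^{-1}\gamma u)(-\alpha(H_\B(wu)))\,\rmd u$, identifies the support $D$ from the conditions $|x_v(\alpha-\beta)|_v\ll 1$, computes $-\alpha(H_\B(wu))=\log(1+x_\infty^2)+2\sum_{v_\ell(x_\ell)<0}(-v_\ell(x_\ell))\log\ell$, and checks directly that this is $\ll\log n$ on $D$. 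That computation simultaneously recovers your product-formula cancellation (the adelic volume of $D$ times $n^{-1/2}$ is $\asymp 1$). Either finish your argument by bounding the weight on the support, or follow the paper's direct adelic estimate.
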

\begin{proof}
Recall that
\[
J_\hyp(f^n)=-\frac{1}{2}\sum_{\gamma\in \A(\QQ)_{\reg}}\int_{\A(\AA)\bs \G(\AA)}f^n(g^{-1}\gamma g)\alpha(H_\B(wg)+H_\B(g))\rmd g,
\]
where $\A$ is the diagonal torus, $\A(\QQ)_\reg$ denotes all the regular elements in $\A(\QQ)$, $w$ denotes the nontrivial element in the Weyl group of $(\G,\A)$, $\alpha$ denotes the positive root in $\mf{sl}_2$, and $H_\B$ denotes the Harish-Chandra map. 

Suppose that $\gamma\in \A(\QQ)_\reg$ contributes to this sum. Then there exists $g\in \A(\AA)\bs\G(\AA)$ such that $f^n(g^{-1}\gamma g)\neq 0$. Hence $f_v^n(g_v^{-1}\gamma g_v)\neq 0$ for all $v\in \mf{S}$. Thus $\gamma$ is integral in $\G(\QQ_p)$ and $\mathopen{|}\det\gamma\mathclose{|}_p=\mathopen{|}\det(g_\ell^{-1}\gamma g_\ell)\mathclose{|}_p=p^{-n_p}$ for all $p\notin S$. Hence the eigenvalues $\gamma_1,\gamma_2$ of $\gamma$ both lie in $\ZZ^S$ and $\det\gamma=\pm nq^\nu$ for some $\nu=(\nu_1,\dots,\nu_r)\in \ZZ^r$. 

Using the Iwasawa decomposition, we can write $g=uk$, where $u\in \N(\AA)$ and $k\in K$, $K$ denotes the standard maximal compact subgroup of $\G(\AA)$ and $\N$ denotes the unipotent radical. By assumption, we have $f^n(g^{-1}\gamma g)=f^n(k^{-1}u^{-1}\gamma uk)\neq 0$. Hence for all $v\in \mf{S}$ we have $f_v^n(k_v^{-1}u_v^{-1}\gamma u_vk_v)\neq 0$. Thus 
\[
u_v^{-1}\gamma u_v\in k_v\supp(f_v^n)k_v\subseteq K_v\supp(f_v^n)K_v.
\]

For $\gamma=(\begin{smallmatrix} \gamma_1 & 0 \\ 0 & \gamma_2 \end{smallmatrix})$ and $u_v=(\begin{smallmatrix} 1 & x_v \\ 0 & 1 \end{smallmatrix})$, we have
\begin{equation}\label{eq:conjugateunipotent}
u_v^{-1}\gamma u_v=\begin{pmatrix}
           \gamma_1 & x_v(\gamma_2-\gamma_1) \\
           0 & \gamma_2
         \end{pmatrix}.
\end{equation}

For $v=q_i\in S$, $f_v^n=f_{q_i}$ is compactly supported. Hence $u_v^{-1}\gamma u_v$ lies in a compact set. Thus there exists $M_i$ (independent of $n$) such that $|\gamma_1|_{q_i},|\gamma_2|_{q_i}\in [q_i^{-M_i},q_i^{M_i}]$ for $\gamma$ contributing the sum. 

For $v=\infty$, $f_v^n=f_\infty$ is compactly supported modulo $Z_+$. Thus $u_v^{-1}\gamma u_v$ lies in a set that is compact modulo $Z_+$. Hence there exists $\delta>1$ such that 
\[
\delta^{-1}<\left|\frac{\gamma_1}{\gamma_2}\right|<\delta.
\]
for $\gamma$ contributing the sum.

Since $\gamma_1,\gamma_2\in \ZZ^S$, $\gamma_1\gamma_2=\pm nq^\nu$, we know that $\gamma_1=\pm n_1q^{\nu_1}$ and $\gamma_2=\pm  n_2q^{\nu_2}$ such that $n_1,n_2$ are divisors of $n$. Moreover, $\nu_1$ and $\nu_2$ both have finitely many choices (independent of $n$). Thus the number of such $\gamma$ is $O(\bm{d}(n))$, where $\bm{d}(n)$ is the number of divisors of $n$.

Now we consider the integral
\begin{equation}\label{eq:hyperbolicsingle}
\int_{\A(\AA)\bs \G(\AA)}f^n(g^{-1}\gamma g)\alpha(H_\B(wg)+H_\B(g))\rmd g.
\end{equation}

By Iwasawa decomposition, the integral above can be rewritten as
\[
\int_{\N(\AA)}\int_{K}f^n(k^{-1}u^{-1}\gamma uk)\rmd k \alpha(H_\B(wu))\rmd u,
\]
where $K$ denotes the standard maximal compact subgroup of $\G(\AA)$. Let
\[
F^n(g)=\int_{K}f^n(k^{-1}gk)\rmd k=\prod_{v\in \mf{S}}F_v^n(g_v),
\]
where $F_v^n(g_v)=\int_{K_v}f_v^n(k_v^{-1}g_vk_v)\rmd k_v$. Thus \eqref{eq:hyperbolicsingle} can be rewritten as
\begin{equation}\label{eq:hyperbolicsingle2}
\int_{\N(\AA)}F^n(u^{-1}\gamma u)\alpha(H_\B(wu))\rmd u.
\end{equation}

By definition of $f_v^n$, we have $F_v^n(g_v)\ll_{f_\infty,f_{q_i}} 1$ if $v\in S$, and $F_v^n(g_v)\ll p^{-n_p/2}$ if $v=p\notin S$. Moreover, $F_v^n$ is supported in $\M_2(\ZZ_p)$ for $v=p\notin S$, compactly supported for $v=p\in S$, and compactly supported modulo the center for $v=\infty$. 

For $\gamma=(\begin{smallmatrix} \gamma_1 & 0 \\ 0 & \gamma_2 \end{smallmatrix})$ and $u_v=(\begin{smallmatrix} 1 & x_v \\ 0 & 1 \end{smallmatrix})$, by \eqref{eq:conjugateunipotent}, there exist $L_0,L_1,\dots,L_r>0$ (independent of $n$) such that $F^n_v(u^{-1}\gamma u)\neq 0$ only if 
\[
|x_{q_i}(\gamma_1-\gamma_2)|_{q_i}\leq q_i^{L_i}
\]
for all $i$,
\[
|x_p(\gamma_1-\gamma_2)|_p\leq 1
\]
for all $p\notin S$, and
\[
\frac{|x_\infty(\gamma_1-\gamma_2)|}{\sqrt{|\gamma_1\gamma_2|}}\leq L_0.
\]
Moreover, by direct formula for the Iwasawa decomposition for $wu$, we obtain
\[
-\alpha(H_\B(wu))=\log(1+x_\infty^2)-2\sum_{v_p(x_p)<0}v_p(x_p)\log p\geq 0.
\]

If we define
\[
D=\left\{x\in \AA\,\middle|\,|x_\infty|\leq L_0\frac{\sqrt{|\gamma_1\gamma_2|}}{|\gamma_1-\gamma_2|},\,|x_{q_i}|_{q_i}\leq q_i^{L_i+v_{q_i}(\gamma_1-\gamma_2)},\,|x_p|_p\leq p^{v_p(\gamma_1-\gamma_2)}\text{\ for $p\notin S$}\right\},
\]
then we have
\begin{align*}
  \eqref{eq:hyperbolicsingle2} & \ll n^{-\frac12}\int_{D} \left(\log(1+x_\infty^2)-2\sum_{v_p(x_p)<0}v_p(x_p)\log p\right)\rmd x\\
   & \ll n^{-\frac12}\frac{\sqrt{|\gamma_1\gamma_2|}}{|\gamma_1-\gamma_2|}\prod_{p}p^{v_p(\gamma_1-\gamma_2)} \left(\log\left(1+\frac{|\gamma_1\gamma_2|}{|\gamma_1-\gamma_2|^2}L_0\right)+2\sum_{i=1}^{r}L_i\log q_i+2\sum_{p}v_p(\gamma_1-\gamma_2)\log p\right)\\
   &\ll \log\left(|\gamma_1-\gamma_2|^2+|\gamma_1\gamma_2|L_0\right)+1,
\end{align*}
where in the last step we used $|\gamma_1\gamma_2|\asymp n$. Since $|\gamma_1/\gamma_2|\in \lopen \delta^{-1},\delta\ropen$, we obtain
$
|\gamma_1-\gamma_2|\ll \sqrt{|\gamma_1\gamma_2|}
$.
Therefore $\eqref{eq:hyperbolicsingle2}\ll \log n$. Finally,
\[
J_\hyp(f^n)\ll\bm{d}(n)\log n\ll n^\varepsilon,
\]
where the implied constant only depends on $f_\infty$, $f_{q_i}$ and $\varepsilon$.
\end{proof}

\subsection{Estimates of the spectral side}
\begin{proposition}\label{prop:eisensteinestimate}
For any $\varepsilon>0$, we have
\begin{equation}\label{eq:discretecontinuous1}
\sum_{\mu}\Tr((\xi_0\otimes\mu)(f^n))\ll n^\varepsilon,
\end{equation}
and
\begin{equation}\label{eq:discretecontinuous2}
\sum_{\mu}\Tr(M(0,\mu)(\xi_0\otimes\mu)(f^n))\ll n^\varepsilon,
\end{equation}
where the implied constants only depend on $f_\infty$, $f_{q_i}$ and $\varepsilon$.
\end{proposition}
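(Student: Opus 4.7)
The plan is to unfold both traces as products over the places of $\QQ$ and bound the local factors. The unramified contribution will be a divisor-function-like product, and the ramified part will contribute only finitely many $\mu$, each giving $O(1)$.

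Using that $(\xi_0\otimes\mu)(f^n)$ is a tensor product of local operators and $M(0,\mu)=\prod_vM_v(0,\mu_v)$, I would first write
\[
\Tr((\xi_0\otimes\mu)(f^n))=\prod_{v\in\mf{S}}\Tr((\xi_0\otimes\mu)_v(f_v^n)),
\]
with an analogous factorization for the second trace. For $\mu=\mu_1\boxtimes\mu_2$ with $\mu_1=\mu_2=\chi$ (a Hecke character of the norm-one ideles), at each unramified prime $p\notin S$ the representation $(\xi_0\otimes\mu)_p$ is realized in the (possibly reducible) principal series $\Ind_\B^\G(0,\mu_p)$ with coincident Satake parameters $(\chi_p(p),\chi_p(p))$. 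Since $f_p^n$ is the normalized spherical Hecke operator, the Satake transform gives
\[
\Tr((\xi_0\otimes\mu)_p(f_p^n))=\sum_{j=0}^{n_p}\chi_p(p)^{n_p-j}\chi_p(p)^j=(n_p+1)\chi_p(p)^{n_p},
\]
of absolute value $n_p+1$. Taking the product over all $p\notin S$ (a trivial factor of $1$ for $p\nmid n$) yields the divisor bound $\prod_{p\mid n}(n_p+1)=\bm{d}(n)\ll_\varepsilon n^\varepsilon$.

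At each ramified place $v\in S$, the function $f_v$ is compactly supported (modulo $Z_+$ for $v=\infty$), and a standard argument restricts the conductor of $\mu_v$ producing nonzero local trace to a bounded range, independent of $n$. Thus only finitely many Hecke characters $\mu$ contribute, with cardinality depending only on $f_\infty$ and the $f_{q_i}$, and each contributes an $O(1)$ local factor at each $v\in S$. Combining with the unramified estimate gives \eqref{eq:discretecontinuous1}. For \eqref{eq:discretecontinuous2}, I would write $M(0,\mu)=m(0,\mu)\prod_vR_v(0,\mu_v)$ through the normalized intertwining operators; at unramified $p\notin S$, $R_p(0,\mu_p)$ acts on the spherical line by a unimodular scalar (being unitary), so the absolute value of the local trace is unchanged. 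At ramified places, $R_v(0,\mu_v)$ is a bounded operator on the finitely many contributing $\mu_v$, and the global scalar $m(0,\mu)$ is finite (for $\mu_1=\mu_2$ the simple poles of $\Lambda(1+s,\triv)$ and $\Lambda(1-s,\triv)$ cancel in the ratio at $s=0$). The same bound $\ll n^\varepsilon$ then follows.

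The main obstacle is the ramified-place analysis: verifying uniformity of the finite set of contributing characters and boundedness of the normalized intertwiner on this set. Both follow from the compactness of $\supp(f_v)$ and the holomorphy of $s\mapsto R_v(s,\mu_v)$ in a neighborhood of $s=0$, where the reducibility of $\Ind_\B^\G(0,\mu_v)$ at the symmetric point is absorbed into the scalar normalization $m_v(0,\mu_v)$ and does not create additional singularities in $R_v$.
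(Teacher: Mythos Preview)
Your approach to \eqref{eq:discretecontinuous1} is essentially the same as the paper's: the unramified local traces contribute $(n_p+1)$ factors via the Satake isomorphism, giving the divisor bound $\bm d(n)\ll_\varepsilon n^\varepsilon$, and conductor bounds at the ramified places restrict the sum over $\mu$ to a finite set independent of $n$. The paper packages this via explicit formulas from \cite{cheng2025}, but your direct computation is equivalent.

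For \eqref{eq:discretecontinuous2}, however, the paper takes a much cleaner route than yours. Since $\mu_1=\mu_2$, the global principal series $\Ind_\B^\G(0,\mu)$ is \emph{irreducible} (the ratio $\mu_1/\mu_2$ is trivial, not $|\cdot|^{\pm 1}$), so Schur's lemma forces $M(0,\mu)$ to be a scalar; the functional equation $M(0,\mu)^2=\id$ then gives $M(0,\mu)=\pm\id$, and \eqref{eq:discretecontinuous2} reduces immediately to \eqref{eq:discretecontinuous1}. Your decomposition $M(0,\mu)=m(0,\mu)\prod_v R_v(0,\mu_v)$ can be made to work, but it requires checking that the indeterminate form $m(0,\mu)=\lim_{s\to 0}\Lambda(1+s,\triv)/\Lambda(1-s,\triv)$ is finite (it is, and equals $\pm 1$ by the functional equation of $\Lambda$) and that each $R_v(0,\mu_v)$ is well-defined and unitary---all of which the Schur argument bypasses. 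Incidentally, your worry about ``reducibility of $\Ind_\B^\G(0,\mu_v)$ at the symmetric point'' is misplaced here: for $\mu_{1,v}=\mu_{2,v}$ the local principal series at $s=0$ is irreducible, so there is no reducibility to absorb.
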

\begin{proof}
$\Ind_{\B(\AA)}^{\G(\AA)}(s,\mu)$ is irreducible when $\Re s=0$. Hence by Schur's lemma, $M(0,\mu)$ is a scalar multiplication. Moreover, by functional equation of the intertwining operator (see Theorem (4.19) of \cite{gelbart1979}, for example), $M(0,\mu)^2=\id$. Hence $M(0,\mu)=\pm\id$. Therefore, 
\begin{equation}\label{eq:intertwining}
\Tr(M(0,\mu)(\xi_0\otimes\mu)(f))=\pm\Tr((\xi_0\otimes\mu)(f)).
\end{equation}
By Corollary 8.7 of \cite{cheng2025}, the sum over $\mu$ in \eqref{eq:discretecontinuous1} is finite and is independent of $n$. Using \eqref{eq:intertwining}, the sum over $\mu$ in \eqref{eq:discretecontinuous2} is also finite. To prove the proposition, it suffices to show that 
\[
\Tr((\xi_0\otimes\mu)(f^n))\ll_{f_{\infty},f_{q_i},\varepsilon} n^\varepsilon
\]
for fixed $\mu$.

By  Proposition 8.3, Proposition 8.4 and Proposition 8.5  in \cite{cheng2025}, we have
\begin{align*}
    \Tr((\xi_0\otimes\mu)(f^n))
    = &4\left(\int_{|x|>1}\frac{\theta_\infty^+(x)}{\sqrt{x^2-1}}\rmd x+\mu_{\infty}(-1)\int_{\RR}\frac{\theta_\infty^-(x)}{\sqrt{x^2+1}}\rmd x\right)\prod_{p\mid n}\mu_p(n)(n_p+1)\\
    \times & 2^r\sum_{\nu\in \ZZ^r}q^{\nu}\prod_{i=1}^{r}\left[\left(1-\frac{1}{q_i}\right)^{-1} \int_{|N_i|_{q_i}= q_i^{-\nu_i}}\int_{Y_{i,1}(N_i)} \frac{\theta_{q_i}(T_i,N_i)}{\sqrt{|T_i^2-4N_i|'_{q_i}}}\mu_{q_i}(N_i)\rmd T_i\rmd N_i\right]\\
    \ll &\prod_{p\mid n}(n_p+1)=\bm{d}(n)\ll n^\varepsilon,
\end{align*}
where the implied constant only depends on $f_{\infty}$, $f_{q_i}$, and $\varepsilon$.
\end{proof}

Finally we consider the continuous part.
\begin{proposition}\label{prop:continuouspart}
For any $\varepsilon>0$, we have 
\[
J_{\mathrm{cont}}(f^n)\ll n^\varepsilon,
\]
where the implied constant only depends on $f_{\infty},f_{q_i}$ and $\varepsilon$.
\end{proposition}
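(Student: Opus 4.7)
The plan is to exploit two features of the continuous part: the \emph{finiteness} of the set of characters $\mu$ contributing to the sum, and the \emph{rapid decay} in $|\Im s|$ from the archimedean factor.

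First, since $f_p^n$ is spherical for every $p\notin S$, the local trace $\Tr(\Ind_{\B(\QQ_p)}^{\G(\QQ_p)}(s,\mu_p)(f_p^n))$ vanishes unless $\mu_p$ is unramified. Characters $\mu=\mu_1\boxtimes\mu_2$ of $\A(\QQ)\bs\A(\AA)^1$ unramified at every $p\notin S$ correspond, via class number $1$ for $\QQ$, to pairs of Dirichlet characters of conductor supported in $S$ together with sign characters at $\infty$; this is a finite set depending only on $S$. So both \eqref{eq:continuouspart1} and \eqref{eq:continuouspart2} reduce to finite sums over $\mu$ of contour integrals on $(0)$, and it suffices to bound each integral by $O_{f_\infty,f_{q_i},\varepsilon}(n^\varepsilon)$.

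Next, I would bound the integrand on $\Re s=0$ place by place. For unramified $\mu_p$ with Satake parameters $\alpha_p=\mu_{1,p}(p),\beta_p=\mu_{2,p}(p)$ of modulus $1$, the normalization $\Tr(\pi_p(f_p^n))=a_\pi(p^{n_p})$ together with temperedness of $\Ind(s,\mu_p)$ on the unitary axis gives
\[
|\Tr(\Ind_{\B(\QQ_p)}^{\G(\QQ_p)}(s,\mu_p)(f_p^n))|\leq n_p+1\quad\text{for }\Re s=0.
\]
Taking the product over $p\mid n$ yields $\bm{d}(n)\ll n^\varepsilon$. At each $v=q_i\in S$, compactness of the support of $f_{q_i}$ makes $\Tr(\Ind(s,\mu_{q_i})(f_{q_i}))$ a bounded finite Laurent polynomial in $q_i^{s/2}$. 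At $v=\infty$, smoothness and compact support modulo $Z_+$ of $f_\infty$ yield, via Paley--Wiener, rapid decay of $\Tr(\Ind(s,\mu_\infty)(f_\infty))$ in $|\Im s|$ uniformly in $\mu_\infty$ over the finite set above.

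Finally, the remaining derivative factors are controlled by standard $L$-function bounds. The logarithmic derivative $m'(s,\mu)/m(s,\mu)=\Lambda'/\Lambda(1+s,\mu_2/\mu_1)+\Lambda'/\Lambda(1-s,\mu_1/\mu_2)$ is $O(\log(2+|\Im s|))$ on $(0)$: the potential simple poles at $s=0$ (when $\mu_1=\mu_2$) cancel in the ratio defining $m$, and Hecke $L$-functions of finite-order characters are non-vanishing on $\Re=1$ with logarithmic growth of their logarithmic derivatives on the edge. For \eqref{eq:continuouspart2}, unitarity and irreducibility of $\Ind(is,\mu_v)$ makes the normalized intertwining operator $R_v(is,\mu_v)$ unitary, and $R_v^{-1}R_v'$ is controlled by logarithmic derivatives of local normalizing factors, uniformly bounded by $O(\log(2+|\Im s|))$ on the finite-dimensional range of $\Ind(s,\mu_v)(f_v^n)$. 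Combining, the integrand on $\Re s=0$ is $\ll n^\varepsilon\cdot\log(2+|\Im s|)\cdot D(\Im s)$ with $D$ of rapid decay from the archimedean factor, so the $s$-integral converges absolutely to $O(n^\varepsilon)$.

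The hardest step will be the uniform $O(\log(2+|\Im s|))$ bound for $R_{q_i}(s,\mu_{q_i})^{-1}R_{q_i}'(s,\mu_{q_i})$ restricted to the finite-dimensional range of $\Ind(s,\mu_{q_i})(f_{q_i})$; this is a standard but somewhat technical piece of local harmonic analysis, after which the rest of the argument is essentially bookkeeping.
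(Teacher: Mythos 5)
Your treatment of \eqref{eq:continuouspart1} is essentially the paper's: finiteness of the $\mu$-sum, the explicit unramified computation giving $\prod_{p}(n_p+1)=\bm{d}(n)$, boundedness at the finite places of $S$, Schwartz decay of the archimedean trace, and polynomial (or logarithmic) growth of $m'/m$ on $\Re s=0$. But your finiteness step is wrong as stated: the set of pairs of Dirichlet characters with conductor supported in $S$ is \emph{infinite} (conductor $q_i^k$ for every $k$), so it is not ``a finite set depending only on $S$.'' What makes it finite is that $f_{q_i}$ is smooth, hence bi-invariant under a compact open subgroup, which forces $\Tr(\Ind_{\B(\QQ_{q_i})}^{\G(\QQ_{q_i})}(s,\mu_{q_i})(f_{q_i}))=0$ unless $\mu_{j,q_i}$ is trivial on $1+q_i^{M_i}\ZZ_{q_i}$ for some $M_i$ depending on $f_{q_i}$; only then does class field theory give a finite group. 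This is exactly the content of \autoref{lem:characterfinite} and \autoref{cor:sumfinite}, and the resulting constant depends on $f_{q_i}$, not just on $S$.

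The more substantial gap is in \eqref{eq:continuouspart2}. You correctly identify the uniform bound on $\Tr(R_v(s,\mu_v)^{-1}R_v'(s,\mu_v)\Ind(s,\mu_v)(f_v^n))$ as the hardest step, but you then defer it as ``standard but somewhat technical local harmonic analysis'' without an argument or a citation that actually delivers the integrability in $s$ on $(0)$; as written this is an unproven claim, not a proof. The paper avoids this analysis entirely by a soft argument worth knowing: the spectral side of the trace formula converges absolutely (M\"uller for $\GL_n$), hence \eqref{eq:continuouspart2} converges absolutely for the fixed test function with $n=1$; since the only $n$-dependence of the integrand sits in the unramified factors, which on $\Re s=0$ are bounded termwise by $\prod_{p\notin S}(n_p+1)=\bm{d}(n)$, one gets $\eqref{eq:continuouspart2}\ll\bm{d}(n)\cdot(\text{the }n=1\text{ absolute integral})\ll n^\varepsilon$ with no local estimate on $R_v^{-1}R_v'$ needed. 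Either patch that step with a genuine local bound or adopt this comparison-with-$n=1$ device.
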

We need to do some preparations for proving this theorem.

Suppose that $\varphi=\bigotimes_{v\in \mf{S}}'\varphi_v$ is a smooth, compactly supported function on $\G(\AA)$ that is trivial on $Z_+$. By modifying the proof of  Proposition 8.2 in \cite{cheng2025}, we deduce that:

\begin{proposition}\label{prop:eisensteinfolding}
We have
\begin{equation}\label{eq:inducetraceglobal}
\Tr\left(\Ind_{\B(\AA)}^{\G(\AA)}(s,\mu)(\varphi)\right)=\int_{Z_+\bs \A(\AA)}\frac{|x-y|}{|xy|^{1/2}}\left|\frac xy\right|^{s}\mu_1(x)\mu_2(y)\int_{\A(\AA)\bs \G(\AA)}\varphi(g^{-1}tg)\rmd g \rmd t,
\end{equation}
where $\A$ is the diagonal torus in $\G$ with $t=(\begin{smallmatrix} x & 0 \\  0 & y \end{smallmatrix})$. Moreover, for nonarchimedean $v\in \mf{S}$,
\begin{equation}\label{eq:inducetracenonarchimedean}
\Tr\left(\Ind_{\B(\QQ_v)}^{\G(\QQ_v)}(s,\mu_v)(\varphi_v)\right)=\int_{ \A(\QQ_v)}\frac{|x-y|_v}{|xy|_v^{1/2}}\left|\frac xy\right|_v^{s}\mu_{1,v}(x)\mu_{2,v}(y)\int_{\A(\QQ_v)\bs \G(\QQ_v)}\varphi_v(g^{-1}tg)\rmd g \rmd t,
\end{equation}
while for the archimedean place,
\begin{equation}\label{eq:inducetracearchimedean}
\Tr\left(\Ind_{\B(\RR)}^{\G(\RR)}(\mu_\infty,s)(\varphi_\infty)\right)=\int_{ Z_+\bs\A(\RR)}\frac{|x-y|}{|xy|^{1/2}}\left|\frac xy\right|^{s}\mu_{1,\infty}(x)\mu_{2,\infty}(y)\int_{\A(\RR)\bs \G(\RR)}\varphi_\infty(g^{-1}tg)\rmd g \rmd t.
\end{equation}
\end{proposition}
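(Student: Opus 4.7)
The plan is to compute the trace by realizing $\Ind_{\B(\AA)}^{\G(\AA)}(s,\mu)$ on functions on the maximal compact $K$ via the Iwasawa decomposition $\G(\AA)=\N(\AA)\A(\AA)K$, unfolding the operator $\Ind(s,\mu)(\varphi)$ as an integral-kernel operator, and then recognizing the diagonal of the kernel---after two changes of variables on the unipotent factor---as an integral of an orbital integral over the split torus $\A$. This is essentially the Weyl integration formula for induced characters, written in the form adapted to the $Z_+$-quotient so that it plugs directly into the continuous part \eqref{eq:continuouspart1}--\eqref{eq:continuouspart2} of the trace formula.

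Concretely, I would parametrize $\psi$ in the induced space by its restriction to $K$ via the transformation rule $\psi(nak)=\mu_1(x)\mu_2(y)|x/y|^{(s+1)/2}\psi(k)$ for $a=\mathrm{diag}(x,y)$, then substitute into $(\Ind(s,\mu)(\varphi)\psi)(h)=\int\varphi(g)\psi(hg)\,dg$ via $g=h^{-1}g'$, $g'=nak$, with Iwasawa measure $dg=\delta_\B^{-1}(a)\,dn\,da\,dk$. Reading off the integral kernel, setting $h=k$, and integrating over $K$ gives
\[
\Tr(\Ind(s,\mu)(\varphi))=\int_K\int_{Z_+\bs\A(\AA)}\int_{\N(\AA)}\varphi(k^{-1}nak)\mu_1(x)\mu_2(y)|x/y|^{(s-1)/2}\,dn\,da\,dk.
\]
Commuting $a$ to the left via $nak=a(a^{-1}na)k$ and substituting $n\leftarrow a^{-1}na$ contributes a Jacobian $\delta_\B(a)=|x/y|$, raising the exponent of $|x/y|$ to $(s+1)/2$. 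To identify the remaining integral over $\N\times K$ with the orbital integral, I would write $g\in\A\bs\G$ as $g=bk$ with $b=\bigl(\begin{smallmatrix}1&v\\0&1\end{smallmatrix}\bigr)\in\N$, compute directly that $b^{-1}ab=a\cdot\bigl(\begin{smallmatrix}1&v(1-y/x)\\0&1\end{smallmatrix}\bigr)$, and change variable $v\mapsto v(1-y/x)$; this produces a factor $|1-y/x|=|x-y|/|x|$. The collected factor becomes
\[
\mu_1(x)\mu_2(y)|x/y|^{(s+1)/2}\cdot\tfrac{|x-y|}{|x|}=\mu_1(x)\mu_2(y)\cdot\tfrac{|x-y|}{|xy|^{1/2}}\cdot|x/y|^{s/2},
\]
which matches the integrand in \eqref{eq:inducetraceglobal}. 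The local formulas \eqref{eq:inducetracenonarchimedean} and \eqref{eq:inducetracearchimedean} follow by repeating the same unfolding one place at a time; $Z_+$ appears only at the archimedean place, and only because $f_\infty$ is taken modulo $Z_+$.

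The main obstacle is bookkeeping rather than conceptual novelty: tracking the modular character $\delta_\B$ consistently through the Iwasawa measure and the two changes of variable, and checking that Fubini is applicable throughout. The latter is automatic since $\varphi$ is compactly supported (modulo $Z_+$ at infinity) and $\mu_1\mu_2$ is trivial on $Z_+$ because $\mu_1,\mu_2$ are characters of $(\AA^\times)^1$, so the integrand vanishes outside a compact set in $t$ and no convergence issue arises.
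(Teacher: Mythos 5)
Your unfolding is correct: the exponent bookkeeping ($(s+1)/2 \to (s-1)/2$ via the Iwasawa measure, back to $(s+1)/2$ after commuting $a$ past $\N$, and the factor $|x-y|/|x|$ from $v\mapsto v(1-y/x)$ combining to $|x-y|\,|x/y|^{s/2}/|xy|^{1/2}$) all checks, as does the role of $Z_+$ at the archimedean place. This is essentially the same Weyl-integration argument the paper invokes by reference to Proposition 8.2 of \cite{cheng2025}, so no further comparison is needed.
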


Since
\[
\Tr\left(\Ind_{\B(\AA)}^{\G(\AA)}(s,\mu)(\varphi)\right) =\prod_{v}\Tr\left(\Ind_{\B(\QQ_v)}^{\G(\QQ_v)}(s,\mu_v)(\varphi_v)\right),
\]
it suffices to compute the local traces.

\begin{proposition}\label{prop:unramifiedtraceeisenstein}
\begin{enumerate}[itemsep=0pt,parsep=0pt,topsep=0pt,leftmargin=0pt,labelsep=2.5pt,itemindent=15pt,label=\upshape{(\arabic*)}]
  \item If $\mu_{1,p}$ or $\mu_{2,p}$ is ramified, then
  \[
  \int_{ \A(\QQ_p)}\frac{|x-y|_p}{|xy|_p^{1/2}}\left|\frac xy\right|_p^{s}\mu_{1,p}(x)\mu_{2,p}(y)\orb(\triv_{\cX_p^{m}};t)\rmd t=0.
  \]
  \item If both $\mu_{1,p}$ and $\mu_{2,p}$ are unramified, then
  \[
  \int_{\A(\QQ_p)}\frac{|x-y|_p}{|xy|_p^{1/2}}\left|\frac xy\right|_p^{s}\mu_{1,p}(x)\mu_{2,p}(y)\orb(\triv_{\cX_p^{m}};t)\rmd t=p^{m/2}\sum_{u=0}^{m}\mu_{1,p}(p^u)\mu_{2,p}(p^{m-u})p^{(2u-m)s/2}.
  \]
\end{enumerate} 
\end{proposition}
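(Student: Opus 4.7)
The plan is to reduce both parts to a single computation of the orbital integral $\orb(\triv_{\cX_\ell^m}; t)$ for a regular diagonal element $t = (\begin{smallmatrix} x & 0 \\ 0 & y \end{smallmatrix})$, and then substitute into formula \eqref{eq:inducetracenonarchimedean}. The factor $|x-y|_\ell$ in that formula is expected to cancel the $|x-y|_\ell^{-1}$ appearing in the orbital integral, leaving only an elementary integral over the torus that separates into a product of local integrals over $\ZZ_\ell^\times$; these are then handled by orthogonality of characters.

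I would first compute the orbital integral as follows. By the Iwasawa decomposition $\G(\QQ_\ell) = \N(\QQ_\ell)\A(\QQ_\ell)K_\ell$ with $K_\ell = \G(\ZZ_\ell)$, and since $\triv_{\cX_\ell^m}$ is bi-$K_\ell$-invariant, the orbital integral reduces to
\[
\orb(\triv_{\cX_\ell^m};t) = \int_{\N(\QQ_\ell)} \triv_{\cX_\ell^m}(u^{-1}tu)\,\rmd u.
\]
For $u = (\begin{smallmatrix} 1 & z \\ 0 & 1 \end{smallmatrix})$ we have $u^{-1}tu = (\begin{smallmatrix} x & z(x-y) \\ 0 & y \end{smallmatrix})$, which lies in $\cX_\ell^m$ iff $x,y\in\ZZ_\ell$, $z(x-y)\in\ZZ_\ell$, and $|xy|_\ell = \ell^{-m}$. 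For $x\neq y$ the $z$-condition has Haar measure $|x-y|_\ell^{-1}$, so
\[
\orb(\triv_{\cX_\ell^m};t) = \triv_{\ZZ_\ell}(x)\triv_{\ZZ_\ell}(y)\,|x-y|_\ell^{-1}\,\triv_{|xy|_\ell=\ell^{-m}}.
\]

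Substituting into \eqref{eq:inducetracenonarchimedean}, the $|x-y|_\ell$ factors cancel and the integral becomes
\[
\int_{\ZZ_\ell\cap\QQ_\ell^\times}\int_{\ZZ_\ell\cap\QQ_\ell^\times}\frac{1}{|xy|_\ell^{1/2}}\left|\frac{x}{y}\right|_\ell^{s/2}\mu_{1,\ell}(x)\mu_{2,\ell}(y)\,\triv_{|xy|_\ell=\ell^{-m}}\,\rmd^\times x\,\rmd^\times y.
\]
Writing $x = \ell^u x_0$ and $y = \ell^{m-u} y_0$ with $x_0,y_0\in\ZZ_\ell^\times$ and $0\leq u\leq m$, this splits into the finite sum
\[
\sum_{u=0}^m \ell^{m/2}\,\ell^{(2u-m)s/2}\,\mu_{1,\ell}(\ell^u)\mu_{2,\ell}(\ell^{m-u})\int_{\ZZ_\ell^\times}\mu_{1,\ell}(x_0)\,\rmd^\times x_0 \int_{\ZZ_\ell^\times}\mu_{2,\ell}(y_0)\,\rmd^\times y_0.
\]
By orthogonality, an integral $\int_{\ZZ_\ell^\times}\mu_{i,\ell}(z_0)\rmd^\times z_0$ vanishes if $\mu_{i,\ell}$ is ramified and equals $1$ otherwise. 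This immediately yields (1), while in the unramified case every term survives and we recover the asserted formula in (2).

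The only delicate point is the orbital integral reduction: one must check that the measures on $\A(\QQ_\ell)\bs\G(\QQ_\ell)$ and on $\N(\QQ_\ell)$ are compatible with the normalization fixed in \autoref{sec:preliminaries} (so that the Iwasawa step produces no extra constant). This is straightforward given $\vol(\A(\ZZ_\ell)\bs K_\ell)=1$, and once it is verified the rest of the argument is routine bookkeeping.
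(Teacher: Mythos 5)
Your proposal is correct and follows essentially the same route as the paper: substitute the formula $\orb(\triv_{\cX_\ell^m};t)=\triv_{\ZZ_\ell}(x)\triv_{\ZZ_\ell}(y)\,|x-y|_\ell^{-1}\,\triv_{|xy|_\ell=\ell^{-m}}$ into \eqref{eq:inducetracenonarchimedean}, cancel the $|x-y|_\ell$ factors, decompose by the valuations of $x$ and $y$, and finish with orthogonality of characters on $\ZZ_\ell^\times$. The only difference is that you rederive the orbital integral via the Iwasawa decomposition, whereas the paper simply quotes it (Theorem 2.6 of the earlier paper, where $\ell^{k_t}=|x-y|_\ell^{-1}$); your derivation and the measure-compatibility check you flag are both fine under the normalizations of \autoref{sec:preliminaries}.
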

\begin{proof}
By Theorem 2.7 of \cite{cheng2025} we know that $\orb(\triv_{\cX_p^{m}};t)=0$ unless $x$ and $y$ are integral and $|xy|_p=p^{-m}$, and in this case we have  $\orb(\triv_{\cX_p^{m}};t)=p^{k_t}=|x-y|_p^{-1}$. Hence
\begin{align*}
  &\int_{ \A(\QQ_p)}\frac{|x-y|_p}{|xy|_p^{1/2}}\left|\frac xy\right|_p^{s}\mu_{1,p}(x)\mu_{2,p}(y)\orb(\triv_{\cX_p^{m}};t)\rmd t\\
  =&p^{m/2}\sum_{u=0}^{m}\int_{x\in p^u\ZZ_p^\times}\int_{y\in p^{m-u}\ZZ_p^\times}\left|\frac xy\right|_p^{s}\mu_{1,p}(x)\mu_{2,p}(y)\rmd^\times y\rmd^\times x\\
  =&p^{m/2}\sum_{u=0}^{m}\mu_{1,p}(p^u)\mu_{2,p}(p^{m-u})p^{(2u-m)s}\int_{a\in\ZZ_p^\times}\int_{b\in \ZZ_p^\times}\mu_{1,p}(a)\mu_{2,p}(b)\rmd^\times a\rmd^\times b,
\end{align*}
which equals 
\[
p^{m/2}\sum_{u=0}^{m}\mu_{1,p}(p^u)\mu_{2,p}(p^{m-u})p^{(2u-m)s}
\] 
if $\mu_{1,p}$ and $\mu_{2,p}$ are unramified and equals $0$ if $\mu_{1,p}$ or $\mu_{2,p}$ is ramified.
\end{proof}

\begin{lemma}\label{lem:characterfinite}
For any prime $p\in S$, there exists $M>0$ such that for all $s$,
\[
\Tr\left(\Ind_{\B(\QQ_p)}^{\G(\QQ_p)}(s,\mu_p)(f_p)\right)=0
\] 
if $\mu_{1,p}$ or $\mu_{2,p}$ is not trivial on $1+p^{M}\ZZ_{p}$.
\end{lemma}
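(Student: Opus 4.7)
The plan is to reduce the trace to an operator on the finite-dimensional subspace of $K_M$-fixed vectors in the induced representation, and then to show that this subspace vanishes whenever one of the characters is sufficiently ramified. This is standard Hecke-algebra / open-compact-subgroup formalism applied to the principal series, and does not require the folding formula \eqref{eq:inducetracenonarchimedean}.

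First, since $f_\ell \in C_c^\infty(\G(\QQ_\ell))$, I would pick $M \geq 1$ such that $f_\ell$ is bi-invariant under the principal congruence subgroup $K_M := I + \ell^M \M_2(\ZZ_\ell) \subset K := \GL_2(\ZZ_\ell)$; such $M$ exists and depends only on $f_\ell$. Writing $\pi = \Ind_{\B(\QQ_\ell)}^{\G(\QQ_\ell)}(s,\mu_\ell)$ on the space $V$, a standard averaging argument gives $\pi(k_1)\pi(f_\ell) = \pi(f_\ell) = \pi(f_\ell)\pi(k_2)$ for all $k_1,k_2 \in K_M$. Hence $\pi(f_\ell)$ factors through the projection onto $V^{K_M}$, and
\[
\Tr \pi(f_\ell) = \Tr\bigl(\pi(f_\ell)|_{V^{K_M}}\bigr).
\]
It therefore suffices to show that $V^{K_M} = 0$ whenever $\mu_{1,\ell}$ or $\mu_{2,\ell}$ is nontrivial on $1+\ell^M \ZZ_\ell$, uniformly in $s$.

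Next, I would use the Iwasawa decomposition $\G(\QQ_\ell) = \B(\QQ_\ell) K$ to realize $V|_K$ as the space of locally constant $\psi: K \to \CC$ satisfying $\psi(bk) = \mu_\ell(b)\psi(k)$ for $b \in \B(\QQ_\ell) \cap K$ (the modulus $\delta^{1/2}$ is trivial on $\B \cap K$, so $s$ does not enter and uniformity in $s$ is automatic). The crucial input is that $K_M$ is normal in $K$, being the kernel of the reduction map $K \twoheadrightarrow \GL_2(\ZZ/\ell^M \ZZ)$. For $\psi \in V^{K_M}$, $b \in \B(\QQ_\ell) \cap K_M$, and any $g \in K$, normality gives $g^{-1}bg \in K_M$; combining right $K_M$-invariance with the left transformation law yields
\[
\mu_\ell(b)\psi(g) = \psi(bg) = \psi(g \cdot g^{-1}bg) = \psi(g),
\]
so $(\mu_\ell(b)-1)\psi(g) = 0$ for all $g \in K$. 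The group $\B(\QQ_\ell) \cap K_M$ consists of upper triangular matrices with diagonal entries in $1+\ell^M \ZZ_\ell$ and upper-right entry in $\ell^M \ZZ_\ell$, and $\mu_\ell$ acts on such an element by $(x,y) \mapsto \mu_{1,\ell}(x)\mu_{2,\ell}(y)$. This restriction is nontrivial precisely when $\mu_{1,\ell}$ or $\mu_{2,\ell}$ is nontrivial on $1+\ell^M \ZZ_\ell$; in that case one picks $b$ with $\mu_\ell(b) \neq 1$, forcing $\psi \equiv 0$ on $K$ and hence on all of $\G(\QQ_\ell)$ by Iwasawa.

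I do not foresee any substantive obstacle. The only step requiring care is the interplay between the left $\B$-equivariance of vectors in $V$ and their right $K_M$-invariance, which is resolved cleanly by the normality of $K_M$ inside $K$; no analytic estimates or delicate character theory enter the argument.
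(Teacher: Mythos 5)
Your proof is correct, and it takes a genuinely different route from the paper. The paper proves the lemma analytically: it invokes the folding formula \eqref{eq:inducetracenonarchimedean} to write the trace as $\int_{\A(\QQ_\ell)}\Theta_\ell(t)\,|x/y|_\ell^{s/2}\mu_{1,\ell}(x)\mu_{2,\ell}(y)\,\rmd t$ with $\Theta_\ell(t)=\frac{|x-y|_\ell}{|xy|_\ell^{1/2}}\orb(f_\ell;t)$, imports from the earlier paper (Lemma 8.6 of \cite{cheng2025}) the invariance $\Theta_\ell(t)=\Theta_\ell\bigl(\bigl(\begin{smallmatrix} a&0\\0&1\end{smallmatrix}\bigr)t\bigr)$ for $a\in 1+\ell^M\ZZ_\ell$, and then a change of variables forces $\Tr=\mu_{1,\ell}(a)\Tr$, hence vanishing unless $\mu_{1,\ell}$ is trivial on $1+\ell^M\ZZ_\ell$ (and symmetrically for $\mu_{2,\ell}$). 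Your argument instead stays entirely on the spectral side: bi-$K_M$-invariance of $f_\ell$ reduces the trace to the finite-dimensional space $V^{K_M}$, and the normality of $K_M=I+\ell^M\M_2(\ZZ_\ell)$ in $\GL_2(\ZZ_\ell)$ together with the left $\B$-equivariance kills $V^{K_M}$ outright when either character is ramified past level $M$. Both hinge on the same source of uniformity in $s$ (the restriction of vectors to $K$ does not see $s$, resp.\ the torus invariance of $\Theta_\ell$ is independent of $s$), and both produce an $M$ depending only on $f_\ell$. What your version buys is self-containedness and a conceptual explanation (the principal series simply has no $K_M$-fixed vectors, so every Hecke operator of level $M$ annihilates it); what the paper's version buys is economy within its own framework, since the folding formula and the invariance of $\Theta_\ell$ are already needed elsewhere (e.g.\ in \autoref{cor:sumfinite} and \autoref{prop:continuouspart}). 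One small point worth making explicit if you write this up: the trace of $\pi(f_\ell)$ is well defined because the principal series is admissible, so $V^{K_M}$ is finite dimensional and $\pi(f_\ell)$ has finite rank.
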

\begin{proof}
Let
\[
\Theta_{p}(t)=\frac{|x-y|_{p}}{|xy|_{p}^{1/2}}\orb(f_{p};t).
\]
By the proof of  Lemma 8.6 in \cite{cheng2025}, there exist $M>0$ such that for any $a\in 1+p^{M}\ZZ_{p}$ and $t\in \A(\QQ_{p})$, we have
\[
\Theta_{p}(t)=\Theta_{p}\left(\begin{pmatrix}
                      a & 0 \\
                      0 & 1 
                    \end{pmatrix}t\right).
\]
Therefore for any $a\in 1+p^{M}\ZZ_{p}$, by \eqref{eq:inducetracenonarchimedean} we have
\begin{align*}
\Tr\left(\Ind_{\B(\QQ_p)}^{\G(\QQ_p)}(s,\mu_p) (f_p)\right)&=\int_{\A(\QQ_{p})}\Theta_{p}(t)\left|\frac xy\right|_p^{s}\mu_{1,p}(x)\mu_{2,p}(y)\rmd t\\
&=\int_{\A(\QQ_{p})}\Theta_{p}(t)\left|\frac xy\right|_p^{s}\mu_{1,p}(ax)\mu_{2,p}(y)\rmd t=\mu_{1,p}(a)\Tr\left(\Ind_{\B(\QQ_p)}^{\G(\QQ_p)}(s,\mu_p)(f_p)\right).
\end{align*}
If $\Tr\left(\Ind_{\B(\QQ_p)}^{\G(\QQ_p)}(s,\mu_p)(f_p)\right)\neq 0$, then $\mu_{1,p}(a)=1$ for all $a\in 1+p^{M}\ZZ_{p}$. Similarly, there exists $M'$ if $\mu_{2,p}$ is not trivial on $1+p^{M'}\ZZ_{p}$, then $\Tr\left(\Ind_{\B(\QQ_p)}^{\G(\QQ_p)}(s,\mu_p)(f_p)\right)=0$ for all $s$.
\end{proof}

\begin{corollary}\label{cor:sumfinite}
The sum over $\mu$ in \eqref{eq:continuouspart1} is finite.
\end{corollary}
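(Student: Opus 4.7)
The plan is to convert the local vanishing results already established into a global finiteness statement. Since the induction functor is multiplicative under restricted tensor products,
\[
\Tr\left(\Ind_{\B(\AA)}^{\G(\AA)}(s,\mu)(f^n)\right)=\prod_{v\in\mf{S}}\Tr\left(\Ind_{\B(\QQ_v)}^{\G(\QQ_v)}(s,\mu_v)(f^n_v)\right),
\]
so a single vanishing local factor is enough to kill the full contribution of that $\mu$ to the integrand in \eqref{eq:continuouspart1}, uniformly in $s$.

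I would then assemble the two local selection mechanisms already proved. For each $p\notin S$, part (1) of \autoref{prop:unramifiedtraceeisenstein} together with \autoref{prop:eisensteinfolding}, applied to $f_p^n=p^{-n_p/2}\triv_{\cX_p^{n_p}}$, shows the local factor vanishes whenever $\mu_{1,p}$ or $\mu_{2,p}$ is ramified. For each finite prime $q_i\in S$, \autoref{lem:characterfinite} produces an integer $M_i>0$ such that the local factor vanishes unless $\mu_{1,q_i}$ and $\mu_{2,q_i}$ are both trivial on $1+q_i^{M_i}\ZZ_{q_i}$. These are exactly the two types of local conductor bounds needed.

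Combining these constraints, each $\mu_j$ ($j=1,2$) that survives is a character of $\QQ^\times\bs(\AA^\times)^1$ whose conductor divides $N:=\prod_i q_i^{M_i}$. Such a character factors through a finite quotient of the profinite group $\QQ^\times\bs(\AA^\times)^1\cong\widehat{\ZZ}^\times$, namely a quotient of $(\ZZ/N\ZZ)^\times$; hence there are only finitely many possibilities for each $\mu_j$, and therefore for the pair $\mu=\mu_1\boxtimes\mu_2$. I expect no serious obstacle: the genuine analytic content has already been carried out in \autoref{prop:unramifiedtraceeisenstein} and \autoref{lem:characterfinite}, and this corollary merely packages those two local statements into a global finite count.
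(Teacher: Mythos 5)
Your argument is correct and coincides with the paper's proof: both use \autoref{prop:unramifiedtraceeisenstein}(1) to force $\mu_{j,p}$ unramified for $p\notin S$, \autoref{lem:characterfinite} to bound the conductor at each $q_i\in S$, and class field theory to conclude that each $\mu_j$ factors through a finite quotient of $\QQ^\times\bs(\AA^\times)^1$. No issues.
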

\begin{proof}
By \autoref{lem:characterfinite}, there exist $M_1,\dots,M_n\in \ZZ_{>0}$ such that 
\[
\Tr\left(\Ind_{\B(\QQ_{q_i})}^{\G(\QQ_{q_i})}(s,\mu_{q_i})(f_{q_i})\right)=0
\]
if $\mu_{1,q_i}$ or  $\mu_{2,q_i}$ is not trivial on $1+q_i^{M_i}\ZZ_{q_i}$.

Suppose that $\mu=\mu_1\boxtimes \mu_2$ is a character on $\A(\QQ)\bs \A(\AA)^1=\QQ^\times\bs (\AA^\times)^1\times\QQ^\times\bs (\AA^\times)^1$ contributing the sum \eqref{eq:continuouspart1}. Then we must have
\[
\Tr\left(\Ind_{\B(\QQ_p)}^{\G(\QQ_p)}(s,\mu_p)(f_p)\right)\neq 0
\] 
for any finite place $p$. This means that $\mu_{i,p}$ is unramified for each $p\notin S$ and $\mu_{q_i}$ is trivial on $1+q_i^{M_i}\ZZ_{q_i}$ for each $i$. Hence $\mu_1$ and $\mu_2$ can be considered as characters on
\[
\left(\RR_{>0}\QQ^\times\prod_{p\notin S}\ZZ_p^\times \prod_{i=1}^{r}(1+q_i^{M_i}\ZZ_{q_i})\right)\bs \AA^\times,
\]
which is a finite group by class field theory. Thus, the sum in \eqref{eq:continuouspart1} over $\mu=\mu_1\boxtimes \mu_2$ is finite.
\end{proof}

\begin{proof}[Proof of \autoref{prop:continuouspart}]
We first prove that $\eqref{eq:continuouspart1}\ll_\varepsilon n^\varepsilon$.
By \autoref{cor:sumfinite}, the sum over $\mu$ is finite. Hence it suffices to prove that the integral converges absolutely for each fixed $\mu$ contributing the sum. By \autoref{prop:unramifiedtraceeisenstein} and the definition of $f^n$, we have
\begin{align*}
  &\int_{(0)}\frac{m'(s,\mu)}{m(s,\mu)}\Tr\left(\Ind_{\B(\AA)}^{\G(\AA)}(s,\mu)(f^n)\right)\rmd s 
   = \int_{(0)}\frac{m'(s,\mu)}{m(s,\mu)} \prod_{v}\Tr\left(\Ind_{\B(\QQ_v)}^{\G(\QQ_v)}(s,\mu_v)(f^n_v)\right)\rmd s\\
    =&\int_{(0)}\frac{m'(s,\mu)}{m(s,\mu)} \prod_{v\in S}\Tr\left(\Ind_{\B(\QQ_v)}^{\G(\QQ_v)}(s,\mu_v)(f_v)\right)\prod_{p\notin S}\left(\sum_{u=0}^{n_p}\mu_{1,p}(p^u)\mu_{2,p}(p^{n_p-u}) p^{(2u-n_p)s/2}\right)\rmd s.
\end{align*}
By \eqref{eq:inducetracenonarchimedean}, for any $v\in S$ and $\Re s=0$ we have
\begin{align*}
\left|\Tr\left(\Ind_{\B(\QQ_v)}^{\G(\QQ_v)}(s,\mu_v)(f_v)\right)\right|&\leq\int_{ \A(\QQ_v)}\frac{|x-y|_v}{|xy|_v^{1/2}}\int_{\A(\QQ_v)\bs \G(\QQ_v)}|f_v(g^{-1}tg)|\rmd g \rmd t\\
&=\int_{ \A(\QQ_v)}\frac{|x-y|_v}{|xy|_v^{1/2}}\orb(|f_v|,t)\rmd t\ll 1
\end{align*}
since the integrand is continuous and compactly supported. 

Now we consider the archimedean part. By  Theorem 2.13 of \cite{cheng2025}, the function
\[ 
\frac{|x-y|}{|xy|^{1/2}}\int_{\A(\RR)\bs \G(\RR)}\varphi_v(g^{-1}tg)\rmd g 
\]
is smooth and compactly supported in $t\in Z_+\bs \A(\RR)$.
Hence by \eqref{eq:inducetracearchimedean}, 
\[
\Phi(\rmi t)=\Tr\left(\Ind_{\B(\RR)}^{\G(\RR)}(\mu_\infty,\rmi t)(\varphi_\infty)\right)
\] 
is a Fourier transform 
of a smooth and compactly supported function for $s=\rmi t$. Thus $\Phi(\rmi t)$ is a Schwartz function for $t$. Therefore
\begin{equation}\label{eq:estimatects1}
\begin{split}
  \left|\int_{(0)}\frac{m'(s,\mu)}{m(s,\mu)}\Tr\left(\Ind_{\B(\AA)}^{\G(\AA)}(s,\mu)(f^n)\right)\rmd s \right|
    &\ll_{f_\infty, f_{q_i}} \int_{(0)}\left|\frac{m'(s,\mu)}{m(s,\mu)}\right|\prod_{p\notin S}\left(\sum_{u=0}^{n_p}|p^{(2u-n_p)s/2}|\right) \rmd |s|\\
    &\ll_{f_\infty, f_{q_i}} \int_{(0)}|\Phi(s)|\left|\frac{m'(s,\mu)}{m(s,\mu)}\right|\prod_{p\notin S}(n_p+1)\rmd |s|\\
    &\ll_{f_\infty, f_{q_i}} \bm{d}(n)\ll_{f_\infty, f_{q_i},\varepsilon} n^\varepsilon.
\end{split}
\end{equation}
The only nontrivial step is the last estimate. It is deduced as follows: Recall that
\[
m(s,\mu)=\prod_{v}\frac{L_v(1-2s,\chi^{-1})}{L_v(1+2s,\chi)}=\frac{\Lambda(1-2s,\chi^{-1})}{\Lambda (1+2s,\chi)},
\]
where $\chi=\mu_1/\mu_2$ can be considered as a Dirichlet character, and $\Lambda$ denotes the complete $L$-function. Hence
\[
\frac{m'(s,\mu)}{m(s,\mu)}=-2\frac{\Lambda'(1-2s,\chi^{-1})}{\Lambda(1-2s,\chi^{-1})}-2\frac{\Lambda' (1+2s,\chi)}{\Lambda (1+2s,\chi)}.
\]
Since $\Phi(\rmi t)$ is a Schwartz function, it suffices to show that 
\[
\frac{\Lambda'(1-\rmi t,\chi^{-1})}{\Lambda(1-\rmi t,\chi^{-1})}\quad\text{and}\quad
\frac{\Lambda' (1+\rmi t,\chi)}{\Lambda (1+\rmi t,\chi)}
\]
both have polynomial growth for $t\in \RR$. By symmetry it suffices to show this for the second function. We have
\[
\frac{\Lambda' (1+\rmi t,\chi)}{\Lambda (1+\rmi t,\chi)}=\frac{L' (1+\rmi t,\chi)}{L (1+\rmi t,\chi)}+\frac{1}{2}\frac{\Gamma'(\frac{\rmi t+\iota}{2})}{\Gamma(\frac{\rmi t+\iota}{2})},
\]
where $\iota\in \{0,1\}$.
By the results in Part II, \SSec 8.3 of \cite{tenenbaum2015analytic} (for $\chi$ nontrivial) and Part II, \SSec 3.10 of loc. cit. (for $\chi$ trivial) we know that $L'(1+\rmi t,\chi)/L(1+\rmi t,\chi)$ has polynomial growth. By Weierstrass product formula for $\Gamma(s)$ (Part II, Theorem 0.6 of loc. cit.) we have
\[
-\frac{\Gamma'(s)}{\Gamma(s)}=\upgamma+\frac{1}{s}-\sum_{j=1}^{+\infty}\frac{s}{j(s+j)},
\]
where $\upgamma$ is the Euler-Mascheroni constant. Hence for any $s=\sigma+\rmi t$ with $\sigma\geq 0$ and $|t|\gg 1$ we have
\[
\frac{\Gamma'(s)}{\Gamma(s)}\ll_\sigma 1+\sum_{j=1}^{+\infty}\frac{|s|}{j^2}\ll_\sigma |t|
\]
and hence $\Gamma'(\frac{\rmi t+\iota}{2})/\Gamma(\frac{\rmi t+\iota}{2})$ has polynomial growth. Consequently, $\Lambda'(1+\rmi t,\chi)/\Lambda (1+\rmi t,\chi)$ has polynomial growth. Hence $m'(s,\mu)/m(s,\mu)$ has polynomial growth. This establishes \eqref{eq:estimatects1}. Hence \eqref{eq:continuouspart1} converges absolutely and $\eqref{eq:continuouspart1}\ll n^\varepsilon$. 

Now we prove the estimate for \eqref{eq:continuouspart2}. By the definition of $f^n$ we have
\begin{align*}
   \eqref{eq:continuouspart2}= & -\frac{1}{4\uppi\rmi}\sum_{v\in S}\sum_{\mu}\int_{(0)}\Tr \left(R_v(s,\mu_v)^{-1}R_v'(s,\mu_v)\Ind_{\B(\QQ_v)}^{\G(\QQ_v)}(s,\mu_v)(f^n_v)\right) \\
     &\times \prod_{\substack{w\in S\\w\neq v}}\Tr\left(\Ind_{\B(\QQ_w)}^{\G(\QQ_w)}(s,\mu_w)(f^n_w)\right)\prod_{p\notin S}\left(\sum_{u=0}^{n_p}\mu_{1,p}(p^u)\mu_{2,p}(p^{n_p-u}) p^{(2u-n_p)s}\right)\rmd s.
\end{align*}

Since the spectral side converges absolutely (see \cite{muller2004} for a proof for the case $\G=\GL_n$ and \cite{muller2011} for general $\G$), $J_{\mathrm{cont}}(f)$ converges absolutely. We have proved that \eqref{eq:continuouspart1} converges absolutely. Hence \eqref{eq:continuouspart2} converges absolutely for any $n$. In particular, by taking $n=1$, we have
\[
\sum_{v\in S}\sum_{\mu}\int_{(0)}\left|\Tr \left(R_v(s,\mu_v)^{-1}R_v'(s,\mu_v)\Ind_{\B(\QQ_v)}^{\G(\QQ_v)}(s,\mu_v)(f_v)\right)\right|\prod_{\substack{w\in S\\w\neq v}}\left|\Tr\left(\Ind_{\B(\QQ_w)}^{\G(\QQ_w)}(s,\mu_w)(f_w)\right)\right|\rmd |s|<+\infty.
\]

Since each $\mu_v$ is unitary, we obtain
\begin{align*}
   \eqref{eq:continuouspart2}\ll & \sum_{v\in S}\sum_{\mu}\int_{(0)}\left|\Tr \left(R_v(s,\mu_v)^{-1}R_v'(s,\mu_v)\Ind_{\B(\QQ_v)}^{\G(\QQ_v)}(s,\mu_v)(f^n_v)\right)\right| \\
     &\times \prod_{\substack{w\in S\\w\neq v}}\left|\Tr\left(\Ind_{\B(\QQ_w)}^{\G(\QQ_w)}(s,\mu_w)(f^n_w)\right)\right|\prod_{p\notin S}\left(\sum_{u=0}^{n_p}|p^{(2u-n_p)s/2}|\right)\rmd |s|\\
   \ll &\sum_{v\in S}\sum_{\mu}\int_{(0)}\left|\Tr \left(R_v(s,\mu_v)^{-1}R_v'(s,\mu_v)\Ind_{\B(\QQ_v)}^{\G(\QQ_v)}(s,\mu_v)(f_v)\right)\right| \\
    &\times\prod_{\substack{w\in S\\w\neq v}}\left|\Tr\left(\Ind_{\B(\QQ_w)}^{\G(\QQ_w)}(s,\mu_w)(f_w)\right)\right|\prod_{p\notin S}(n_p+1)\rmd |s|\ll_{f_\infty, f_{q_i}} \bm{d}(n)\ll_{f_\infty, f_{q_i},\varepsilon} n^\varepsilon.
\end{align*} 
Hence $\eqref{eq:continuouspart2}\ll_{f_\infty, f_{q_i},\varepsilon} n^\varepsilon$. Since $J_{\mathrm{cont}}(f^n)$ is the sum of \eqref{eq:continuouspart1} and \eqref{eq:continuouspart2} we obtain the desired result.
\end{proof}
Combining the trace formula \eqref{eq:traceformulagl2} with \autoref{prop:idestimate}, \autoref{prop:unipestimate}, \autoref{prop:hypestimate}, \autoref{prop:eisensteinestimate}, and \autoref{prop:continuouspart}, we establish \autoref{thm:cuspidalestimate}.

\section{Contribution of the elliptic part: Part 1}\label{sec:elliptic}
In the following two sections, we estimate the elliptic part and establish our main result. First, we recall the main theorem of \cite{cheng2025}:
\begin{theorem}\label{thm:maintheoremramify}
We have
\[
I_{\el}(f^n)=\sum_{\mu}\Tr(\mu(f^n))-\frac{1}{2}\sum_{\mu}\Tr((\xi_0\otimes\mu)(f^n))-\Sigma(\square)+\Sigma(0)+\Sigma(\xi\neq 0).
\]
See \cite[Theorem 8.10]{cheng2025} for the precise definitions of these terms.
\end{theorem}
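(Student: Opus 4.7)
The plan is to start from the explicit formula \eqref{eq:elliptictrace} and reparametrize the sum over elliptic conjugacy classes by $(T,N) = (\Tr\gamma, \det\gamma)$. The support of $f^n$ forces $N = \pm n q^\nu$ for some $\nu \in \ZZ^r$ and $T \in \ZZ^S$ with $T^2 - 4N$ not a positive square. I would then combine the volume term $\vol(\gamma)$ with the nonarchimedean orbital integrals, using the normalized functions $\theta_\ell$ from Section \ref{subsec:singularities}, Dirichlet's class number formula, and Zagier's identity expressing class numbers as a sum of $L(1,\chi_{\delta/f^2})$ weighted by $1/f$. The result is that $I_\el(f^n)$ rewrites as a sum over $\nu \in \ZZ^r$ and $T \in \ZZ^S$ of
\[
L^{S}(1, T^2 - 4N)\cdot \theta_\infty^{\sgn N}\!\left(\tfrac{T}{2\sqrt{|N|}}\right) \cdot \theta_{q}(T,N),
\]
up to explicit normalizing constants coming from the measure conventions and the factor $(1 - \chi(\ell)/\ell)^{-1}$ absorbed into $\theta_\ell$, using \eqref{eq:orbittheta}.

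Next I would apply Corollary \ref{cor:afe1} to $L^{S}(1, T^2 - 4N)$, splitting it into an $F$-piece and a $V$-piece, and then apply Poisson summation in the variable $T$. This converts the sum over $T \in \ZZ^S$ into a sum over a dual variable $\xi$, with Fourier transforms of the $\theta$-functions (singular in $T$ at $T^2 = 4N$) appearing as the test data. The $\xi = 0$ contribution is collected into $\Sigma(0)$, and the $\xi \neq 0$ contribution, where the generalized Kloosterman sums of Section \ref{subsec:kloosterman} naturally appear, becomes $\Sigma(\xi \neq 0)$.

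The square locus must be handled separately: the functional equation \eqref{eq:funceqndelta}, and hence Corollary \ref{cor:afe1}, is stated for $\delta$ not a square, whereas the original sum ranges over all $T$. I would add and subtract the square contribution; the subtracted part, after rewriting via the reverse direction of the approximate functional equation, yields $-\Sigma(\square)$. The spectral terms $\sum_{\mu}\Tr(\mu(f^n))$ and $-\tfrac12 \sum_\mu \Tr((\xi_0 \otimes \mu)(f^n))$ then emerge from residues encountered when shifting contours in the $V$-piece: concretely, the simple pole of $\widetilde F$ at $s=0$ (Proposition \ref{prop:propertyf}) combined with the pole of $\zeta(s)$ at $s=1$ (appearing via $L^S(s, \triv)$ when $\delta/f^2 = 1$) produce exactly these two contributions, which one matches to the spectral definitions via \autoref{prop:eisensteinfolding} and \autoref{prop:unramifiedtraceeisenstein}.

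The main obstacle will be justifying the interchange of the Poisson summation with the decomposition $\theta = \theta_1 + \theta_0$ and with the Mellin contour integrals. One must establish absolute convergence at each stage, which requires precise local Fourier-analytic estimates on $\theta_\ell$ (whose local behavior \eqref{eq:shalikalocal} carries a singularity weighted by the modified norm $|\cdot|_\ell'$, with the $\ell=2$ case requiring the special definition from Section \ref{subsec:singularities}) together with the rapid decay of $\widetilde F$ and $\widetilde{V_{\iota,\epsilon}}$ on vertical strips from Propositions \ref{prop:propertyf} and \ref{prop:propertyh}. A secondary subtlety is bookkeeping between $\sigma_{(q)}$ and $\tau_\delta = \delta/\sigma_{(q)}^2$ so that the archimedean gamma factors and the $S$-local Euler factors in \eqref{eq:funceqndelta} align with the decomposition of $\theta_\infty$ and $\theta_{q,\tau}$ across the various sign-and-parity sectors indexed by $(\iota, \epsilon) \in \{0,1\} \times \{-1,0,1\}^r$.
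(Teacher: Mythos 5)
You should flag up front that this paper does not prove \autoref{thm:maintheoremramify} at all: the identity, together with the definitions of $\Sigma(\square)$, $\Sigma(0)$ and $\Sigma(\xi\neq 0)$, is imported verbatim from \cite[Theorem 8.10]{cheng2025}, so there is no in-paper argument to compare yours against. Judged against what the cited proof must look like (as reconstructed from the unramified antecedent \cite{altug2015} and from the pieces of the machinery that do appear here), your architecture is the right one: parametrize elliptic classes by $(T,N)$ with $N=\pm nq^\nu$ forced by the support of $f^n$, convert $\vol(\gamma)$ times the unramified orbital integrals into $L^{S}(1,T^2-4N)$ via the class number formula and Zagier's identity, complete the $T$-sum over all of $\ZZ^S$ by adding the square discriminants, apply \autoref{cor:afe1}, and Poisson-sum in $T$; the added square terms, written in AFE form, are exactly what is subtracted as $-\Sigma(\square)$ (compare \autoref{cor:sigmasquarel}, which undoes this in reverse). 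One ordering point you should make explicit: the squares must be added \emph{before} the AFE and Poisson step, since \autoref{thm:afe} is only stated for non-square $\delta$ and Poisson summation needs the full lattice; the square case requires the separate variant \autoref{thm:afesquare}.

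The one place your sketch is genuinely off is the provenance of the two spectral terms. The Eisenstein term $-\tfrac12\sum_\mu\Tr((\xi_0\otimes\mu)(f^n))$ does not arise from the pole of $\widetilde F$ at $s=0$ together with a pole of $\zeta(s)$ at $s=1$; as the proof of \autoref{prop:sigma02estimate} makes visible, it is half the residue at $s=0$ of the full integrand $\widetilde F(s)\,\Gamma(\tfrac{\iota+s}{2})/\Gamma(\tfrac{\iota+1-s}{2})\cdot\zeta(2s)/\zeta(s+1)\cdot\prod_i(\cdots)$ in the $\xi=0$ term, which is singular only in the exceptional sector $\iota=0$, $\epsilon=\mathbf 1$; the factor $\tfrac12$ comes from replacing the contour $\cC_r$ by a principal value at that pole, and the identification with the Eisenstein trace is \cite[Theorem 8.8]{cheng2025}, not a $\zeta$-pole computation. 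The main term $\sum_\mu\Tr(\mu(f^n))\asymp n^{1/2}$ comes instead from the residue of $\widetilde F$ at $s=0$ in the $F$-piece of the $\xi=0$ term (which is why the contour in the first constituent of $\Sigma(0)$ sits at $(-1)$, already past that pole), matched to the $1$-dimensional traces via the orbital-integral identities of \cite[Propositions 8.3--8.5]{cheng2025}. Your mention of $L^{S}(s,\triv)$ and $\delta/f^2=1$ conflates this with the square-discriminant bookkeeping. This is not a fatal structural error, but as written the residue accounting would not produce the two spectral terms with the stated coefficients.
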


Combining \autoref{thm:maintheoremramify} with \autoref{thm:cuspidalestimate} and \autoref{prop:eisensteinestimate}, we obtain
\begin{corollary}\label{cor:estimateellipticremain}
For any $\varepsilon>0$, we have
\[
I_{\mathrm{cusp}}(f^n)=-\Sigma(\square)+\Sigma(0)+\Sigma(\xi\neq 0)+O_{f_\infty,f_{q_i},\varepsilon}(n^\varepsilon).
\]
\end{corollary}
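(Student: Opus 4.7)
The corollary follows directly by assembling three results that have already been established in the paper. The plan is to substitute the geometric/spectral identity for $I_{\el}(f^n)$ provided by \autoref{thm:maintheoremramify} into the estimate of \autoref{thm:cuspidalestimate}, and then absorb the Eisenstein-type sums into the $O(n^\varepsilon)$ error via \autoref{prop:eisensteinestimate}.

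More concretely, I would first rewrite \autoref{thm:cuspidalestimate} as
\[
I_{\mathrm{cusp}}(f^n)=I_\el(f^n)-\sum_{\mu}\Tr(\mu(f^n))+O_{f_\infty,f_{q_i},\varepsilon}(n^\varepsilon),
\]
and then plug in the decomposition
\[
I_{\el}(f^n)=\sum_{\mu}\Tr(\mu(f^n))-\frac{1}{2}\sum_{\mu}\Tr((\xi_0\otimes\mu)(f^n))-\Sigma(\square)+\Sigma(0)+\Sigma(\xi\neq 0)
\]
from \autoref{thm:maintheoremramify}. The $\sum_{\mu}\Tr(\mu(f^n))$ contributions cancel exactly, leaving the one-dimensional piece controlled only through the Eisenstein-type trace $\sum_{\mu}\Tr((\xi_0\otimes\mu)(f^n))$.

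By \autoref{prop:eisensteinestimate}, this remaining Eisenstein sum is itself $O_{f_\infty,f_{q_i},\varepsilon}(n^\varepsilon)$, so the $\frac{1}{2}\sum_{\mu}\Tr((\xi_0\otimes\mu)(f^n))$ term is absorbed into the error term and we arrive at the claimed identity. No new estimates are required.

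Since every ingredient is already in hand, there is no serious obstacle here; the proof is purely a bookkeeping step that records the consequence of combining the geometric/spectral inputs of \autoref{sec:nonelliptic} with the identity of \cite{cheng2025}. The real work lies beyond this corollary, namely in the subsequent estimation of $\Sigma(\square)$, $\Sigma(0)$, and especially $\Sigma(\xi\neq 0)$, which will control the size of $I_{\mathrm{cusp}}(f^n)$ and ultimately yield the $n^{1/4+\varepsilon}$ bound of \autoref{thm:totalestimate}.
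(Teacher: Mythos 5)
Your proposal is correct and is exactly the paper's argument: substitute the decomposition of $I_\el(f^n)$ from \autoref{thm:maintheoremramify} into \autoref{thm:cuspidalestimate}, cancel the $\sum_\mu\Tr(\mu(f^n))$ terms, and absorb $-\frac12\sum_\mu\Tr((\xi_0\otimes\mu)(f^n))$ into the error via \autoref{prop:eisensteinestimate}. Nothing further is needed.
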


\begin{remark}
The main work of \cite{cheng2025} was isolating the traces of $1$-dimensional representations in the elliptic part of the trace formula. It is easy to see that 
\[
\sum_{\mu}\Tr(\mu(f^n))\asymp n^{\frac12}.
\]
Hence we need to kill such term coming from the spectral side in the geometric side of the trace formula.
\end{remark}

Now we set $\vartheta=1/2$ in the formula of \cite[Theorem 8.10]{cheng2025}. 
\subsection{Estimate of $\Sigma(0)$}
The term $\Sigma(0)$ decomposes into the following two terms:
\begin{equation}\label{eq:sigma01}
\begin{split}
&4\sqrt{n}\sum_{\pm}\sum_{\nu\in \ZZ^r}q^{\nu/2}\int_{x\in \RR}\int_{y\in \QQ_{S_\fin}}\theta_\infty^\pm(x)\theta_{q}(y,\pm nq^\nu)
  \left[\frac{1}{\dpii}\int_{(-1)}\widetilde{F}(s)\frac{\zeta(2s+2)}{\zeta(s+2)} \right.\\
  \times&\left.\prod_{i=1}^{r}\frac{1-q_i^{-2s-2}}{1-q_i^{-s-2}}\prod_{p\notin S}\frac{1-p^{-(n_p+1)(s+1)}}{1-p^{-s-1}}\legendresymbol{(4nq^\nu)^{-1/2}}{|x^2\mp 1|_\infty^{1/2}|y^2\mp 4nq^\nu|_q'^{1/2}}^{-s}\rmd s\right]\rmd x\rmd y
\end{split}
\end{equation}
and
\begin{equation}\label{eq:sigma02}  
\begin{split}
  &2\sum_{\pm}\sum_{\nu\in \ZZ^r}\int_{x\in \RR}\int_{y\in \QQ_{S_\fin}}\frac{\theta_\infty^\pm(x)\theta_{q}(y,\pm nq^\nu)}{\sqrt{|x^2\mp 1|_\infty|y^2\mp 4nq^\nu|_q'}}\left[\frac{\sqrt{\uppi}}{\dpii}\int_{\cC_r}\widetilde{F}(s) \frac{\Gamma(\frac{\iota+s}{2})}{\Gamma(\frac{\iota+1-s}{2})}\frac{\zeta(2s)}{\zeta(s+1)} \right.\\
  \times&\left. \prod_{i=1}^{r}\frac{1-\epsilon_i q_i^{s-1}}{1-\epsilon_i q_i^{-s}}\frac{1-q_i^{-2s}}{1-q_i^{-s-1}}\prod_{p\notin S}\frac{1-p^{-(n_p+1)s}}{1-p^{-s}} \legendresymbol{\uppi (4nq^\nu)^{-1/2}}{|x^2\mp 1|_\infty^{1/2}|y^2\mp 4nq^\nu|_q'^{1/2}}^{-s}\rmd s\right]
  \rmd x\rmd y.
\end{split}
\end{equation}
Our goal of this subsection is to bound \eqref{eq:sigma01} and \eqref{eq:sigma02}.

\begin{lemma}\label{lem:eisensteinestimate}
We have
\[
\int_{\omega_i(T_i)=1}\frac{\theta_{q_i}(T_i,\pm 4nq^{\nu})}{\sqrt{|T_i^2\mp 4n q^\nu|_{q_i}'}}\rmd T_i\ll_{f_{q_i}} 1.
\]
\end{lemma}
\begin{proof}
By \cite[Corollary 8.7]{cheng2025}, the sum over $\nu$ is finite (which is independent of $n$).
By the structure of $p$-adic numbers we know that $\ZZ_{q_i}^\times/(\ZZ_{q_i}^\times)^2$ is finite. Let $\{a_1,\dots,a_k\}$ be a set of full representatives of $\ZZ_{q_i}^\times/(\ZZ_{q_i}^\times)^2$ and suppose that $na_j^{-1}\in (\ZZ_{q_i}^\times)^2$. Let $\alpha$ be a square root of $na_j^{-1}$ in $\ZZ_{q_i}^\times$. Then we have
\begin{align*}
  \int_{\omega_i(T_i)=1}\frac{\theta_{q_i}(T_i,\pm nq^{\nu})}{\sqrt{|T_i^2\mp 4n q^\nu|_{q_i}'}}\rmd T_i &\ll \int_{\QQ_{q_i}}\frac{|\theta_{q_i}(T_i,\pm nq^{\nu})|}{\sqrt{|T_i^2\mp 4n q^\nu|_{q_i}'}}\rmd T_i= \int_{\QQ_{q_i}}\frac{|\theta_{q_i}(\alpha T_i,\pm nq^{\nu})|}{\sqrt{|\alpha^2 T_i^2\mp 4n q^\nu|_{q_i}'}}|\alpha|_{q_i}\rmd T_i\\
  &=\int_{\QQ_{q_i}}\frac{|\theta_{q_i}(\alpha T_i,\pm nq^{\nu})|}{\sqrt{|\alpha^2(T_i^2\mp 4a_j q^\nu)|_{q_i}'}}\rmd T_i\ll_{q_i} \int_{\QQ_{q_i}}\frac{|\theta_{q_i}(\alpha T_i,\pm nq^{\nu})|}{\sqrt{|T_i^2\mp 4a_jq^\nu|_{q_i}'}}\rmd T_i.
\end{align*}
Since $\theta_{q_i}(\gamma)$ is compactly supported in $\G(\QQ_{q_i})$, there exists $M_i>0$ such that $\theta_{q_i}(T,N)=0$ if $|T|_{q_i}>q_i^{M_i}$. Thus for $|T_i|_{q_i}>q_i^{M_i}$ we have $|\alpha T_i|_{q_i}=|T_i|_{q_i}>q_i^{M_i}$ and hence $\theta_{q_i}(\alpha T_i,\pm 4nq^{\nu})=0$. Since $\theta_{q_i}$ is compactly supported, it is bounded. Hence
\[
\int_{\omega_i(T_i)=1}\frac{\theta_{q_i}(T_i,\pm nq^{\nu})}{\sqrt{|T_i^2\mp 4n q^\nu|_{q_i}'}}\rmd T_i \ll_{q_i,f_{q_i}} \int_{\QQ_{q_i}}\frac{\rmd T_i}{\sqrt{|T_i^2\pm 4a_jq^\nu|_{q_i}'}}\ll_{f_{q_i}} 1.
\]
The last estimate holds since the integral
\[
\int_{\QQ_{q_i}}\frac{\rmd T_i}{\sqrt{|T_i^2\mp 4a_jq^\nu|_{q_i}'}}
\]
converges. Indeed, it is compactly supported and has singularities at $T_i^2\mp 4a_jq^\nu$. If $x_0$ is a singularity, the asymptotic behavior of the integrand near $x_0$ is $|T_i-x_0|_{q_i}^{-1/2}$, thus it is integrable near $x_0$.
\end{proof}

\begin{proposition}\label{prop:sigma01estimate}
For any $\varepsilon>0$, we have
\[
\eqref{eq:sigma01}\ll n^\varepsilon,
\] 
where the implied constant only depends on $f_\infty$, $f_{q_i}$ and $\varepsilon$.
\end{proposition}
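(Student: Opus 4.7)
The plan is to estimate the bracketed inner contour integral directly on the line $\Re s = -1$, exploiting the cancellation between the outer factor $2\sqrt{n}\, q^{\nu/2}$ and the $(4nq^\nu)^{-1/2}$ encoded inside the power term at $\sigma=-1$. Write
\[
A = A(x,y,\nu) = \frac{(4nq^\nu)^{-1/2}}{|x^2\mp 1|_\infty^{1/2}|y^2\mp 4nq^\nu|_q'^{1/2}},
\]
so that the quantity raised to $-s$ is $A$. First I would verify regularity on the contour: $\widetilde{F}(s)$ is holomorphic on $\Re s = -1$ by \autoref{prop:propertyf}; $\zeta(2s+2)$ and $1/\zeta(s+2)$ are finite on this line (with at worst polynomial growth in $|\Im s|$, using that $1/\zeta$ vanishes at $s=-1$); and the finite $q_i$-factors $(1-q_i^{-2s-2})/(1-q_i^{-s-2})$ are uniformly bounded. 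The product over $p\notin S$ reduces to a finite product over $p\mid n^{(q)}$ since the factor collapses to $1$ when $n_p=0$.

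Next I would bound the integrand at $s=-1+\rmi t$. By \eqref{eq:frapiddecay}, $|\widetilde{F}(-1+\rmi t)|\ll\rme^{-\uppi|t|/2}$, giving absolute convergence with rapid decay in $t$; the zeta ratio and $q_i$-factors contribute only polynomial growth, absorbed by $\widetilde{F}$'s decay. For the remaining product over ramified primes, writing $s+1 = \rmi t$ and using $|1-p^{-m\rmi t}|/|1-p^{-\rmi t}| = |\sin(mt\log p/2)|/|\sin(t\log p/2)|$, the standard Dirichlet kernel bound yields
\[
\prod_{p\mid n^{(q)}}\left|\frac{1-p^{-(n_p+1)(s+1)}}{1-p^{-s-1}}\right|\leq \prod_{p\mid n^{(q)}}(n_p+1) = \bm{d}(n^{(q)})\ll n^\varepsilon.
\]
At $\Re s = -1$, $|A^{-s}|=A$. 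Assembling all factors, the bracketed contour integral is $\ll_\varepsilon n^\varepsilon A$.

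Multiplying by $2\sqrt{n}\,q^{\nu/2}$ cancels the $(4nq^\nu)^{-1/2}$ in $A$ exactly, so
\[
\eqref{eq:sigma01}\ll n^\varepsilon \sum_{\pm}\sum_{\nu}\int_{\RR}\int_{\QQ_{S_\fin}} \frac{|\theta_\infty^\pm(x)\,\theta_q(y,\pm nq^\nu)|}{|x^2\mp 1|_\infty^{1/2}|y^2\mp 4nq^\nu|_q'^{1/2}}\,\rmd x\,\rmd y.
\]
Using the decompositions $\theta_\infty^\pm = \theta_{\infty,1}^\pm + \theta_{\infty,0}^\pm$ and $\theta_q = \sum_\tau \theta_{q,\tau}$ from \autoref{subsec:singularities}, the $\tau=1$ pieces supply the matching $|\cdot|^{1/2}$ factors and cancel the singularities outright, while the $\tau=0$ pieces are smooth and compactly supported and the residual $|\cdot|^{-1/2}$ singularity is integrable archimedeanly and at each $q_i$ by \autoref{lem:ladiconverge} (applied with $\Re s=-1/2>-1$). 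Compact support of $f_{q_i}$ confines each $\nu_i$ to a finite range independent of $n$, so the outer sum has only finitely many terms, each producing an $L^1$ norm bounded uniformly in $n$ and $\nu$.

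The principal obstacle is ensuring the Dirichlet-kernel bound on the product over $p\mid n^{(q)}$ is uniform in $t\in\RR$, including through the removable singularities at $t\log p\in 2\uppi\ZZ$ and at $t=0$; the trigonometric identity makes this immediate, but it is the only place where polynomial growth in $n$ must be tamed to the divisor bound. Once this is in hand, the key arithmetic miracle is that the nominal $\sqrt{n}$ in front is exactly cancelled by the $n^{-1/2}$ living inside $A$ on the line $\Re s = -1$, turning a potentially $\sqrt{n}$-sized contribution into one of size $n^\varepsilon$.
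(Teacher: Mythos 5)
Your proposal is correct and follows essentially the same route as the paper: take absolute values on the line $\Re s=-1$, use the rapid decay of $\widetilde{F}$ to absorb the polynomial growth of the zeta ratio and the $q_i$-factors, bound the product over $p\mid n$ by $\bm{d}(n)\ll n^\varepsilon$, and observe that $|A^{-s}|=A$ at $\sigma=-1$ cancels the prefactor $2\sqrt{n}\,q^{\nu/2}$ exactly, leaving an $(x,y)$-integral that is $O(1)$ by the singularity structure of the $\theta$'s. The only cosmetic differences are that the paper bounds the $p$-factor by writing it as the finite geometric sum $\sum_{u=0}^{n_p}p^{-u(s+1)}$ (which also sidesteps your worry about removable singularities) and cites the proof of \autoref{prop:eisensteinestimate} for the final $(x,y)$-integral rather than redoing the $\tau$-decomposition.
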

\begin{proof}
We have
\begin{align*}
  \eqref{eq:sigma01} & \ll \sqrt{n}\sum_{\pm}\sum_{\nu\in \ZZ^r}q^{\nu/2}\int_{x\in \RR}\int_{y\in \QQ_{S_\fin}}|\theta_\infty^\pm(x)||\theta_{q}(y,\pm nq^\nu)|
  \left[\frac{1}{2\uppi}\int_{(-1)}|\widetilde{F}(s)|\left|\frac{\zeta(2s+2)}{\zeta(s+2)} \right| \right.\\
  &\times\left.\prod_{i=1}^{r}\left|\frac{1-q_i^{-2s-2}}{1-q_i^{-s-2}}\right|\prod_{p\notin S}\left|\sum_{u=0}^{n_p}p^{-u(s+1)}\right|\left|\legendresymbol{(4nq^\nu)^{-1/2}}{|x^2\mp 1|_\infty^{1/2}|y^2\mp 4nq^\nu|_q'^{1/2}}^{-s}\right|\rmd |s|\right]\rmd x\rmd y\\
  & \ll \sqrt{n}\sum_{\pm}\sum_{\nu\in \ZZ^r}q^{\nu/2}\int_{x\in \RR}\int_{y\in \QQ_{S_\fin}}|\theta_\infty^\pm(x)||\theta_{q}(y,\pm nq^\nu)|\legendresymbol{(4nq^\nu)^{-1/2}}{|x^2\mp 1|_\infty^{1/2}|y^2\mp 4nq^\nu|_q'^{1/2}}\rmd x\rmd y\\
  &\times
  \int_{(-1)}|\widetilde{F}(s)|\left|\frac{\zeta(2s+2)}{\zeta(s+2)} \right| \prod_{i=1}^{r}\left|\frac{1-q_i^{-2s-2}}{1-q_i^{-s-2}}\right|\prod_{p\notin S}(1+n_p)\rmd |s|.
\end{align*}
Let $s=\sigma+\rmi t$. $\widetilde{F}(s)$ has rapid decay in $t$ when $\sigma$ is fixed by Proposition 3.7  of \cite{cheng2025}. Moreover
\[
\frac{\zeta(2s+2)}{\zeta(s+2)}\qquad\text{and}\qquad \frac{1-q_i^{-2s-2}}{1-q_i^{-s-2}}
\]
both has polynomial growth in $t$ when $\sigma$ is fixed. Indeed, $\zeta(s)$ has polynomial growth in $t$ when $\sigma$ is fixed by \cite[Part II, \SSec 1.6]{tenenbaum2015analytic}. Since $|q_i^{-s}|$ is constant when $\sigma$ is fixed, $1-q_i^{-s}$ has polynomial growth in $t$ when $\sigma$ is fixed. Thus so are the above two functions. Therefore
\[
\int_{(-1)}|\widetilde{F}(s)|\left|\frac{\zeta(2s+2)}{\zeta(s+2)} \right| \prod_{i=1}^{r}\left|\frac{1-q_i^{-2s-2}}{1-q_i^{-s-2}}\right|\rmd |s|\ll_{f_\infty,f_{q_i}} 1.
\]

By \autoref{lem:eisensteinestimate},
\begin{align*}
&4\sqrt{n}\sum_{\pm}\sum_{\nu\in \ZZ^r}q^{\nu/2}\int_{x\in \RR}\int_{y\in \QQ_{S_\fin}}|\theta_\infty^\pm(x)||\theta_{q}(y,\pm nq^\nu)|\legendresymbol{(4nq^\nu)^{-1/2}}{|x^2\mp 1|_\infty^{1/2}|y^2\mp 4nq^\nu|_q'^{1/2}}\rmd x\rmd y\\
  \ll&\sum_{\pm}\sum_{\nu\in \ZZ^r}\int_{x\in \RR}\int_{y\in \QQ_{S_\fin}}\frac{|\theta_\infty^\pm(x)||\theta_{q}(y,\pm nq^\nu)|}{\sqrt{|x^2\mp 1|_\infty|y^2\mp 4nq^\nu|_q'}}\rmd x\rmd y\ll_{f_\infty,f_{q_i}} 1.
\end{align*}
Therefore
\[
\eqref{eq:sigma01}\ll_{f_\infty,f_{q_i}} \prod_{p\notin S}(1+n_p)=\bm{d}(n)\ll_{f_\infty,f_{q_i},\varepsilon} n^\varepsilon.\qedhere
\]
\end{proof}

\begin{proposition}\label{prop:sigma02estimate}
For any $\varepsilon>0$, we have
\[
\eqref{eq:sigma02}\ll n^\varepsilon,
\] 
where the implied constant only depends on $f_\infty$, $f_{q_i}$ and $\varepsilon$.
\end{proposition}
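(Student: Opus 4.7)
The plan is to mimic the proof of \autoref{prop:sigma01estimate} essentially verbatim: move the absolute value inside all the integrals, bound each factor along the contour $\cC_r$, and collect the $n$-dependence coming from the product over $p\notin S$. The only genuinely new ingredient compared with $\eqref{eq:sigma01}$ is the gamma quotient $\Gamma(\tfrac{\iota+s}{2})/\Gamma(\tfrac{\iota+1-s}{2})$, which by \eqref{eq:mellinrapid2} satisfies $|\Gamma(\tfrac{\iota+s}{2})/\Gamma(\tfrac{\iota+1-s}{2})| \asymp |t/2|^{\sigma-1/2}$ on $\Re s=\sigma$ fixed, i.e.\ polynomial growth in $t$, and so is absorbed by the rapid decay of $\widetilde F(s)$ from \autoref{prop:propertyf}. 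The ratios $\zeta(2s)/\zeta(s+1)$ and $(1-\epsilon_i q_i^{s-1})/(1-\epsilon_i q_i^{-s})$, $(1-q_i^{-2s})/(1-q_i^{-s-1})$ have polynomial growth in $t$ on any vertical line avoiding their poles, exactly as in \autoref{prop:sigma01estimate}. Hence, stripped of the $p$-product and the Mellin kernel $(\cdot)^{-s}$, the $s$-integral along $\cC_r$ is $O_{f_\infty, f_{q_i}}(1)$.

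For the $p$-product, on $\Re s = \sigma \geq 0$ we bound
$$
\left|\prod_{p\notin S}\frac{1-p^{-(n_p+1)s}}{1-p^{-s}}\right|
= \left|\prod_{p\mid n}\sum_{u=0}^{n_p}p^{-us}\right|
\leq \prod_{p\mid n}(n_p+1) = \bm{d}(n)\ll n^\varepsilon,
$$
as before. The Mellin kernel contributes in absolute value $\pi^{-\sigma}(4nq^\nu)^{\sigma/2}|x^2\mp 1|_\infty^{\sigma/2}|y^2\mp 4nq^\nu|_q'^{\sigma/2}$, and multiplied with the outer factor $1/\sqrt{|x^2\mp 1|_\infty|y^2\mp 4nq^\nu|_q'}$ this leaves the weight $|x^2\mp 1|^{(\sigma-1)/2}|y^2\mp 4nq^\nu|_q'^{(\sigma-1)/2}$. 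Using the decomposition of $\theta_\infty^\pm$ and $\theta_{q_i}$ into a smooth part plus $|\cdot|^{1/2}\cdot$(smooth) near the singularities (cf.\ \SSec\ref{subsec:singularities}), the resulting $x,y$-integrand against $|\theta_\infty^\pm\theta_q|$ is absolutely integrable for any $\sigma>0$, and the outer $\nu$-sum is finite by the same compactness argument as in \autoref{prop:eisensteinestimate}.

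The main subtlety, and the step I expect to deserve real care, is handling the surplus factor $n^{\sigma/2}$ from the Mellin kernel: unlike in $\eqref{eq:sigma01}$ where the contour $(-1)$ produced a clean cancellation of $n$-powers against the prefactor $\sqrt n\,q^{\nu/2}$, here no such cancellation occurs. The resolution is to take $\cC_r$ inside the strip $\{0<\sigma<1/2\}$, which avoids the poles of $\widetilde F$ at $s=0$, of $\zeta(2s)$ at $s=1/2$, of $\Gamma(\tfrac{\iota+s}{2})$ at $s=-\iota$, and of the local factors $(1-\epsilon_i q_i^{-s})^{-1}$ at $s=0$ when $\epsilon_i=1$; then $n^{\sigma/2}\bm{d}(n)$ is absorbed into $n^\varepsilon$ after choosing $\sigma$ sufficiently small and relabelling $\varepsilon$, yielding $\eqref{eq:sigma02}\ll_{f_\infty,f_{q_i},\varepsilon} n^\varepsilon$.
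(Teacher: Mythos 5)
There is a genuine gap at exactly the step you flag as the "main subtlety". The contour $\cC_r$ is not yours to choose: it is prescribed by Theorem 8.10 of \cite{cheng2025}, and the identity the paper relies on, namely $\frac{1}{\dpii}\int_{\cC_r}h(s)\,\rmd s=-\frac12\res_{s=0}h(s)+\pv\frac{1}{\dpii}\int_{(0)}h(s)\,\rmd s$, shows that $\cC_r$ passes to the \emph{left} of $s=0$. Your plan of working on a vertical line with $0<\sigma<1/2$ therefore requires deforming the contour across $s=0$, and the integrand of \eqref{eq:sigma02} genuinely has a simple pole there: when $\iota=0$ and $\epsilon_i=1$ for all $i$, the poles of $\widetilde F(s)$, of $\Gamma(\frac{s}{2})$ and of each $(1-\epsilon_iq_i^{-s})^{-1}$ outweigh the zeros of $1/\zeta(s+1)$ and of $\prod_i(1-q_i^{-2s})$ (the residue is computed in \cite[Theorem 6.1]{cheng2025}). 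Saying the new contour "avoids" these poles does not make their residue disappear; your argument silently drops it. That dropped term is precisely the quantity $2\sum_{\pm}\sum_{\nu}\int_{X_0}\int_{Y_{\mathbf 1}}\theta_\infty^\pm\theta_q/\sqrt{|x^2\mp1|_\infty|y^2\mp4nq^\nu|_q'}$ (times the arithmetic factor $\prod_p(n_p+1)$), which the paper isolates and bounds by relating it to the Eisenstein contribution $\sum_\mu\Tr((\xi_0\otimes\mu)(f^n))$ via \cite[Theorem 8.8]{cheng2025} and \autoref{prop:eisensteinestimate}. The omission is fixable — the residue term is $O(\bm{d}(n))=O(n^\varepsilon)$ by the same direct estimates — but as written your proof is incomplete at the one point where \eqref{eq:sigma02} differs structurally from \eqref{eq:sigma01}.

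For the remainder your estimates are sound, but note that the paper avoids your $n^{\sigma/2}$ bookkeeping altogether: after extracting the residue it bounds the principal-value integral on $\Re s=0$, where the Mellin kernel has modulus $1$, so no surplus power of $n$ ever appears. Your alternative of taking $\sigma$ arbitrarily small inside $\lopen 0,1/2\ropen$ would also work once the residue is accounted for (the integrand is holomorphic in that open strip and $\widetilde F$ decays rapidly), but it is not simpler.
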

\begin{proof}
First we recall the following fact: If $h(s)$ is a meromorphic function on the vertical strip $\sigma_1<\sigma<\sigma_2$ such that $0\in \lopen\sigma_1,\sigma_2\ropen$. Suppose that  $h$ has a simple pole at $s=0$ and holomorphic otherwise, and for any $\sigma\in  \lopen\sigma_1,\sigma_2\ropen$ and $\sigma\neq 0$, the integral $\int_{(\sigma)}h(s)\rmd s$ converges. Then
\[
\frac{1}{\dpii}\int_{\cC_v}h(s)\rmd s=-\frac{1}{2}\res_{s=0}h(s)+\pv\frac{1}{\dpii}\int_{(0)}h(s)\rmd s,
\]
where $\pv$ denotes the principal value. 
Also, recall that 
\[
\widetilde{F}(s)\frac{\Gamma(\frac{\iota+s}{2})}{\Gamma(\frac{\iota+1-s}{2})} \frac{\zeta(2s)}{\zeta(s+1)}\prod_{i=1}^{r}\frac{1-\epsilon_i q_i^{s-1}}{1-\epsilon_i q_i^{-s}} \frac{1-q_i^{-2s}}{1-q_i^{-s-1}}
\]
is regular at $s=0$ unless $\iota=0$ and $\epsilon_i=1$ for all $i$, and in this exceptional case, the function has a simple pole at $s=0$ and the residue at $s=0$ is
$-2/\sqrt{\uppi}$ \cite[Theorem 6.1]{cheng2025}. 
Hence we obtain
\begin{align*}
  \eqref{eq:sigma02} =&4\sum_{\pm}\sum_{\nu\in \ZZ^r}\int_{x\in \RR}\int_{y\in \QQ_{S_\fin}}\frac{\theta_\infty^\pm(x)\theta_{q}(y,\pm nq^\nu)}{\sqrt{|x^2\mp 1|_\infty|y^2\mp 4nq^\nu|_q'}}\left[\pv\frac{\sqrt{\uppi}}{\dpii}\int_{(0)}\widetilde{F}(s) \frac{\Gamma(\frac{\iota+s}{2})}{\Gamma(\frac{\iota+1-s}{2})}\frac{\zeta(2s)}{\zeta(s+1)} \right.\\
  \times&\left. \prod_{i=1}^{r}\frac{1-\epsilon_i q_i^{s-1}}{1-\epsilon_i q_i^{-s}}\frac{1-q_i^{-2s}}{1-q_i^{-s-1}}\prod_{p\notin S}\frac{1-p^{-(n_p+1)s}}{1-p^{-s}} \legendresymbol{\uppi (4nq^\nu)^{-1/2}}{|x^2\mp 1|_\infty^{1/2}|y^2\mp 4nq^\nu|_q'^{1/2}}^{-s}\rmd s\right]
  \rmd x\rmd y\\
  +&2^r\bm{d}(n)\sum_{\pm}\sum_{\nu\in \ZZ^r}\int_{X_0}\int_{Y_\mathbf{1}}\frac{\theta_\infty^\pm(x)\theta_q(y,\pm nq^\nu)}{\sqrt{|x^2\mp 1|_\infty|y^2\mp 4nq^\nu|_q'}}\rmd x\rmd y.
\end{align*}
By \cite[Theorem 8.8]{cheng2025} and \autoref{lem:eisensteinestimate},
\[
\sum_{\pm}\sum_{\nu\in \ZZ^r}\int_{X_0}\int_{Y_\mathbf{1}}\frac{\theta_\infty^\pm(x)\theta_q(y,\pm nq^\nu)}{\sqrt{|x^2\mp 1|_\infty|y^2\mp 4nq^\nu|_q'}}\rmd x\rmd y\ll 1.
\]

The first term is bounded by
\begin{align*}
  & \sum_{\pm}\sum_{\nu\in \ZZ^r}\int_{x\in \RR}\int_{y\in \QQ_{S_\fin}}\frac{|\theta_\infty^\pm(x)||\theta_{q}(y,\pm nq^\nu)|}{\sqrt{|x^2\mp 1|_\infty|y^2\mp 4nq^\nu|_q'}}\left[\pv\frac{\sqrt{\uppi}}{\dpii}\int_{(0)}|\widetilde{F}(s)| \left|\frac{\Gamma(\frac{\iota+s}{2})}{\Gamma(\frac{\iota+1-s}{2})}\right|\left|\frac{\zeta(2s)}{\zeta(s+1)}\right| \right.\\
  \times&\left. \prod_{i=1}^{r}\left|\frac{1-\epsilon_i q_i^{s-1}}{1-\epsilon_i q_i^{-s}}\frac{1-q_i^{-2s}}{1-q_i^{-s-1}}\right|\prod_{p\notin S}\left|\sum_{u=0}^{n_p}p^{-us}\right|\left|\legendresymbol{\uppi (4nq^\nu)^{-1/2}}{|x^2\mp 1|_\infty^{1/2}|y^2\mp 4nq^\nu|_q'^{1/2}}^{-s}\right|\rmd |s|\right]
  \rmd x\rmd y\\
  \ll &\sum_{\pm}\sum_{\nu\in \ZZ^r}\int_{x\in \RR}\int_{y\in \QQ_{S_\fin}}\frac{|\theta_\infty^\pm(x)||\theta_{q}(y,\pm nq^\nu)|}{\sqrt{|x^2\mp 1|_\infty|y^2\mp 4nq^\nu|_q'}}\rmd x\rmd y\prod_{p\notin S}(1+n_p) \\
  \times&\left|\pv\frac{\sqrt{\uppi}}{\dpii}\int_{(0)}\widetilde{F}(s) \frac{\Gamma(\frac{\iota+s}{2})}{\Gamma(\frac{\iota+1-s}{2})}\frac{\zeta(2s)}{\zeta(s+1)} \prod_{i=1}^{r}\frac{1-\epsilon_i q_i^{s-1}}{1-\epsilon_i q_i^{-s}}\frac{1-q_i^{-2s}}{1-q_i^{-s-1}}\rmd s\right|.
\end{align*}
Observe that $\widetilde{F}(s)$ has rapid decay in $t$ when $\sigma$ is fixed, and
\[
\frac{\zeta(2s)}{\zeta(s+1)},\qquad \frac{1-\epsilon_i q_i^{s-1}}{1-\epsilon_i q_i^{-s}},\qquad \frac{1-q_i^{-2s}}{1-q_i^{-s-1}}
\]
all have polynomial growth in $t$. Now we prove that $\Gamma(\frac{\iota+s}{2})/\Gamma(\frac{\iota+1-s}{2})$ has polynomial growth in $t$. Indeed, it is the direct consequence of the Stirling formula \cite[Chapter II.0]{tenenbaum2015analytic} that
\[
\frac{\Gamma(\frac{\iota+s}{2})}{\Gamma(\frac{\iota+1-s}{2})}\asymp |t|^{-1/2}
\]
for $|t|\gg 1$. Hence we obtain
\[
\left|\pv\frac{\sqrt{\uppi}}{\dpii}\int_{(0)}\widetilde{F}(s) \frac{\Gamma(\frac{\iota+s}{2})}{\Gamma(\frac{\iota+1-s}{2})}\frac{\zeta(2s)}{\zeta(s+1)} \prod_{i=1}^{r}\frac{1-\epsilon_i q_i^{s-1}}{1-\epsilon_i q_i^{-s}}\frac{1-q_i^{-2s}}{1-q_i^{-s-1}}\rmd s\right|\ll_{f_\infty,f_{q_i}} 1.
\]
Therefore, we conclude that 
\[
\eqref{eq:sigma02}\ll_{f_\infty,f_{q_i}} \prod_{p\notin S}(1+n_p)=\bm{d}(n)\ll_{f_\infty,f_{q_i},\varepsilon} n^\varepsilon.\qedhere
\]
\end{proof}

Combining \autoref{prop:sigma01estimate} and \autoref{prop:sigma02estimate} together, we obtain
\begin{proposition}\label{prop:estimatesigma0}
For any $\varepsilon>0$, we have
\[
\Sigma(0)\ll n^\varepsilon,
\]
where the implied constant only depends on $f_\infty$, $f_{q_i}$ and $\varepsilon$.
\end{proposition}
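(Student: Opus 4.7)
The plan is very short, because the hard analytic work is already done. The term $\Sigma(0)$ has been displayed at the start of this subsection (via \cite[Theorem 8.10]{cheng2025} specialized to $\vartheta=1/2$) as the sum of the two explicit integrals $\eqref{eq:sigma01}$ and $\eqref{eq:sigma02}$. I would therefore simply write
\[
\Sigma(0)=\eqref{eq:sigma01}+\eqref{eq:sigma02}
\]
and invoke \autoref{prop:sigma01estimate} and \autoref{prop:sigma02estimate}, each of which bounds the corresponding summand by $O_{f_\infty,f_{q_i},\varepsilon}(n^\varepsilon)$. Adding the two estimates gives $\Sigma(0)\ll n^\varepsilon$ with the same dependence, which is the claim.

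Since this step is essentially a bookkeeping one, the real obstacles lie in the two preceding propositions rather than here. For orientation, the main points exploited there are: (i) the rapid decay of $\widetilde{F}(s)$ on vertical strips from \cite[Proposition 3.7]{cheng2025}, combined with the polynomial growth in $t=\Im s$ of the zeta ratios $\zeta(2s+2)/\zeta(s+2)$ and $\zeta(2s)/\zeta(s+1)$, of the finite Euler factors at $q_i$, and (for \eqref{eq:sigma02}) of the gamma ratio $\Gamma(\tfrac{\iota+s}{2})/\Gamma(\tfrac{\iota+1-s}{2})$ via Stirling, which together make the inner contour integrals $O_{f_\infty,f_{q_i}}(1)$; (ii) control of the remaining $(x,y)$-integral of $|\theta_\infty^\pm\,\theta_q|/\sqrt{|x^2\mp 1|_\infty\,|y^2\mp 4nq^\nu|_q'}$ via the bound from \autoref{prop:eisensteinestimate}; and (iii) absorption of the local Hecke factor $\prod_{p\notin S}(n_p+1)=\bm{d}(n)$ into $n^\varepsilon$.

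The only mildly delicate point is specific to \eqref{eq:sigma02}: the contour $\cC_r$ must pass through a possible simple pole at $s=0$ (which actually occurs exactly when $\iota=0$ and $\epsilon_i=1$ for every $i$). This is handled by the principal value identity
\[
\frac{1}{\dpii}\int_{\cC_r}h(s)\,\rmd s=-\frac{1}{2}\res_{s=0}h(s)+\pv\frac{1}{\dpii}\int_{(0)}h(s)\,\rmd s,
\]
after which the residue contribution reduces to an expression of the same shape as the Eisenstein boundary term and is controlled by \autoref{prop:eisensteinestimate}, while the principal value integral is dominated as in (i)--(iii). With both pieces of $\Sigma(0)$ uniformly bounded by $n^\varepsilon$, the proposition follows by summation and there is no further obstacle.
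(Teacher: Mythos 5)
Your proposal matches the paper exactly: $\Sigma(0)$ is decomposed as $\eqref{eq:sigma01}+\eqref{eq:sigma02}$ and the two preceding propositions are combined, and your sketch of their proofs (rapid decay of $\widetilde{F}$, polynomial growth of the zeta/gamma/Euler factors, the $\bm{d}(n)\ll n^\varepsilon$ absorption, and the principal-value treatment of the pole at $s=0$ with the residue term controlled via the Eisenstein estimate) is the argument actually used. No gaps.
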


\subsection{Estimate of $\Sigma(\square)$}
We have
\begin{equation}\label{eq:defsigmasquare}
\begin{split}
\Sigma(\square)=&2\sum_{\pm}\sum_{\nu\in \ZZ^r}\sum_{\substack{T\in \ZZ^S\\ T^2\mp 4nq^\nu= \square}}\sum_{f^2\mid T^2\mp 4nq^\nu}  \sum_{k\in \ZZ_{(S)}^{>0}}\frac{1}{kf}\legendresymbol{(T^2\mp 4nq^\nu)/f^2}{k}\theta_\infty^\pm\legendresymbol{T}{2n^{1/2}q^{\nu/2}}\\
    \times &\prod_{i=1}^{r}\theta_{q_i}(T,\pm nq^\nu)\left[F\legendresymbol{kf^2}{|T^2\mp 4nq^\nu|_{\infty,q}'^{1/2}}+\frac{kf^2}{\sqrt{|T^2\mp 4nq^\nu|_{\infty,q}'}}V\legendresymbol{kf^2}{|T^2\mp 4nq^\nu|_{\infty,q}'^{1/2}}\right],
\end{split}
\end{equation}
which comes from the complement of the approximate functional equation of the non-square terms. It suggests that we can use the approximate functional equation in the reverse direction.

Note that when $T^2\mp 4nq^\nu=0$, then the contribution to $\Sigma(\square)$ is $0$. Hence we only need to consider the summation  over $T\in \ZZ^S$ such that $T^2\mp 4nq^\nu=\square$ and $T^2\mp 4nq^\nu\neq 0$.

The definition of the partial Zagier $L$-function naturally extends to $\delta\in \ZZ^S$ that is a square and is nonzero. Write $\delta=\sigma^2$. Then we have
\begin{equation}\label{eq:zagierzeta}
\begin{split}
   L^{S}(s,\delta) &=\prod_{i=1}^{r}\left(1-q_i^{-s}\right)\sum_{f\mid {\sigma^{(q)}}}\frac{1}{f^{2s-1}}L\left(s,\legendresymbol{(\sigma^{(q)}/f)^2}{\cdot}\right)  \\
     &  =\prod_{i=1}^{r}\left(1-q_i^{-s}\right)\sum_{f\mid {\sigma^{(q)}}}\frac{1}{f^{2s-1}}\prod_{p\mid \frac{{\sigma^{(q)}}}{f}}\left(1-p^{-s}\right)\zeta(s)
     =\prod_{i=1}^{r}\left(1-q_i^{-s}\right)\sum_{f\mid {\sigma^{(q)}}}\frac{1}{f^{2s-1}}\sum_{k\mid \frac{{\sigma^{(q)}}}{f}}\frac{\bm{\mu}(k)}{k^s}\zeta(s),
\end{split}
\end{equation}
which is an entire function times $\zeta(s)$, where $\bm\mu(k)$ denotes the M\"obius function. Hence it is regular at $s\neq 1$ and has a simple pole at $s=1$ with residue
\[
\res_{s=1}L^{S}(s,\delta)=\prod_{i=1}^{r}(1-q_i^{-1})\sum_{f\mid {\sigma^{(q)}}}\frac{1}{f}\prod_{p\mid\frac{{\sigma^{(q)}}}{f}}(1-p^{-1})=\prod_{i=1}^{r}\left(1-q_i^{-s}\right)\sum_{f\mid {\sigma^{(q)}}}\frac{1}{f}\sum_{k\mid \frac{{\sigma^{(q)}}}{f}}\frac{\bm{\mu}(k)}{k}.
\]

By \autoref{thm:funceqndelta}, we obtain
\begin{theorem}\label{thm:funceqndeltasquare}
We have
\begin{equation}\label{eq:funceqndeltasquare}
L^{S}(s,\delta)=\legendresymbol{|\delta|_{\infty,q}'}{\uppi}^{1/2-s} \prod_{i=1}^{r}\frac{1-q_i^{-s}}{1-q_i^{s-1}} \frac{\Gamma((1-s)/2)}{\Gamma(s/2)}L^{S}(1-s,\delta).
\end{equation}
\end{theorem}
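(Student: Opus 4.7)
The plan is to reduce the statement directly to the classical functional equation of the Riemann zeta function, using the fact that for a square $\delta = \sigma^2$ the Kronecker symbol $\legendresymbol{\delta/f^2}{\cdot}$ is the principal Dirichlet character, so the corresponding partial $L$-function factors as $\zeta(s)$ times finitely many Euler factors. This is exactly what the explicit formula \eqref{eq:zagierzeta} records:
\[
L^{S}(s,\delta) \;=\; \zeta(s)\,\prod_{i=1}^{r}(1-q_i^{-s})\,\cdot\,G(s),
\qquad
G(s) \;:=\; \sum_{f\mid \sigma^{(q)}} \frac{1}{f^{2s-1}} \prod_{\ell\mid \sigma^{(q)}/f}(1-\ell^{-s}),
\]
where $G(s)$ is a finite Dirichlet polynomial, hence entire.

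With this factorization in hand, I would apply $\zeta(s) = \uppi^{s-1/2}\,\Gamma((1-s)/2)/\Gamma(s/2)\,\zeta(1-s)$ to the $\zeta(s)$ factor and substitute the analogous expression for $L^{S}(1-s,\delta)$. Since $\delta>0$ gives $\iota_\delta = 0$, and since $\tau_\delta = (\sigma^{(q)})^2$ is a square coprime to $S$ we have $\epsilon_i = \legendresymbol{(\sigma^{(q)})^2}{q_i} = 1$ for every $i$ and $|\delta|_{\infty,q}' = (\sigma^{(q)})^2$. After cancelling the common factor $\prod_{i=1}^{r}(1-q_i^{-s})(1-q_i^{s-1})^{-1}\cdot(1-q_i^{-(1-s)})$ arising from the two sides, the desired functional equation reduces to the elementary identity
\[
G(s) \;=\; (\sigma^{(q)})^{\,1-2s}\, G(1-s).
\]

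To prove this identity I would first note that $G$ is multiplicative in $\sigma^{(q)}$: if $\sigma^{(q)} = N_1 N_2$ with $\gcd(N_1,N_2)=1$, then each divisor $f$ factors uniquely as $f_1 f_2$, the sets of prime divisors of $N_i/f_i$ are disjoint, and the sum splits. It therefore suffices to check the prime-power case $N = p^e$, where summing the geometric series yields
\[
G_p(s) \;=\; (1-p^{-s})\,\frac{1-p^{e(1-2s)}}{1-p^{1-2s}} \,+\, p^{e(1-2s)}.
\]
Writing $a := p^{-s}$ and $b := p^{1-2s}$, one has the key relation $b^{k}/(pa) = a\,b^{k-1}$, and a short direct manipulation shows that both $G_p(s)$ and $b^{e}G_p(1-s) = p^{e(1-2s)}G_p(1-s)$ collapse to the same polynomial $\sum_{j=0}^{e}b^{j} - a\sum_{j=0}^{e-1}b^{j}$.

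No step poses a genuine obstacle; the whole argument is bookkeeping once one spots the factorization through $\zeta(s)$. The only points requiring care are verifying $|\delta|_{\infty,q}' = (\sigma^{(q)})^2$, $\iota_\delta = 0$, and $\epsilon_i = 1$ from the definitions of Section 2, so that the Euler-factor and gamma quotient in the claimed functional equation match exactly the form produced on the right-hand side.
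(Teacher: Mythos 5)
Your argument is correct, but it is not the route the paper takes. The paper derives \eqref{eq:funceqndeltasquare} in one line by specializing the general functional equation of Theorem \ref{thm:funceqndelta} — which, per the remark following it, "remains valid when $\delta$ is a square, with an identical proof" — to the square case, where $\iota_\delta=0$, $\epsilon_i=\legendresymbol{(\sigma^{(q)})^2}{q_i}=1$, and $|\delta|_{\infty,q}'=(\sigma^{(q)})^2$; the substance is thus outsourced to the proof of Theorem 3.3 in \cite{cheng2025}. You instead give a self-contained verification available only in the square case: factor $L^S(s,\delta)=\zeta(s)\prod_i(1-q_i^{-s})G(s)$ via \eqref{eq:zagierzeta}, invoke the classical functional equation $\zeta(s)=\uppi^{s-1/2}\Gamma(\tfrac{1-s}{2})\Gamma(\tfrac{s}{2})^{-1}\zeta(1-s)$, and reduce to the Dirichlet-polynomial identity $G(s)=(\sigma^{(q)})^{1-2s}G(1-s)$, which you correctly establish by multiplicativity and the prime-power computation (your collapsed form $\sum_{j=0}^{e}b^{j}-a\sum_{j=0}^{e-1}b^{j}$ and the relation $p^{s-1}b^{k}=p^{-s}b^{k-1}$ both check out, as do the identifications $\iota_\delta=0$, $\epsilon_i=1$, $|\delta|_{\infty,q}'=(\sigma^{(q)})^2$). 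The trade-off: the paper's proof is shorter and uniform with the non-square case but rests on an assertion about the earlier paper's proof extending verbatim, whereas yours is longer but entirely verifiable from what is displayed in this paper plus the standard functional equation of $\zeta$, and it makes transparent why the square case works (the character is principal, so everything funnels through $\zeta$).
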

Moreover, we establish the following approximate functional equation for $L^S(s,\delta)$.
\begin{theorem}\label{thm:afesquare}
For any $A>0$, we have
\begin{equation}\label{eq:approximatefesquare}
\begin{split}
   \res_{s=0}\widetilde{F}(s)L^{S}(1+s,\delta)A^s& =\sum_{f^2\mid \delta}\sum_{k\in \ZZ_{(S)}^{>0}}\frac{1}{kf}\legendresymbol{\delta/f^2}{k}F\legendresymbol{kf^2}{A}\\
     & +\frac{kf^2}{\sqrt{|\delta|_{\infty,q}'}}\sum_{f^2\mid \delta}\sum_{k\in \ZZ_{(S)}^{>0}}\frac{1}{kf}\legendresymbol{\delta/f^2}{k}V_{0,\mathbf{1}}\legendresymbol{kf^2A}{|\delta|_{\infty,q}'},
\end{split}
\end{equation}
where $\mathbf{1}=(1,\dots,1)\in \{0,\pm 1\}^r$.
\end{theorem}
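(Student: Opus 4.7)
The plan is to mimic the standard contour-integral proof of the approximate functional equation (cf. \autoref{thm:afe} and \autoref{cor:afe1}), with the key modification that when $\delta$ is a square, $L^{S}(1+s,\delta)$ acquires a simple pole at $s=0$ (coming from the $\zeta$-factor in \eqref{eq:zagierzeta}), so contour deformation across $s=0$ picks up a nontrivial residue. Define
\[
I_{\sigma}=\frac{1}{\dpii}\int_{(\sigma)}\widetilde{F}(s)\,L^{S}(1+s,\delta)\,A^{s}\,\rmd s.
\]
I will evaluate $I_\sigma$ in two half-planes and then compare using the residue theorem.

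First, for $\sigma>0$: since $\Re(1+s)>1$, the partial Zagier $L$-function is given by its absolutely convergent Dirichlet series. Interchanging sum and integral (justified by the rapid decay of $\widetilde{F}$ in vertical strips from \autoref{prop:propertyf}) and applying Mellin inversion for $F$ (which has Mellin transform $\widetilde{F}$ by \eqref{eq:mellinf}) yields
\[
I_{\sigma}=\sum_{f^2\mid\delta}\sum_{k\in\ZZ_{(S)}^{>0}}\frac{1}{kf}\legendresymbol{\delta/f^2}{k}F\legendresymbol{kf^2}{A}.
\]
Second, for $\sigma<-1$: apply \autoref{thm:funceqndeltasquare} inside the integral to replace $L^{S}(1+s,\delta)$ by the functional-equation factor times $L^{S}(-s,\delta)$. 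Since now $\Re(-s)>1$, expand $L^{S}(-s,\delta)$ as its Dirichlet series, interchange sum and integral, make the substitution $s\to -s$ (using $\widetilde{F}(-s)=-\widetilde{F}(s)$ from \autoref{prop:propertyf}), and match the resulting integrand with the definition of $V_{0,\mathbf{1}}=V_{0,\mathbf{1},1}$ from \autoref{cor:afe1} (with $\iota=0$, $\epsilon=\mathbf{1}$). This produces
\[
I_{\sigma}=-\sum_{f^2\mid\delta}\sum_{k\in\ZZ_{(S)}^{>0}}\frac{1}{kf}\legendresymbol{\delta/f^2}{k}\cdot\frac{kf^{2}}{\sqrt{|\delta|_{\infty,q}'}}\,V_{0,\mathbf{1}}\legendresymbol{kf^2 A}{|\delta|_{\infty,q}'}.
\]

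Third, I shift the contour from a line $\Re s=\sigma_{+}>0$ to $\Re s=\sigma_{-}<-1$. The integrand has only one singularity in the strip $-1<\Re s<\sigma_+$: a double pole at $s=0$ arising from the simple pole of $\widetilde{F}$ and the simple pole of $L^{S}(1+s,\delta)$ (no pole of $\zeta$ elsewhere, no pole of $\Gamma$ factors since those appear only after applying the functional equation). The horizontal segments at $|t|\to\infty$ vanish by the rapid decay of $\widetilde{F}$ combined with the polynomial growth of $L^{S}(1+s,\delta)$ in fixed vertical strips (away from $s=0$). The residue theorem then gives
\[
I_{\sigma_+}-I_{\sigma_-}=\res_{s=0}\widetilde{F}(s)L^{S}(1+s,\delta)A^{s},
\]
and substituting the two evaluations of $I_\sigma$ produces exactly \eqref{eq:approximatefesquare}.

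The main obstacle I expect is keeping track of signs and factors through the functional equation together with the $s\to -s$ substitution, and reconciling the contour $(1)$ in the definition of $V_{0,\mathbf{1}}$ with the line $(-\sigma_-)$ (which requires an additional, harmless, contour shift that encounters no poles because $-\sigma_->1>0$). A secondary technical point is rigorously justifying the interchange of summation and integration in the $\sigma<-1$ regime: because $\delta$ is a square, $L^{S}(-s,\delta)$ still converges absolutely there, and the uniform rapid decay of $\widetilde{F}$ against the polynomial growth of the gamma and local Euler factors (established just as in the proof of \autoref{prop:propertyh} via Stirling) makes Fubini applicable.
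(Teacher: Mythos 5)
Your proposal is correct and follows essentially the same route as the paper: the paper's proof is exactly the standard contour-shift argument for the approximate functional equation (as in \autoref{thm:afe}), modified only in that the integrand $\widetilde{F}(s)L^{S}(1+s,\delta)A^{s}$ now has a double pole at $s=0$ (simple pole of $\widetilde{F}$ times the simple pole of $\zeta(1+s)$ in \eqref{eq:zagierzeta}), whose residue replaces the value $L^{S}(1,\delta)$ appearing in the non-square case. Your sign bookkeeping through the functional equation, the substitution $s\mapsto -s$ using the oddness of $\widetilde{F}$, and the identification with $V_{0,\mathbf{1}}$ all check out.
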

\begin{proof}
The proof is just a slight modification of the original proof of \autoref{thm:afe}. The only difference is that we want to move the contour integral with integrand $\widetilde{F}(u)L^{S}(1+u,\delta)A^u$, which only has a double pole at $u=0$.
\end{proof}
By \eqref{eq:defsigmasquare} with $A=|T^2\mp 4nq^\nu|_{\infty,q}'^{1/2}$, we obtain
\begin{corollary}\label{cor:sigmasquarel}
We have
\begin{equation}\label{eq:sigmasquarel}
\begin{split}
   \Sigma(\square)= &2\sum_{\pm}\sum_{\nu\in \ZZ^r}\sum_{\substack{T\in \ZZ^S\\ T^2\mp 4nq^\nu= \square\\ T^2\mp 4nq^\nu\neq 0}}\theta_\infty^\pm\legendresymbol{T}{2n^{1/2}q^{\nu/2}}
    \prod_{i=1}^{r}\theta_{q_i}(T,\pm nq^\nu) \\
     & \times \res_{s=0}\widetilde{F}(s)L^{S}(1+s,T^2\mp 4nq^\nu)|T^2\mp 4nq^\nu|_{\infty,q}'^{s/2}.
\end{split}
\end{equation}
\end{corollary}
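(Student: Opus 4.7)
The plan is to substitute the approximate functional equation of Theorem~\ref{thm:afesquare} directly into the definition \eqref{eq:defsigmasquare} of $\Sigma(\square)$, taking $\delta = T^2 \mp 4nq^\nu$ and $A = |T^2 \mp 4nq^\nu|_{\infty,q}'^{1/2}$. This specific choice of $A$ is what makes the two sides speak the same language: it forces $A^s = |\delta|_{\infty,q}'^{s/2}$ and $kf^2 A/|\delta|_{\infty,q}' = kf^2/A$, so the arguments of $F$ and of $V$ produced by \eqref{eq:approximatefesquare} match those appearing inside the bracket of \eqref{eq:defsigmasquare} verbatim.

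First I would restrict the sum over $T$ to those with $T^2 \mp 4nq^\nu \neq 0$. Indeed, if $T^2 \mp 4nq^\nu = 0$ then $A = 0$ and the arguments $kf^2/A$ of $F$ and $V$ tend to $+\infty$; by the rapid decay of $F,V\in\cS$ (Propositions~\ref{prop:propertyf} and~\ref{prop:propertyh}), both $F(kf^2/A)$ and $(kf^2/A)V(kf^2/A)$ vanish in the limiting sense, so these terms contribute nothing. For each surviving $T$, the quantity $\delta = T^2\mp 4nq^\nu$ is a nonzero square in $\ZZ^S$; in particular $\delta>0$ so $\iota_\delta = 0$, and writing $\delta = \sigma^2$ one has $\tau_\delta = (\sigma^{(q)})^2$, a square coprime to each $q_i$, whence $\epsilon_{i,\delta} = \legendresymbol{(\sigma^{(q)})^2}{q_i} = 1$ for every $i$. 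Consequently the function $V$ appearing in the definition of $\Sigma(\square)$ coincides with $V_{0,\mathbf{1}}$, and Theorem~\ref{thm:afesquare} is applicable.

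Applying \eqref{eq:approximatefesquare} with the above $\delta$ and $A$ identifies the bracketed $(k,f)$-sum inside \eqref{eq:defsigmasquare} with $\res_{s=0}\widetilde{F}(s)L^{S}(1+s,\delta)|\delta|_{\infty,q}'^{s/2}$. Substituting this identity back and pulling out the $\theta$-factors $\theta_\infty^\pm(T/(2n^{1/2}q^{\nu/2}))\prod_{i=1}^{r}\theta_{q_i}(T,\pm nq^\nu)$, which are independent of $k$ and $f$, yields exactly \eqref{eq:sigmasquarel}.

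I do not expect any real obstacle: the approximate functional equation was formulated in precisely the form required, and the remainder of the argument is pure bookkeeping. The only care point is absolute convergence when interchanging the $(k,f)$-summation with the residue at $s=0$, but this is already guaranteed by Theorem~\ref{thm:afesquare} itself, which writes the residue as an absolutely convergent double series.
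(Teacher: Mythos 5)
Your proposal is correct and follows exactly the paper's own route: the paper derives \eqref{eq:sigmasquarel} precisely by applying \autoref{thm:afesquare} to \eqref{eq:defsigmasquare} with $A=|T^2\mp 4nq^\nu|_{\infty,q}'^{1/2}$, having already noted that the $T^2\mp 4nq^\nu=0$ terms contribute nothing and that $\iota=0$, $\epsilon=\mathbf{1}$ for nonzero squares. Your additional remarks on why this choice of $A$ matches the arguments of $F$ and $V$ are sound bookkeeping, consistent with the paper.
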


Now we give the estimate of $\Sigma(\square)$. First we prove the following lemma for arithmetic functions.
\begin{lemma}\label{lem:arithmeticfunction}
Suppose that $n$ is a positive integer. Let $\bm{\mu}(n)$ be the M\"obius function and $\bm{\Lambda}(n)$ be the von Mangoldt function.
\begin{enumerate}[itemsep=0pt,parsep=0pt,topsep=0pt, leftmargin=0pt,labelsep=2.5pt,itemindent=15pt,label=\upshape{(\arabic*)}]
\item We have
\[
\sum_{f\mid n}\frac{1}{f}\sum_{k\mid \frac{n}{f}}\frac{\bm{\mu}(k)}{k}=1.
\]
\item We have
\[
\sum_{f\mid n}\frac{\log f}{f}\sum_{k\mid \frac{n}{f}}\frac{\bm{\mu}(k)}{k}=\sum_{d\mid n}\frac{\bm{\Lambda}(d)}{d}
\qquad\text{and}\qquad
\sum_{f\mid n}\frac{1}{f}\sum_{k\mid \frac{n}{f}}\frac{\bm{\mu}(k)\log k}{k}=-\sum_{d\mid n}\frac{\bm{\Lambda}(d)}{d}.
\]
\end{enumerate}
\end{lemma}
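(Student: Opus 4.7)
The plan is to translate both identities into identities of Dirichlet series, where they reduce to routine algebraic manipulations involving $\zeta(s)$ and $\zeta(s+1)$. The outer sum $\sum_{f\mid n}(\cdots)\sum_{k\mid n/f}(\cdots)$ is a Dirichlet convolution in $n$, so its Dirichlet series factors as a product of two Dirichlet series. The key preliminary identity is
\begin{equation*}
\sum_{m=1}^{\infty}\frac{1}{m^s}\sum_{k\mid m}\frac{\bm{\mu}(k)}{k}=\frac{\zeta(s)}{\zeta(s+1)},
\end{equation*}
which follows by setting $m=kd$ and separating the double sum into $\sum_{k\geq 1}\bm{\mu}(k)/k^{s+1}=1/\zeta(s+1)$ and $\sum_{d\geq 1}1/d^s=\zeta(s)$.

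For part (1), multiplying by $\sum_f 1/f^{s+1}=\zeta(s+1)$ shows that the Dirichlet series of the left-hand side is $\zeta(s+1)\cdot\zeta(s)/\zeta(s+1)=\zeta(s)$, which is the Dirichlet series of the constant function $1$. Uniqueness of Dirichlet coefficients in the region of absolute convergence $\Re s>1$ then gives the result for every $n$.

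For part (2), I would introduce the $\log$ factors by differentiating the preceding identities in $s$. From $\zeta(s+1)=\sum_f f^{-s-1}$ one gets $\sum_f (\log f)/f^{s+1}=-\zeta'(s+1)$, and from $1/\zeta(s+1)=\sum_k\bm{\mu}(k)/k^{s+1}$ one gets $\sum_k \bm{\mu}(k)(\log k)/k^{s+1}=\zeta'(s+1)/\zeta(s+1)^2$. Hence the first left-hand side in (2) has Dirichlet series $-\zeta'(s+1)\cdot\zeta(s)/\zeta(s+1)$, and the second has Dirichlet series
\begin{equation*}
\zeta(s+1)\cdot\frac{\zeta'(s+1)\zeta(s)}{\zeta(s+1)^2}=\frac{\zeta'(s+1)\zeta(s)}{\zeta(s+1)}.
\end{equation*}
On the other hand, since $\sum_d \bm{\Lambda}(d)/d^{s+1}=-\zeta'(s+1)/\zeta(s+1)$, the Dirichlet series of $\sum_{d\mid n}\bm{\Lambda}(d)/d$ equals $\zeta(s)\cdot(-\zeta'(s+1)/\zeta(s+1))=-\zeta'(s+1)\zeta(s)/\zeta(s+1)$. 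Matching Dirichlet coefficients yields both identities in (2); the minus sign in the second identity emerges naturally from the fact that differentiating $1/\zeta(s+1)$ flips sign relative to differentiating $\zeta(s+1)$.

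The entire argument is formal once one restricts to $\Re s>1$, so there is no genuine obstacle. An alternative would be to use the elementary identity $\sum_{k\mid m}\bm{\mu}(k)/k=\phi(m)/m$ together with $\sum_{f\mid n}\phi(n/f)=n$ to give a slick proof of (1), but the $\log$ terms in (2) are handled most uniformly through the Dirichlet series viewpoint above.
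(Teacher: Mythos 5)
Your argument is correct: every Dirichlet series you write down converges absolutely for $\Re s>1$ (the shifted series involve $\zeta(s+1)$, $\zeta'(s+1)$, $1/\zeta(s+1)$ at $\Re(s+1)>2$), the products therefore correspond to Dirichlet convolutions, and uniqueness of coefficients in the half-plane of absolute convergence legitimately transfers the identities of analytic functions back to identities for each individual $n$. The computations $-\zeta'(s+1)\zeta(s)/\zeta(s+1)$ for both the first left-hand side of (2) and for $\sum_{d\mid n}\bm{\Lambda}(d)/d$, and $+\zeta'(s+1)\zeta(s)/\zeta(s+1)$ for the second, all check out. The route is genuinely different from the paper's. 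The paper works entirely at the level of finite sums: (1) is a two-line change of variables using $\sum_{k\mid d}\bm{\mu}(k)=\delta_{d=1}$; for the first identity in (2) it writes the inner sum as $\bm{\phi}(n/f)/(n/f)$, expands $\log f=\sum_{d\mid f}\bm{\Lambda}(d)$, and uses $\sum_{f\mid m}\bm{\phi}(f)=m$; the second identity then follows by adding the two left-hand sides of (2) and observing that the combined sum collapses to $\sum_{d\mid n}\frac{\log d}{d}\sum_{k\mid d}\bm{\mu}(k)=0$. Your approach is more uniform — the logarithms enter mechanically by differentiation in $s$, and the sign in the second identity falls out of differentiating $1/\zeta(s+1)$ versus $\zeta(s+1)$ — at the cost of invoking the (standard but non-finite) uniqueness theorem for Dirichlet series; the paper's proof is purely combinatorial and self-contained, which is in keeping with the elementary role the lemma plays in bounding $\fp_{s=1}L^S(s,\delta)$.
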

\begin{proof}
(1) This is because
\[
\sum_{f\mid n}\frac{1}{f}\sum_{k\mid \frac{n}{f}}\frac{\bm{\mu}(k)}{k}=\sum_{kf\mid n}\frac{\bm{\mu}(k)}{kf}=\sum_{d\mid n}\frac{1}{d}\sum_{k\mid d}\bm{\mu}(k)=1.
\]

(2) We begin by proving the first equality. We have
\[
\sum_{k\mid \frac{n}{f}}\frac{\bm{\mu}(k)}{k}=\prod_{p\mid \frac{n}{f}}(1-p^{-1})=\frac{\bm{\phi}(n/f)}{n/f},
\] 
where $\bm{\phi}$ denotes the Euler totient function. Hence
\[
\sum_{f\mid n}\frac{\log f}{f}\sum_{k\mid \frac{n}{f}}\frac{\bm{\mu}(k)}{k}=\frac{1}{n}\sum_{f\mid n}\log f\bm{\phi}\legendresymbol{n}{f}=\frac{1}{n}\sum_{f\mid n}\sum_{d\mid f}\bm{\Lambda}(d)\bm{\phi}\legendresymbol{n}{f}=\frac{1}{n}\sum_{d\mid n}\bm{\Lambda}(d)\sum_{f\mid \frac{n}{d}}\bm{\phi}(f)=\sum_{d\mid n}\frac{\bm{\Lambda}(d)}{d}.
\]

The second equality follows from 
\[
\sum_{f\mid n}\frac{\log f}{f}\sum_{k\mid \frac{n}{f}}\frac{\bm{\mu}(k)}{k}+
\sum_{f\mid n}\frac{1}{f}\sum_{k\mid \frac{n}{f}}\frac{\bm{\mu}(k)\log k}{k}= \sum_{f\mid n}\sum_{k\mid \frac{n}{f}}\frac{\bm{\mu}(k)\log (kf)}{kf}=\sum_{d\mid n}\frac{\log d}{d}\sum_{k\mid d}\bm{\mu}(k)=0.\qedhere
\]
\end{proof}

\begin{proposition}\label{prop:estimatesigmasquare}
For any $\varepsilon>0$, we have
\[
\Sigma(\square)\ll n^\varepsilon,
\]
where the implied constant only depends on $f_\infty$, $f_{q_i}$ and $\varepsilon$.
\end{proposition}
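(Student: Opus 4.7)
The plan is to evaluate the residue in \autoref{cor:sigmasquarel} explicitly, show it grows at most like $\log n$, and combine with a divisor-counting bound on the admissible $T$.

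First, I would compute $\res_{s=0}\widetilde{F}(s)L^{S}(1+s,\delta)A^s$ for $\delta = T^2\mp 4nq^\nu$ a nonzero square and $A=|\delta|_{\infty,q}'^{1/2}$. By \eqref{eq:mellinf}, $\widetilde{F}(s)=s^{-1}(1+O(s))$ at $s=0$. From \eqref{eq:zagierzeta} we may write $L^{S}(s,\delta)=P(s)H_\delta(s)\zeta(s)$, where $P(s)=\prod_{i=1}^r(1-q_i^{-s})$ is entire and $H_\delta(s)=\sum_{f\mid \sigma^{(q)}}f^{1-2s}\sum_{k\mid \sigma^{(q)}/f}\bm{\mu}(k)k^{-s}$ (with $\delta=\sigma^2$). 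By \autoref{lem:arithmeticfunction}(1), $H_\delta(1)=1$, so $L^{S}(1+s,\delta)$ has a simple pole at $s=0$ with residue $\rho:=\prod_i(1-q_i^{-1})$. Expanding around the resulting double pole,
\[
\res_{s=0}\widetilde{F}(s)L^{S}(1+s,\delta)A^s \;=\; \rho\bigl(c_0 + \log A + H_\delta'(1)/H_\delta(1)\bigr),
\]
where $c_0$ is an absolute constant assembled from the Taylor coefficients of $\widetilde{F}$, $\zeta(1+s)$, and $P(s)$ at $s=0,1$. By \autoref{lem:arithmeticfunction}(2), $H_\delta'(1) = -\sum_{d\mid \sigma^{(q)}}\bm{\Lambda}(d)/d$, and the standard Mertens-type bound $\sum_{d\mid m}\bm{\Lambda}(d)/d \leq \sum_{p\mid m}\log p/(p-1)\ll \log m$ gives $H_\delta'(1)=O(\log n)$. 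The product formula together with $\delta\in\ZZ^S\setminus\{0\}$ gives $|\delta|_{\infty,q}'\gg 1$, while the compact support of $\theta_\infty^\pm$ and $\theta_{q_i}$ forces $|\delta|_{\infty,q}' \ll n$; hence $|\log A|\ll \log n$, and the residue is $O(\log n)$ uniformly.

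Second, I would count the admissible $(T,\nu)$. The support of $\theta_{q_i}$ restricts $\nu$ to a fixed finite set and bounds the $q_i$-adic denominators of $T$, while the support of $\theta_\infty^\pm$ forces $|T|\ll n^{1/2}q^{\nu/2}$. Writing $T=T_0/q^\mu$ with $T_0\in\ZZ$ and $\mu\in\ZZ^r$ bounded, the condition that $T^2\mp 4nq^\nu$ be a nonzero square is equivalent to $(T_0-m)(T_0+m)=\pm 4nq^{\nu+2\mu}$ for some $m\in\ZZ_{\geq 0}$. Each factorization of $\pm 4nq^{\nu+2\mu}$ yields at most one $T_0$, so the number of admissible $T$ is $O(\bm{d}(4nq^{\nu+2\mu}))=O(\bm{d}(n))=O(n^\varepsilon)$. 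Combined with the boundedness of $\theta_\infty^\pm$ and $\theta_{q_i}$ on the relevant supports and the $O(\log n)$ residue bound, this gives
\[
\Sigma(\square)\ll n^\varepsilon\log n\ll n^\varepsilon.
\]

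The principal technical point is the clean extraction of the double-pole residue at $s=0$: both $\widetilde{F}$ and $L^{S}(1+s,\delta)$ contribute simple poles there, so the $s^{-1}$-coefficient is sensitive to two orders of Taylor data. The identities in \autoref{lem:arithmeticfunction} are exactly what convert this data into the controlled von Mangoldt sum $\sum_{d\mid \sigma^{(q)}}\bm{\Lambda}(d)/d$, whose Mertens-type logarithmic growth is what ultimately allows the $\log n$ factor to be absorbed into $n^\varepsilon$.
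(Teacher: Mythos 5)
Your argument is correct and follows essentially the same route as the paper's proof: both rest on (i) bounding the double-pole residue at $s=0$ by $O(\log n)$ via the identities of \autoref{lem:arithmeticfunction} applied to $\sigma^{(q)}$, and (ii) counting the admissible $T$ by factoring $\pm 4nq^\nu$, which gives $O(\bm{d}(n))$ terms. The only differences are presentational — you extract the residue as an explicit closed form (where the paper uses the oddness of $\widetilde{F}$ to bound it term by term via $\res_{s=1}$ and $\fp_{s=1}$ of $L^S$), and you factor in $\ZZ$ after clearing $q$-denominators rather than directly in $\ZZ^S$.
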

\begin{proof}
Since the sum over $\nu$ is finite (independent of $n$) by the proof of  \cite[Theorem 2.14]{cheng2025}, it suffices to show that for fixed $\nu\in \ZZ^r$ and sign $\pm$, we have
\[
\sum_{\substack{T\in \ZZ^S\\ T^2\mp 4nq^\nu= \square\\ T^2\mp 4nq^\nu\neq 0}}\theta_\infty^\pm\legendresymbol{T}{2n^{1/2}q^{\nu/2}}\\
    \prod_{i=1}^{r}\theta_{q_i}(T,\pm nq^\nu)\res_{s=0}\widetilde{F}(s)L^{S}(1+s,T^2\mp 4nq^\nu)|T^2\mp 4nq^\nu|_{\infty,q}'^{s/2}\ll_{f_\infty,f_{q_i},\varepsilon} n^\varepsilon.
\]
Assume $T^2\mp 4nq^\nu=\sigma^2$ for some $\sigma\in \ZZ^S$. Then $(T+\sigma)(T-\sigma)=\pm 4nq^\nu$. Hence there exist $\alpha,\beta\in \ZZ^r$ and $n_1,n_2\in \ZZ_{(S)}^{>0}$ with $\alpha+\beta=\nu$ and $n=n_1n_2$, such that $T+\sigma=\pm 2n_1q^{\alpha}$ and $T-\sigma=\pm 2n_2q^{\beta}$.
\[
T=\pm(n_1q^\alpha \pm n_2q^\beta)\qquad\text{and}\qquad \sigma=\pm(n_1pq^\alpha \mp n_2q^\beta).
\]

Now we fix such $n_1$ and $n_2$. Note that the number of such pair $(n_1,n_2)$ is $\bm{d}(n)$, which is $\ll_\varepsilon n^\varepsilon$ for any $\varepsilon>0$.

For any $i\in \{1,\dots,r\}$, there exists $M_i>0$ such that $\theta_{q_i}(T,\pm nq^\nu)=0$ if $v_{q_i}(T)<-M_i$. Since 
\[
v_{q_i}(T)=v_{q_i}(n_1q^\alpha \pm n_2q^\beta)\geq \min\{v_{q_i}(n_1q^\alpha) ,v_{q_i}(n_2q^\beta)\}=\min\{\alpha_i,\beta_i\},
\]
we must have $\alpha_i,\beta_i>-M_i$ if $T$ contributes to this sum. Since $\alpha_i+\beta_i=\nu_i$ is fixed, $\alpha_i$ and $\beta_i$ have finitely many choices. Since this holds for all $i$, $\alpha$ and $\beta$ have finitely many choices. 

By above, we know that it suffices to show that for any fixed $\alpha$ and $\beta$ with $\alpha+\beta=\nu$ and $n_1,n_2\in \ZZ_{(S)}^{>0}$ with $n=n_1n_2$, and any $\varepsilon>0$, we have
 \[
\theta_\infty^\pm\legendresymbol{T}{2n^{1/2}q^{\nu/2}}\\
    \prod_{i=1}^{r}\theta_{q_i}(T,\pm nq^\nu)\res_{s=0}\widetilde{F}(s)L^{S}(1+s,T^2\mp 4nq^\nu)|T^2\mp 4nq^\nu|_{\infty,q}'^{s/2}\ll_{f_\infty,f_{q_i},\varepsilon} n^\varepsilon.
\]

Without loss of generality, we may assume $T=n_1q^\alpha \pm n_2q^\beta$ so that $\sigma^2=T^2\mp 4nq^\nu=(n_1q^\alpha \mp n_2q^\beta)^2$. Clearly we have
\[
\theta_\infty^\pm\legendresymbol{T}{2n^{1/2}q^{\nu/2}}\\
    \prod_{i=1}^{r}\theta_{q_i}(T,\pm nq^\nu)\ll_{f_\infty,f_{q_i}} 1.
\]
Hence it suffices to show that
\[
\res_{s=0}\widetilde{F}(s)L^{S}(1+s,T^2\mp 4nq^\nu)|T^2\mp 4nq^\nu|_{\infty,q}'^{s/2}\ll_{\varepsilon} n^\varepsilon.
\]

Since $\widetilde{F}(s)$ is an odd function, we have $\fp_{s=0}\widetilde{F}(s)=0$. Therefore
\begin{align*}
   & \res_{s=0}\widetilde{F}(s)L^{S}(1+s,T^2\mp 4nq^\nu)|T^2\mp 4nq^\nu|_{\infty,q}'^{s/2}  \\
  = & \fp_{s=0}\widetilde{F}(s)\res_{s=1}L^{S}(s,T^2\mp 4nq^\nu)+\res_{s=0}\widetilde{F}(s)\fp_{s=1}L^{S}(s,T^2\mp 4nq^\nu)\\
  +& \res_{s=0}\widetilde{F}(s)\res_{s=1}L^{S}(s,T^2\mp 4nq^\nu)(|T^2\mp 4nq^\nu|_{\infty,q}'^{s/2})'|_{s=0} \\
  \ll &\mathopen{|}\res_{s=1}L^{S}(s,T^2\mp 4nq^\nu)\mathclose{|}\log|T^2\mp 4nq^\nu|_{\infty,q}'+\mathopen{|}\fp_{s=1}L^{S}(s,T^2\mp 4nq^\nu)\mathclose{|}.
\end{align*}

For $T^2\mp 4nq^\nu=\delta=\sigma^2$, by \eqref{eq:zagierzeta} we have 
\[
L^S(s,\delta)=\prod_{i=1}^{r}\left(1-q_i^{-s}\right)\zeta(s)r(s),
\]
where
\[
r(s)=\sum_{f\mid {\sigma^{(q)}}}\frac{1}{f^{2s-1}}\prod_{p\mid \frac{{\sigma^{(q)}}}{f}}\left(1-p^{-s}\right)=\sum_{f\mid {\sigma^{(q)}}}\frac{1}{f^{2s-1}}\sum_{k\mid \frac{{\sigma^{(q)}}}{f}}\frac{\bm{\mu}(k)}{k^s}.
\]
Hence
\[
\res_{s=1}L^S(s,\delta)\ll_q \res_{s=1}\zeta(s)r(1)=\sum_{f\mid {\sigma^{(q)}}}\frac{1}{f}\sum_{k\mid \frac{{\sigma^{(q)}}}{f}}\frac{\bm{\mu}(k)}{k}=1
\]
and 
\begin{align*}
   \fp_{s=1}L^S(s,\delta) & \ll_q \mathopen{|}\fp_{s=1}\zeta(s)r(1)\mathclose{|}+|r'(1)| \\
     & \ll\left|\sum_{f\mid {\sigma^{(q)}}}\frac{1}{f}\sum_{k\mid \frac{{\sigma^{(q)}}}{f}}\frac{\bm{\mu}(k)}{k}\right|+\left| \sum_{f\mid {\sigma^{(q)}}}\frac{2\log f}{f}\sum_{k\mid \frac{{\sigma^{(q)}}}{f}}\frac{\bm{\mu}(k)}{k}+\sum_{f\mid {\sigma^{(q)}}}\frac1f\sum_{k\mid \frac{{\sigma^{(q)}}}{f}}\frac{\bm{\mu}(k)\log k}{k}\right|\\
     &= 1+\sum_{d\mid \sigma^{(q)}}\frac{\bm{\Lambda}(d)}{d}\ll 1+\sum_{d\mid \sigma^{(q)}}\bm{\Lambda}(d)\ll \log|{\sigma^{(q)}}|
\end{align*}
by \autoref{lem:arithmeticfunction}.

Since $|T^2\mp 4nq^\nu|_{\infty,q}'=|{\sigma^{(q)}}|^2$, we obtain
\[
   \res_{s=0}\widetilde{F}(s)L^{S}(1+s,T^2\mp 4nq^\nu)|T^2\mp 4nq^\nu|_{\infty,q}'^{s/2}  \\
  \ll_q 2\log|{\sigma^{(q)}}|+\log|{\sigma^{(q)}}|\ll \log|{\sigma^{(q)}}|.
\]
Finally we have ${\sigma^{(q)}}=\sigma/q^\gamma$, where $q^\gamma=\sigma_{(q)}$. Thus
\[
\gamma_i=v_{q_i}(\sigma)=v_{q_i}(n_1q^\alpha \mp n_2q^\beta)\geq \min\{\alpha_i,\beta_i\}.
\]
Hence 
\[
|{\sigma^{(q)}}|=|\sigma|/q^\gamma\leq |\sigma|/q^{\min\{\alpha,\beta\}}\ll |\sigma|=|n_1q^\alpha \mp n_2q^\beta|\ll_q n.
\]
Therefore
\[
\res_{s=0}\widetilde{F}(s)L^{S}(1+s,T^2\mp 4nq^\nu)|T^2\mp 4nq^\nu|_{\infty,q}'^{s/2}  \\
  \ll  \log|{\sigma^{(q)}}|\ll \log n\ll_{\varepsilon} n^\varepsilon.\qedhere
\]
\end{proof}

Combining \autoref{cor:estimateellipticremain}, \autoref{prop:estimatesigma0} and \autoref{prop:estimatesigmasquare} yields
\begin{corollary}\label{cor:estimateellipticremain2}
For any $\varepsilon>0$, we have
\[
I_{\mathrm{cusp}}(f^n)=\Sigma(\xi\neq 0)+O_{f_\infty,f_{q_i},\varepsilon}(n^\varepsilon).
\]
\end{corollary}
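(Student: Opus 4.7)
The plan is to observe that \autoref{cor:estimateellipticremain2} is a direct bookkeeping consequence of three results already established in this section. Namely, \autoref{cor:estimateellipticremain} provides the decomposition
\[
I_{\mathrm{cusp}}(f^n)=-\Sigma(\square)+\Sigma(0)+\Sigma(\xi\neq 0)+O_{f_\infty,f_{q_i},\varepsilon}(n^\varepsilon),
\]
while \autoref{prop:estimatesigma0} and \autoref{prop:estimatesigmasquare} give $\Sigma(0)\ll n^\varepsilon$ and $\Sigma(\square)\ll n^\varepsilon$, each with implied constants depending only on $f_\infty$, $f_{q_i}$ and $\varepsilon$. Absorbing these two bounds into the existing error term yields the claimed identity.

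Concretely, I would simply substitute the two estimates into the decomposition and consolidate the three $O_{f_\infty,f_{q_i},\varepsilon}(n^\varepsilon)$ contributions into one, with no further analytic input required at this step. Since the dependencies of the implied constants coincide across all three inputs, the combined constant still depends only on $f_\infty$, $f_{q_i}$ and $\varepsilon$, so the claim follows verbatim.

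The substantive content therefore lies entirely upstream: in the non-elliptic estimates of \autoref{sec:nonelliptic} (feeding into \autoref{cor:estimateellipticremain} via \autoref{thm:cuspidalestimate} and \autoref{thm:maintheoremramify}), together with the two delicate estimates for $\Sigma(0)$ (a contour argument exploiting the rapid decay of $\widetilde{F}$ against the polynomial growth of the gamma and zeta ratios) and $\Sigma(\square)$ (the reverse approximate functional equation combined with the divisor identities of \autoref{lem:arithmeticfunction} to control $\res_{s=1}L^S(s,\delta)$ and $\fp_{s=1}L^S(s,\delta)$ for square $\delta$). The genuine obstacle for the main \autoref{thm:totalestimate} is not \autoref{cor:estimateellipticremain2} itself but the next step: proving $\Sigma(\xi\neq 0)\ll n^{1/4+\varepsilon}$, which requires semilocal Fourier estimates at the places of $S$ together with bounds for the generalized Kloosterman sums of \autoref{subsec:kloosterman}.
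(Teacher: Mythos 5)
Your proposal is correct and matches the paper exactly: the corollary is obtained by combining \autoref{cor:estimateellipticremain} with \autoref{prop:estimatesigma0} and \autoref{prop:estimatesigmasquare} and absorbing the $\Sigma(0)$ and $\Sigma(\square)$ bounds into the $O_{f_\infty,f_{q_i},\varepsilon}(n^\varepsilon)$ error term. No further argument is needed.
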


\section{Contribution of the elliptic part: Part 2}\label{sec:elliptic2}
The main goal of this section is to prove that for any $\varepsilon>0$, we have
\[
\Sigma(\xi\neq 0)\ll_{f_{\infty},f_{q_i},\varepsilon} n^{\frac14+\varepsilon}.
\]

Recall that
\begin{align*}
  \Sigma(\xi\neq 0)=&4\sqrt{n}\sum_{\pm}\sum_{\nu\in \ZZ^r}q^{\nu/2}\sum_{k,f\in \ZZ_{(S)}^{>0}}\frac{1}{k^2f^3}\sum_{\xi\in \ZZ^S-\{0\}}\Kl_{k,f}^S(\xi,\pm nq^\nu)\\
   \times&\int_{x\in\RR}\int_{y\in\QQ_{S_\fin}}\theta_\infty^\pm(x)\theta_{q}(y,\pm nq^\nu) \left[F\legendresymbol{kf^2(4nq^\nu)^{-1/2}}{|x^2\mp 1|_\infty^{1/2}|y^2\mp 4nq^\nu|_q'^{1/2}}+\frac{kf^2n^{-1/2}q^{-\nu/2}}{2\sqrt{|x^2\mp 1|_\infty|y^2\mp 4nq^\nu|_q'}}\right.\\
     \times&\left.V\legendresymbol{kf^2(4nq^\nu)^{-1/2}}{|x^2\mp 1|_\infty^{1/2}|y^2\mp 4nq^\nu|_q'^{1/2}}\right]\rme\legendresymbol{-2x\xi n^{1/2}q^{\nu/2}}{kf^2}\rme_{q}\legendresymbol{-y\xi}{kf^2}\rmd x\rmd y.
\end{align*}
Since the sum over $\nu$ is finite, it suffices to show that for fixed sign $\pm$ and $\nu\in \ZZ^r$, the summand is bounded by $n^{1/4+\varepsilon}$. In other words, it suffices to prove that the following is bounded by $n^{1/4+\varepsilon}$:
\begin{equation}\label{eq:xinot0}
\begin{split}
  &\sqrt{n}\sum_{k,f\in \ZZ_{(S)}^{>0}}\frac{1}{k^2f^3}\sum_{\xi\in \ZZ^S-\{0\}}\Kl_{k,f}^S(\xi,\pm nq^\nu)
   \int_{x\in\RR}\int_{y\in\QQ_{S_\fin}}\theta_\infty^\pm(x)\theta_{q}(y,\pm nq^\nu) \\ \times&\left[F\legendresymbol{kf^2(4nq^\nu)^{-1/2}}{|x^2\mp 1|_\infty^{1/2}|y^2\mp 4nq^\nu|_q'^{1/2}}+\frac{kf^2n^{-1/2}q^{-\nu/2}}{2\sqrt{|x^2\mp 1|_\infty|y^2\mp 4nq^\nu|_q'}}V\legendresymbol{kf^2(4nq^\nu)^{-1/2}}{|x^2\mp 1|_\infty^{1/2}|y^2\mp 4nq^\nu|_q'^{1/2}}\right]\\
   \times&\rme\legendresymbol{-2x\xi n^{1/2}q^{\nu/2}}{kf^2}\rme_{q}\legendresymbol{-y\xi}{kf^2}\rmd x\rmd y.
\end{split}
\end{equation}

Establishing the bound of $\Sigma(\xi\neq 0)$ involves technical complexities. To enhance readability, we outline the proof strategy here, with full details developed in this section and the appendices.
The proof proceeds in three steps:

First, we analyze the Fourier transform in \eqref{eq:xinot0}, which exhibits \emph{singularities} (using results from \autoref{subsec:singularities}). This requires deriving \emph{uniform} bounds in parameters $\xi,k,f,n$, with the nonarchimedean and the global theory developed in \autoref{sec:fourier} and archimedean case following \cite[Appendix A]{altug2017}. The main result is \autoref{cor:xi0firstestimate}.
 
Next, we establish estimates for generalized Kloosterman sums in \autoref{sec:kloosterman} (extending methods from \cite[Appendix B]{altug2017}), with key results in \autoref{lem:kloosterman}.

Finally, we directly bound the simplified expression to obtain the desired bound by using the estimate of certain "heights" on the ring of $S$-integers (\autoref{lem:estimatexilargesum} and \autoref{lem:estimatexismallsum}). The main results are \autoref{prop:xi0kloosterman1} and \autoref{prop:xi0kloosterman2}.

Readers familiar with \cite[Section 4.2 and Appendices A and B]{altug2017} will see numerous similarities in the proof below.

\subsection{Estimate of the Fourier transform}\label{subsec:fourierestimate}
\eqref{eq:xinot0} can be separated into the following two terms
\begin{equation}\label{eq:xi0firstterm}
\begin{split}
  &\sum_{k,f\in \ZZ_{(S)}^{>0}}\frac{1}{k^2f^3}\sum_{\xi\in \ZZ^S-\{0\}}\Kl_{k,f}^S(\xi,\pm nq^\nu)
   \int_{x\in\RR}\int_{y\in\QQ_{S_\fin}}\theta_\infty^\pm(x)\theta_{q}(y,\pm nq^\nu) \\
   \times&F\legendresymbol{kf^2(4nq^\nu)^{-1/2}}{|x^2\mp 1|_\infty^{1/2}|y^2\mp 4nq^\nu|_q'^{1/2}}\rme\legendresymbol{-2x\xi n^{1/2}q^{\nu/2}}{kf^2}\rme_{q}\legendresymbol{-y\xi}{kf^2}\rmd x\rmd y
\end{split}
\end{equation}
and
\begin{equation}\label{eq:xi0secondterm}
\begin{split}
  &\sum_{k,f\in \ZZ_{(S)}^{>0}}\frac{1}{k^2f^3}\sum_{\xi\in \ZZ^S-\{0\}}\Kl_{k,f}^S(\xi,\pm nq^\nu)
   \int_{x\in\RR}\int_{y\in\QQ_{S_\fin}}\theta_\infty^\pm(x)\theta_{q}(y,\pm nq^\nu)\frac{kf^2n^{-1/2}q^{-\nu/2}}{2\sqrt{|x^2\mp 1|_\infty|y^2\mp 4nq^\nu|_q'}} \\ \times&V\legendresymbol{kf^2(4nq^\nu)^{-1/2}}{|x^2\mp 1|_\infty^{1/2}|y^2\mp 4nq^\nu|_q'^{1/2}}\rme\legendresymbol{-2x\xi n^{1/2}q^{\nu/2}}{kf^2}\rme_{q}\legendresymbol{-y\xi}{kf^2}\rmd x\rmd y.
\end{split}
\end{equation}

For $\xi\in \ZZ^S$ and $a\in \RR$, we define 
\[
\llbracket a \star \xi\rrbracket=(1+|a\xi|_\infty)\prod_{i=1}^{r}(1+|\xi|_{q_i}),
\]
which is $\asymp|a\xi|$ if $\xi\in \ZZ-\{0\}$ and $a\gg 1$.
Note that if $a\in \ZZ_{(S)}$, then $\llbracket a \star \xi\rrbracket=\llbracket 1 \star a\xi\rrbracket$.

For $k,f\in \ZZ_{(S)}^{>0}$, we consider the following two terms separately:
\begin{equation}\label{eq:xi0firsttermsingle}
\int_{x\in\RR}\int_{y\in\QQ_{S_\fin}}\!\theta_\infty^\pm(x)\theta_{q}(y,\pm nq^\nu) F\legendresymbol{kf^2(4nq^\nu)^{-1/2}}{|x^2\mp 1|_\infty^{1/2}|y^2\mp 4nq^\nu|_q'^{1/2}} \rme\legendresymbol{-2x\xi n^{1/2}q^{\nu/2}}{kf^2}\rme_{q}\!\legendresymbol{-y\xi}{kf^2}\!\rmd x\rmd y.
\end{equation}
and
\begin{equation}\label{eq:xi0secondtermid}
\begin{split}
   & \int_{x\in\RR}\int_{y\in\QQ_{S_\fin}}\theta_\infty^\pm(x)\theta_{q}(y,\pm nq^\nu)\frac{kf^2n^{-1/2}q^{-\nu/2}}{2\sqrt{|x^2\mp 1|_\infty|y^2\mp 4nq^\nu|_q'}} \\ \times&V\legendresymbol{kf^2(4nq^\nu)^{-1/2}}{|x^2\mp 1|_\infty^{1/2}|y^2\mp 4nq^\nu|_q'^{1/2}}\rme\legendresymbol{-2x\xi n^{1/2}q^{\nu/2}}{kf^2}\rme_{q}\legendresymbol{-y\xi}{kf^2}\rmd x\rmd y.
\end{split}
\end{equation}

\begin{proposition}\label{prop:xi0firstterm1}
Suppose that 
\[
\left\llbracket\frac{\sqrt{n}}{kf^2}\star \xi\right\rrbracket\frac {(kf^2)^2}{n} \gg 1.
\]
Then for any $M>0$, we have
\[
\eqref{eq:xi0firsttermsingle}
\ll \left(\left\llbracket\frac{\sqrt{n}}{kf^2}\star \xi\right\rrbracket\frac {(kf^2)^2}{n}\right)^{-M}\left\llbracket\frac{\sqrt{n}}{kf^2}\star \xi\right\rrbracket^{-2}\frac{n}{(kf^2)^2},
\]
where the implied constant only depends on $M,f_\infty,f_{q_i},\pm$ and $\nu$.
\end{proposition}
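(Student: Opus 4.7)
The plan is to decouple the archimedean and nonarchimedean variables via Mellin inversion of $F$, bound each resulting local Fourier transform, and recombine via a contour shift. By \autoref{prop:propertyf}, for any $\sigma>0$ we may write $F(u) = \frac{1}{\dpii}\int_{(\sigma)}\widetilde{F}(s)u^{-s}\,\rmd s$. Substituting this into \eqref{eq:xi0firsttermsingle} with $u = kf^2(4nq^\nu)^{-1/2}/(|x^2\mp 1|_\infty^{1/2}|y^2\mp 4nq^\nu|_q'^{1/2})$, the integrand factorizes across places and the integral becomes
\[
\frac{1}{\dpii}\int_{(\sigma)}\widetilde{F}(s)\left(\frac{kf^2}{2\sqrt{nq^\nu}}\right)^{-s} J^\pm_\infty(s;\xi)\,\prod_{i=1}^{r} J_{q_i}(s;\xi)\,\rmd s,
\]
where
\[
J_\infty^\pm(s;\xi) = \int_\RR\theta_\infty^\pm(x)|x^2\mp 1|_\infty^{s/2}\,\rme\!\left(\frac{-2x\xi\sqrt{n}q^{\nu/2}}{kf^2}\right)\rmd x
\]
and $J_{q_i}(s;\xi)$ is the analogous $q_i$-adic Fourier integral.

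Next I would establish uniform local Fourier estimates. By \autoref{subsec:singularities}, $\theta_{q_i}(y,\pm nq^\nu)|y^2\mp 4nq^\nu|_{q_i}'^{s/2}$ is locally constant and compactly supported in $y$ for $\Re s>-1$, so its $q_i$-adic Fourier transform $J_{q_i}(s;\xi)$ is likewise locally constant and compactly supported in $\xi$; in particular $|J_{q_i}(s;\xi)|\ll_{N,\Re s}(1+|\xi|_{q_i})^{-N}$ for any $N$, uniformly in the imaginary part of $s$. For the archimedean factor, the decomposition \eqref{eq:deftheta} combined with the expansions \eqref{eq:thetaasymp} shows that $\theta_\infty^\pm(x)|x^2\mp 1|_\infty^{s/2}$ has derivatives in $L^1$ up to order roughly $\lfloor\Re s/2\rfloor+1$; repeated integration by parts then yields $|J_\infty^\pm(s;\xi)|\ll_N (1+|s|)^{cN}(1+|\sqrt{n}\xi q^{\nu/2}/kf^2|_\infty)^{-N}$ for any $N$ with $2N-2<\Re s$, with the polynomial growth in $|s|$ coming from successive differentiation of $|x^2\mp 1|^{s/2}$.

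Finally, I would shift the contour to $\Re s = 2M+2+\varepsilon$ for a small $\varepsilon>0$. Since $\widetilde{F}$ is holomorphic on $\Re s>0$, no residues are crossed. The prefactor contributes $(kf^2/(2\sqrt{nq^\nu}))^{-(2M+2+\varepsilon)} \asymp ((kf^2)^2/n)^{-(M+1)}$ up to a harmless $\varepsilon$-power, and applying the local bounds with $N=M+2$ gives $\llbracket\sqrt{n}/kf^2\star\xi\rrbracket^{-(M+2)}$; the $s$-integral converges absolutely because the exponential decay of $\widetilde{F}$ in \eqref{eq:frapiddecay} dwarfs the polynomial growth in $|s|$. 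Combining these and using the standing hypothesis $\llbracket\sqrt{n}/kf^2\star\xi\rrbracket (kf^2)^2/n\gg 1$ to absorb the $\varepsilon$-loss produces the claimed $(AB)^{-M}A^{-2}B^{-1}$ bound.

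The main obstacle is the sharp uniformity of the archimedean Fourier estimate in $\Re s$ near the boundary $\Re s\approx 2N-2$, where integration by parts ceases to yield an $L^1$-integrand. The singularity of $\theta_{\infty,0}^\pm$ at $x=\pm 1$ must be handled delicately using the expansions \eqref{eq:thetaasymp}, and the polynomial growth in $|s|$ of the IBP constants must be tracked at each step. This analysis essentially adapts \cite[Appendix A]{altug2017} to our setting; the nonarchimedean piece is by contrast a routine local computation once the Mellin decoupling is in place.
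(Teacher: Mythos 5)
Your overall architecture is the same as the paper's: the proof of this proposition is a direct application of the main Fourier estimate (\autoref{thm:mainfourierestimate}) with $\Phi=F$, $\varphi=\theta_{\infty,\sigma}^\pm$, $\psi_i=\theta_{q_i,\tau_i}$ and $M_1=M+1$, and that theorem is itself proved in \autoref{sec:fourier} by exactly your route — Mellin inversion of $F$, factorization into local Fourier integrals, local bounds, and a contour shift to $\Re s\asymp 2M$. Your final numerology ($B^{-(M+1)}$ from the prefactor at $\Re s=2M+2+\varepsilon$ times $A^{-(M+2)}$ from the local decay, giving $(AB)^{-M}A^{-2}B^{-1}$) is also correct.

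However, your nonarchimedean step contains a genuine error. You claim that $\theta_{q_i}(y,\pm nq^\nu)\,|y^2\mp 4nq^\nu|_{q_i}'^{s/2}$ is locally constant and compactly supported in $y$, so that $J_{q_i}(s;\xi)$ is compactly supported in $\xi$ and hence $\ll_N(1+|\xi|_{q_i})^{-N}$ for every $N$. This fails precisely when $\pm nq^\nu$ has a square root in $\QQ_{q_i}$: by \autoref{subsec:singularities} the function $\theta_{q_i,1}(y,N)=|y^2-4N|_{q_i}'^{1/2}\psi_1(y)$, together with the Mellin weight $|y^2-4N|_{q_i}'^{s/2}$, is genuinely singular at $y=\pm2\sqrt{N}$ (it is locally constant only away from those points, which are limit points of the support). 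The Fourier transform of the singular piece is \emph{not} compactly supported in $\xi$; it decays only at the finite, $\Re s$-dependent rate $(1+|\xi|_{q_i})^{-1-(\tau_i+\sigma)/2}$ — this is the content of \autoref{thm:ladicsingular}, whose case-by-case computation (including the $\ell=2$ cases, which matter here since $2\in S$) is a substantial part of \autoref{sec:fourier}. It happens that at your contour $\Re s=2M+2+\varepsilon$ this finite rate is $\leq -(M+2)$, so the conclusion survives, but your justification does not establish this, and the coupling between the contour height and the nonarchimedean decay rate is exactly the point that must be verified rather than dismissed as routine. To repair the argument you must split each $\psi_i$ into a piece supported away from $\pm2\sqrt{N}$ (where your "compact support in $\xi$" reasoning is valid, cf. \autoref{prop:ladicnonsingular}) and a singular piece handled by the explicit computation of \autoref{thm:ladicsingular}.
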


\begin{proof}
Fix $\iota\in\{0,1\}$ and $\epsilon_i\in \{0,\pm 1\}$.
$\widetilde{F}(s)$ and $\theta_{\infty,\sigma}$, $\theta_{q_i,\tau_i}$ satisfy the properties in \autoref{thm:mainfourierestimate} by \autoref{prop:propertyf} and the results in \autoref{subsec:singularities}. Hence for any $M_1$ and $\sigma,\tau_i\in \{0,1\}$, we have
\begin{align*}
&\int_{X_\iota}\int_{Y_\epsilon}\theta_{\infty,\sigma}^\pm(x)\theta_{q,\tau}(y,\pm nq^\nu) F\legendresymbol{kf^2(4nq^\nu)^{-1/2}}{|x^2\mp 1|_\infty^{1/2}|y^2\mp 4nq^\nu|_q'^{1/2}} \rme\legendresymbol{-2x\xi n^{1/2}q^{\nu/2}}{kf^2}\rme_{q}\legendresymbol{-y\xi}{kf^2}\rmd x\rmd y\\
\ll &\left[\frac{(kf^2)^2}{4nq^\nu}\left(1+\left|\frac{2\xi\sqrt{nq^\nu}}{kf^2}\right|\right) \prod_{i=1}^{r}\left(1+\left|\frac{\xi}{kf^2}\right|_{q_i}\right)\right]^{-M_1} \left(1+\left|\frac{2\xi\sqrt{nq^\nu}}{kf^2}\right|\right)^{-1-\frac{\sigma}{2}} \prod_{i=1}^{r}\left(1+\left|\frac{\xi}{kf^2}\right|_{q_i}\right)^{-1-\frac{\tau_i}{2}}\\
\ll & \left(\frac{(kf^2)^2}{n}\left\llbracket\frac{\sqrt{n}}{kf^2}\star \xi\right\rrbracket\right) ^{-M_1}\left\llbracket\frac{\sqrt{n}}{kf^2}\star \xi\right\rrbracket^{-1}.
\end{align*}
Letting $M_1=1+M$, the estimate is reduced to
\begin{align*}
&\int_{X_\iota}\int_{Y_\epsilon}\theta_{\infty,\sigma}^\pm(x)\theta_{q,\tau}(y,\pm nq^\nu) F\legendresymbol{kf^2(4nq^\nu)^{-1/2}}{|x^2\mp 1|_\infty^{1/2}|y^2\mp 4nq^\nu|_q'^{1/2}} \rme\legendresymbol{-2x\xi n^{1/2}q^{\nu/2}}{kf^2}\rme_{q}\legendresymbol{-y\xi}{kf^2}\rmd x\rmd y\\
\ll &\left(\left\llbracket\frac{\sqrt{n}}{kf^2}\star \xi\right\rrbracket\frac {(kf^2)^2}{n}\right)^{-M}\left\llbracket\frac{\sqrt{n}}{kf^2}\star \xi\right\rrbracket^{-2}\frac{n}{(kf^2)^2}.
\end{align*}
Now we sum over all $\iota$, $\epsilon_i$, $\sigma$, $\tau_i$, and get the desired conclusion.
\end{proof}

\begin{proposition}\label{prop:xi0firstterm2}
Suppose that 
\[
\left\llbracket\frac{\sqrt{n}}{kf^2}\star \xi\right\rrbracket\frac {(kf^2)^2}{n} \ll 1.
\]
Then for any $\varepsilon>0$ we have
\[
\eqref{eq:xi0firsttermsingle}\ll \frac{kf^2}{\sqrt{n}}\left\llbracket\frac{\sqrt{n}}{kf^2}\star \xi\right\rrbracket^{-\frac12}\left(\frac{(kf^2)^2}{n}\left\llbracket\frac{\sqrt{n}}{kf^2}\star \xi\right\rrbracket\right)^{-\varepsilon},
\]
where the implied constant only depends on $\varepsilon$, $f_\infty$, $f_{q_i}$, $\pm$ and $\nu$.
\end{proposition}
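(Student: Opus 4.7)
The plan is to reuse the singular Fourier analysis that underlies \autoref{prop:xi0firstterm1}, but with the flexible decay parameter $M_1$ chosen to be $\varepsilon$ rather than an arbitrarily large integer. After that, the announced bound will follow from a short algebraic rearrangement that combines the small-regime hypothesis with the product formula for the nonzero $S$-integer $\xi$.

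\textbf{Step 1 (Fourier estimate).} As in the proof of \autoref{prop:xi0firstterm1}, decompose
\[
\theta_\infty^\pm(x)\theta_q(y,\pm nq^\nu)=\sum_{\sigma\in\{0,1\}}\sum_{\tau\in\{0,1\}^r}\theta_{\infty,\sigma}^\pm(x)\theta_{q,\tau}(y,\pm nq^\nu)
\]
as in \autoref{subsec:singularities}. Using the properties of $\widetilde F$ provided by \autoref{prop:propertyf}, I apply \autoref{thm:mainfourierestimate} to each summand with the parameter $M_1=\varepsilon$ (in the notation used in the proof of \autoref{prop:xi0firstterm1}). Summing over $\iota,\epsilon,\sigma,\tau$ and keeping the worst case $\sigma=1$, $\tau_i=1$, I obtain
\[
\eqref{eq:xi0firsttermsingle}\ll\left[\frac{(kf^2)^2}{n}\left\llbracket\frac{\sqrt n}{kf^2}\star\xi\right\rrbracket\right]^{-\varepsilon}\left\llbracket\frac{\sqrt n}{kf^2}\star\xi\right\rrbracket^{-3/2},
\]
with implied constant depending only on $\varepsilon,f_\infty,f_{q_i},\pm$ and $\nu$.

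\textbf{Step 2 (Rearrangement).} In view of Step 1, it suffices to show that
\[
\left\llbracket\frac{\sqrt n}{kf^2}\star\xi\right\rrbracket^{-3/2}\ll\frac{kf^2}{\sqrt n}\left\llbracket\frac{\sqrt n}{kf^2}\star\xi\right\rrbracket^{-1/2},
\]
which is equivalent to $\frac{kf^2}{\sqrt n}\left\llbracket\frac{\sqrt n}{kf^2}\star\xi\right\rrbracket\gg 1$. Using the elementary estimate $1+a\geq a$ for $a\geq 0$ in each factor of the bracket, I get
\[
\left\llbracket\frac{\sqrt n}{kf^2}\star\xi\right\rrbracket\geq\frac{\sqrt n}{kf^2}\,|\xi|_\infty\prod_{i=1}^r|\xi|_{q_i}.
\]
Since $\xi\in\ZZ^S-\{0\}$, the product formula $|\xi|_\infty\prod_\ell|\xi|_\ell=1$ together with $|\xi|_p\leq 1$ for $p\notin S$ yields $|\xi|_\infty\prod_{i=1}^r|\xi|_{q_i}\geq 1$, so
\[
\left\llbracket\frac{\sqrt n}{kf^2}\star\xi\right\rrbracket\geq\frac{\sqrt n}{kf^2},
\]
which provides the required inequality and completes the proof.

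\textbf{Main obstacle.} The real analytic content is packaged inside \autoref{thm:mainfourierestimate}, whose proof combines the archimedean oscillatory integral bounds of \cite[Appendix A]{altug2017} with the nonarchimedean singular Fourier analysis developed in \autoref{sec:fourier}; the only subtle point is verifying that taking $M_1=\varepsilon$ is a legitimate specialization of its hypotheses (it is, since the bound is uniform in $M_1\geq 0$). Once that is granted, Step 2 is a purely algebraic manipulation relying only on the integrality of $\xi$ and the product formula, so no further analytic input is needed.
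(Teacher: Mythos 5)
Your Step 2 is fine (the product‑formula argument showing $\bigl\llbracket\tfrac{\sqrt n}{kf^2}\star\xi\bigr\rrbracket\geq\tfrac{\sqrt n}{kf^2}$ for $\xi\in\ZZ^S-\{0\}$ is exactly the inequality the paper also uses), and taking $M=\varepsilon$ in \autoref{thm:mainfourierestimate} is legitimate. The gap is in Step 1: you keep "the worst case $\sigma=1$, $\tau_i=1$", but that is the \emph{best} case. \autoref{thm:mainfourierestimate} applied to the summand with exponents $a_0=\sigma$, $a_i=\tau_i$ gives decay $(1+|\xi|_\infty)^{-1-\sigma/2}\prod_i(1+|\eta_i|_{q_i})^{-1-\tau_i/2}$, so after summing over the decomposition the dominant contribution comes from $\sigma=0$ (and $\tau_i=0$), which is only $\bigl\llbracket\tfrac{\sqrt n}{kf^2}\star\xi\bigr\rrbracket^{-1}$ times the $\varepsilon$-factor, not $\llbracket\cdot\rrbracket^{-3/2}$. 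Your Step 2 then requires $\llbracket\cdot\rrbracket^{-1}\ll\tfrac{kf^2}{\sqrt n}\llbracket\cdot\rrbracket^{-1/2}$, i.e.\ $\llbracket\cdot\rrbracket\gg n/(kf^2)^2$, which is the \emph{opposite} of the hypothesis $\llbracket\cdot\rrbracket\ll n/(kf^2)^2$ defining this regime. Concretely, for $\xi=1$, $k=f=1$ and $n$ large one has $\llbracket\sqrt n\star 1\rrbracket\asymp\sqrt n$, the hypothesis holds, the target bound is $\asymp n^{-3/4+\varepsilon/2}$, but the $\sigma=0$ term is only controlled by $\asymp n^{-1/2+\varepsilon/2}$ via \autoref{thm:mainfourierestimate}. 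This loss of up to $(\sqrt n/(kf^2))^{1/2}$ would propagate to $\Sigma(\xi\neq0)\ll n^{1/2+\varepsilon}$ and destroy the main theorem.

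The paper's proof avoids this by treating $\sigma=0$ and $\sigma=1$ separately: for $\sigma=1$ the crude bound suffices, while for $\sigma=0$ it invokes the final clause of \autoref{thm:mainfourierestimate2} (applicable because $\theta_{\infty,0}^\pm\in C_c^\infty(\RR)$), which replaces $(1+|\xi|_\infty)^{-1}$ by $\bigl(|C|^2\prod_i(1+|\eta_i|_{q_i})\bigr)^{1-\varepsilon}+(1+|\xi|_\infty)^{-2}$. That refinement rests on \autoref{cor:mainarchimedeancor}, i.e.\ on shifting the archimedean Mellin contour to $\Re s=-2+2\varepsilon$ and exploiting the cancellation of the residue terms between the regions $|x|<1$ and $|x|>1$. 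Some input of this kind is unavoidable; the statement cannot be recovered from \autoref{thm:mainfourierestimate} alone.
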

\begin{proof}
Using the notations as above. We want to estimate 
\[
\int_{x\in\RR}\int_{Y_\epsilon}\theta_{\infty,\sigma}^\pm(x)\theta_{q,\tau}(y,\pm nq^\nu) F\legendresymbol{kf^2(4nq^\nu)^{-1/2}}{|x^2\mp 1|_\infty^{1/2}|y^2\mp 4nq^\nu|_q'^{1/2}} \rme\legendresymbol{-2x\xi n^{1/2}q^{\nu/2}}{kf^2}\rme_{q}\legendresymbol{-y\xi}{kf^2}\rmd x\rmd y.
\]

\underline{\emph{Case 1:}}\ \ $\sigma=1$.
$\widetilde{F}(s)$ and $\theta_{\infty,\sigma}$, $\theta_{q_i,\tau_i}$ satisfy the properties in \autoref{thm:mainfourierestimate2} by \autoref{prop:propertyf} and the results in \autoref{subsec:singularities}. Hence the integral above can be bounded by
\begin{align*}
\left(1+\left|\frac{2\xi\sqrt{nq^\nu}}{kf^2}\right|\right)^{-1-\frac{1}{2}} \prod_{i=1}^{r}\left(1+\left|\frac{\xi}{kf^2}\right|_{q_i}\right)^{-1-\frac{\tau_i}{2}}&\ll \left\llbracket\frac{\sqrt{n}}{kf^2}\star \xi\right\rrbracket^{-1}
\left(1+\left|\frac{2\xi\sqrt{nq^\nu}}{kf^2}\right|\right)^{-1}\\
&\ll \left\llbracket\frac{\sqrt{n}}{kf^2}\star \xi\right\rrbracket^{-\frac12}\left(\frac{\sqrt{n}}{kf^2}\right)^{-1}.
\end{align*}

\underline{\emph{Case 2:}}\ \ $\sigma=0$.
$\widetilde{F}(s)$ and $\theta_{\infty,\sigma}$, $\theta_{q_i,\tau_i}$ satisfy the properties in the last statement of \autoref{thm:mainfourierestimate2} by \autoref{prop:propertyf} and the results in \autoref{subsec:singularities}. Hence the integral above can be bounded by
\begin{align*}
&\left[\left|\frac{kf^2}{\sqrt{4nq^\nu}}\right|^{2-2\varepsilon} \prod_{i=1}^{r}\left(1+\left|\frac{\xi}{kf^2}\right|_{q_i}\right) ^{1-\varepsilon} +\left(1+\left|\frac{2\xi\sqrt{nq^\nu}}{kf^2}\right|\right)^{-2}\right] \prod_{i=1}^{r}\left(1+\left|\frac{\xi}{kf^2}\right|_{q_i}\right)^{-1-\frac{\tau_i}{2}}\\
\ll& \legendresymbol{\sqrt{n}}{kf^2}^{-2+2\varepsilon} +\left\llbracket\frac{\sqrt{n}}{kf^2}\star \xi\right\rrbracket^{-1}\left(\frac{\sqrt{n}}{kf^2}\right)^{-1}
\ll \left(\frac{\sqrt{n}}{kf^2}\right)^{-1}\left\llbracket\frac{\sqrt{n}}{kf^2}\star \xi\right\rrbracket^{-\frac12}\left(\frac{(kf^2)^2}{n}\left\llbracket\frac{\sqrt{n}}{kf^2}\star \xi\right\rrbracket\right)^{-\varepsilon},
\end{align*}
since 
\[
\frac{kf^2}{\sqrt{n}} \ll \left\llbracket\frac{\sqrt{n}}{kf^2}\star \xi\right\rrbracket^{-\frac12}
\]
by assumption.
The conclusion now follows by summing over $\sigma$ and $\tau_i$.
\end{proof}

By \autoref{prop:xi0firstterm1} and \autoref{prop:xi0firstterm2} we obtain
\begin{corollary}\label{cor:xi0firsttermfinal}
For any $M,\varepsilon>0$, we have
\begin{align*}
\eqref{eq:xi0firstterm}\ll &\sum_{\substack{k,f\in \ZZ_{(S)}^{>0},\ \xi\in \ZZ^S-\{0\} \\ \left\llbracket\frac{\sqrt{n}}{kf^2}\star \xi\right\rrbracket\frac {(kf^2)^2}{n} \gg 1}}\frac{1}{k^2f^3}|\Kl_{k,f}^S(\xi,\pm nq^\nu)|\left(\left\llbracket\frac{\sqrt{n}}{kf^2}\star \xi\right\rrbracket\frac {(kf^2)^2}{n}\right)^{-M}\left\llbracket\frac{\sqrt{n}}{kf^2}\star \xi\right\rrbracket^{-2}\frac{n}{(kf^2)^2}\\
+&\sum_{\substack{k,f\in \ZZ_{(S)}^{>0},\ \xi\in \ZZ^S-\{0\} \\ \left\llbracket\frac{\sqrt{n}}{kf^2}\star \xi\right\rrbracket\frac {(kf^2)^2}{n} \ll 1}}\frac{1}{k^2f^3}|\Kl_{k,f}^S(\xi,\pm nq^\nu)|\frac{kf^2}{\sqrt{n}}\left\llbracket\frac{\sqrt{n}}{kf^2}\star \xi\right\rrbracket^{-\frac12}\left(\frac{(kf^2)^2}{n}\left\llbracket\frac{\sqrt{n}}{kf^2}\star \xi\right\rrbracket\right)^{-\varepsilon},
\end{align*}
where the implied constant only depends on $M$, $\varepsilon$, $f_\infty$, $f_{q_i}$, $\pm$ and $\nu$.
\end{corollary}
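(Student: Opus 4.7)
The plan is essentially to decompose the sum \eqref{eq:xi0firstterm} dyadically in terms of the quantity $\left\llbracket\frac{\sqrt{n}}{kf^2}\star\xi\right\rrbracket\frac{(kf^2)^2}{n}$ and apply the two preceding propositions term by term. First, I would apply the triangle inequality to pull the absolute value inside both the $(k,f)$-sum and the $\xi$-sum in \eqref{eq:xi0firstterm}, which gives
\[
|\eqref{eq:xi0firstterm}|\leq \sum_{k,f\in \ZZ_{(S)}^{>0}}\frac{1}{k^2f^3}\sum_{\xi\in\ZZ^S-\{0\}}\bigl|\Kl^S_{k,f}(\xi,\pm nq^\nu)\bigr|\cdot|\eqref{eq:xi0firsttermsingle}|.
\]

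Next I would split the double sum over $(k,f,\xi)$ into two disjoint regions: the "large" region where $\left\llbracket\frac{\sqrt{n}}{kf^2}\star\xi\right\rrbracket\frac{(kf^2)^2}{n}\gg 1$ (for some fixed implicit constant matching the hypothesis of \autoref{prop:xi0firstterm1}) and the complementary "small" region where $\left\llbracket\frac{\sqrt{n}}{kf^2}\star\xi\right\rrbracket\frac{(kf^2)^2}{n}\ll 1$. On the large region, \autoref{prop:xi0firstterm1} directly yields
\[
|\eqref{eq:xi0firsttermsingle}|\ll \left(\left\llbracket\tfrac{\sqrt{n}}{kf^2}\star\xi\right\rrbracket\tfrac{(kf^2)^2}{n}\right)^{-M}\left\llbracket\tfrac{\sqrt{n}}{kf^2}\star\xi\right\rrbracket^{-2}\frac{n}{(kf^2)^2}
\]
for any $M>0$, with the implied constant depending only on $M,f_\infty,f_{q_i},\pm,\nu$. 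On the small region, \autoref{prop:xi0firstterm2} yields
\[
|\eqref{eq:xi0firsttermsingle}|\ll \frac{kf^2}{\sqrt{n}}\left\llbracket\tfrac{\sqrt{n}}{kf^2}\star\xi\right\rrbracket^{-\frac12}\left(\tfrac{(kf^2)^2}{n}\left\llbracket\tfrac{\sqrt{n}}{kf^2}\star\xi\right\rrbracket\right)^{-\varepsilon}
\]
for any $\varepsilon>0$, with the implied constant depending only on $\varepsilon,f_\infty,f_{q_i},\pm,\nu$.

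Substituting these two bounds into the corresponding pieces of the split sum gives exactly the two terms claimed in the corollary, and no non-trivial estimation is actually required at this stage. There is essentially no obstacle here, since the only content of this corollary is to package the local-integral bounds of \autoref{prop:xi0firstterm1} and \autoref{prop:xi0firstterm2} into the global sum; the genuine analytic work has already been carried out in those two propositions (which themselves rest on the Fourier-estimate theorems of \autoref{sec:fourier}), and the genuinely delicate work of actually summing these over $k,f,\xi$ (using Kloosterman sum bounds and height estimates on $\ZZ^S$) is deferred to \autoref{prop:xi0kloosterman1}.
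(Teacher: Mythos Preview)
Your proposal is correct and matches the paper's approach exactly: the paper simply states ``By \autoref{prop:xi0firstterm1} and \autoref{prop:xi0firstterm2} we obtain'' before the corollary, and your triangle-inequality-then-split argument is precisely the content of that one line. (Minor quibble: you describe the decomposition as ``dyadic,'' but what you actually carry out---and all that is needed---is a single binary split into the $\gg 1$ and $\ll 1$ regions.)
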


\begin{proposition}\label{prop:xi0secondterm1}
Suppose that $k,f\in \ZZ_{(S)}^{>0}$ such that
\[
\left\llbracket\frac{\sqrt{n}}{kf^2}\star \xi\right\rrbracket\frac {(kf^2)^2}{n} \gg 1.
\]
Then for any $M>0$, we have
\[
\eqref{eq:xi0secondtermid}\ll \left(\frac{(kf^2)^2}{n}\left\llbracket\frac{\sqrt{n}}{kf^2}\star \xi\right\rrbracket\right)^{-M} \left\llbracket\frac{\sqrt{n}}{kf^2}\star \xi\right\rrbracket^{-2}\frac{n}{(kf^2)^2},
\]
where the implied constant only depends on $M$, $f_\infty$, $f_{q_i}$, $\pm$ and $\nu$.
\end{proposition}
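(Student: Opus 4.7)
My plan is to mirror the proof of \autoref{prop:xi0firstterm1} step by step, replacing the role of $F$ by $V = V_{\iota,\epsilon}$ and absorbing the extra factor
\[
\frac{kf^2\,q^{-\nu/2}}{2\sqrt{n}\,\sqrt{|x^2\mp 1|_\infty\,|y^2\mp 4nq^\nu|_q'}}
\]
into the singular profiles. Concretely, I decompose $\theta_\infty^\pm = \theta_{\infty,0}^\pm + \theta_{\infty,1}^\pm$ and $\theta_{q_i} = \theta_{q_i,0} + \theta_{q_i,1}$ according to \autoref{subsec:singularities}, and split the integration according to $\iota\in\{0,1\}$ and $\epsilon_i \in \{0,\pm 1\}$. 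Since there are only finitely many such pieces, it suffices to bound each one separately and sum.

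For a fixed choice of $(\iota,\epsilon,\sigma,\tau)$, I rewrite the combined integrand as
\[
\frac{\theta_{\infty,\sigma}^\pm(x)\,\theta_{q,\tau}(y,\pm nq^\nu)}{\sqrt{|x^2\mp 1|_\infty\,|y^2\mp 4nq^\nu|_q'}}\,V_{\iota,\epsilon}\!\left(\frac{kf^2(4nq^\nu)^{-1/2}}{|x^2\mp 1|_\infty^{1/2}|y^2\mp 4nq^\nu|_q'^{1/2}}\right)\,\rme\!\left(\frac{-2x\xi n^{1/2}q^{\nu/2}}{kf^2}\right)\rme_q\!\left(\frac{-y\xi}{kf^2}\right).
\]
By the description in \autoref{subsec:singularities}, the numerator in the first factor is a smooth profile times $|x^2\mp 1|^{\sigma/2}\prod_i|y_i^2\mp 4nq^\nu|_{q_i}'^{\tau_i/2}$, so after dividing by the square-root denominator the combined local factor has shifted singularity exponents $(\sigma-1)/2$ at the archimedean place and $(\tau_i - 1)/2$ at each $q_i$. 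Thanks to \autoref{lem:ladiconverge} combined with \autoref{lem:modifynorm}, these exponents remain in the range for which the general Fourier-type estimate \autoref{thm:mainfourierestimate} applies; this is the precise point at which the hypothesis $2\in S$ and the modified $\ell$-adic norms enter.

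By \autoref{prop:propertyh}, the Mellin transform $\widetilde{V_{\iota,\epsilon}}(s)$ is holomorphic on $\Re s>0$ and of rapid decay on vertical lines, so $V_{\iota,\epsilon}$ plays the same role in \autoref{thm:mainfourierestimate} that $F$ played in the proof of \autoref{prop:xi0firstterm1}. Applying the theorem to the modified integrand with a sufficiently large decay parameter $M_1$ yields, for each piece, a bound of the shape
\[
\left(\frac{(kf^2)^2}{n}\left\llbracket\frac{\sqrt{n}}{kf^2}\star\xi\right\rrbracket\right)^{-M_1}\left\llbracket\frac{\sqrt{n}}{kf^2}\star\xi\right\rrbracket^{-\alpha}
\]
for an explicit $\alpha>0$ depending only on the singularity indices $\sigma,\tau$. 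Multiplying by the prefactor $\frac{kf^2}{\sqrt{n}}q^{-\nu/2}$ and using the running hypothesis $\frac{(kf^2)^2}{n}\llbracket\cdot\rrbracket\gg 1$ to convert one power of the large quantity into an absorbing factor $\frac{kf^2}{\sqrt{n}}\cdot\llbracket\cdot\rrbracket^{1/2}\cdot\bigl(\frac{(kf^2)^2}{n}\llbracket\cdot\rrbracket\bigr)^{-1}\ll 1$, the exponents realign to produce precisely the target bound. Summing the finite number of pieces completes the argument.

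The main obstacle — and the only place where the argument genuinely differs from that of \autoref{prop:xi0firstterm1} — is the bookkeeping in the step above: one must verify that the shifted singular exponents $(\sigma-1)/2$ and $(\tau_i-1)/2$ still fall within the admissible range of \autoref{thm:mainfourierestimate}, and that the extra prefactor $\frac{kf^2}{\sqrt{n}}$ can be cleanly absorbed against one power of $\frac{(kf^2)^2}{n}\llbracket\cdot\rrbracket$ under the standing hypothesis. Everything else is a verbatim repetition of the earlier argument.
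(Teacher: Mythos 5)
Your proposal is correct and follows essentially the same route as the paper: decompose into the $(\iota,\epsilon,\sigma,\tau)$ pieces, absorb the square-root denominators into the profiles so that \autoref{thm:mainfourierestimate} applies with shifted exponents $a_0=\sigma-1$, $a_i=\tau_i-1$ (all $\geq -1>-2$) and $\Phi=V_{\iota,\epsilon}$ via \autoref{prop:propertyh}, then take the decay parameter large enough that the prefactor $kf^2/\sqrt{n}$ and the residual $\llbracket\cdot\rrbracket^{-1/2}$ realign to the target exponents. The paper does the bookkeeping by the exact choice $M_1=3/2+M$ rather than by invoking the hypothesis $\frac{(kf^2)^2}{n}\llbracket\cdot\rrbracket\gg 1$, but this is an immaterial difference.
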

\begin{proof}
Fix $\iota\in\{0,1\}$, $\epsilon_i\in \{0,\pm 1\}$, and $\sigma,\tau_i\in \{0,1\}$. We consider the following integral
\begin{equation}\label{eq:xi0secondtermsingle}
\begin{split}
& kf^2\int_{X_\iota}\int_{Y_\epsilon}\frac{\theta_{\infty,\sigma}^\pm(x)\theta_{q,\tau}(y,\pm nq^\nu)n^{-1/2}q^{-\nu/2}}{2\sqrt{|x^2\mp 1|_\infty|y^2\mp 4nq^\nu|_q'}}V_{\iota,\epsilon}\legendresymbol{kf^2 (4nq^\nu)^{-1/2}}{|x^2\mp 1|_\infty^{1/2}|y^2\mp 4nq^\nu|_q'^{1/2}}
  \\
  \times&
  \rme\legendresymbol{-2x\xi n^{1/2}q^{\nu/2}}{kf^2}\rme_{q}\legendresymbol{-y\xi}{kf^2}\rmd x\rmd y.
\end{split}
\end{equation}

By \autoref{thm:mainfourierestimate} for $\Phi=V_{\iota,\epsilon}$ and
\[
\varphi(x)=\frac{\theta_{\infty,\sigma}^\pm(x)}{\sqrt{|x^2\mp 1|_\infty}}\quad\text{and}\quad \psi_i(y_i)=\frac{\theta_{q_i,\tau_i}(y_i,\pm nq^\nu)}{\sqrt{|y_i^2\mp 4nq^\nu|_{q_i}'}}
\]
(which is valid by \autoref{prop:propertyh} and the results in \autoref{subsec:singularities}), we know that for any $M_1$ and $\sigma,\tau_i\in \{0,1\}$, \eqref{eq:xi0secondtermsingle} is bounded by
\begin{align*}
&\frac{kf^2}{\sqrt{n}} \left[\frac{(kf^2)^2}{4nq^\nu}\!\left(1+\left|\frac{2\xi\sqrt{nq^\nu}}{kf^2}\right|\right) \prod_{i=1}^{r}\left(1+\left|\frac{\xi}{kf^2}\right|_{q_i}\!\right)\!\right]^{\!-M_1} \!\! \left(1+\left|\frac{2\xi\sqrt{nq^\nu}}{kf^2}\right|\right)^{\!\!-1-\frac{\sigma-1}{2}} \prod_{i=1}^{r}\left(\!1+\left|\frac{\xi}{kf^2}\right|_{q_i}\right)^{\!\!-1-\frac{\tau_i-1}{2}}\\
\ll& \frac{kf^2}{\sqrt{n}} \left[\frac{(kf^2)^2}{4nq^\nu}\left(1+\left|\frac{2\xi\sqrt{nq^\nu}}{kf^2}\right|\right) \prod_{i=1}^{r}\left(1+\left|\frac{\xi}{kf^2}\right|_{q_i}\right)\right]^{-M_1} \left(1+\left|\frac{2\xi\sqrt{nq^\nu}}{kf^2}\right|\right)^{-\frac12} \prod_{i=1}^{r}\left(1+\left|\frac{\xi}{kf^2}\right|_{q_i}\right)^{-\frac12}\\
\ll&  \frac{kf^2}{\sqrt{n}}\left(\frac{(kf^2)^2}{n}\left\llbracket\frac{\sqrt{n}}{kf^2}\star \xi\right\rrbracket\right)^{-M_1}\left\llbracket\frac{\sqrt{n}}{kf^2}\star \xi\right\rrbracket^{-\frac12}.
\end{align*}
Let $M_1=3/2+M$, we obtain
\[
\eqref{eq:xi0secondtermsingle}
\ll \left(\frac{(kf^2)^2}{n}\left\llbracket\frac{\sqrt{n}}{kf^2}\star \xi\right\rrbracket\right)^{-M} \left\llbracket\frac{\sqrt{n}}{kf^2}\star \xi\right\rrbracket^{-2}\frac{n}{(kf^2)^2}.
\]
Now by summing over all $\iota$, $\epsilon_i$, $\sigma$, $\tau_i$ we get the desired result.
\end{proof}

\begin{proposition}\label{prop:xi0secondterm2}
Suppose that $k,f\in \ZZ_{(S)}^{>0}$ such that
\[
\left\llbracket\frac{\sqrt{n}}{kf^2}\star \xi\right\rrbracket\frac {(kf^2)^2}{n} \ll 1.
\]
Then for any $\varepsilon>0$ we have
\[
  \eqref{eq:xi0secondtermsingle}
   \ll \frac{kf^2}{\sqrt{n}}\left\llbracket\frac{\sqrt{n}}{kf^2}\star \xi\right\rrbracket^{-\frac 12}\left(\frac{(kf^2)^2}{n}\left\llbracket\frac{\sqrt{n}}{kf^2}\star \xi\right\rrbracket\right)^{-\varepsilon},
\]
where the implied constant only depends on $\varepsilon$, $f_\infty$, $f_{q_i}$, $\pm$ and $\nu$.
\end{proposition}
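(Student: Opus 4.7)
My plan is to imitate the proof of \autoref{prop:xi0firstterm2}, with two modifications dictated by passing from the $F$-kernel to the $V$-kernel: first, replace $F$ by $V_{\iota,\epsilon}$ (whose Mellin transform $\widetilde{V_{\iota,\epsilon}}$ is holomorphic and rapidly decaying on any half-plane $\Re s>0$ by \autoref{prop:propertyh}); second, absorb the extra weight $|x^2\mp 1|_\infty^{-1/2}|y^2\mp 4nq^\nu|_q'^{-1/2}$ and the coefficient $kf^2/(2\sqrt{nq^\nu})$ into the test functions, exactly as was done in the proof of \autoref{prop:xi0secondterm1}. Concretely, I set $\varphi(x)=\theta_{\infty,\sigma}^\pm(x)/\sqrt{|x^2\mp 1|_\infty}$ and $\psi_i(y_i)=\theta_{q_i,\tau_i}(y_i,\pm nq^\nu)/\sqrt{|y_i^2\mp 4nq^\nu|_{q_i}'}$; by the singularity analysis of \autoref{subsec:singularities} these are smooth away from the discriminant locus, with singularity of order $(\sigma-1)/2$ archimedean and $(\tau_i-1)/2$ nonarchimedean, and so satisfy the hypotheses of the theorem underlying \autoref{prop:xi0firstterm2}. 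After expanding $\theta_\infty^\pm=\theta_{\infty,0}^\pm+\theta_{\infty,1}^\pm$ and $\theta_{q_i}=\theta_{q_i,0}+\theta_{q_i,1}$, it suffices to bound each summand indexed by $(\iota,\epsilon,\sigma,\tau)$ and sum.

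The estimate then splits into two cases according to $\sigma\in\{0,1\}$, in exact parallel with \autoref{prop:xi0firstterm2}. In the case $\sigma=1$, the first half of the theorem applied to $\widetilde{V_{\iota,\epsilon}}$ produces a bound of the shape
\[
\frac{kf^2}{\sqrt n}\left(1+\left|\tfrac{2\xi\sqrt{nq^\nu}}{kf^2}\right|\right)^{-1}\prod_{i=1}^r\left(1+\left|\tfrac{\xi}{kf^2}\right|_{q_i}\right)^{-1-(\tau_i-1)/2}.
\]
Using $\tau_i\in\{0,1\}$ and the identity $|\xi/kf^2|_{q_i}=|\xi|_{q_i}$ (valid since $kf^2\in\ZZ_{q_i}^\times$), this is majorized by $(kf^2/\sqrt n)\llbracket\tfrac{\sqrt n}{kf^2}\star\xi\rrbracket^{-1/2}$, which is already stronger than the target because the smallness hypothesis makes $\bigl(((kf^2)^2/n)\llbracket\cdot\rrbracket\bigr)^{-\varepsilon}\gg 1$.

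In the case $\sigma=0$ the integrand picks up a genuine $|x^2\mp 1|^{-1/2}$ singularity and the rapid-decay part of the theorem alone no longer suffices; I instead invoke the last statement of the theorem, shifting the Mellin contour by $\varepsilon$ to the right (which is legal by \autoref{prop:propertyh}). The resulting two-term bound has a second summand handled exactly as in $\sigma=1$, while the first summand factors as $(kf^2/\sqrt n)((kf^2)^2/n)^{1-\varepsilon}$ times a nonarchimedean product whose exponents on $(1+|\xi|_{q_i})$ are controlled once the smallness hypothesis $((kf^2)^2/n)\llbracket\cdot\rrbracket\ll 1$ is used to convert powers of $(kf^2/\sqrt n)$ into powers of $\llbracket\cdot\rrbracket^{-1/2}$; this rearrangement yields precisely $(kf^2/\sqrt n)\llbracket\cdot\rrbracket^{-1/2}\bigl(((kf^2)^2/n)\llbracket\cdot\rrbracket\bigr)^{-\varepsilon}$.

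The main obstacle I anticipate is the nonarchimedean bookkeeping when $\tau_i=0$: the shifted function $\psi_i$ then has a $|y_i^2\mp 4nq^\nu|_{q_i}^{-1/2}$ singularity, which is $L^1_{\mathrm{loc}}$ by \autoref{lem:ladiconverge} but is not in $L^2$, so I must verify carefully that the Mellin-inversion step at the heart of the theorem still produces the advertised polynomial bound in $\xi$ \emph{uniformly} in $k,f,n$ with the exponents $(\sigma-1)/2,(\tau_i-1)/2$ in place of $\sigma/2,\tau_i/2$. Once this uniformity is in place, summing over the finite set of indices $(\iota,\epsilon,\sigma,\tau)$ concludes the proof.
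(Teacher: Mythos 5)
Your setup (normalizing by $\sqrt{|x^2\mp 1|_\infty}$ and $\sqrt{|y_i^2\mp 4nq^\nu|_{q_i}'}$, using $\widetilde{V_{\iota,\epsilon}}$ via \autoref{prop:propertyh}, and exploiting that the smallness hypothesis makes $\bigl(((kf^2)^2/n)\llbracket\cdot\rrbracket\bigr)^{-\varepsilon}\gg 1$) is sound, and your $\sigma=1$ branch is correct. But your $\sigma=0$ branch contains a genuine error: after dividing by $\sqrt{|x^2\mp 1|_\infty}$ the archimedean singularity exponent is $a_0=\sigma-1$, so $\sigma=0$ gives $a_0=-1$, and the ``last statement'' of \autoref{thm:mainfourierestimate2} that you invoke is stated only for $a_0=0$; it simply does not apply here. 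You have transplanted the case structure of \autoref{prop:xi0firstterm2}, where the test function is $\theta_{\infty,\sigma}^\pm$ itself (so $a_0=\sigma$ and the $a_0=0$ clause is needed precisely when $\sigma=0$); in the present proposition the normalization shifts the exponents by $-1$, which changes which clause is relevant. Your claimed two-term bound and the subsequent ``rearrangement'' for $\sigma=0$ therefore rest on an inapplicable estimate.

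The irony is that $\sigma=0$ is the \emph{easy} case here: with $a_0=-1$ the first statement of \autoref{thm:mainfourierestimate2} (or, as the paper does, \autoref{thm:mainfourierestimate} with $M=\varepsilon$, which is permitted since $\Re a_0=-1>-2$) already yields the factor $(1+|\xi|_\infty)^{-1-a_0/2}=(1+|\xi|_\infty)^{-1/2}$, and together with the nonarchimedean exponents $-1-(\tau_i-1)/2\leq -1/2$ this gives exactly $\llbracket\tfrac{\sqrt n}{kf^2}\star\xi\rrbracket^{-1/2}$ times the prefactor $kf^2/\sqrt n$ and the $(\cdot)^{-\varepsilon}$ factor. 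In other words, no case split, no contour shift, and no appeal to the $a_0=0$ clause is needed: a single application of \autoref{thm:mainfourierestimate} with $M=\varepsilon$ to every $(\iota,\epsilon,\sigma,\tau)$ summand, followed by the observation that all exponents are at most $-1/2$, proves the proposition. Your closing worry about the $|y_i^2\mp 4nq^\nu|_{q_i}'^{-1/2}$ singularity when $\tau_i=0$ is likewise already resolved by the hypotheses of the appendix theorems, which allow $\Re a_i>-2$; flagging it as unverified leaves the proof incomplete as written.
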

\begin{proof}
Fix $\iota\in\{0,1\}$, $\epsilon_i\in \{0,\pm 1\}$, and $\sigma,\tau_i\in \{0,1\}$. 
By \autoref{thm:mainfourierestimate} for $\Phi=V_{\iota,\epsilon}$, and
\[
\varphi(x)=\frac{\theta_{\infty,\sigma}^\pm(x)}{\sqrt{|x^2\mp 1|_\infty}}\quad\text{and}\quad \psi_i(y_i)=\frac{\theta_{q_i,\tau_i}(y_i,\pm nq^\nu)}{\sqrt{|y_i^2\mp 4nq^\nu|_{q_i}'}}
\]
(which is valid by \autoref{prop:propertyh} and the results in \autoref{subsec:singularities}), we know that for any $\varepsilon>0$ and $\sigma,\tau_i\in \{0,1\}$, \eqref{eq:xi0secondtermsingle} is bounded by
\begin{align*}
&\frac{kf^2}{\sqrt{n}} \left[\frac{(kf^2)^2}{4nq^\nu}\!\left(1+\left|\frac{2\xi\sqrt{nq^\nu}}{kf^2}\right|\right) \prod_{i=1}^{r}\left(1+\left|\frac{\xi}{kf^2}\right|_{q_i}\right)\right] ^{-\varepsilon} \left(1+\left|\frac{2\xi\sqrt{nq^\nu}}{kf^2}\right|\right)^{\!\!-1-\frac{\sigma-1}{2}} \prod_{i=1}^{r}\left(1+\left|\frac{\xi}{kf^2}\right|_{q_i}\right)^{\!\!-1-\frac{\tau_i-1}{2}}\\
\ll&\frac{kf^2}{\sqrt{n}} \left[\frac{(kf^2)^2}{4nq^\nu}\left(1+\left|\frac{2\xi\sqrt{nq^\nu}}{kf^2}\right|\right) \prod_{i=1}^{r}\left(1+\left|\frac{\xi}{kf^2}\right|_{q_i}\right)\right]^{-\varepsilon} \left(1+\left|\frac{2\xi\sqrt{nq^\nu}}{kf^2}\right|\right)^{-\frac12} \prod_{i=1}^{r}\left(1+\left|\frac{\xi}{kf^2}\right|_{q_i}\right)^{-\frac12}\\
\ll&  \frac{kf^2}{\sqrt{n}} \left(\frac{(kf^2)^2}{n}\left\llbracket\frac{\sqrt{n}}{kf^2}\star \xi\right\rrbracket\right)^{-\varepsilon} \left\llbracket\frac{\sqrt{n}}{kf^2}\star \xi\right\rrbracket^{-\frac12}.
\end{align*}
Summing over all $\iota$, $\epsilon_i$, $\sigma$, $\tau_i$ yields the desired conclusion.
\end{proof}

By \autoref{prop:xi0secondterm1} and \autoref{prop:xi0secondterm2} we obtain
\begin{corollary}\label{cor:xi0secondtermfinal}
For any $M>0$ and $\varepsilon>0$, we have
\begin{align*}
\eqref{eq:xi0secondterm}&\ll \sum_{\substack{k,f\in \ZZ_{>0},\ \xi\in \ZZ^S-\{0\}\\\left\llbracket\frac{\sqrt{n}}{kf^2}\star \xi\right\rrbracket\frac {(kf^2)^2}{n} \gg 1}}\frac{|\Kl_{k,f}^S(\xi,\pm nq^\nu)|}{k^2f^3}\left(\frac{(kf^2)^2}{n}\left\llbracket\frac{\sqrt{n}}{kf^2}\star \xi\right\rrbracket\right)^{-M} \left\llbracket\frac{\sqrt{n}}{kf^2}\star \xi\right\rrbracket^{-2}\frac{n}{(kf^2)^2}\\
&+ \sum_{\substack{k,f\in \ZZ_{>0},\ \xi\in \ZZ^S-\{0\}\\\left\llbracket\frac{\sqrt{n}}{kf^2}\star \xi\right\rrbracket\frac {(kf^2)^2}{n} \ll 1}}\frac{|\Kl_{k,f}^S(\xi,\pm nq^\nu)|}{k^2f^3}\frac{kf^2}{\sqrt{n}}\left\llbracket\frac{\sqrt{n}}{kf^2}\star \xi\right\rrbracket^{-\frac 12}\left(\frac{(kf^2)^2}{n}\left\llbracket\frac{\sqrt{n}}{kf^2}\star \xi\right\rrbracket\right)^{-\varepsilon},
\end{align*}
where the implied constant only depends on $M$, $\varepsilon$, $f_\infty$, $f_{q_i}$, $\pm$ and $\nu$.
\end{corollary}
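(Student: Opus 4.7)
The proof plan is purely to package Propositions \ref{prop:xi0secondterm1} and \ref{prop:xi0secondterm2}. By inspection of the definition of \eqref{eq:xi0secondterm}, the quantity in question equals
\[
\sum_{k,f\in \ZZ_{(S)}^{>0}}\sum_{\xi\in \ZZ^S-\{0\}}\frac{\Kl_{k,f}^S(\xi,\pm nq^\nu)}{k^2f^3}\cdot\eqref{eq:xi0secondtermid}.
\]
The first step is to pass to absolute values inside the triple sum and then split the range of summation into the two regimes distinguished by whether $\llbracket\sqrt{n}/(kf^2)\star\xi\rrbracket\cdot(kf^2)^2/n$ is $\gg 1$ or $\ll 1$.

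On the first range I apply Proposition \ref{prop:xi0secondterm1}, which bounds $\eqref{eq:xi0secondtermid}$ for each fixed admissible $(k,f,\xi)$ by
\[
\left(\tfrac{(kf^2)^2}{n}\llbracket\tfrac{\sqrt{n}}{kf^2}\star\xi\rrbracket\right)^{-M}\llbracket\tfrac{\sqrt{n}}{kf^2}\star\xi\rrbracket^{-2}\tfrac{n}{(kf^2)^2},
\]
with implied constant depending only on $M,f_\infty,f_{q_i},\pm,\nu$. On the complementary range I apply Proposition \ref{prop:xi0secondterm2} to obtain the corresponding $\varepsilon$-flavored bound
\[
\tfrac{kf^2}{\sqrt{n}}\llbracket\tfrac{\sqrt{n}}{kf^2}\star\xi\rrbracket^{-1/2}\left(\tfrac{(kf^2)^2}{n}\llbracket\tfrac{\sqrt{n}}{kf^2}\star\xi\rrbracket\right)^{-\varepsilon},
\]
whose implied constant depends on the same list of parameters together with $\varepsilon$. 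Multiplying each of these pointwise bounds by $|\Kl_{k,f}^S(\xi,\pm nq^\nu)|/(k^2f^3)$ and summing over the respective ranges yields exactly the two-line right-hand side in the statement.

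There is no genuine obstacle here: the corollary is a pure bookkeeping step once the real analytic work has been completed in the two propositions (themselves resting on the semilocal Fourier/stationary-phase estimates applied to $V_{\iota,\epsilon}$ via \autoref{prop:propertyh}, together with the local singularity structure of the orbital integrals recorded in Section \ref{subsec:singularities}). The only tiny point to verify is that taking absolute values commutes with the finite interior sums over $(\iota,\epsilon,\sigma,\tau)$ that were carried out in the proofs of those propositions; but both propositions are stated precisely so that this issue has already been absorbed, so no further reduction is required.
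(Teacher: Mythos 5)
Your proposal is correct and is essentially identical to the paper's own argument: the corollary is obtained by splitting the sum over $(k,f,\xi)$ into the two regimes according to the size of $\llbracket\frac{\sqrt{n}}{kf^2}\star\xi\rrbracket\frac{(kf^2)^2}{n}$ and inserting the pointwise bounds of Propositions \ref{prop:xi0secondterm1} and \ref{prop:xi0secondterm2}, exactly as you describe. Your closing remark about the finite sums over $(\iota,\epsilon,\sigma,\tau)$ being already absorbed into those propositions is the right observation and is how the paper handles it as well.
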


Finally, we conclude that
\begin{corollary}\label{cor:xi0firstestimate}
For any $M>0$ and $\varepsilon>0$, we have
\begin{align*}
\eqref{eq:xinot0}&\ll \sqrt{n} \sum_{\substack{k,f\in \ZZ_{>0},\ \xi\in \ZZ^S-\{0\}\\\left\llbracket\frac{\sqrt{n}}{kf^2}\star \xi\right\rrbracket\frac {(kf^2)^2}{n} \gg 1}}\frac{|\Kl_{k,f}^S(\xi,\pm nq^\nu)|}{k^2f^3}\left(\frac{(kf^2)^2}{n}\left\llbracket\frac{\sqrt{n}}{kf^2}\star \xi\right\rrbracket\right)^{-M} \left\llbracket\frac{\sqrt{n}}{kf^2}\star \xi\right\rrbracket^{-2}\frac{n}{(kf^2)^2}\\
&+ \sum_{\substack{k,f\in \ZZ_{>0},\ \xi\in \ZZ^S-\{0\}\\\left\llbracket\frac{\sqrt{n}}{kf^2}\star \xi\right\rrbracket\frac {(kf^2)^2}{n} \ll 1}}\frac{|\Kl_{k,f}^S(\xi,\pm nq^\nu)|}{kf}\left\llbracket\frac{\sqrt{n}}{kf^2}\star \xi\right\rrbracket^{-\frac 12}\left(\frac{(kf^2)^2}{n}\left\llbracket\frac{\sqrt{n}}{kf^2}\star \xi\right\rrbracket\right)^{-\varepsilon},
\end{align*}
where the implied constant only depends on $M,\varepsilon,f_\infty,f_{q_i},\pm$ and $\nu$.
\end{corollary}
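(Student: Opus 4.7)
The plan is to decompose \eqref{eq:xinot0} according to the two terms inside the bracketed integrand and then invoke the two preceding corollaries directly, so the proof reduces to bookkeeping.

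First I would observe that by linearity of the integral and the sums,
\[
\eqref{eq:xinot0} = \sqrt{n}\bigl(\eqref{eq:xi0firstterm} + \eqref{eq:xi0secondterm}\bigr),
\]
since the outer factors $\sqrt{n}$, $\frac{1}{k^2f^3}$, $\Kl_{k,f}^S(\xi,\pm nq^\nu)$, and the two exponential characters $\rme\bigl(\tfrac{-2x\xi n^{1/2}q^{\nu/2}}{kf^2}\bigr)\rme_q\bigl(\tfrac{-y\xi}{kf^2}\bigr)$ in \eqref{eq:xinot0} are identical to those in \eqref{eq:xi0firstterm} and \eqref{eq:xi0secondterm}, and the bracket naturally splits into the $F$-piece (the integrand of \eqref{eq:xi0firsttermsingle}) and the $V$-piece (the integrand of \eqref{eq:xi0secondtermid}).

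Next I would apply \autoref{cor:xi0firsttermfinal} to bound \eqref{eq:xi0firstterm} and \autoref{cor:xi0secondtermfinal} to bound \eqref{eq:xi0secondterm}. The crucial observation — arranged deliberately by the parallel structure of \autoref{prop:xi0firstterm1}, \autoref{prop:xi0firstterm2}, \autoref{prop:xi0secondterm1}, \autoref{prop:xi0secondterm2} — is that both corollaries yield identical bounds on each of the two ranges $\llbracket\frac{\sqrt{n}}{kf^2}\star \xi\rrbracket\frac{(kf^2)^2}{n}\gg 1$ and $\llbracket\frac{\sqrt{n}}{kf^2}\star \xi\rrbracket\frac{(kf^2)^2}{n}\ll 1$. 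Therefore summing the two corollaries merely doubles each bound, and the factor of $2$ is absorbed into the implied constant depending on $M,\varepsilon,f_\infty,f_{q_i},\pm,\nu$.

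Finally I multiply through by $\sqrt{n}$. On the first range the $\sqrt{n}$ carries over to the front, producing precisely the first summand in the statement. On the second range the factor $\frac{kf^2}{\sqrt{n}}$ present in both \autoref{cor:xi0firsttermfinal} and \autoref{cor:xi0secondtermfinal} combines with the outer $\sqrt{n}$ and the $\frac{1}{k^2f^3}$ to collapse to $\frac{1}{kf}$, yielding the second summand exactly as claimed. There is no genuine obstacle at this step — all of the analytic difficulty has already been absorbed into the four propositions \autoref{prop:xi0firstterm1} through \autoref{prop:xi0secondterm2}, which carried out the careful Fourier-analytic estimates in the semilocal space $\QQ_S$ with the appropriate singularity decomposition from \autoref{subsec:singularities}.
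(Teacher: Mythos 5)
Your proposal is correct and matches the paper's argument: the paper likewise notes that \eqref{eq:xinot0} is bounded by $n^{1/2}(\eqref{eq:xi0firstterm}+\eqref{eq:xi0secondterm})$ and then applies \autoref{cor:xi0firsttermfinal} and \autoref{cor:xi0secondtermfinal}, with the identical bounds on the two ranges combining exactly as you describe (including the collapse $\sqrt{n}\cdot\frac{1}{k^2f^3}\cdot\frac{kf^2}{\sqrt{n}}=\frac{1}{kf}$ on the second range).
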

\begin{proof}
The conclusion follows from \autoref{cor:xi0firsttermfinal}, \autoref{cor:xi0secondtermfinal} and that $
\eqref{eq:xinot0}\ll n^{1/2}(\eqref{eq:xi0firstterm}+\eqref{eq:xi0secondterm})$.
\end{proof}

\subsection{Estimate of the Kloosterman sum part} Form the previous subsection we know that \eqref{eq:xinot0} is bounded by the sum of the following two terms
\begin{equation}\label{eq:xi0kloosterman1}
\sqrt{n}\sum_{\substack{k,f\in \ZZ_{>0},\ \xi\in \ZZ^S-\{0\}\\\left\llbracket\frac{\sqrt{n}}{kf^2}\star \xi\right\rrbracket\frac {(kf^2)^2}{n} \gg 1}}\frac{|\Kl_{k,f}^S(\xi,\pm nq^\nu)|}{k^2f^3}\left(\frac{(kf^2)^2}{n}\left\llbracket\frac{\sqrt{n}}{kf^2}\star \xi\right\rrbracket\right)^{-M} \left\llbracket\frac{\sqrt{n}}{kf^2}\star \xi\right\rrbracket^{-2}\frac{n}{(kf^2)^2},
\end{equation}
and
\begin{equation}\label{eq:xi0kloosterman2}
\sum_{\substack{k,f\in \ZZ_{>0},\ \xi\in \ZZ^S-\{0\}\\\left\llbracket\frac{\sqrt{n}}{kf^2}\star \xi\right\rrbracket\frac {(kf^2)^2}{n} \ll 1}}\frac{|\Kl_{k,f}^S(\xi,\pm nq^\nu)|}{kf}\left\llbracket\frac{\sqrt{n}}{kf^2}\star \xi\right\rrbracket^{-\frac 12}\left(\frac{(kf^2)^2}{n}\left\llbracket\frac{\sqrt{n}}{kf^2}\star \xi\right\rrbracket\right)^{-\varepsilon} .
\end{equation}

\begin{lemma}\label{lem:kloosterman}
Let $k,f\in \ZZ_{(S)}^{>0}$ and $\xi\in \ZZ^S-\{0\}$. Let $F(k,f,\xi)$ be a nonnegative function. Let $\mf{C}(k,f,\xi)$ be a set of conditions, each of which is of the form $C(k,f,\xi)\ll 1$ or $C(k,f,\xi)\gg 1$ for some nonnegative function $C$. Then for any $\varepsilon>0$ we have
\[
\sum_{\substack{k,f\in \ZZ_{(S)}^{>0},\ \xi\in \ZZ^S-\{0\}\\\mf{C}(k,f,\xi)}}F(k,f,\xi)\Kl_{k^{(q)},f^{(q)}}^S(\xi,\pm nq^\nu)\ll_\varepsilon \sum_{\substack{d^2\mid n,\, a,k,f\in \ZZ_{(S)}^{>0},\ \xi\in \ZZ^S-\{0\}\\\mf{C}(ak,df,ad\xi)}}(ad^2kf^2)^\varepsilon adk^{1/2}F(ak,df,ad\xi).
\]
\end{lemma}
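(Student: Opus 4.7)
The plan is to establish a Weil--Sali\'e type bound for the generalized Kloosterman sum $\Kl_{k,f}^S(\xi,\pm nq^\nu)$ and then reindex the resulting divisor sum to match the right-hand side. Since $k^{(q)}=k$ and $f^{(q)}=f$ for $k,f\in\ZZ_{(S)}^{>0}$, the sum on the left really involves $\Kl_{k,f}^S(\xi,\pm nq^\nu)$. By the Chinese remainder theorem together with \autoref{prop:equalkloosterman},
\[
\Kl_{k,f}^S(\xi,\pm nq^\nu)=\prod_{\ell\notin S}\Kl_{\ell^{v_\ell(k)},\ell^{v_\ell(f)}}^{(\ell)}(\xi,\pm n),
\]
because $q^\nu$ is a unit at each $\ell\notin S$; the problem reduces to a local bound at every prime $\ell\notin S$.

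Next, following the strategy of \cite[Appendix B]{altug2017}, I would prove a local bound of the form
\[
\bigl|\Kl_{\ell^u,\ell^v}^{(\ell)}(\xi,\pm n)\bigr|\ll_\varepsilon \ell^{\varepsilon(u+v)}\sum_{\substack{a_\ell\mid\ell^u,\ d_\ell\mid\ell^v\\ d_\ell^2\mid n,\ a_\ell d_\ell\mid\xi}}a_\ell d_\ell\,(\ell^u/a_\ell)^{1/2}.
\]
The derivation proceeds by (i) splitting the $a$-sum according to $\gcd(\xi,\ell^u)$, producing the factor $a_\ell$ and reducing the length of the remaining sum to $\ell^u/a_\ell$; (ii) analysing the quadratic congruence $a^2\equiv 4m\pmod{\ell^{2v}}$, which forces a square divisor $d_\ell^2\mid n$ and contributes the factor $d_\ell$ coming from the multiplicity of square roots modulo $\ell^{2v}$; and (iii) applying the classical Weil/Sali\'e square-root bound to the resulting twisted exponential sum of length $\ell^u/a_\ell$.

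Multiplying these local bounds over $\ell\notin S$ and setting $a=\prod_\ell a_\ell$, $d=\prod_\ell d_\ell$ (so $d^2\mid n$ automatically), I obtain
\[
\bigl|\Kl_{k,f}^S(\xi,\pm nq^\nu)\bigr|\ll_\varepsilon (kf)^\varepsilon\sum_{\substack{a\mid k,\ d\mid f\\ d^2\mid n,\ ad\mid\xi}}ad\,(k/a)^{1/2}.
\]
Inserting this into the left-hand side of the lemma and changing variables $k\mapsto ak$, $f\mapsto df$, $\xi\mapsto ad\xi$, with $(a,d)$ pulled outside as new summation indices, converts $ad(k/a)^{1/2}$ into $adk^{1/2}$ in the new variables, the constraint $\mf{C}(k,f,\xi)$ into $\mf{C}(ak,df,ad\xi)$, and $(kf)^\varepsilon=(ad\cdot kf)^\varepsilon\leq (ad^2kf^2)^\varepsilon$; this is exactly the claimed inequality.

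The main obstacle is the local bound. At primes where $\xi$, $k$, and $f$ all share common factors, one must simultaneously control the quadratic character twist $\legendresymbol{\cdot}{\ell^u}$, the $\ell^{2v}$-divisibility condition on $a^2-4m$, and the exponential phase in $\xi$, arranging the case analysis so that the three parameters $a_\ell$, $d_\ell$, and $\ell^u/a_\ell$ emerge cleanly without double-counting. This bookkeeping constitutes the technical core and parallels the analysis of \cite[Appendix B]{altug2017}, which applies directly to our setting once \autoref{prop:equalkloosterman} has been used to combine $\rme$ and $\rme_q$ into the single local additive character $\rme_\ell$ on the local Kloosterman sum.
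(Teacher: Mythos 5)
Your proposal is correct and follows essentially the same route as the paper: a Weil-type bound for $\Kl_{k,f}^S(\xi,\pm nq^\nu)$ obtained by local factorization and the Weil/Sali\'e square-root estimate (this is exactly the content of \autoref{prop:prodkloostermanlocal} and \autoref{cor:kloostermanfinal}, which the paper simply cites), followed by the change of variables $k\mapsto ak$, $f\mapsto df$, $\xi\mapsto ad\xi$ with the divisor parameters $a,d$ pulled out as new summation indices by positivity. The only caveat is that your CRT factorization should twist $\xi$ by the unit $((kf^2)^{(\ell)})^{-1}$ at each prime and retain $\pm nq^\nu$ rather than $\pm n$ in the local sums; since these changes only alter arguments by units they do not affect the valuations and hence not the bound.
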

\begin{proof}
By \autoref{cor:kloostermanfinal} we have
\begin{align*}
    & \Kl_{k,f}^S(\xi,\pm nq^\nu)  \\
   \ll_{\varepsilon}  &  \delta(\pm4nq^\nu;(f^{(q)})^2)\begin{dcases}
             (kf^2)^\varepsilon\sqrt{k\gcd(n,f^2)}\sqrt{\gcd\left( \frac{\xi^{(q)}}{\sqrt{\gcd(n,f^2)}},k\right)}, & \frac{k\sqrt{\gcd(n,f^2)}}{\rad k}\mid \xi^{(q)},  \\
              0, & \text{otherwise}.
            \end{dcases}
\end{align*}
Suppose that $\Kl_{k,f}^S(\xi,\pm nq^\nu)\neq 0$, then $x^2\equiv \pm 4nq^\nu\, (f^2)$ has solutions. Thus $\gcd(n,f^2)$ must be a square. Write $d=\sqrt{\gcd(n,f^2)}$. Then we obtain
\[
     \Kl_{k,f}^S(\xi,\pm nq^\nu)  
   \ll_\varepsilon   \begin{dcases}
             (kf^2)^\varepsilon   dk^{1/2}\sqrt{\gcd\left(\xi^{(q)}/d,k\right)}, & \frac{dk}{\rad k}\mid \xi^{(q)},  \\
              0, & \text{otherwise}.
            \end{dcases}
\]
Hence 
\[
\Kl_{k,f}^S(\xi,\pm nq^\nu) \ll_\varepsilon (kf^2)^\varepsilon dk^{1/2}\sqrt{\gcd\left(\xi^{(q)}/d,k\right)}.
\]

Let $a=\gcd(\xi^{(q)}/d,k)\in \ZZ_{(S)}^{>0}$. By making change of variable $k\mapsto ak$, $f\mapsto df$ and $\xi\mapsto ad\xi$ we obtain
\begin{align*}
     &\sum_{\substack{k,f\in \ZZ_{(S)}^{>0},\ \xi\in \ZZ^S-\{0\} \\\mf{C}(k,f,\xi)}}F(k,f,\xi)\Kl_{k,f}^S(\xi,\pm nq^\nu) \\   
   \ll_\varepsilon & \sum_{\substack{d^2\mid n,\,k,f\in \ZZ_{(S)}^{>0},\ \xi\in \ZZ^S-\{0\} \\ \mf{C}(k,df,d\xi)}}F(k,df,d\xi)(kd^2f^2)^\varepsilon dk^{1/2}\sqrt{\gcd\left(\xi^{(q)}/d,k\right)} \\
   \ll_\varepsilon  & \sum_{\substack{d^2\mid n,\,a,k,f\in \ZZ_{(S)}^{>0},\ \xi\in \ZZ^S-\{0\}\\\mf{C}(ak,df,ad\xi)}}(ad^2kf^2)^\varepsilon adk^{1/2}F(ak,df,ad\xi).\qedhere
\end{align*}
\end{proof}

First we prove that $\eqref{eq:xi0kloosterman1}\ll n^{1/4+\varepsilon}$. We need the following lemmas:
\begin{lemma}\label{lem:estimatexilargesum}
Suppose that $M>1$. 
\begin{enumerate}[itemsep=0pt,parsep=0pt,topsep=0pt, leftmargin=0pt,labelsep=2.5pt,itemindent=15pt,label=\upshape{(\arabic*)}]
\item For any $\varepsilon>0$, we have
\[
\sum_{\xi\in \ZZ^S-\{0\}}\llbracket a\star \xi\rrbracket^{-M} \ll_{\varepsilon} |a|^{-M+\varepsilon}.
\]
\item For any $\varepsilon>0$, we have
\[
\sum_{\llbracket a\star\xi\rrbracket\gg b}\llbracket a\star\xi\rrbracket^{-M}\ll_\varepsilon |a|^{-1}|b|^{-M+1+\varepsilon}.
\]
\end{enumerate}
\end{lemma}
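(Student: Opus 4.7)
The plan is to parametrize each nonzero $S$-integer $\xi$ uniquely as $\xi=m/q^\beta$ with $m\in\ZZ\setminus\{0\}$, $\beta\in\ZZ_{\geq 0}^r$, and $\gcd(m,q_i)=1$ whenever $\beta_i>0$. With this representation $|a\xi|_\infty=|am|/q^\beta$, and $1+|\xi|_{q_i}\asymp q_i^{\beta_i}$ when $\beta_i>0$ while $1+|\xi|_{q_i}\asymp 1$ otherwise, so
\[
\llbracket a\star\xi\rrbracket \;\asymp\; q^\beta+|am|.
\]
Both estimates are thereby reduced to double sums of the form $\sum_{\beta,m}(q^\beta+|am|)^{-M}$, possibly with an additional cutoff in (2).

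For (1) I first perform the inner sum over $m$ by splitting at $|m|\asymp q^\beta/|a|$, which gives
\[
\sum_{m\neq 0}\bigl(q^\beta+|am|\bigr)^{-M} \ll \begin{cases} q^{(1-M)\beta}/|a|, & q^\beta\geq |a|,\\ |a|^{-M}, & q^\beta<|a|, \end{cases}
\]
valid because $M>1$. I then sum over $\beta\in\ZZ_{\geq 0}^r$ via the two multi-index lattice bounds
\[
\#\{\beta:q^\beta\leq X\}\ll (\log X)^r,\qquad \sum_{q^\beta\geq X}q^{(1-M)\beta}\ll X^{1-M}(\log X)^{r-1},
\]
the second being obtained by a dyadic decomposition of $q^\beta$ (or by the continuous change of variables $\alpha_i=\beta_i\log q_i$, reducing to an integral over a simplex). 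The combined contribution is of order $|a|^{-M}(\log|a|)^r$ in the relevant range, and the logarithmic factor is absorbed into $|a|^\varepsilon$.

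For (2) the extra condition $\llbracket a\star\xi\rrbracket\gg b$ becomes $q^\beta+|am|\gg b$, and I split into complementary regions. When $q^\beta\gg b$ the cutoff is automatic and the $m$-sum above contributes $q^{(1-M)\beta}/|a|$ per $\beta$; summing over $q^\beta\gg b$ via the second lattice bound above gives $\ll |a|^{-1}b^{1-M}(\log b)^{r-1}$. When $q^\beta\ll b$ the cutoff forces $|m|\gg b/|a|$, so the tail $\sum_{|m|\gg b/|a|}|am|^{-M}\ll |a|^{-1}b^{1-M}$ multiplied by the $\ll(\log b)^r$ admissible $\beta$ again yields the same order. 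Absorbing the log factor into $b^\varepsilon$ produces (2).

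The main bookkeeping I anticipate is the multi-index handling of the $\beta$-sum over $\ZZ_{\geq 0}^r$: verifying the two displayed lattice-point bounds, and carefully absorbing the resulting $(\log X)^r$ factors into $|a|^\varepsilon$ or $b^\varepsilon$. Everything else is a routine dyadic decomposition together with the absolute convergence supplied by the hypothesis $M>1$.
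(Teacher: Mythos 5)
Your proposal is correct and follows essentially the same route as the paper: the parametrization $\xi=m/q^{\beta}$ with $\llbracket a\star\xi\rrbracket\asymp q^{\beta}+|am|$ is exactly the paper's stratification of $\ZZ^S-\{0\}$ by the $q_i$-adic valuations, and in part (2) your split into $q^{\beta}\gg b$ versus $q^{\beta}\ll b$ with the resulting $\log^r b$ factor absorbed into $b^{\varepsilon}$ is precisely what the paper does. The only cosmetic difference is in part (1), where the paper introduces $\varepsilon$ up front by relaxing the archimedean exponent from $-M$ to $-M+\varepsilon$ and summing geometric series over the valuations, whereas you keep exact exponents and absorb the lattice-count logarithms at the end; both versions (like the paper's) implicitly take $\varepsilon<M-1$ so the integer sums converge.
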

\begin{proof}
(1) follows from
\begin{align*}
   &\sum_{\xi\in \ZZ^S-\{0\}}\llbracket a\star \xi\rrbracket^{-M} \ll_\varepsilon  \sum_{\xi\in \ZZ^S-\{0\}}\left(1+|a\xi|_\infty\right)^{-M+\varepsilon}\prod_{i=1}^{r} (1+|\xi|_{q_i})^{-M} \\
   =&\sum_{I\subseteq \{1,\dots,r\}}\sum_{\substack{c_i\in \ZZ_{<0}\ i\in I\\ c_j\in \ZZ_{\geq 0}\ j\notin I}}\sum_{\substack{\xi\in \ZZ^S-\{0\}\\ v_{q_i}(\xi)=c_i}}\left(1+|a\xi|_\infty\right)^{-M+\varepsilon}\prod_{i=1}^{r} (1+|\xi|_{q_i})^{-M}\\
     \ll_\varepsilon &\sum_{I\subseteq \{1,\dots,r\}}\prod_{i\in I}\sum_{\substack{c_i=-\infty\\ i\in I}}^{-1}\sum_{\xi\in \ZZ-\{0\}}(|a\xi| q_1^{c_1}\dots q_r^{c_n})^{-M+\varepsilon}\prod_{i=1}^{r}(1+q_i^{-c_i})^{-M}
     \\
     \ll_\varepsilon &\sum_{\xi\in \ZZ-\{0\}}|a\xi|^{-M+\varepsilon}\sum_{I\subseteq \{1,\dots,r\}}\prod_{i\in I}\sum_{\substack{c_i=-\infty\\ i\in I}}^{-1} \frac{q_i^{c_i(M-\varepsilon)}}{(1+q_i^{c_i})^{M}}\ll_\varepsilon |a|^{-M+\varepsilon},
\end{align*}
and
(2) follows from
\begin{align*}
\sum_{\llbracket a\star\xi\rrbracket\gg b}\llbracket a\star\xi\rrbracket^{-M}&\ll \sum_{u\in \ZZ_{\leq 0}^r}\sum_{\substack{\xi\in \ZZ-\{0\}\\ \llbracket a\star q^u\xi\rrbracket\gg b}}(1+|aq^u\xi|)^{-M}\prod_{i=1}^{r}(1+q_i^{-u_i})^{-M}\\
&\ll \left(\sum_{q^{-u}\ll b}\sum_{|\xi|\gg \frac{1}{|aq^u|}\frac{b}{q^{-u}}}+\sum_{q^{-u}\gg b}\sum_{|\xi|\geq 1}\right)(1+|aq^u\xi|)^{-M}\prod_{i=1}^{r}(1+q_i^{-u_i})^{-M}\\
&\ll  \sum_{q^{-u}\ll b}|aq^u|^{-1}|bq^u|^{-M+1}|q^{-u}|^{-M}+\sum_{q^{-u}\gg b}|aq^u|^{-1-\varepsilon}\prod_{i=1}^{r}(1+q_i^{-u_i})^{-M}\\
&\ll |a|^{-1}|b|^{-M+1}\log^r |b|+|a|^{-1-\varepsilon}\sum_{q^{-u}\gg b}(q^u)^{M-1-\varepsilon}\ll_\varepsilon |a|^{-1}|b|^{-M+1+\varepsilon}.\qedhere
\end{align*}
\end{proof}

\begin{proposition}\label{prop:xi0kloosterman1}
For $M$ sufficiently large and any $\varepsilon>0$, we have
\[
\eqref{eq:xi0kloosterman1}\ll n^{\frac14+\varepsilon}.
\]
The implied constant only depends on $M$, $\varepsilon$, $f_\infty$, $f_{q_i}$, $\pm$ and $\nu$.
\end{proposition}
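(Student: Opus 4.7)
The plan is to apply \autoref{lem:kloosterman} to dispose of the Kloosterman sum, then to use \autoref{lem:estimatexilargesum} to carry out the inner sum over $\xi$, and finally to estimate the resulting elementary sum over $a,d,k,f$ directly.

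Applying \autoref{lem:kloosterman} to \eqref{eq:xi0kloosterman1} with
\[
F(k,f,\xi)=\frac{1}{k^2f^3}\left(\frac{(kf^2)^2}{n}\right)^{-M}\left\llbracket\frac{\sqrt{n}}{kf^2}\star\xi\right\rrbracket^{-M-2}\frac{n}{(kf^2)^2}
\]
substitutes $(k,f,\xi)\to(ak,df,ad\xi)$ with weight $adk^{1/2}(ad^2kf^2)^{\varepsilon}$ and an additional summation over $d^2\mid n$ and $a\in\ZZ_{(S)}^{>0}$. Because $ad\in\ZZ_{(S)}$ implies $|ad\xi|_{q_i}=|\xi|_{q_i}$, we have the key scaling identity
\[
\left\llbracket\frac{\sqrt{n}}{akd^2f^2}\star ad\xi\right\rrbracket=\left\llbracket\frac{\sqrt{n}}{kdf^2}\star\xi\right\rrbracket,
\]
and the summation condition $\mf{C}$ becomes $\llbracket\frac{\sqrt{n}}{kdf^2}\star\xi\rrbracket\gg n/R^2$ with $R:=akd^2f^2$.

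Next, I split according to whether $R\leq\sqrt{n}$ (main range) or $R>\sqrt{n}$ (tail). In the main range, \autoref{lem:estimatexilargesum}(2) bounds the $\xi$-sum by $\frac{kdf^2}{\sqrt{n}}(n/R^2)^{-M-1+\varepsilon}$, and combined with the decay factor $(R^2/n)^{-M}$ in $F$ the entire $M$-dependence cancels. Tracking exponents yields a summand of the form $n^{O(\varepsilon)}/(a\,k^{1/2}\,d\,f)$. The partial sum $\sum_{k\leq\sqrt{n}/(ad^2f^2)}k^{-1/2}\ll(\sqrt{n}/(ad^2f^2))^{1/2}$ then contributes the crucial factor $n^{1/4}/(ad^2f^2)^{1/2}$, after which the residual sums over $a$, $f$, and $d$ converge absolutely, with the sum over $d$ (where $d^2\mid n$) contributing at most $\bm{d}(\sqrt{n})\ll n^\varepsilon$. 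This gives the bound $\ll n^{1/4+\varepsilon}$. In the tail $R>\sqrt{n}$ the factor $(R^2/n)^{-M}$ is strictly less than $1$; combining it with either \autoref{lem:estimatexilargesum}(1) (when $\sqrt{n}/(kdf^2)\geq 1$) or a trivial estimate on the $\xi$-sum (when $\sqrt{n}/(kdf^2)<1$, using $\llbracket \cdot \rrbracket\geq\prod(1+|\xi|_{q_i})$) shows that the tail contribution is $\ll n^\varepsilon$ provided $M$ is large enough.

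The main obstacle is the delicate bookkeeping of the powers of $a,d,k,f,n$ through the successive substitutions, particularly verifying the cancellation of $M$-dependence in the main range and the decay estimate in the tail. Conceptually, the exponent $1/4$ arises from the Weil-type square-root cancellation $\sqrt{k}$ furnished by the Kloosterman bound, integrated over the effective range $k\lesssim\sqrt{n}$ dictated by the rapid decay in $M$; this parallels the origin of the $1/4$ bound in Kuznetsov's derivation via the Petersson-Kuznetsov formula.
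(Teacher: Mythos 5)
Your proposal follows essentially the same route as the paper: the same reduction via \autoref{lem:kloosterman} with the scaling identity $\llbracket\sqrt{n}/(ad^2kf^2)\star ad\xi\rrbracket=\llbracket\sqrt{n}/(dkf^2)\star\xi\rrbracket$, the same use of \autoref{lem:estimatexilargesum} (part (2) in the main range, where the $M$-dependence cancels against the decay factor, and part (1) in the tail), and the same source of the exponent $1/4$, namely $\sum_{k\ll\sqrt{n}/(a^2f^2)}k^{-1/2}$. The only difference is cosmetic: the paper partitions the tail into three sub-cases according to whether $a^2$, then $a^2f^2$, then $a^2f^2k$ exceeds $\sqrt{n}$, rather than using the single parameter $R=akd^2f^2$, but the estimates are the same.
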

\begin{proof}
Since $\llbracket a \star \xi\rrbracket=\llbracket 1 \star a\xi\rrbracket$ for $a\in \ZZ_{(S)}$, we have
\[
\left\llbracket\frac{\sqrt{n}}{ad^2kf^2}\star ad\xi\right\rrbracket=\left\llbracket\frac{\sqrt{n}}{dkf^2}\star \xi\right\rrbracket
\]
for $a\in \ZZ_{(S)}^{>0}$ and $d^2\mid n$. Thus by \autoref{lem:kloosterman}, \eqref{eq:xi0kloosterman1} is bounded by
\[
\sqrt{n}\sum_{d^2\mid n}\sum_{\substack{a,k,f\in \ZZ_{(S)}^{>0},\ \xi\in \ZZ^S-\{0\}\\\left\llbracket\frac{\sqrt{n}}{dkf^2}\star \xi\right\rrbracket\frac {(ad^2kf^2)^2}{n} \gg 1}}\frac{(ad^2kf^2)^\varepsilon adk^{1/2}}{d^3f^3a^2k^2} \left(\frac{(ad^2kf^2)^2}{n}\left\llbracket\frac{\sqrt{n}}{dkf^2}\star \xi\right\rrbracket\right)^{-M}\left\llbracket\frac{\sqrt{n}}{dkf^2}\star \xi\right\rrbracket^{-2}\frac{n}{(ad^2kf^2)^2}
\]
for any $\varepsilon>0$.
We decompose the above sum as
\[
\sum_{\substack{a,k,f\in \ZZ_{(S)}^{>0},\ \xi\in \ZZ^S-\{0\}\\\left\llbracket\frac{\sqrt{n}}{dkf^2}\star \xi\right\rrbracket\frac {(ad^2kf^2)^2}{n} \gg 1}}=\sum_{\substack{a^2\gg \sqrt {n}\\k,f,\xi}}+ \sum_{\substack{a^2\ll \sqrt {n}\\ f^2\gg \frac{\sqrt{n}}{a^2}\\k,\xi}}+ \sum_{\substack{a^2\ll \sqrt {n}\\ f^2\ll \frac{\sqrt{n}}{a^2}\\ k\gg \frac{\sqrt{n}}{a^2f^2} \\\xi}}+\sum_{\substack{a^2\ll \sqrt {n}\\ f^2\ll \frac{\sqrt{n}}{a^2}\\ k\ll \frac{\sqrt{n}}{a^2f^2} \\\left\llbracket\frac{\sqrt{n}}{dkf^2}\star \xi\right\rrbracket\frac {(ad^2kf^2)^2}{n} \gg 1}}
\]
and bound these four terms separately. 

For the first term, by \autoref{lem:estimatexilargesum} (1) we have
\begin{align*}
\sqrt{n}\sum_{d^2\mid n}\sum_{\substack{a^2\gg \sqrt {n}\\k,f,\xi}}\cdots
\ll &\sqrt{n}\sum_{d^2\mid n}\sum_{\substack{a^2\gg \sqrt {n}\\k,f}}\frac{(ad^2kf^2)^\varepsilon adk^{1/2}}{a^2d^3k^2f^3}\left(\frac{(ad^2kf^2)^2}{n} \right)^{-M-1}\sum_{\xi\in \ZZ^S-\{0\}}\left\llbracket\frac{\sqrt{n}}{dkf^2}\star \xi\right\rrbracket^{-M-2} \\
\ll &\sqrt{n}\sum_{d^2\mid n}\sum_{\substack{a^2\gg \sqrt {n}\\k,f}}\frac{(ad^2kf^2)^\varepsilon}{ad^2k^{3/2}f^3}\left(\frac{(ad^2kf^2)^2}{n} \right)^{-M-1}\left(\frac{\sqrt{n}}{dkf^2}\right)^{-M-2+\varepsilon}\\
=&n^{\frac{M+1+\varepsilon}{2}}\sum_{d^2\mid n}\sum_{a^2\gg \sqrt{n}}\frac{1}{a^{3+2M-\varepsilon}d^{4+3M-\varepsilon}}\sum_{k,f}\frac{1}{k^{3/2+M}f^{3+2M}}\\
\ll &n^{\frac{M+1+\varepsilon}{2}}\sum_{a^2\gg \sqrt{n}}\frac{1}{a^{3+2M-\varepsilon}}\ll n^{\frac{M+1+\varepsilon}{2}} (n^{\frac14})^{-2-2M+\varepsilon}\ll n^{\varepsilon}.
\end{align*}

For the second term we have
\begin{align*}
\sqrt{n}\sum_{d^2\mid n}\sum_{\substack{a^2\ll \sqrt {n}\\ f^2\gg \frac{\sqrt{n}}{a^2}\\k,\xi}}\cdots
\ll&\sqrt{n}\sum_{d^2\mid n}\sum_{\substack{a^2\ll \sqrt {n}\\ f^2\gg \frac{\sqrt{n}}{a^2}\\k}}\frac{(ad^2kf^2)^\varepsilon adk^{1/2}}{a^2d^3k^2f^3}\left(\frac{(ad^2kf^2)^2}{n} \right)^{-M-1}\left(\frac{\sqrt{n}}{dkf^2}\right)^{-M-2+\varepsilon}\\
\ll&n^{\frac{M+1+\varepsilon}{2}}\sum_{d^2\mid n}\sum_{a^2\ll \sqrt{n}}\frac{1}{a^{3+2M-\varepsilon}d^{4+3M-\varepsilon}}\sum_{f\gg \frac{\sqrt{n}}{a^2}}\frac{1}{f^{3+2M}}\sum_{k}\frac{1}{k^{3/2+M}}\\
\ll &n^{\frac{M+1+\varepsilon}{2}}\sum_{a^2\ll \sqrt{n}}\frac{1}{a^{3+2M-\varepsilon}}\sum_{f\gg\frac{\sqrt{n}}{a^2}}\frac{1}{f^{3+2M}}\ll n^{\frac{M+1+\varepsilon}{2}}\sum_{a^2\ll \sqrt{n}}\frac{1}{a^{3+2M-\varepsilon}} \left(\frac{\sqrt{n}}{a^2}\right)^{-2-2M}\\
\ll &n^{\frac{M+1+\varepsilon}{2}}(n^{\frac14})^{-2-2M+\varepsilon}\ll n^{\varepsilon}.
\end{align*}

For the third term we have
\begin{align*}
&\sqrt{n}\sum_{d^2\mid n}\sum_{\substack{a^2\ll \sqrt {n}\\ f^2\ll \frac{\sqrt{n}}{a^2}\\k\gg \frac{\sqrt{n}}{a^2f^2}\\\xi}}\cdots
\ll\sqrt{n} \sum_{d^2\mid n}\sum_{\substack{a^2\ll \sqrt {n}\\ f^2\ll \frac{\sqrt{n}}{a^2}\\k\gg \frac{\sqrt{n}}{a^2f^2}}}\frac{(ad^2kf^2)^\varepsilon adk^{1/2}}{a^2d^3k^2f^3} \left(\frac{(ad^2kf^2)^2}{n} \right)^{-M-1}\left(\frac{\sqrt{n}}{dkf^2}\right)^{-M-2+\varepsilon}\\
\ll &n^{\frac{M+1+\varepsilon}{2}}\sum_{d^2\mid n}\sum_{a^2\ll \sqrt{n}}\frac{1}{ a^{3+2M-\varepsilon}d^{4+3M-\varepsilon}}\sum_{f\ll \frac{\sqrt{n}}{a^2}}\frac{1}{f^{3+2M}}\sum_{k\gg \frac{\sqrt{n}}{a^2f^2}}\frac{1}{k^{3/2+M}}\\
\ll &n^{\frac{M+1+\varepsilon}{2}}\sum_{a^2\ll \sqrt{n}}\frac{1}{a^{3+2M-\varepsilon}}\sum_{f\ll \frac{\sqrt{n}}{a^2}}\frac{1}{f^{3+2M}} \left(\frac{\sqrt{n}}{a^2f^2}\right)^{-\frac12-M}\ll n^{\frac14+\varepsilon}\sum_{a^2\ll \sqrt{n}}\frac{1}{a^{2-3\varepsilon}}\sum_{f\ll \frac{\sqrt{n}}{a^2}}\frac{1}{f^{2}}\ll n^{\frac14+\varepsilon}.
\end{align*}

For the fourth term, by \autoref{lem:estimatexilargesum} (2) we have
\begin{align*}
&\sqrt{n}\sum_{d^2\mid n}\sum_{\substack{a^2\ll \sqrt {n}\\ f^2\ll \frac{\sqrt{n}}{a^2}\\k\ll \frac{\sqrt{n}}{a^2f^2}\\\left\llbracket\frac{\sqrt{n}}{dkf^2}\star \xi\right\rrbracket\frac {(ad^2kf^2)^2}{n} \gg 1}}\cdots\\
\ll &\sqrt{n}\sum_{d^2\mid n}\sum_{\substack{a^2\ll \sqrt {n}\\ f^2\ll \frac{\sqrt{n}}{a^2}\\k\ll \frac{\sqrt{n}}{a^2f^2}}}\frac{(ad^2kf^2)^\varepsilon adk^{1/2}}{a^2d^3k^2f^3} \left(\frac{(ad^2kf^2)^2}{n} \right)^{-M-1} \sum_{\left\llbracket\frac{\sqrt{n}}{dkf^2}\star \xi\right\rrbracket\frac{(adkf^2)^2}{n} \gg 1}\left\llbracket\frac{\sqrt{n}}{dkf^2}\star \xi\right\rrbracket^{-M-2}\\
\ll &\sqrt{n}\sum_{d^2\mid n}\sum_{\substack{a^2\ll \sqrt {n}\\ f^2\ll \frac{\sqrt{n}}{a^2}\\k\ll \frac{\sqrt{n}}{a^2f^2}}}\frac{(ad^2kf^2)^{\varepsilon}adk^{1/2}}{a^2d^3k^2f^3} \left(\frac{(ad^2kf^2)^2}{n} \right)^{-M-1} \left(\frac{\sqrt{n}}{dkf^2}\right)^{-1}\left(\frac{(ad^2kf^2)^2}{n} \right)^{M+1-\varepsilon}\\
\ll &n^{\varepsilon}\sum_{d^2\mid n}\sum_{a^2\ll \sqrt{n}}\frac{1}{a^{1+\varepsilon}d^{1+2\varepsilon}}\sum_{f\ll \frac{\sqrt{n}}{a^2}}\frac{1}{f^{1+2\varepsilon}}\sum_{k\ll \frac{\sqrt{n}}{a^2f^2}}\frac{1}{k^{1/2+\varepsilon}}\ll n^{\varepsilon}\sum_{a^2\ll \sqrt{n}}\frac{1}{a}\sum_{f\ll \frac{\sqrt{n}}{a^2}}\frac{1}{f}\left(\frac{\sqrt{n}}{a^2f^2} \right)^{\frac12} \\ \ll &n^{\frac14+\varepsilon}\sum_{a^2\ll \sqrt{n}}\frac{1}{a^2}\sum_{f\ll \frac{\sqrt{n}}{a^2}}\frac{1}{f^2}\ll  n^{\frac14+\varepsilon}.
\end{align*}
Hence we obtain $\eqref{eq:xi0kloosterman1}\ll n^{1/4+\varepsilon}$.
\end{proof}

Now we prove that $\eqref{eq:xi0kloosterman2}\ll n^{1/4+\varepsilon}$. We need the following lemma.

\begin{lemma}\label{lem:estimatexismallsum}
Suppose that $b\gg 1$ and $\alpha<1$. Then we have
\[
\sum_{\substack{\xi\in \ZZ^S\\\llbracket a\star \xi\rrbracket \ll b}}\llbracket a\star \xi\rrbracket^{-\alpha}\ll \frac{b^{1-\alpha}}{a}\log^r b.
\]
\end{lemma}
\begin{proof}
Suppose that $I\subseteq\{1,\dots,r\}$ such that $u_i=v_{q_i}(\xi)<0$ if and only if $i\in I$. Then $\llbracket a\star \xi\rrbracket=(1+|a\xi|)\prod_{i=1}^{r}(1+|\xi|_{q_i})\ll b$ only if 
\[
|a\xi|\ll \frac{b}{\prod_{i\in I}(1+q_i^{-u_i})}.
\]
Since $\prod_{i\in I}q_i^{-u_i}\xi\in \ZZ$ we know that the sum of such $\xi$ is 
\begin{align*}
&\ll \sum_{\substack{v_{q_i}(\xi)=u_i\\\llbracket a\star \xi\rrbracket \ll b}} (1+|a\xi|)^{-\alpha}\prod_{i=1}^{r}(1+|\xi|_{q_i})^{-\alpha}\ll \sum_{\substack{\xi\in \ZZ-\{0\}\\|\xi|\leq \frac{b\prod_{i\in I}q_i^{-u_i}}{a\prod_{i\in I}(1+q_i^{-u_i})}}}a^{-\alpha}\left(\prod_{i\in I}q_i^{-u_i}\right)^\alpha \prod_{i\in I}(1+q_i^{-u_i})^{-\alpha}|\xi|^{-\alpha}\\
&\ll a^{-\alpha}\legendresymbol{\prod_{i\in I}q_i^{-u_i}}{\prod_{i\in I}(1+q_i^{-u_i})}^\alpha \legendresymbol{b\prod_{i\in I}q_i^{-u_i}}{a\prod_{i\in I}(1+q_i^{-u_i})}^{1-\alpha}\ll a^{-\alpha} \legendresymbol{b}{a}^{1-\alpha}\frac{\prod_{i\in I}q_i^{-u_i}}{\prod_{i\in I}(1+q_i^{-u_i})}\ll \frac{b^{1-\alpha}}{a}.
\end{align*}
Moreover, such $u_i$ must satisfy $\prod_{i\in I}q_i^{-u_i}\ll b$. Hence the number of such $(u_i)_{i\in I}$ is $\ll \log^{\# I} b\ll \log^r b$. This establishes the desired conclusion.
\end{proof}

\begin{proposition}\label{prop:xi0kloosterman2}
For any $\varepsilon>0$, we have
\[
\eqref{eq:xi0kloosterman2}\ll n^{\frac14+\varepsilon}.
\]
The implied constant only depends on $\varepsilon$, $f_\infty$, $f_{q_i}$, $\pm$ and $\nu$.
\end{proposition}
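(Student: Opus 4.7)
The proof runs parallel to \autoref{prop:xi0kloosterman1}, with the key technical input being \autoref{prop:estimatexismallsum} (for small $\xi$ sums) in place of \autoref{lem:estimatexilargesum} (for large $\xi$ sums). First I apply \autoref{lem:kloosterman} to bound the Kloosterman sum, which replaces the sum over $(k,f,\xi)$ with a sum over $(d,a,k,f,\xi)$ with $d^2\mid n$, via the substitution $k\mapsto ak$, $f\mapsto df$, $\xi\mapsto ad\xi$, together with the factor $(ad^2kf^2)^\varepsilon adk^{1/2}$. Since $ad\in\ZZ_{(S)}^{>0}$, the invariance $\llbracket a'/c\star c\xi\rrbracket=\llbracket a'\star\xi\rrbracket$ for $c\in\ZZ_{(S)}^{>0}$ yields
\[
\left\llbracket\frac{\sqrt{n}}{ad^2kf^2}\star ad\xi\right\rrbracket=\left\llbracket\frac{\sqrt{n}}{dkf^2}\star\xi\right\rrbracket.
\]
After simplification (the factor $adk^{1/2}/(akdf)=1/(k^{1/2}f)$), the estimate reduces to bounding
\[
\sum_{d^2\mid n}\sum_{\substack{a,k,f\in\ZZ_{(S)}^{>0},\ \xi\in\ZZ^S-\{0\}\\\llbracket\tfrac{\sqrt n}{dkf^2}\star\xi\rrbracket(ad^2kf^2)^2\ll n}}\frac{(ad^2kf^2)^\varepsilon}{k^{1/2}f}\left\llbracket\frac{\sqrt{n}}{dkf^2}\star\xi\right\rrbracket^{-\frac12-\varepsilon}\left(\frac{(ad^2kf^2)^2}{n}\right)^{-\varepsilon}.
\]

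Next, since $\llbracket\cdot\rrbracket\geq 1$ for $\xi\in\ZZ^S-\{0\}$, the summation condition forces $ad^2kf^2\ll\sqrt{n}$. Under this condition I apply \autoref{prop:estimatexismallsum} with $a'=\sqrt n/(dkf^2)$, $b=n/(ad^2kf^2)^2$ and $\alpha=1/2+\varepsilon<1$, which gives
\[
\sum_\xi \left\llbracket\frac{\sqrt{n}}{dkf^2}\star\xi\right\rrbracket^{-\frac12-\varepsilon}\ll \frac{n^{-\varepsilon}\,dkf^2}{(ad^2kf^2)^{1-2\varepsilon}}\log^r\!\legendresymbol{n}{(ad^2kf^2)^2}.
\]
After substituting and collecting powers of $ad^2kf^2$, the expression to bound becomes (up to $\log$ factors and $n^\varepsilon$)
\[
\sum_{d^2\mid n}\sum_{\substack{a,k,f\in\ZZ_{(S)}^{>0}\\ad^2kf^2\ll\sqrt n}}\frac{1}{a^{1-\varepsilon}d^{1-2\varepsilon}k^{1/2-\varepsilon}f^{1-2\varepsilon}}.
\]

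Finally I evaluate this sum by iterated partial summation. Summing over $a\leq\sqrt n/(d^2kf^2)$ contributes $O_\varepsilon((\sqrt n/(d^2kf^2))^\varepsilon)$, then summing over $k\leq\sqrt n/(d^2f^2)$ contributes $O((\sqrt n/(d^2f^2))^{1/2})=O(n^{1/4}/(df))$; the remaining sum over $f$ converges as a tail of $\sum f^{-2}$, yielding $O(n^{1/4+\varepsilon/2}/d)$ for each $d$. Finally $\sum_{d^2\mid n} d^{-2+2\varepsilon}$ converges, producing the desired $O(n^{1/4+\varepsilon})$ bound. The main delicacy is only organizational: ensuring that after \autoref{lem:kloosterman} the condition $(ad^2kf^2)^2\llbracket\cdot\rrbracket\ll n$ is strong enough both to make the $\xi$-sum via \autoref{prop:estimatexismallsum} applicable and to cap $a,d,k,f$ so that each geometric series terminates at the right scale; the accounting works out precisely because the exponents of $a,d,k,f$ in $(ad^2kf^2)^{-1+2\varepsilon}$ (from the $\xi$-sum) combine with $adk^{1/2}$ (from \autoref{lem:kloosterman}) and $1/(kf)$ (from the summand) to produce $1/(a^{1-\varepsilon}d^{1-2\varepsilon}k^{1/2-\varepsilon}f^{1-2\varepsilon})$, whose $k$-sum delivers the decisive $n^{1/4}$.
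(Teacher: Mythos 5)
Your argument is correct and follows the paper's proof essentially step for step: the same application of \autoref{lem:kloosterman} with the substitution $k\mapsto ak$, $f\mapsto df$, $\xi\mapsto ad\xi$, the same invariance $\left\llbracket\frac{\sqrt{n}}{ad^2kf^2}\star ad\xi\right\rrbracket=\left\llbracket\frac{\sqrt{n}}{dkf^2}\star \xi\right\rrbracket$, the same use of \autoref{prop:estimatexismallsum} with $\alpha=1/2+\varepsilon$ and $b=n/(ad^2kf^2)^2\gg 1$, and the same exponent bookkeeping leading to a convergent sum whose $k$-range $k\ll \sqrt{n}/(a^2 d^2 f^2)$ produces the $n^{1/4}$. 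The only deviations are cosmetic (the order of the $a$-, $k$-, $f$-, $d$-summations, and a harmless slip where the $f$-summed contribution is written as $O(n^{1/4+\varepsilon/2}/d)$ rather than $O(n^{1/4+\varepsilon/2}/d^{2-2\varepsilon})$, which your final sentence about $\sum_{d^2\mid n}d^{-2+2\varepsilon}$ implicitly corrects).
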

\begin{proof}
By \autoref{lem:kloosterman} we have
\[
   \eqref{eq:xi0kloosterman2}\ll  \sum_{d^2\mid n}\sum_{\substack{a,k,f\in \ZZ_{(S)}^{>0},\,\xi\in \ZZ^S-\{0\}\\\left\llbracket\frac{\sqrt{n}}{dkf^2}\star \xi\right\rrbracket\frac {(ad^2kf^2)^2}{n} \ll 1}}\frac{(ad^2kf^2)^\varepsilon adk^{1/2}}{adkf}\left\llbracket\frac{\sqrt{n}}{dkf^2}\star \xi\right\rrbracket^{-\frac 12}\left(\frac{(ad^2kf^2)^2}{n}\left\llbracket\frac{\sqrt{n}}{dkf^2}\star \xi\right\rrbracket\right)^{-\varepsilon}.
\]

Therefore by \autoref{lem:estimatexismallsum} with $\alpha=1/2+\varepsilon$, we obtain
\begin{align*}
   \eqref{eq:xi0kloosterman2}\ll & \sum_{d^2\mid n}\sum_{a^2\ll \sqrt{n}}\sum_{f^2\ll \frac{\sqrt{n}}{a^2}}\sum_{k\ll \frac{\sqrt{n}}{a^2f^2}}\sum_{\xi:\left\llbracket\frac{\sqrt{n}}{dkf^2}\star \xi\right\rrbracket\frac {(ad^2kf^2)^2}{n} \ll 1}\frac{(ad^2kf^2)^\varepsilon adk^{1/2}}{adkf}\\
\times&\left\llbracket\frac{\sqrt{n}}{dkf^2}\star \xi\right\rrbracket^{-\frac 12}\left(\frac{(ad^2kf^2)^2}{n}\left\llbracket\frac{\sqrt{n}}{dkf^2}\star \xi\right\rrbracket\right)^{-\varepsilon} \\
\ll & \sum_{d^2\mid n}\sum_{a^2\ll \sqrt{n}}\sum_{f^2\ll \frac{\sqrt{n}}{a^2}}\sum_{k\ll \frac{\sqrt{n}}{a^2f^2}}\frac{(ad^2kf^2)^\varepsilon k^{1/2}}{kf}\left(\frac{(ad^2kf^2)^2}{n}\right)^{ -\varepsilon}\left(\frac{n}{(ad^2kf^2)^2}\right)^{\frac 12-\varepsilon}\frac{dkf^2}{\sqrt{n}}\log^r\frac{n}{(ad^2kf^2)^2}\\
\ll& \sum_{d^2\mid n}\sum_{a^2\ll \sqrt{n}}\sum_{f^2\ll \frac{\sqrt{n}}{a^2}}\sum_{k\ll \frac{\sqrt{n}}{a^2f^2}}\frac{(ad^2kf^2)^{\varepsilon} k^{1/2}}{kf}\left(\frac{n}{(ad^2kf^2)^2}\right)^{\frac 12+\varepsilon}\frac{dkf^2}{\sqrt{n}}\\
\ll & n^{\varepsilon}\sum_{d^2\mid n}\sum_{a^2\ll \sqrt{n}}\sum_{f^2\ll \frac{\sqrt{n}}{a^2}}\sum_{k\ll \frac{\sqrt{n}}{a^2f^2}} 
\frac{1}{a^{1+\varepsilon} d^{1+2\varepsilon} k^{1/2+\varepsilon}f^{1+2\varepsilon}}\ll n^{\varepsilon}\sum_{a^2\ll \sqrt{n}}\sum_{f^2\ll \frac{\sqrt{n}}{a^2}}\frac{1}{af} \legendresymbol{\sqrt{n}}{a^2f^2}^{\frac12}\\
\ll & n^{\frac 14+\varepsilon} \sum_{a^2\ll \sqrt{n}}\frac{1}{a^2}\sum_{f^2\ll \frac{\sqrt{n}}{a^2}}\frac{1}{f^2}
\ll n^{\frac14+\varepsilon}.\qedhere
\end{align*}
\end{proof}

By \autoref{cor:xi0firstestimate}, \autoref{prop:xi0kloosterman1}, and \autoref{prop:xi0kloosterman2}, we obtain the main result in this section.
\begin{theorem}\label{thm:xinot0estimate}
For any $\varepsilon>0$, we have
\[
\Sigma(\xi\neq 0)\ll n^{\frac14+\varepsilon},
\]
where the implied constant only depends on $f_\infty$, $f_{q_i}$ and $\varepsilon$.
\end{theorem}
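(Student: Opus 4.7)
The plan is to combine the estimates already established throughout this section; no substantially new idea is required at this final step. First, I would observe that by the compactness of the supports of $\theta_\infty^\pm$ and of $\theta_{q_i}(\cdot, \pm nq^\nu)$, the sum over $\nu \in \ZZ^r$ in the definition of $\Sigma(\xi\neq 0)$ is finite with the number of contributing $\nu$ bounded independently of $n$ (cf.\ the proof of \autoref{prop:estimatesigmasquare}). It therefore suffices, for each fixed sign $\pm$ and each fixed $\nu$, to prove the bound $\eqref{eq:xinot0} \ll n^{1/4+\varepsilon}$.

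The second step is to apply \autoref{cor:xi0firstestimate}, which bounds $\eqref{eq:xinot0}$ by the sum of \eqref{eq:xi0kloosterman1} and \eqref{eq:xi0kloosterman2}. This reduction encapsulates the whole of the Fourier-analytic work carried out in \autoref{subsec:fourierestimate}: one splits the $(k,f,\xi)$-sum according to whether $\llbracket \sqrt{n}/(kf^2) \star \xi\rrbracket (kf^2)^2/n$ is large or small, and uses the singularity structure of $\theta_\infty^\pm$, $\theta_{q_i}$ together with the rapid decay of $F$ and $V_{\iota,\epsilon}$ established via \autoref{prop:propertyf} and \autoref{prop:propertyh}.

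Thirdly, I would invoke \autoref{prop:xi0kloosterman1} with $M$ chosen sufficiently large to bound $\eqref{eq:xi0kloosterman1}$ by $n^{1/4+\varepsilon}$, and \autoref{prop:xi0kloosterman2} to bound $\eqref{eq:xi0kloosterman2}$ by $n^{1/4+\varepsilon}$. Both results rely on the generalized Kloosterman sum estimate in \autoref{lem:kloosterman} (combined with Weil-type bounds from the appendices), and on the counting lemmas \autoref{lem:estimatexilargesum} and \autoref{prop:estimatexismallsum} that control sums of the form $\sum_\xi \llbracket a \star \xi\rrbracket^{-\alpha}$ over $\ZZ^S$. Adding the two bounds then yields $\Sigma(\xi\neq 0) \ll n^{1/4+\varepsilon}$ with implied constant depending only on $f_\infty$, $f_{q_i}$ and $\varepsilon$.

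The main obstacle does not lie in this final assembly but in the earlier machinery: the genuine difficulty is producing the Fourier-transform estimates of \autoref{subsec:fourierestimate} uniformly in the four parameters $\xi, k, f, n$ and locating the correct exponent $1/4$. This exponent arises precisely from balancing the dyadic ranges $a^2 \ll \sqrt{n}$, $f^2 \ll \sqrt{n}/a^2$, $k \ll \sqrt{n}/(a^2 f^2)$ against the Kloosterman factor $\ll dk^{1/2}$ in the summations of \autoref{prop:xi0kloosterman1} and \autoref{prop:xi0kloosterman2}; once those summations are carried out the final theorem is immediate.
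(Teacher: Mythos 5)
Your proposal is correct and follows exactly the paper's own route: reduce to a fixed sign and fixed $\nu$ via finiteness of the $\nu$-sum, bound \eqref{eq:xinot0} by \eqref{eq:xi0kloosterman1} and \eqref{eq:xi0kloosterman2} using \autoref{cor:xi0firstestimate}, and then conclude with \autoref{prop:xi0kloosterman1} (for $M$ large) and \autoref{prop:xi0kloosterman2}. Your closing remark that the real work lies in the uniform Fourier estimates and the balancing of ranges, rather than in this final assembly, also accurately reflects the structure of the paper's argument.
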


Combining \autoref{cor:estimateellipticremain2} and \autoref{thm:xinot0estimate} establishes our main result \autoref{thm:totalestimate}.

\appendix
\section{Estimate of the Fourier transform on the semilocal space}\label{sec:fourier}
In this section we will derive several estimates of the Fourier transform on the semilocal space $\QQ_S=\RR\times\QQ_{q_1}\times  \dots\times \QQ_{q_r}$ with \emph{singularities}, generalizing the results in \cite[Appendix A]{altug2017} for the case over $\RR$. 
The archimedean part was analyzed by Altu\u{g}. Hence we mainly need to derive the estimate for the nonarchimedean part, which is direct computation. 

We will often use the notation $s=\sigma+\rmi t$ in this section. Recall that $\cS$ is the set of smooth functions $\Phi$ on $\lopen 0,+\infty\ropen$ such that for any $k\in \ZZ_{\geq 0}$, $\Phi^{(k)}(x)$ are of rapid decay as $x\to +\infty$.
Our main goal of this section is to prove the following theorems:

\begin{theorem}\label{thm:mainfourierestimate}
Suppose that $\Phi\in \cS$. Let $\widetilde{\Phi}(s)$ be its Mellin transform. Suppose that $\widetilde{\Phi}(s)$ has a meromorphic continuation on $\Re s>0$, and is $\ll \rme^{-\uppi|t|/2}$ when $\sigma>0$ is fixed. Fix a sign $\pm$, $\nu\in \ZZ^r$ and $\iota\in \{0,1\}$, $\epsilon_i\in \{0,\pm 1\}$. Let $N=\pm nq^\nu$. Let $a_0,a_1,\dots,a_r$ be complex numbers such that $\Re a_i>-2$ for all $i$. Let 
\[
\varphi(x)=|x^2\mp 1|^{a_0/2} \overline{\varphi}(x)\quad\text{and}\quad \psi_i(y_i)=|y_i^2-4N|_{q_i}'^{a_i/2} \overline{\psi_i}(y_i),
\]
where $\overline{\varphi}(x)$ is a smooth function on the open set $X_{\iota}=\{x\in \RR\ |\ \omega_\infty(x)=\iota\}$ and up to the boundary, supported on a bounded set of $X_{\iota}$, and $\overline{\psi_i}(y_i)$ are smooth and compactly supported functions on $Y_{\epsilon_i}=\{y_i\in \QQ_{q_i}\ |\ \omega_i(y_i)=\epsilon_i\}$. Suppose that for the $\iota=0$ case, around $x=\pm 1$, $\varphi(x)$ has asymptotic expansions
\[
\varphi(\pm(1-x))\sim |x|^{\frac{a_0}{2}}\sum_{m=0}^{+\infty}c_m^\pm x^m.
\]
Let $Y_\epsilon=Y_{\epsilon_1}\times\dots\times Y_{\epsilon_n}$ and 
\[
\bF_{\iota,\epsilon}(\xi,\eta)=\int_{X_\iota}\int_{Y_\epsilon}\varphi(x)\prod_{i=1}^{r}\psi_i(y_i)\Phi\legendresymbol{C}{|x^2\mp 1|^{1/2}|y^2-4N|_q'^{1/2}}\rme(-x\xi)\rme_{q}(-y\eta)\rmd x\rmd y.
\]
Let
\[
\llbracket\xi,\eta\rrbracket=(1+|\xi|_\infty)\prod_{i=1}^{r}(1+|\eta_i|_{q_i}).
\]
Then for any $M>0$,
  \[
  \bF_{\iota,\epsilon}(\xi,\eta)\ll_{\varphi,\psi,\pm,\nu,M}\left(|C|^2\llbracket\xi,\eta\rrbracket\right)^{-M} (1+|\xi|_\infty)^{-1-\frac{a_0}{2}}\prod_{i=1}^{r}(1+|\eta_i|_{q_i})^{-1-\frac{a_i}{2}}.
  \]
\end{theorem}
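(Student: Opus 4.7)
The plan is to decouple the variables $x$ and $y_i$ by Mellin-inverting $\Phi$, thereby reducing the problem to a product of independent archimedean and nonarchimedean Fourier integrals depending on a parameter $s$, estimate each uniformly in $s$ on a vertical line, and finally shift the contour to $\Re s=2M$ to produce the advertised decay in $|C|$ and $\llbracket\xi,\eta\rrbracket$.

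First, by the Mellin inversion formula applied to $\Phi$, for any $\sigma>0$ at which $\widetilde{\Phi}$ is holomorphic,
\[
\Phi\legendresymbol{C}{|x^2\mp 1|^{1/2}|y^2-4N|_q'^{1/2}}
=\frac{1}{\dpii}\int_{(\sigma)}\widetilde{\Phi}(s)\,C^{-s}\,|x^2\mp 1|^{s/2}\prod_{i=1}^{r}|y_i^2-4N|_{q_i}'^{s/2}\rmd s.
\]
Substituting into the definition of $\bF_{\iota,\epsilon}$ and interchanging integrals (justified by the rapid decay of $\widetilde{\Phi}$ in $\Im s$ for fixed $\sigma$, together with the local integrability guaranteed by \autoref{lem:ladiconverge}), the variables separate:
\[
\bF_{\iota,\epsilon}(\xi,\eta)=\frac{1}{\dpii}\int_{(\sigma)}\widetilde{\Phi}(s)\,C^{-s}\,I_\infty(s;\xi)\prod_{i=1}^{r}I_i(s;\eta_i)\,\rmd s,
\]
where
\begin{align*}
I_\infty(s;\xi)&=\int_{X_\iota}|x^2\mp 1|^{(a_0+s)/2}\overline{\varphi}(x)\rme(-x\xi)\rmd x,\\
I_i(s;\eta_i)&=\int_{Y_{\epsilon_i}}|y_i^2-4N|_{q_i}'^{(a_i+s)/2}\overline{\psi_i}(y_i)\rme_{q_i}(-y_i\eta_i)\rmd y_i.
\end{align*}

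Next, I would establish uniform bounds on each factor on the line $\Re s=\sigma$. For $I_\infty$, integration by parts on the smooth part of $\overline{\varphi}$, combined with the prescribed asymptotic expansion of $\varphi$ at $x=\pm 1$ (essential only in the $\iota=0$ case when the support of $\overline{\varphi}$ reaches the singularities), yields a bound of the form
\[
I_\infty(s;\xi)\ll_{\varphi,\sigma}(1+|s|)^{A}(1+|\xi|_\infty)^{-1-(a_0+\sigma)/2}
\]
for some $A=A(\sigma)$; this is precisely the content of \cite[Appendix A]{altug2017}. For each nonarchimedean $I_i$, the function $\overline{\psi_i}$ is locally constant with compact support, and $|y_i^2-4N|_{q_i}'$ is constant on each level set of $v_{q_i}(y_i^2-4N)$; the integral therefore splits into a finite sum of elementary additive-character integrals over $q_i$-adic balls, each of which is computed explicitly, giving
\[
I_i(s;\eta_i)\ll_{\psi_i,\sigma,q_i}(1+|\eta_i|_{q_i})^{-1-(a_i+\sigma)/2},
\]
with no polynomial-in-$s$ loss (convergence is assured since $\Re(a_i+\sigma)>-2$).

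Finally, I would take $\sigma=2M$. In the intended applications ($\Phi=F$ or $\Phi=V_{\iota,\epsilon}$) the function $\widetilde{\Phi}$ is already holomorphic on $\{\Re s>0\}$ by Propositions \ref{prop:propertyf} and \ref{prop:propertyh}, so no residues are encountered; in the general meromorphic case, any residues picked up contribute terms of the same form but with smaller powers of $|C|^{-1}$, which are dominated by the trivial bound when $|C|^2\llbracket\xi,\eta\rrbracket\lesssim 1$ and absorbable otherwise. The rapid decay of $\widetilde{\Phi}(s)$ in $\Im s$ kills the polynomial factor $(1+|s|)^A$, yielding
\[
\bF_{\iota,\epsilon}(\xi,\eta)\ll_M |C|^{-2M}(1+|\xi|_\infty)^{-1-a_0/2-M}\prod_{i=1}^{r}(1+|\eta_i|_{q_i})^{-1-a_i/2-M},
\]
and regrouping the $M$-powers via $\llbracket\xi,\eta\rrbracket=(1+|\xi|_\infty)\prod_i(1+|\eta_i|_{q_i})$ gives exactly the claimed bound.

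The main obstacle is the uniform archimedean estimate for $I_\infty(s;\xi)$: one must simultaneously handle the boundary behavior at $x=\pm 1$ (using the asymptotic expansion, particularly delicate in the $\iota=0$ case where the support can extend all the way to the singularities) and exploit oscillation as $|\xi|\to\infty$, uniformly in the Mellin parameter $s$. I would import this step essentially verbatim from \cite[Appendix A]{altug2017}, since the archimedean analysis is identical to the unramified setting. The nonarchimedean estimates and the contour shift, by contrast, are routine direct computations.
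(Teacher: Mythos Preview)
Your outline is correct and follows the same backbone as the paper's proof: Mellin inversion to separate variables, local bounds on each factor, then shift the contour to $\Re s=2M$. The paper carries this out by first splitting each local factor into a ``near-singularity'' piece and a ``far-from-singularity'' piece via explicit bump functions (so that the far pieces are entire in $s$ and rapidly decreasing in the Fourier variable, reducing to Propositions~\ref{prop:ladicsmooth}--\ref{prop:archimedeannonsingular}), and then treating the delicate case (archimedean near-singularity, $|\xi|\gg 1$) not by a pointwise-in-$s$ bound on $I_\infty(s;\xi)$ but by applying Altu\u{g}'s packaged asymptotic-expansion theorems (Theorems~\ref{thm:mainarchimedeantheorem1}--\ref{thm:mainarchimedeantheorem2}) to the full $s$-integral with $\Xi(s)=\widetilde{\Phi}(s)\prod_i I_i(s;\eta_i)$, and only afterwards shifting the contour inside the resulting $\cA_{\varphi,0}^{\sigma,\pm}$ transform. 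Your proposed uniform bound $I_\infty(s;\xi)\ll(1+|s|)^A(1+|\xi|)^{-1-(a_0+\sigma)/2}$ is true (the leading contribution is $\Gamma(1+\tfrac{a_0+s}{2})(2\pi i\xi)^{-1-(a_0+s)/2}$, whose modulus is polynomial in $|t|$ after the exponential factors cancel), and using it would make the argument slightly more linear than the paper's; however, this bound is \emph{not} literally what \cite[Appendix~A]{altug2017} proves, since those results operate on the integrated quantity $\int_{(\sigma)}\Xi(s)C^{-s}I_\infty(s;\xi)\rmd s$ rather than on $I_\infty(s;\xi)$ itself. So either supply the short direct argument for the pointwise bound, or restructure Case $|\xi|\gg1$ to invoke Theorems~\ref{thm:mainarchimedeantheorem1}--\ref{thm:mainarchimedeantheorem2} as the paper does.
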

Note that $M$ can be taken arbitrarily large or arbitrarily close to $0$ in the above theorem.

\begin{theorem}\label{thm:mainfourierestimate2}
Suppose that $\Phi\in \cS$. Let $\widetilde{\Phi}(s)$ be its Mellin transform. Suppose that $\widetilde{\Phi}(s)$ has a meromorphic continuation to $\Re s\geq -2$ with only a simple pole at $s=0$, and is $\ll \rme^{-\uppi|t|/2}$ when $\sigma\geq -2$ is fixed. Fix a sign $\pm$, $\nu\in \ZZ^r$. Let $N=\pm nq^\nu$. Let $a_0,a_1,\dots,a_r\in \ZZ_{\geq -1}$. Let 
\[
\varphi(x)=|x^2\mp 1|^{a_0/2} \overline{\varphi}(x)\quad\text{and}\quad \psi_i(y_i)=|y_i^2-4N|_{q_i}'^{a_i/2} \overline{\psi_i}(y_i),
\]
where $\overline{\varphi}(x)$ is a smooth and compactly supported function on the open set $\RR$ and up to the boundary, and $\overline{\psi_i}(y_i)$ are smooth and compactly supported functions on $Y_{\epsilon_i}$. Suppose that for the $\iota=0$ case, around $x=\pm 1$, $\varphi(x)$ has an asymptotic expansion
\[
\varphi(\pm(1-x))\sim |x|^{\frac{a_0}{2}}\sum_{m=0}^{+\infty}c_m^\pm x^m.
\]
Let $Y_\epsilon=Y_{\epsilon_1}\times\dots\times Y_{\epsilon_n}$ and 
\[
\bF_\epsilon(\xi,\eta)=\int_{\RR}\int_{Y_\epsilon}\varphi(x)\prod_{i=1}^{r}\psi_i(y_i)\Phi\legendresymbol{C}{|x^2\mp 1|^{1/2}|y^2-4N|_q'^{1/2}}\rme(-x\xi)\rme_{q}(-y\eta)\rmd x\rmd y.
\]
Then
\[
\bF_\epsilon(\xi,\eta)\ll_{\varphi,\psi,\pm,\nu} (1+|\xi|_\infty)^{-1-\frac{a_0}{2}}\prod_{i=1}^{r}(1+|\eta_i|_{q_i})^{-1-\frac{a_i}{2}}.
\]
Moreover, if $a_0=0$, then the factor $(1+|\xi|_\infty)^{-1-\frac{a_0}{2}}$ can be replaced by 
\[
\left(|C|^2\prod_{i=1}^{r}(1+|\eta_i|_{q_i})\right)^{1-\varepsilon}+(1+|\xi|_\infty)^{-2}
\]
for any $\varepsilon>0$. (The implied constant then also depends on $\varepsilon$.)
\end{theorem}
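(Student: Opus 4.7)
The plan is to follow the Mellin--Barnes argument of \autoref{thm:mainfourierestimate}, adapted to exploit the meromorphic continuation of $\widetilde\Phi$ to $\Re s\geq -2$ and the simple pole at $s=0$. Mellin inversion and Fubini give, for any $\sigma_0>0$,
\[
\bF_\epsilon(\xi,\eta)=\frac{1}{2\pi\rmi}\int_{(\sigma_0)}\widetilde\Phi(s)C^{-s}\bI_\infty(s,\xi)\prod_{i=1}^r\bI_i(s,\eta_i)\rmd s,
\]
with
\[
\bI_\infty(s,\xi)=\int_\RR|x^2\mp 1|^{(s+a_0)/2}\overline\varphi(x)\rme(-x\xi)\rmd x,\quad \bI_i(s,\eta_i)=\int_{Y_{\epsilon_i}}|y_i^2-4N|_{q_i}'^{(s+a_i)/2}\overline{\psi_i}(y_i)\rme_{q_i}(-y_i\eta_i)\rmd y_i.
\]
The local bounds
\[
\bI_\infty(s,\xi)\ll(1+|s|)^B(1+|\xi|)^{-1-(s+a_0)/2},\qquad \bI_i(s,\eta_i)\ll_{q_i}(1+|s|)^B(1+|\eta_i|_{q_i})^{-1-(s+a_i)/2}
\]
are the main inputs: the archimedean case is treated in \cite[Appendix A]{altug2017} using the asymptotic expansion of $\varphi$ at $x=\pm 1$, and the nonarchimedean case is a direct computation on $Y_{\epsilon_i}$. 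Together with the rapid decay of $\widetilde\Phi$ in $\Im s$ supplied by \autoref{prop:propertyh}, these ensure absolute convergence of every contour integral below.

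For the first assertion, I would shift the contour through the simple pole at $s=0$. The residue equals $c_0\bI_\infty(0,\xi)\prod_i\bI_i(0,\eta_i)=c_0\widehat\varphi(\xi)\prod_i\widehat{\psi_i}(\eta_i)$, where $c_0=\Res_{s=0}\widetilde\Phi$, and this is bounded by $(1+|\xi|)^{-1-a_0/2}\prod_i(1+|\eta_i|_{q_i})^{-1-a_i/2}$ by specializing the local bounds to $s=0$. The shifted contour at $\sigma=-\delta$ yields the same bound up to a factor of $|C|^\delta\leq 1$ when $|C|\leq 1$; for $|C|\geq 1$ one instead shifts to $\sigma=+\delta$ without crossing any pole, and bounds the integral by $|C|^{-\delta}\leq 1$ times the same Fourier factors. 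Letting $\delta\to 0$ completes the first part.

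For the improvement when $a_0=0$, the function $\varphi=\overline\varphi$ is smooth and compactly supported on $\RR$, hence its Fourier transform has rapid decay; in particular $\widehat\varphi(\xi)\ll(1+|\xi|)^{-2}$. Thus the residue at $s=0$ already contributes the $(1+|\xi|)^{-2}\prod_i(1+|\eta_i|_{q_i})^{-1-a_i/2}$ summand of the improved bound. For the other summand, shift the contour further to $\sigma_1=-2+\varepsilon$; this is permissible because $(-2+\varepsilon+a_0)/2>-1$ keeps the archimedean factor integrable. At $\sigma_1$ one has $|C^{-s}|=|C|^{2-\varepsilon}$, $|\bI_\infty(s,\xi)|\ll 1$ uniformly in $\xi$ by the trivial integral bound, and $|\bI_i(s,\eta_i)|\ll(1+|\eta_i|_{q_i})^{-\varepsilon/2-a_i/2}$. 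The resulting estimate $|C|^{2-\varepsilon}\prod_i(1+|\eta_i|_{q_i})^{-\varepsilon/2-a_i/2}$ is, after relabeling $\varepsilon$, the first summand in the claimed improvement.

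The principal technical obstacle is the meromorphic continuation of the nonarchimedean local factor $\bI_i(s,\eta_i)$ to $\sigma=-2+\varepsilon$ in the exceptional case $a_i=-1$, where the defining integral is no longer absolutely convergent. I would resolve this by partitioning $Y_{\epsilon_i}$ according to the $q_i$-adic valuation of $y_i^2-4N$ and writing $\bI_i$ as a finite linear combination of Tate-type integrals whose Mellin transforms are explicit geometric series in $q_i^{-s}$; these extend meromorphically, and any poles encountered in the shift contribute only $\eta_i$-independent corrections which are absorbed into the implicit constant. A secondary point is the polynomial-in-$|\Im s|$ uniformity of the local bounds, which follows from the explicit nonarchimedean formulas and from the archimedean integration-by-parts argument of Altu\u{g}; this is exactly what allows the contour integrals to converge against the rapid decay of $\widetilde\Phi$.
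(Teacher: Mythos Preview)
Your approach is largely correct, and for the heart of the theorem (the improved bound when $a_0=0$) it is actually more direct than the paper's. The paper establishes this case by splitting the archimedean integral into $|x|<1$ and $|x|>1$, applying Altu\u{g}'s asymptotic expansions (Theorems A.10--A.11) to each, and then observing that the leading $O(|\xi|^{-1})$ terms cancel exactly when the two halves are recombined (this is the content of Corollary A.12). Your argument reaches the same conclusion by a single move: the residue at $s=0$ is $c_0\widehat{\overline\varphi}(\xi)\prod_i\widehat{\psi_i}(\eta_i)$, and since $\overline\varphi\in C_c^\infty(\RR)$ when $a_0=0$, its Fourier transform already has rapid decay, yielding the $(1+|\xi|)^{-2}$ term without any asymptotic expansion or cancellation argument. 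The shifted contour at $\sigma=-2+\varepsilon$ then gives the $|C|^{2-\varepsilon}$ term exactly as you say. This is a genuine simplification: it avoids the $\blacktriangle/\blacktriangledown$ bump-function decomposition and the detailed Altu\u{g} machinery, at the cost of needing to know that the nonarchimedean local integrals $\bI_i$ remain holomorphic down to $\Re s>-2-a_i$ (which is established by the paper's Theorem A.4).

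Two caveats. First, your treatment of the first assertion via ``letting $\delta\to 0$'' is not rigorous: the implied constant in the contour integral at $\sigma=\pm\delta$ involves $\int_{(\pm\delta)}|\widetilde\Phi(s)|\,|\rmd s|$, which diverges as $\delta\to 0$ due to the pole. The paper's own proof is equally brief here (it just says ``follows from Theorem A.1''), and in practice the first assertion is only ever applied under the constraint $|C|^2\llbracket\xi,\eta\rrbracket\ll 1$, where any fixed $\delta>0$ suffices. Second, your remark about the $a_i=-1$ case is well taken: the factor $\bI_i(s,\eta_i)$ then has poles on the line $\Re s=-1$ coming from the $(1-q_i^{-2-s-a_i})^{-1}$ in Theorem A.4, and the contour shift to $-2+\varepsilon$ crosses them. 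The paper's proof via Corollary A.12 has the same lacuna (A.12 hypothesizes $\Xi$ holomorphic on $\Re s>-2$ except at $s=0$). However, in the actual application (Proposition 5.2) the improved bound is only invoked with $a_i=\tau_i\in\{0,1\}$, so this never arises; your proposed resolution via explicit geometric series would work if needed, though the residues at $s=-1$ are not $\eta_i$-independent as you suggest---they have the form $A/B$ with $A\ll(1+|\eta_i|)^{-1}$ and $B\asymp(1+|\eta_i|)^{-1}$, hence are merely bounded.
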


\subsection{Nonarchimedean analysis}
First we consider the nonarchimedean part.
We want to estimate the $p$-adic integral of the form
\begin{equation}\label{eq:ladicfourier}
\bI(s,\eta)=\int_{\omega_p(y)=\epsilon}\psi(y)|y^2-4N|_p'^{s/2} \rme_p(-y\eta)\rmd y,
\end{equation}
where $N\in \QQ_p^\times$,
\[
\omega_p(y)=\legendresymbol{(y^2-4N)|y^2-4N|_p'}{p},
\]
$\epsilon\in \{0,\pm 1\}$, $\psi(y)$ is a smooth, compactly supported function on the set $\{y\in \QQ_p\ |\ \omega_p(y)=\epsilon\}$. 

We use the notation $2\nu=v_p(N)$ in this subsection.

\begin{proposition}\label{prop:ladicsmooth}
If $N$ does not have square roots in $\QQ_p$. Then $\bI(s,\eta)$ is an entire function for $s$, and
\[
\bI(s,\eta)\ll_{\nu,\sigma,\psi,M} (1+|\eta|_p)^{-M}
\]
for any $M>0$.
\end{proposition}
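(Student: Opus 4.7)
The plan is to exploit two ingredients: the absence of square roots of $N$ in $\QQ_\ell$ forces $y^2-4N$ to stay away from $0$ on $\supp\psi$, and smooth compactly supported functions on $\QQ_\ell$ have compactly supported Fourier transforms, so rapid decay is automatic.

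First I would establish that $y\mapsto|y^2-4N|_\ell'$ is locally constant on $\supp\psi$. Since $N$ has no square root in $\QQ_\ell$, the continuous map $y\mapsto y^2-4N$ never vanishes, so by compactness of $\supp\psi$ there exist $c_1,c_2>0$ with $c_1\le|y^2-4N|_\ell'\le c_2$ throughout $\supp\psi$. For any $y_0\in\supp\psi$ and $y=y_0+\epsilon$ with $|\epsilon|_\ell$ sufficiently small, the ultrametric inequality gives $v_\ell(y^2-4N)=v_\ell(y_0^2-4N)$ and moreover $(y^2-4N)/(y_0^2-4N)\in 1+\ell^2\ZZ_\ell$; \autoref{lem:modifynorm} then yields $|y^2-4N|_\ell'=|y_0^2-4N|_\ell'$. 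Combined with compactness, the function $|y^2-4N|_\ell'$ takes only finitely many values on $\supp\psi$, so the integrand $\psi(y)|y^2-4N|_\ell'^{s/2}$ equals a finite sum of entire functions $c^{s/2}\,\psi(y)\,\mathbbm{1}_{U}(y)$ over finitely many compact open sets $U$. Integrating term by term, $\bI(s,\eta)$ is entire in $s$, and for $\sigma$ in a bounded range it is uniformly bounded by a constant depending only on $\nu,\sigma,\psi$.

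Second, I would establish the rapid decay in $\eta$. The integrand $y\mapsto\psi(y)|y^2-4N|_\ell'^{s/2}$ is locally constant (by the preceding paragraph combined with the local constancy of $\psi$) and compactly supported on $\QQ_\ell$, so there exists $N_0=N_0(\psi,\nu)\in\ZZ_{\geq 0}$ such that it is invariant under translation by $\ell^{N_0}\ZZ_\ell$ and supported in $\ell^{-N_0}\ZZ_\ell$. If $|\eta|_\ell>\ell^{N_0}$, then on every coset $a+\ell^{N_0}\ZZ_\ell$ the character $y\mapsto\rme_\ell(-y\eta)$ is a nontrivial additive character, so $\int_{a+\ell^{N_0}\ZZ_\ell}\rme_\ell(-y\eta)\,\rmd y=0$; summing over the finitely many cosets meeting $\supp\psi$ yields $\bI(s,\eta)=0$. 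When $|\eta|_\ell\le\ell^{N_0}$ the trivial bound $|\bI(s,\eta)|\ll_{\nu,\sigma,\psi}1$ is dominated by $(1+|\eta|_\ell)^{-M}$ after absorbing a constant of size $\ell^{N_0 M}$ into the implied constant, giving the claimed estimate.

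There is no real obstacle here; the only step requiring care is the local constancy of $|y^2-4N|_\ell'$, which relies crucially on $\ell\nmid(y^2-4N)$ staying bounded, hence on the non-existence of square roots of $N$. Once this is in hand, both the entire-ness and the (in fact finitely-supported) Fourier decay are immediate from standard $\ell$-adic orthogonality.
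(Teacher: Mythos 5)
Your proof is correct and follows essentially the same route as the paper's: establish that $|y^2-4N|_\ell'$ is bounded away from $0$ and locally constant on $\supp\psi$ (via the same $1+\ell^2\ZZ_\ell$ computation and \autoref{lem:modifynorm}), deduce entirety, and then get vanishing of $\bI(s,\eta)$ for $|\eta|_\ell$ large from translation invariance of the integrand. The only cosmetic differences are that you obtain the lower bound on $|y^2-4N|_\ell$ by compactness rather than the paper's explicit Hensel's-lemma case analysis, and you phrase the vanishing as coset orthogonality rather than the change-of-variable identity $\bI(s,\eta)=\rme_\ell(-\delta\eta)\bI(s,\eta)$; these are equivalent.
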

\begin{proof}
We first prove that $\bI(y,\eta)$ is entire in $s$. We have
\[
|y^2-4N|_p\gg 1.
\]
Indeed, if $v_p(4N)$ is odd, then $v_p(y^2)\neq v_p(4N)$. Hence
\[
|y^2-4N|_p=\max\{|y^2|_p,|4N|_p\}\geq |4N|_p \gg_{\nu} 1.
\]
If $2\nu'=v_p(4N)$ is even, then for $v_p(y)\neq \nu'$ we have the same estimate. Suppose that $v_p(y)= \nu'$. Write $y=p^{\nu'}y_0$ and $4N=p^{2\nu'}n_0$, then
\[
|y^2-4N|_p=p^{-2\nu'}|y_0^2-n_0|_p.
\]
We must have $|y_0^2-n_0|_p\geq 1/p^2$. Otherwise, by Hensel's lemma $y_0^2-n_0$ would have a root in $\QQ_p$ and thus $y^2-4N$ would have a root in $\QQ_p$. Hence $|y^2-4N|_p\gg 1$. 

Next we prove that $|y^2-4N|_p'$ is smooth at $y_0$ if $y_0^2-4N\neq 0$. Indeed, for any $\delta\in \QQ_p$ we have
\[
\frac{(y_0+\delta)^2-4N}{y_0^2-4N}=1+\delta\frac{2y_0+\delta}{y_0^2-4N}.
\]
Suppose that $|\delta|_p<\max\{|2y_0|_p,1\}$ and $|\delta|_p\leq p^{-2}|y_0^2-4N|_p/\max\{|2y_0|_p,1\}$, we have
\[
\left|\delta\frac{2y_0+\delta}{y_0^2-4N}\right|_p\leq |\delta|_p\frac{\max\{|2y_0|_p,1\}}{|y_0^2-4N|_p}\leq p^{-2}.
\]
Hence $((y_0+\delta)^2-4N)/(y_0^2-4N)\in 1+p^2\ZZ_p$ and hence
\[
|(y_0+\delta)^2-4N|_p'=|y_0^2-4N|_p'.
\]
Hence $|y^2-4N|_p'$ is smooth.

Since $\psi(y)$ is compactly supported, the integral converges for $s$ uniformly on every compact subset and thus defines an entire function for $s$. 
 
Since $\psi(y)$ is compactly supported, $|y_0|_p$ and $|y_0^2-4N|_p$ are bounded when $y_0\in \supp(\psi)$ (depending only on $\nu$). Hence we can let the $\delta$ above independent of $y_0\in \supp(\psi)$. Hence there exists $\kappa$ depending only on $\nu$ and $\psi$ such that 
\[
\omega_p(y+\delta)=\legendresymbol{((y+\delta)^2-4N)|(y+\delta)^2-4N|_p'}{p}= \legendresymbol{(y^2-4N)|y^2-4N|_p'}{p}=\omega_p(y)
\]
and
\[
\psi(y+\delta)|(y+\delta)^2-4N|_p'^{s/2}=\psi(y)|y^2-4N|_p'^{s/2}
\]
for any $s$ and $|\delta|_p\leq p^{-\kappa}$. By making change of variable $y\mapsto y+\delta$ in \eqref{eq:ladicfourier}, we obtain
\[
\bI(s,\eta)= \int_{\omega_p(y)=\epsilon}\psi(y+\delta)|(y+\delta)^2-4N|_p'^{s/2} \rme_p(-y\eta)\rmd y=\rme_p(-\delta\eta)\bI(s,\eta).
\]
Hence if $\bI(s,\eta)\neq 0$, we must have $\rme_p(-\delta\eta)=1$ for all $|\delta|_p\leq p^{-\kappa}$. Hence $|\eta|_p\leq p^\kappa$. That is, $\bI(s,\eta)=0$ if $|\eta|_p> p^\kappa$. For $|\eta|_p\leq p^\kappa$ we have
\[
|\bI(s,\eta)|\ll  \int_{\omega(y)=\epsilon}|\psi(y)||y^2-4N|_p'^{\sigma/2}\rmd y\ll_{\psi,\sigma} 1.
\]
Hence the conclusion follows by combining these two cases together.
\end{proof}

\begin{proposition}\label{prop:ladicnonsingular}
Suppose that $N$ has square roots $\pm \sqrt{N}$ in $\QQ_p$ and the support of $\psi(y)$ is disjoint from
\[
\{y\in \QQ_p\ |\ |y-2\sqrt{N}|_p \leq p^{-L}\}\cup \{y\in \QQ_p\ |\ |y+2\sqrt{N}|_p\leq p^{-L}\}.
\]
Then $\bI(s,\eta)$ is an entire function for $s$, and
\[
\bI(s,\eta)\ll_{\nu,\sigma,\psi,M,L} (1+|\eta|_p)^{-M}
\]
for any $M>0$.
\end{proposition}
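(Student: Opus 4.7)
The plan is to mimic the proof of \autoref{prop:ladicsmooth} very closely, using the hypothesis on the support of $\psi$ as a replacement for the fact that $N$ has no square root. There are essentially three ingredients to reproduce: a uniform lower bound on $|y^2-4N|_\ell$ on $\supp(\psi)$, the local constancy of $|y^2-4N|_\ell'$ with a modulus of continuity \emph{independent} of $y_0\in\supp(\psi)$, and the translation-invariance trick that forces either $\bI(s,\eta)=0$ or $|\eta|_\ell$ to be bounded.

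First, I would establish the lower bound. On $\supp(\psi)$ we have $|y-2\sqrt N|_\ell\geq\ell^{-L}$ and $|y+2\sqrt N|_\ell\geq\ell^{-L}$, so
\[
|y^2-4N|_\ell=|y-2\sqrt N|_\ell\,|y+2\sqrt N|_\ell\geq \ell^{-2L}.
\]
Combined with $|y|_\ell$ and $|y^2-4N|_\ell$ being bounded above on $\supp(\psi)$ (by compactness), this already shows that the integrand is bounded by $|\psi(y)|\cdot\ell^{-L\sigma}$ up to a constant depending on $\nu$, so the integral converges absolutely and uniformly on compact subsets of the $s$-plane, hence defines an entire function.

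Next I would repeat the smoothness computation from \autoref{prop:ladicsmooth}. For any $y_0\in\supp(\psi)$ and $\delta\in\QQ_\ell$,
\[
\frac{(y_0+\delta)^2-4N}{y_0^2-4N}=1+\delta\,\frac{2y_0+\delta}{y_0^2-4N},
\]
and choosing $|\delta|_\ell\leq \ell^{-\kappa}$ with $\kappa$ large enough (depending only on $\nu$, $L$ and $\supp(\psi)$, using the uniform upper bound on $|2y_0|_\ell$ and the uniform lower bound $|y_0^2-4N|_\ell\geq \ell^{-2L}$) makes the second term lie in $\ell^2\ZZ_\ell$. Consequently, by \autoref{lem:modifynorm}, $|(y_0+\delta)^2-4N|_\ell'=|y_0^2-4N|_\ell'$ and $\omega_\ell(y_0+\delta)=\omega_\ell(y_0)$, and by shrinking $\kappa$ further if needed we may also assume $\psi(y_0+\delta)=\psi(y_0)$.

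Finally, performing the change of variable $y\mapsto y+\delta$ in \eqref{eq:ladicfourier} yields
\[
\bI(s,\eta)=\rme_\ell(-\delta\eta)\bI(s,\eta)
\]
for every $|\delta|_\ell\leq \ell^{-\kappa}$. Hence $\bI(s,\eta)=0$ unless $|\eta|_\ell\leq \ell^\kappa$, in which case the trivial estimate
\[
|\bI(s,\eta)|\leq \int_{\omega_\ell(y)=\epsilon}|\psi(y)|\,|y^2-4N|_\ell'^{\sigma/2}\,\rmd y\ll_{\nu,\sigma,\psi,L}1
\]
gives the desired bound, since then $(1+|\eta|_\ell)^{-M}\asymp 1$. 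Combining the two cases proves the proposition. The only real subtlety is ensuring that the constant $\kappa$ really is uniform in $y_0\in\supp(\psi)$; this is guaranteed by the hypothesis, since the denominator $y_0^2-4N$ is bounded away from zero precisely by $\ell^{-2L}$, uniformly in $y_0$.
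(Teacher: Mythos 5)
Your proposal is correct and follows essentially the same route as the paper: establish the uniform lower bound $|y^2-4N|_\ell\gg_{\nu,L}1$ on $\supp(\psi)$ from the support hypothesis, and then repeat the argument of \autoref{prop:ladicsmooth} verbatim (local constancy of $|y^2-4N|_\ell'$ with a uniform modulus, the translation trick giving $\bI(s,\eta)=\rme_\ell(-\delta\eta)\bI(s,\eta)$, and the trivial bound when $|\eta|_\ell$ is bounded). This is exactly what the paper does.
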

\begin{proof}
It suffices to prove that $|y_0^2-4N|,|y_0^2-4N|'\gg_{\nu,L} 1$ for $y_0\in \supp(\psi)$ in this case. The remaining arguments follow verbatim from \autoref{prop:ladicsmooth}. Now we prove this claim. By \autoref{lem:modifynorm} it suffices to show that $|y^2-4N|\gg_{\nu,L} 1$ for $y\in \supp(\psi)$. Indeed, we have
\[
|y^2-4N|=|y+2\sqrt{N}||y-2\sqrt{N}|> p^{-2L}.\qedhere
\]
\end{proof}

Suppose that $N$ has square roots $\pm \sqrt{N}$ in $\QQ_p$. For $L$ sufficiently large (we will determine it later), we consider the integral
\begin{equation}\label{eq:ladicfouriersupport}
\bJ(s,\eta)=\int_{\omega_p(y)=\epsilon, |y-2\sqrt{N}|\leq p^{-L}}|y^2-4N|_p'^{s/2} \rme_p(-y\eta)\rmd y.
\end{equation}
It has singularities at $y=\pm 2\sqrt{N}$ with exponent $s/2$. Hence $\bJ(s,\eta)$ converges for $\Re s>-2$ (by \autoref{lem:ladiconverge}) and defines a holomorphic function on this half plane.

\begin{theorem}\label{thm:ladicsingular}
For any $\eta\in \QQ_p$, $\bJ(s,\eta)$ is a sum of at most three functions of the following form
\[
A\frac{1}{1-p^{-2-s}}B^{s/2}\qquad\text{and}\qquad AB^{s/2},
\]
where $A\ll (1+|\eta|_p)^{-1}$ and $B\asymp (1+|\eta|_p)^{-1}$. The implied constants only depend on $p,\nu$ and $L$. 

In particular, for any $\sigma>-2$ we have
\[
\bJ(s,\eta)\ll (1+|\eta|_p)^{-1-\frac{\sigma}{2}},
\]
where the implied constant only depends on $\sigma,p,\nu$ and $L$. 
\end{theorem}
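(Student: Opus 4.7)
The plan is to change variables $u = y - 2\sqrt{N}$, reduce the integrand on each valuation class of $u$ to a constant times a power of $|u|_\ell'$, and then evaluate the resulting elementary Fourier integrals explicitly. Under the substitution $y^2 - 4N = u(u + 4\sqrt{N})$ and
\[
\bJ(s,\eta) = \rme_\ell(-2\sqrt{N}\eta)\int\limits_{\substack{|u|_\ell \le \ell^{-L}\\ \omega_\ell(y) = \epsilon}} |u(u + 4\sqrt{N})|_\ell'^{s/2} \rme_\ell(-u\eta)\,\rmd u.
\]
Setting $\nu_0 = v_\ell(4\sqrt{N})$ and taking $L > \nu_0 + 2$, we have $1 + u/(4\sqrt{N}) \in 1 + \ell^2\ZZ_\ell$ throughout the support, so by \autoref{lem:modifynorm} we have $|y^2-4N|_\ell' = |u\cdot 4\sqrt{N}|_\ell'$; this is determined entirely by $v_\ell(u)$ and, when $\ell = 2$, by the leading digit of $u$ modulo $4$. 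Moreover $1 + u/(4\sqrt{N})$ is a square in $\QQ_\ell^\times$ by Hensel's lemma, so the Kronecker-symbol factor defining $\omega_\ell(y)$ also depends only on these discrete data.

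I would then partition the domain into finitely many cosets $W = \ell^m U$, where $U$ is a coset of $\ZZ_\ell^\times$ modulo $1 + \ell\ZZ_\ell$ (resp.\ $1 + 4\ZZ_2$), chosen so that both $|u\cdot 4\sqrt{N}|_\ell'$ and the constraint $\omega_\ell(y) = \epsilon$ are constant on each piece. Writing $B_m$ for the common value of $|u\cdot 4\sqrt{N}|_\ell'$ on $W = \ell^m U$, the problem reduces to a finite sum of integrals $B_m^{s/2}\int_W \rme_\ell(-u\eta)\,\rmd u$. A standard computation gives
\[
\int_{\ell^m U}\rme_\ell(-u\eta)\,\rmd u = \begin{cases} \mathrm{meas}(\ell^m U), & |\eta|_\ell \le \ell^m,\\ \lambda(U,\eta)\,\ell^{-m-1}, & |\eta|_\ell = \ell^{m+1},\\ 0, & |\eta|_\ell \ge \ell^{m+2},\end{cases}
\]
with $|\lambda(U,\eta)| \le 1$. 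Summing the first case over $m \ge \max(L, \lceil\log_\ell|\eta|_\ell\rceil)$ along a fixed parity (resp.\ mod-$4$) class gives a geometric series in $\ell^{-2-s}$, whose sum has the form $A\cdot(1-\ell^{-2-s})^{-1}\cdot B^{s/2}$; the boundary valuation $|\eta|_\ell = \ell^{m+1}$ contributes an additional term of the form $A B^{s/2}$. Across all classes, at most three such pieces appear---two parity classes for $\ell \ne 2$ and three for $\ell = 2$---matching the statement.

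The bounds on $A$ and $B$ are then immediate. With $m_{\min} = \max(L, -v_\ell(\eta))$, one has $\ell^{-m_{\min}} \asymp_{\ell,\nu,L} (1+|\eta|_\ell)^{-1}$, hence $A \ll (1+|\eta|_\ell)^{-1}$ and $B \asymp (1+|\eta|_\ell)^{-1}$. For $\sigma > -2$ the estimate $|1-\ell^{-2-s}|^{-1} \ll_\sigma 1$ is trivial, and combining yields
\[
|\bJ(s,\eta)| \ll (1+|\eta|_\ell)^{-1}(1+|\eta|_\ell)^{-\sigma/2} = (1+|\eta|_\ell)^{-1-\sigma/2},
\]
as claimed. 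The main technical obstacle will be the case $\ell = 2$, where the modified norm distinguishes leading digits modulo $4$, forcing a finer case analysis to match $\omega_2(y) = \epsilon$ after the substitution; a secondary bookkeeping task is verifying that the boundary and main contributions amalgamate into at most three terms of the stated form.
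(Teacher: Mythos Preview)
Your approach is essentially the paper's: the same substitution $u=y-2\sqrt{N}$, the same reduction via \autoref{lem:modifynorm} to $|4u\sqrt{N}|_\ell'$, the same decomposition into valuation--coset pieces, and the same geometric summation. One refinement you will need when you carry out the $\ell=2$ bookkeeping: for $\epsilon=\pm 1$ the constraint $\omega_2(y)=\epsilon$ depends on the leading digit of $u$ modulo~$8$ (not~$4$), because the Kronecker symbol $\legendresymbol{\cdot}{2}$ is determined modulo~$8$; consequently your coset Fourier integral has nonzero boundary contributions at $|\eta|_2=2^{m+1},2^{m+2},2^{m+3}$ rather than only at $2^{m+1}$, and the paper's explicit case-by-case computation confirms these still assemble into at most three terms of the stated form.
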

In the proof of the above theorem we will add an additional technical assumption for $L$ for convenience of our computation (which will be explicit in the proof). Since we can always choose $L$ satisfying this assumption in the following arguments, it will not affect our main results.
\begin{proof}
In the proof we omit the subscript $p$ in $\omega_p$, $|\cdot|_p$ and $|\cdot|_p'$. We denote $w=-v_p(\eta)$.

Let $\widetilde{\omega}(y)=\omega(y+2\sqrt{N})$. By making change of variable $y\mapsto y+2\sqrt{N}$ we have
\[
\bJ(s,\eta)=\rme_p(-2\sqrt{N}\eta)\int_{y\in p^L\ZZ_p,\ \widetilde{\omega}(y)=\epsilon}|y(y+4\sqrt{N})|'^{s/2}\rme_p(-y\eta)\rmd y.
\]

We assume that $L\geq \nu+4$. In this case we have
\[
\frac{y(y+4\sqrt{N})}{4y\sqrt{N}}=1+\frac{y}{4\sqrt{N}}\in 1+p^2\ZZ_p.
\]
Hence $|y(y+4\sqrt{N})|'=|4y\sqrt{N}|'$. Therefore
\[
\bJ(s,\eta)=\rme_p(-2\sqrt{N}\eta)\int_{y\in p^L\ZZ_p,\ \widetilde{\omega}(y)=\epsilon}|4y\sqrt{N}|'^{s/2}\rme_p(-y\eta)\rmd y.
\]
Moreover
\[
\widetilde{\omega}(y)=\legendresymbol{y(y+4\sqrt{N})|y(y+4\sqrt{N})|'}{p}= \legendresymbol{4y\sqrt{N}|4y\sqrt{N}|'}{p}.
\]

\underline{\emph{Case 1:}}\ \ $p\neq 2$.

We have $\bJ(s,\eta)=\rme_p(-2\sqrt{N}\eta)\sum_{u=L}^{+\infty}J_{u}(s,\eta)$, where
\[
J_u(s,\eta)=\int_{y\in p^u\ZZ_p^\times,\ \widetilde{\omega}(y)=\epsilon}|4y\sqrt{N}|'^{s/2}\rme_p(-y\eta)\rmd y.
\]

\underline{\emph{Case 1.1:}}\ \ $\epsilon=0$.

For convenience, we choose $L\not\equiv \nu\ (2)$ in this case. 

In this case $\widetilde{\omega}(y)=\epsilon$ if and only if $v_p(4y\sqrt{N})$ is odd, that is, $u\not\equiv \nu \ (2)$. For such $u$, we have $|4y\sqrt{N}|_{p}'=p^{-u-\nu+1}$. Hence $J_u(s,\eta)=0$ if $|\eta| > p^u$ and
\[
J_u(s,\eta)=p^{-u}\left(1-\frac{1}{p}\right)p^{(-u-\nu+1)s/2}
\] 
otherwise.

For $w\geq L$ we have $\rme_p(-2\sqrt{N}\eta)=1$ and thus
\[
\bJ(s,\eta)=\sum_{\substack{u=w\\u\not\equiv \nu \ (2)}}^{+\infty}p^{-u}\left(1-\frac{1}{p}\right)p^{(-u-\nu+1)s/2},
\]
which equals
\[
\left(1-\frac{1}{p}\right)p^{(-\nu+1)s/2}\frac{p^{-w(1+s/2)}}{1-p^{2-s}}=|\eta|^{-1} \left(1-\frac{1}{p}\right)\frac{1}{1-p^{2-s}}|\sqrt{N}\eta^{-1}|'^{s/2}
\]
if $w\not\equiv \nu\ (2)$ and equals
\[
\left(1-\frac{1}{p}\right)p^{(-\nu+1)s/2}\frac{p^{-(w+1)(1+s/2)}}{1-p^{2-s}}=|\eta|^{-1} \left(1-\frac{1}{p}\right)\frac{p^{-1}}{1-p^{2-s}}|\sqrt{N}\eta^{-1}|'^{s/2}
\]
if $w\equiv \nu\ (2)$.

For $w<L$ we have
\begin{align*}
   &\bJ(s,\eta)  =\rme_p(-2\sqrt{N}\eta)\sum_{\substack{u=L\\u\not\equiv \nu \ (2)}}^{+\infty}p^{-u}\left(1-\frac{1}{p}\right)p^{(-u-\nu+1)s/2}, \\
     & =\rme_p(-2\sqrt{N}\eta)\left(1-\frac{1}{p}\right)p^{(-\nu+1)s/2}\frac{p^{-L(1+s/2)}}{1-p^{2-s}} =\rme_p(-2\sqrt{N}\eta)p^{-L}\left(1-\frac{1}{p}\right)\frac{1}{1-p^{2-s}}p^{(-\nu-L+1)s/2}.
\end{align*}

\underline{\emph{Case 1.2:}}\ \ $\epsilon=\pm 1$. 

We choose $L$ such that $L\equiv \nu\ (2)$ in this case.

In this case $\widetilde{\omega}(y)=\epsilon$ if and only if $v_p(4y\sqrt{N})$ is even and 
\[
\legendresymbol{4y_0n_0}{p}=\epsilon,
\]
where $y_0=y|y|$ and $n_0=\sqrt{N}|\sqrt{N}|$. 
For $u\equiv \nu\ (2)$ we have
\[
J_u(s,\eta)=p^{-u}p^{(-u-\nu)s/2}\int_{y_0\in\ZZ_p^\times,\ \chi_p(y_0)=\epsilon\chi_p(n_0)} \rme_p(-p^uy_0\eta)\rmd y_0,
\]
where $\chi_p=\legendresymbol{\cdot}{p}$. Note that
\begin{align*}
   \int_{y_0\in\ZZ_p^\times,\ \chi_p(y_0)=\epsilon\chi_p(n_0)} \rme_p(-p^uy_0\eta)\rmd y_0 & =\int_{y_0\in\ZZ_p^\times,\ \chi_p(y_0)=\epsilon\chi_p(n_0)} \rme_p(-p^u(y_0+p a)\eta)\rmd y_0  \\
     & =\rme_p(-p^{u+1}a\eta)\int_{y_0\in\ZZ_p^\times,\ \chi_p(y_0)=\epsilon\chi_p(n_0)} \rme_p(-p^uy_0\eta)\rmd y_0
\end{align*}
for all $a\in \ZZ_p$. Hence for $|\eta|>p^{u+1}$ we have $J_u(s,\eta)=0$. For $|\eta| \leq p^u$ we have $\rme_p(-p^uy_0\eta)=1$ for all $y_0\in \ZZ_p^\times$. Hence
\[
J_u(s,\eta)=\frac{1}{2}p^{-u}p^{(-u-\nu)s/2}\left(1-\frac{1}{p}\right).
\]
Finally we consider the case $|\eta|=p^{u+1}$. Write $\eta=\eta_0p^{-u-1}$, where $\eta_0\in \ZZ_p^\times$. Then
\[
\int_{y_0\in\ZZ_p^\times,\ \chi_p(y_0)=\epsilon\chi_p(n_0)} \rme_p(-p^uy_0\eta)\rmd y_0 =\int_{y_0\in\ZZ_p^\times,\ \chi_p(y_0)=\epsilon\chi_p(n_0\eta_0)}\rme_p\left(-\frac{y_0}{p}\right)\rmd y_0=\frac{1}{p}\mf{g}_p^{\epsilon\chi_p(n_0\eta_0)},
\]
where for any sign $\pm$, the \emph{Gauss sum} $\mf{g}_p^\pm$ is defined by
\[
\mf{g}_p^\pm=\sum_{\substack{x\in \FF_p\\\legendresymbol{x}{p}=\pm 1}}\rme\legendresymbol{x}{p}.
\]

For $w\geq L+1$ we have $\rme_p(-2\sqrt{N}\eta)=1$ and thus
\[
\bJ(s,\eta)=\frac{1}{2}\sum_{\substack{u=w\\u\equiv \nu \ (2)}}^{+\infty}p^{-u}\left(1-\frac{1}{p}\right)p^{(-u-\nu)s/2}
\]
for $w\equiv \nu\ (2)$ and
\[
\bJ(s,\eta)=\frac{1}{2}\sum_{\substack{u=w\\u\equiv \nu \ (2)}}^{+\infty}p^{-u}\left(1-\frac{1}{p}\right)p^{(-u-\nu)s/2}+p^{-w+1}p^{(-w+1-\nu)s/2} \frac{1}{p}\mf{g}_p^{\epsilon\chi_p(n_0\eta_0)}
\]
for $w\not\equiv \nu\ (2)$. Hence for $w\equiv \nu\ (2)$ we have
\[
\bJ(s,\eta)=\frac{1}{2}\left(1-\frac{1}{p}\right)p^{-\nu s/2}\frac{p^{-w(1+s/2)}}{1-p^{-2-s}}=\frac{1}{2}\left(1-\frac{1}{p}\right)|\eta|^{-1} \frac{1}{1-p^{-2-s}}|\sqrt{N}\eta^{-1}|'^{s/2},
\]
while for $w\not\equiv \nu\ (2)$ we have
\begin{align*}
   \bJ(s,\eta) & =\frac{1}{2}\left(1-\frac{1}{p}\right)p^{-\nu s/2}\frac{p^{-(w+1)(1+s/2)}}{1-p^{-2-s}}+p^{-w}p^{(-w-\nu+1)s/2}\mf{g}_p^{\epsilon\chi_p(n_0\eta_0)}\\
     & =\frac{1}{2}\left(1-\frac{1}{p}\right)|\eta|^{-1} \frac{p^{-1}}{1-p^{-2-s}}(p^{-1}|\sqrt{N}\eta^{-1}|)^{s/2}+|\eta|^{-1} |\sqrt{N}\eta^{-1}|'^{s/2}\mf{g}_p^{\epsilon\chi_p(n_0\eta_0)}.
\end{align*}

For $w\leq L$ we have
\begin{align*}
   \bJ(s,\eta) & =\frac{1}{2}\rme_p(-2\sqrt{N}\eta)\sum_{\substack{u=L\\u\equiv \nu \ (2)}}^{+\infty}p^{-u}\left(1-\frac{1}{p}\right)p^{(-u-\nu)s/2} \\
     & =\frac{1}{2}\rme_p(-2\sqrt{N}\eta)\left(1-\frac{1}{p}\right)p^{-\nu s/2}\frac{p^{-L(1+s/2)}}{1-p^{-2-s}}.
\end{align*}

\underline{\emph{Case 2:}}\ \ $p= 2$.

We have $\bJ(s,\eta)=\rme_p(-2\sqrt{N}\eta)\sum_{u=L}^{+\infty}J_{u}(s,\eta)$, where
\[
J_u(s,\eta)=\int_{p^u\ZZ_p^\times,\ \widetilde{\omega}(y)=\epsilon}|4y\sqrt{N}|'^{s/2}\rme_p(-y\eta)\rmd y.
\]

\underline{\emph{Case 2.1:}}\ \ $\epsilon=0$. 

We choose $L\equiv \nu \ (2)$ in this case.

In this case, $u\not\equiv \nu\ (2)$, or $u\equiv \nu\ (2)$ and $y_0n_0\equiv 3\ (4)$, where $y_0=y|y|$ and $n_0=\sqrt{N}|\sqrt{N}|$. 

We first consider the case $u\not\equiv \nu\ (2)$. We have $|4y\sqrt{N}|'=p^{3-2-u-\nu}=p^{1-u-\nu}$. Hence $J_u(s,\eta)=0$ if $|\eta| > p^u$ and
\[
J_u(s,\eta)=\frac{1}{2}p^{-u}p^{(-u-\nu+1)s/2}
\] 
otherwise.

Now we consider the case for $u\equiv \nu\ (2)$ and $y_0n_0\equiv 3\ (4)$. We have $|4y\sqrt{N}|'=p^{2-2-u-\nu}=p^{-u-\nu}$. Hence
\[
J_u(s,\eta)=p^{-u}p^{(-u-\nu)s/2}\int_{y_0\in\ZZ_p^\times,\ y_0n_0\equiv 3\ (4)} \rme_p(-p^uy_0\eta)\rmd y_0.
\]
Note that
\begin{align*}
   \int_{y_0\in\ZZ_p^\times,\ y_0n_0\equiv 3\ (4)} \rme_p(-p^uy_0\eta)\rmd y_0 & =\int_{y_0\in\ZZ_p^\times,\ y_0n_0\equiv 3\ (4)} \rme_p(-p^u(y_0+p^2 a)\eta)\rmd y_0  \\
     & =\rme_p(-p^{u+2}a\eta)\int_{y_0\in\ZZ_p^\times,\ y_0n_0\equiv 3\ (4)} \rme_p(-p^uy_0\eta)\rmd y_0.
\end{align*}
Hence for $|\eta|>p^{u+2}$ we have $J_u(s,\eta)=0$. For $|\eta| \leq p^u$ we have $\rme_p(-p^uy_0\eta)=1$ for all $y_0\in \ZZ_p^\times$. Hence
\[
J_u(s,\eta)=\frac{1}{4}p^{-u}p^{(-u-\nu)s/2}.
\]
For $|\eta|= p^{u+1}$ we have 
\[
\rme_p(-p^u y_0\eta)=\rme_p\left(-\frac{y_0\eta_0}{p}\right)=-1
\]
if $\eta=p^{-u-1}\eta_0$. Hence 
\[
J_u(s,\eta)=-\frac{1}{4}p^{-u}p^{(-u-\nu)s/2}.
\]
Finally, we consider the case $|\eta|= p^{u+2}$. By setting $\eta=p^{-u-2}\eta_0$, we have
\[
\rme_p(-p^u y_0\eta)=\rme_p\left(-\frac{y_0\eta_0}{4}\right)=\rme\legendresymbol{3n_0^{-1}\eta_0}{4}= \rme\left(-\frac{n_0\eta_0}{4}\right).
\]
Hence
\[
J_u(s,\eta)=\frac14p^{-u}p^{(-u-\nu)s/2}\rme\left(-\frac{n_0\eta_0}{4}\right).
\]

For $w\geq L+1$ we have $\rme_p(-2\sqrt{N}\eta)=1$ and thus
\begin{align*}
\bJ(s,\eta)&=\frac{1}{2}\sum_{\substack{u=w\\u\not\equiv \nu \ (2)}}^{+\infty}p^{-u}p^{(-u-\nu+1)s/2}+\frac{1}{4}\sum_{\substack{u=w\\u\equiv \nu \ (2)}}^{+\infty}p^{-u}p^{(-u-\nu)s/2}+\frac14 p^{-w+2}p^{(-w-\nu+2)s/2}\rme\left(-\frac{n_0\eta_0}{4}\right)\\
&=\frac{1}{2}|\eta|^{-1}\frac{1}{1-p^{-2-s}}|\sqrt{N}\eta^{-1}|^{s/2}+|\eta|^{-1} (p^2|\sqrt{N}\eta^{-1}|)^{s/2}\rme\left(-\frac{n_0\eta_0}{4}\right)
\end{align*}
for $w\equiv \nu\ (2)$ and
\begin{align*}
&\bJ(s,\eta)=\frac{1}{2}\sum_{\substack{u=w\\u\not\equiv \nu \ (2)}}^{+\infty}p^{-u}p^{(-u-\nu+1)s/2}+\frac{1}{4}\sum_{\substack{u=w\\u\equiv \nu \ (2)}}^{+\infty}p^{-u}p^{(-u-\nu)s/2}-\frac14 p^{-w+1}p^{(-w-\nu+1)s/2}\\
&=\frac{1}{2}|\eta|^{-1}\frac{1}{1-p^{-2-s}}(p|\sqrt{N}\eta^{-1}|)^{s/2}+ \frac{1}{8}|\eta|^{-1}\frac{1}{1-p^{-2-s}}(p^{-1}|\sqrt{N}\eta^{-1}|)^{s/2}-\frac12 |\eta|^{-1}(p|\sqrt{N}\eta^{-1}|)^{s/2}
\end{align*}
for $w\not\equiv \nu\ (2)$. 

For $w\leq L$ we have
\begin{align*}
\bJ(s,\eta)&=\rme_p(-2\sqrt{N}\eta)\left[\frac{1}{2}\sum_{\substack{u=L\\u\not\equiv \nu \ (2)}}^{+\infty}p^{-u}p^{(-u-\nu+1)s/2}+\frac{1}{4}\sum_{\substack{u=L\\u\equiv \nu \ (2)}}^{+\infty}p^{-u}p^{(-u-\nu)s/2}\right]\\
&=\rme_p(-2\sqrt{N}\eta)\frac{1}{2}p^{-L}\frac{1}{1-p^{-2-s}}(p^{-L-\nu})^{s/2}.
\end{align*}

\underline{\emph{Case 2.2:}}\ \ $\epsilon=\pm 1$.

For simplicity we only deal with the case  $\epsilon=-1$. The other case is similar.
We choose $L\equiv \nu \ (2)$ in this case.

We have $|4y\sqrt{N}|'=p^{-2-u-\nu}$. Hence
\[
J_u(s,\eta)=p^{-u}p^{(-2-u-\nu)s/2}\int_{y_0\in\ZZ_p^\times,\ y_0n_0\equiv 5\ (8)} \rme_p(-p^uy_0\eta)\rmd y_0.
\]
Note that
\begin{align*}
   \int_{y_0\in\ZZ_p^\times,\ y_0n_0\equiv 5\ (8)} \rme_p(-p^uy_0\eta)\rmd y_0 & =\int_{y_0\in\ZZ_p^\times,\ y_0n_0\equiv 5\ (8)} \rme_p(-p^u(y_0+p^3 a)\eta)\rmd y_0  \\
     & =\rme_p(-p^{u+3}a\eta)\int_{y_0\in\ZZ_p^\times,\ y_0n_0\equiv 5\ (8)} \rme_p(-p^uy_0\eta)\rmd y_0.
\end{align*}
Hence for $|\eta|>p^{u+3}$ we have $J_u(s,\eta)=0$. For $|\eta| \leq p^u$ we have $\rme_p(-p^uy_0\eta)=1$ for all $y_0\in \ZZ_p^\times$. Hence
\[
J_u(s,\eta)=\frac{1}{8}p^{-u}p^{(-2-u-\nu)s/2}.
\]
For $|\eta|= p^{u+1}$ we have 
\[
\rme_p(-p^u y_0\eta)=\rme_p\left(-\frac{y_0\eta_0}{p}\right)=-1
\]
if $\eta=p^{-u-1}\eta_0$. Hence 
\[
J_u(s,\eta)=-\frac{1}{8}p^{-u}p^{(-2-u-\nu)s/2}.
\]
For $|\eta|= p^{u+2}$ we write $\eta=p^{-u-2}\eta_0$. Then we have
\[
\rme_p(-p^u y_0\eta)=\rme_p\left(-\frac{y_0\eta_0}{4}\right)=\rme\left(\frac{5n_0^{-1}\eta_0}{4}\right)= \rme\left(\frac{n_0\eta_0}{4}\right).
\]
Hence
\[
J_u(s,\eta)=\frac{1}{8}p^{-u}p^{(-2-u-\nu)s/2}\rme\left(\frac{n_0\eta_0}{4}\right).
\]
Finally, for $|\eta|= p^{u+3}$ we write $\eta=p^{-u-3}\eta_0$. Then we have
\[
\rme_p(-p^u y_0\eta)=\rme_p\left(-\frac{y_0\eta_0}{8}\right)=\rme\left(\frac{5n_0^{-1}\eta_0}{8}\right)= \rme\left(\frac{5n_0\eta_0}{8}\right).
\]
Hence
\[
J_u(s,\eta)=\frac{1}{8}p^{-u}p^{(-2-u-\nu)s/2}\rme\left(\frac{5n_0\eta_0}{8}\right).
\]

For $w\geq L+2$ we have $\rme_p(-2\sqrt{N}\eta)=1$ and thus
\begin{align*}
\bJ(s,\eta)&=\frac{1}{8}\sum_{\substack{u=w\\u\equiv \nu \ (2)}}^{+\infty}p^{-u}p^{(-2-u-\nu)s/2}+ \frac{1}{8}p^{-w+2}p^{(-w-\nu)s/2}\rme\left(\frac{n_0\eta_0}{4}\right)\\
&=\frac{1}{8}|\eta|^{-1}\frac{1}{1-p^{-2-s}}|4\sqrt{N}\eta^{-1}|^{s/2}+\frac12|\eta|^{-1} |\sqrt{N}\eta^{-1}|^{s/2}\rme\left(\frac{n_0\eta_0}{4}\right)
\end{align*}
for $w\equiv \nu\ (2)$ and
\begin{align*}
\bJ(s,\eta)&=\frac{1}{8}\sum_{\substack{u=w\\u\equiv \nu \ (2)}}^{+\infty}p^{-u}p^{(-2-u-\nu)s/2}-\frac18 p^{-w+1}p^{(-1-w-\nu)s/2}+ \frac{1}{8}p^{-w+3}p^{(1+w-\nu)s/2}\rme\legendresymbol{5n_0\eta_0}{8}\\
&=\frac{1}{16}|\eta|^{-1}\frac{1}{1-p^{-2-s}}(p^{-3}|\sqrt{N}\eta^{-1}|)^{s/2}- \frac{1}{4}|\eta|^{-1}|2\sqrt{N}\eta^{-1}|^{s/2}+ |\eta|^{-1}(p|\sqrt{N}\eta^{-1}|)^{s/2}\rme\legendresymbol{5n_0\eta_0}{8}
\end{align*}
for $w\not\equiv \nu\ (2)$. 

For $w\leq L$ we have
\[
\bJ(s,\eta)=\frac{1}{8}\sum_{\substack{u=L\\u\equiv \nu \ (2)}}^{+\infty}p^{-u}p^{(-2-u-\nu)s/2}= \frac{1}{8}\frac{p^{-L(1+s/2)}}{1-p^{-2-s}}p^{(-\nu-2)s/2}= \frac{1}{8}p^{-L}\frac{1}{1-p^{-2-s}}p^{(-L-\nu-2)s/2}.
\]

For $w=L+1$ we have
\begin{align*}
\bJ(s,\eta)&=\frac{1}{8}\sum_{\substack{u=w\\u\equiv \nu \ (2)}}^{+\infty}p^{-u}p^{(-2-u-\nu)s/2}-\frac18 p^{-L}p^{(-2-L-\nu)s/2}\\
&=\frac{1}{32}p^{-L}\frac{1}{1-p^{-2-s}}p^{(-L-\nu-4)s/2}-\frac18 p^{-L}p^{(-2-L-\nu)s/2}.
\end{align*}

Finally, by combining all the computations of $\bJ(s,\eta)$ together we obtain our result.
\end{proof}

\subsection{Archimedean analysis}
Now we consider the archimedean part. We will mainly use the results of \cite[Appendix A]{altug2017}. We consider the integral
\begin{equation}\label{eq:archimedeanfourier}
\bI(s,\xi)=\int_{\omega_\infty(x)=\iota}\varphi(x)|x^2\mp 1|^{s/2} \rme(-x\xi)\rmd x,
\end{equation}
where 
\[
\omega_\infty(x)=\begin{cases}
             0, & x^2\mp 1>0, \\
             1, & x^2\mp 1<0,
           \end{cases}
\]
and $\varphi(x)$ is a smooth function with bounded support.
\begin{proposition}\label{prop:archimedeansmooth}
If $\sgn N=-1$, then $\bI(s,\xi)$ is an entire function for $s$, and
\[
\bI(s,\xi)\ll_{\sigma,\varphi,M} (1+|\xi|)^{-M}
\]
for any $M>0$.
\end{proposition}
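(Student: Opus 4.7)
The plan is to exploit that the hypothesis $\sgn N = -1$ selects the ``minus'' branch of the $\mp$ in $|x^2 \mp 1|$, so $x^2 + 1 > 0$ for every $x \in \RR$ and hence $\omega_\infty(x) \equiv 0$. For $\iota = 1$ the integration region is empty and $\bI(s,\xi) = 0$ trivially; for $\iota = 0$ the domain of integration is all of $\RR$. The key point is that the weight $(x^2+1)^{s/2}$ is now genuinely smooth in $x$ (no singularity at any real point) and entire in $s$, so the integrand $\varphi(x)(x^2+1)^{s/2}$ is a smooth function with support in the bounded set $\supp\varphi$ depending holomorphically on $s$.

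First I would establish that $s \mapsto \bI(s,\xi)$ is entire by Morera's theorem: on any compact subset of $\CC$ the integrand is bounded uniformly in $s$ and in $x \in \supp\varphi$, so Fubini permits interchanging a contour integral in $s$ with the $x$-integral, and holomorphy of $s \mapsto (x^2+1)^{s/2}$ transfers to $\bI$. Then I would obtain the rapid-decay bound by integrating by parts $M$ times against $\rme(-x\xi)$. Because $\varphi$ has compact support, boundary terms vanish, and one arrives at
\[
\bI(s,\xi) = \frac{1}{(2\pi\rmi\xi)^M}\int_{\RR} \partial_x^M\bigl[\varphi(x)(x^2+1)^{s/2}\bigr]\,\rme(-x\xi)\,\rmd x.
\]
The Leibniz rule expands this derivative as a finite sum of terms of the form $\varphi^{(j)}(x)$ times derivatives of $(x^2+1)^{s/2}$; the latter are polynomials in $s$ multiplied by factors $(x^2+1)^{s/2-k}$, each bounded on $\supp\varphi$ by a constant depending only on $\sigma$, $\varphi$ and $M$. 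Hence $|\bI(s,\xi)| \ll_{\sigma,\varphi,M} |\xi|^{-M}$ for $|\xi| \geq 1$, and combining this with the trivial bound $|\bI(s,\xi)| \ll_{\sigma,\varphi} 1$ for $|\xi| \leq 1$ yields the claimed $(1+|\xi|)^{-M}$ estimate.

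There is essentially no obstacle in this proposition: the content reduces to the standard fact that a smooth compactly supported function has Schwartz-class Fourier transform, upgraded to uniformity on vertical strips in $s$. The genuinely delicate singular analysis is deferred to the companion case $\sgn N = +1$, where the weight $|x^2-1|^{s/2}$ acquires singularities at $x = \pm 1$ and one must invoke the asymptotic expansion \eqref{eq:thetaasymp} together with $\ell$-adic analogues like \autoref{thm:ladicsingular}.
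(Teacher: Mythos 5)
Your proposal is correct and follows essentially the same route as the paper: holomorphy in $s$ from uniform convergence of the integral over the bounded support (the paper argues this directly rather than via Morera, but the content is the same), a trivial bound for $|\xi|\ll 1$, and $M$-fold integration by parts with the Leibniz rule for $|\xi|\gg 1$, using that $\partial_x^j(x^2+1)^{s/2}$ is a smooth multiple of $(x^2+1)^{s/2}$ bounded on $\supp\varphi$. Your observation that the $\iota=1$ region is empty when $\sgn N=-1$ is a small clarification the paper leaves implicit, but otherwise the arguments coincide.
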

\begin{proof}
In this case
\[
\bI(s,\xi)=\int_{\RR}\varphi(x)|x^2+1|^{s/2}\rme(-x\xi)\rmd \xi.
\]

Since $\varphi(x)$ smooth and up to the boundary, supported in a bounded set and $|x^2+1|\gg 1$, the integral converges absolutely and uniformly on every compact subset for $s$ and hence defines an entire function. 

We consider the cases $|\xi|\ll 1$ and $|\xi|\gg 1$ separately.

\underline{\emph{Case 1:}}\ \ $|\xi|\ll 1$.

In this case, we bound the integral trivially and obtain
\[
\bI(s,\xi)\ll\int_{\omega_\infty(x)=\iota}|\varphi(x)||x^2+1|^{\frac\sigma2}\rmd x\ll_{\sigma,\psi} 1.
\]
Since $1+|\xi|\asymp 1$ in this case, we obtain the result in this case.

\underline{\emph{Case 2:}}\ \ $|\xi|\gg 1$. 

In this case, by integration by parts $K$ times we obtain
\[
\bI(s,\eta)=\frac{1}{(-\dpii\xi)^K}\int_{\RR}\frac{\rmd^K}{\rmd x^K}\left(\varphi(x)|x^2+1|^{\frac s2}\right)\rme(-x\xi)\rmd x.
\]
By Leibniz rule, we have
\[
\frac{\rmd^K}{\rmd x^K}\left(\varphi(x)|x^2+1|^{s/2}\right)=\sum_{j=0}^{K}\binom{K}{j}\varphi^{(K-j)}(x) \frac{\rmd^j}{\rmd x^j}|x^2+1|^{\frac s2}.
\]
By induction, $\frac{\rmd^j}{\rmd x^j}|x^2+1|^{s/2}=a_j(x)|x^2+1|^{s/2}$, where 
\[
a_0(x)=1,\qquad a_{j+1}(x)=a_j'(x)+2xa_j(x)\log|x^2+1|.
\]
Hence $a_j(x)$ are smooth functions. This yields
\[
\bI(s,\eta)\ll |\xi|^{-K}\int_{\RR}\sum_{j=0}^{K}\binom{K}{j}|\varphi^{(K-j)}(x)||a_j(x)| |x^2+1|^{\frac\sigma2}\rmd x\ll_{\varphi,\sigma,K} |\xi|^{-K}.
\]
Finally, let $K=\lceil M\rceil$ and note that $1+|\xi|\asymp |\xi|$ we obtain our result in this case.
\end{proof}

\begin{proposition}\label{prop:archimedeannonsingular}
Suppose that $\sgn N=+1$ and the support of $\varphi(x)$ is disjoint from
\[
\{x\in \RR\ |\ |x-1| \leq \kappa\}\cup \{x\in \RR\ |\ |x+1|\leq \kappa\}.
\]
Then $\bI(s,\xi)$ is an entire function for $s$, and
\[
\bI(s,\xi)\ll_{\sigma,\varphi,M} (1+|\xi|)^{-M}
\]
for any $M>0$.
\end{proposition}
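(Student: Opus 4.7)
The plan is to imitate the proof of \autoref{prop:archimedeansmooth} almost verbatim, with the only new observation being that, away from $\pm 1$, the function $|x^2-1|^{s/2}$ is a genuinely smooth function of $x$ whose derivatives admit uniform control on $\supp(\varphi)$.

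First I would record the elementary geometric consequence of the hypothesis: on $\supp(\varphi)$ we have $|x-1|>\kappa$ and $|x+1|>\kappa$, hence $|x^2-1|=|x-1||x+1|>\kappa^2$, so $|x^2-1|$ is bounded below by a positive constant depending only on $\kappa$. Furthermore $\supp(\varphi)$ is contained in the union of the three open intervals $\lopen-\infty,-1-\kappa\ropen$, $\lopen-1+\kappa,1-\kappa\ropen$, $\lopen 1+\kappa,+\infty\ropen$, and on each of these components $x^2-1$ has constant sign, so $|x^2-1|^{s/2}=((x^2-1)\cdot \sgn(x^2-1))^{s/2}$ is a smooth function of $x$ for every fixed $s$, entire in $s$ for every fixed $x$, and the product $\varphi(x)|x^2-1|^{s/2}$ is smooth and compactly supported in $x$. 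Hence the integral defining $\bI(s,\xi)$ converges absolutely and uniformly on compact subsets in $s$, so $\bI(s,\xi)$ is entire.

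For the decay in $\xi$, I would split into $|\xi|\ll 1$ and $|\xi|\gg 1$. In the first range, the trivial bound
\[
\bI(s,\xi)\ll\int_{\supp(\varphi)}|\varphi(x)||x^2-1|^{\sigma/2}\rmd x\ll_{\sigma,\varphi} 1 \asymp (1+|\xi|)^{-M}
\]
suffices. In the second range, I would integrate by parts $K$ times using $\rme(-x\xi)=(-\dpii\xi)^{-1}\frac{\rmd}{\rmd x}\rme(-x\xi)$, noting that there are no boundary contributions because $\varphi$ has compact support. Leibniz's rule then gives
\[
\bI(s,\xi)=\frac{1}{(-\dpii\xi)^K}\sum_{j=0}^{K}\binom{K}{j}\int_{\RR}\varphi^{(K-j)}(x)\frac{\rmd^j}{\rmd x^j}|x^2-1|^{s/2}\rme(-x\xi)\rmd x.
\]
An easy induction (analogous to the one in the proof of \autoref{prop:archimedeansmooth}) shows that $\frac{\rmd^j}{\rmd x^j}|x^2-1|^{s/2}=b_j(x,s)|x^2-1|^{s/2-j}$ for a polynomial $b_j$ in $s$ whose coefficients are smooth functions of $x$; since $|x^2-1|\gg_\kappa 1$ on $\supp(\varphi)$, each term is bounded by a constant depending on $\sigma,\varphi,\kappa,K$. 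Choosing $K=\lceil M\rceil$ yields $\bI(s,\xi)\ll_{\sigma,\varphi,M}|\xi|^{-M}\asymp(1+|\xi|)^{-M}$.

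There is no real obstacle here; the only subtlety is to notice that the distributional issue near $x=\pm 1$ that complicates the general setting is entirely killed by the support hypothesis, which reduces the present statement to the same Fourier-analytic integration-by-parts argument as in \autoref{prop:archimedeansmooth}.
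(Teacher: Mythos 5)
Your proof is correct and follows essentially the same route as the paper's: reduce to the argument of \autoref{prop:archimedeansmooth}, using the support hypothesis to get the uniform lower bound $|x^2-1|\gg_\kappa 1$ (hence entirety in $s$ and smoothness of the integrand), and then integrate by parts $K=\lceil M\rceil$ times with no boundary contributions. Your explicit form $\frac{\rmd^j}{\rmd x^j}|x^2-1|^{s/2}=b_j(x,s)|x^2-1|^{s/2-j}$ is in fact a cleaner statement of the derivative bookkeeping than the recurrence written in the paper.
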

\begin{proof}
Follows almost verbatim from \autoref{prop:archimedeansmooth}. The only difference are the following:
\begin{enumerate}[itemsep=0pt,parsep=0pt,topsep=0pt, leftmargin=0pt,labelsep=2.5pt,itemindent=9pt,label=\textbullet]
  \item 
We need to consider the boundary terms at $x=\pm 1\pm \kappa$ but they all vanish by our assumption.
\item We need to consider $\frac{\rmd^j}{\rmd x^j}|x^2-1|^{s/2}=b_j(x)|x^2-1|^{s/2}$, where 
\[
b_0(x)=1,\qquad b_{j+1}(x)=b_j'(x)+2xb_j(x)\log|x^2-1|.
\]
$b_j(x)$ are smooth in $\supp(\varphi)$ in this case. \qedhere
\end{enumerate}
\end{proof}

Now we assume that $\sgn N=+1$.
Let $\phi\in C_c^\infty(\RR)$ such that $\supp(\phi)\in [-2,2]$ and $\phi\equiv 1$ in $[-1,1]$. For $\kappa\in \lopen0,1/2\ropen$, we define
\[
\phi_\kappa(x)=\phi\left(\frac{|1-|x||}{\kappa}\right).
\]
Then $\phi$ is supported near $\pm 1$. 

Finally, we state our main theorems for the archimedean case, which are just reformulations of the last three results in \cite[Appendix A]{altug2017}.

Let $\Xi(s)$ be a holomorphic function on the half plane $\Re s>0$. Also, we assume that $\Xi(s)$ has rapid decay in $t$ for each fixed $\sigma$.
Let $c_i^\pm$ be complex numbers. We define the integral transform
\[
\cA_{\varphi,m}^{\sigma,\pm}(\Xi)(x)=\frac{1}{\dpii}\int_{(\sigma)}\Xi(s) c_m^\pm\legendresymbol{s}{2}\Gamma\left(m+1+\frac{a+s}{2}\right)x^{-\frac s2}\rmd s
\]
with
\[
c_m^\pm \legendresymbol{s}{2}=\legendresymbol{\rmi}{2\uppi}^{1+m+\frac{a+s}{2}}2^{\frac s2}\sum_{\substack{j+k=m\\ j,k\geq 0}}\frac{c_k^\pm}{(-2)^j}\binom{s/2}{j}.
\]
Note that $c_m^\pm(s/2)$ is an entire function with at most polynomial growth in the vertical direction. On the other hand, $\Gamma(m+1+\frac{a+s}{2})$ has exponential decay in the vertical direction by Stirling formula.

\begin{remark}\label{rem:powerfunction}
In \cite{altug2017}, the author does not interpret the branch of the logarithm to define the function of the form $z^s$. By going over the proof of Lemma A.11 and Lemma A.13 of loc. cit., we can find that the functions $(\rmi x)^{s}$ and $x^s$ for $x\in \RR^\times$ (which are all the functions of such form occurred in the definition) are defined as follows:
\begin{equation}\label{eq:imaginaryxs}
(\rmi x)^s=\rme^{s(|x|+\sgn x\rmi \uppi/2)}=\begin{cases}
                                              |x|^s\rme^{\rmi\uppi s/2}, & x>0, \\
                                              |x|^s\rme^{-\rmi\uppi s/2}, & x<0,
                                            \end{cases}
\end{equation}
and
\begin{equation}\label{eq:realxs}
x^s=\sgn x|x|^s=\begin{cases}
                                              |x|^s, & x>0, \\
                                              -|x|^s, & x<0.
                                            \end{cases}
\end{equation}
Note that such definition does \emph{not} equal to the usual sense if $x<0$ and $s\in 2\ZZ$. Hence we will use new notation $[x]^s$ for \eqref{eq:realxs}. In this notation we actually have
\[
\cA_{\varphi,m}^{\sigma,\pm}(\Xi)(x)=\frac{1}{\dpii}\int_{(\sigma)}\Xi(s) c_m^\pm\legendresymbol{s}{2}\Gamma\left(m+1+\frac{a+s}{2}\right)[x]^{-\frac s2}\rmd s.
\]
\end{remark}

\begin{theorem}\label{thm:mainarchimedeantheorem1}
Let $\Xi(s)$ be a holomorphic function on the half plane $\Re s>0$. Also, we assume that $\Xi(s)\ll_\sigma \rme^{-\uppi|t|/2}$ for each fixed $\sigma$. Let $\varphi(x)=|1-x^2|^{\frac a2}\overline{\varphi}(x)$, where $\overline{\varphi}(x)$ is smooth in and up to the boundary of $\lopen -1,1\ropen$. Assume that around $x=\pm 1$ we have asymptotic expansions
\[
\varphi(\pm(1-x))\sim |x|^{\frac{a}{2}}\sum_{m=0}^{+\infty}c_m^\pm x^m.
\]
Then for any $\sigma,\tau\in \RR_{>0}$, $|\xi|\gg 1$, and $K\in \ZZ_{\geq 0}$ we have
\begin{align*}
    & \frac{1}{\dpii}\int_{(\sigma)}\frac{\Xi(s)}{C^s}\int_{-1}^{1}\varphi(x)|1-x^2|^{\frac s2} \rme(-x\xi)\rmd x\rmd s \\
   =  & \sum_{\pm}\sum_{m=0}^{K}\frac{\rme(\mp \xi)\cA_{\varphi,m}^{\sigma,\pm}(\Xi)(\pm C^2\xi)}{(\pm\xi)^{m+1}[\pm \xi]^{\frac a2}}+ O\left(\sup_{\substack{\Re s=2\tau \\ |t|\gg 1}}|\Xi(s)||C^2\xi|^{-\tau}|\xi|^{-K-2-\frac{a}{2}}\right),
\end{align*}
The implied constant only depends on $\varphi,K,\sigma,\tau$ and $a$. 
If $\Xi(s)$ has a meromorphic extension to $\Re s>-2$, with has at most a simple pole at $s=0$ and holomorphic otherwise, we can take $\tau=0$.
\end{theorem}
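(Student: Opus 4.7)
The strategy is to localize the $x$-integral around the singularities $x = \pm 1$ of $\varphi$ via a smooth partition of unity using the cutoff $\phi_\kappa$, writing $1 = \phi_\kappa(x-1) + \phi_\kappa(x+1) + \chi_0(x)$ with $\chi_0$ supported away from $\pm 1$. In the region bounded away from $\pm 1$, the integrand $\varphi(x)\chi_0(x)|1-x^2|^{s/2}$ is smooth and compactly supported in $x$, uniformly in $s$ on vertical strips; repeated integration by parts in $x$ gives arbitrary polynomial decay in $|\xi|$, which combined with the rapid decay of $\Xi(s)$ in $|\Im s|$ contributes only to the error.

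To treat the piece supported near $x = +1$ (the $x = -1$ case is entirely symmetric, accounting for the $\sum_\pm$), change variables $y = 1-x$ and factor $|1-x^2|^{s/2} = 2^{s/2} y^{s/2}(1-y/2)^{s/2}$. Binomially expand $(1-y/2)^{s/2} = \sum_{j=0}^{K}\binom{s/2}{j}(-y/2)^j + O_K(y^{K+1})$ and apply the hypothesized asymptotic expansion $\varphi(1-y) = |y|^{a/2}\sum_{k=0}^{K} c_k^+ y^k + O(y^{K+1+a/2})$. Multiplying these expansions and collecting the coefficient of $y^{m+a/2}$, one recognises precisely the combinatorial expression $2^{s/2}\sum_{j+k=m} c_k^+(-1/2)^j \binom{s/2}{j}$ that appears in the kernel $c_m^+(s/2)$. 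Extending the resulting cutoff $\phi_\kappa$ back to all of $[0,\infty)$ introduces only rapidly decaying tails (by integration by parts in $y$ against the smooth function $1-\phi_\kappa$), and the remaining complete integral is evaluated by the Mellin–Fourier identity
\[
\int_0^{\infty} y^{\alpha-1} e(y\xi)\,dy = \Gamma(\alpha)(-2\pi i \xi)^{-\alpha}
\]
with $\alpha = m+1+(a+s)/2$. Combined with the phase factor $e(-\xi)$ arising from $e(-x\xi) = e(-\xi)e(y\xi)$ and the identity $(-2\pi i \xi)^{-\alpha} = (i/2\pi)^{\alpha}\xi^{-\alpha}$, the outer $s$-integration on $(\sigma)$ exactly reproduces $e(-\xi)\cA_{\varphi,m}^{\sigma,+}(\Xi)(C^2\xi)\,\xi^{-m-1-a/2}$.

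The error from the Taylor remainder of order $y^{K+1+a/2}$ is estimated by shifting the $s$-contour from $\Re s = \sigma$ to $\Re s = 2\tau$, which is permissible because the double integrand decays rapidly in $|\Im s|$ on every vertical strip in $\Re s > 0$ (by hypothesis on $\Xi$ together with Stirling asymptotics for the Gamma factor arising after the $y$-integration). On this shifted contour, integration by parts in $y$ (or, equivalently, the Mellin–Fourier identity applied to the remainder) yields decay $|\xi|^{-K-2-a/2-\tau}$ in the inner integral, and the outer $s$-integral contributes $C^{-2\tau}\sup_{\Re s = 2\tau}|\Xi(s)|$; the product is precisely the claimed error $\sup|\Xi(s)|\cdot|C^2\xi|^{-\tau}|\xi|^{-K-2-a/2}$. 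For the $\tau = 0$ version under the meromorphic-extension hypothesis, the contour shift passes through the simple pole of $\Xi$ at $s = 0$ and the residue is absorbed into the $m = 0$ main term via a direct residue computation.

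The main obstacle is the algebraic bookkeeping: identifying the product of binomial coefficients, powers of $2^{s/2}$, and powers of $(i/2\pi)^\alpha$ produced by the two expansions and the Fourier identity with the explicit kernel $c_m^\pm(s/2)$ specified in the statement, including the correct branch of $(-2\pi i \xi)^{-\alpha}$ for $\xi > 0$ versus $\xi < 0$ (which encodes the $\sum_\pm$). A secondary technical point is the uniformity in $s$ of the Taylor remainder across the strip $\Re s \in (0,\sigma]$ (extended to include $s = 0$ in the meromorphic case), which legitimizes the contour shift and rests on the rapid decay of $\Xi$.
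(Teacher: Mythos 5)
Your proposal is essentially correct and follows the same route as the paper's proof, which is simply a citation of Altu\u{g}'s Theorem A.14 together with the remark that the error term comes from shifting the $s$-contour (your shift to $\Re s=2\tau$ is the one consistent with the stated error $\sup_{\Re s=2\tau}|\Xi(s)|\,|C^2\xi|^{-\tau}|\xi|^{-K-2-a/2}$; the paper's ``$\sigma=\tau/2$'' appears to be a slip). Your reconstruction of the localization, the change of variables $y=1\mp x$, the binomial/asymptotic expansion matching the kernel $c_m^\pm(s/2)$, and the regularized identity $\int_0^{+\infty}y^{\alpha-1}\rme(y\xi)\,\rmd y=\Gamma(\alpha)(-\dpii\xi)^{-\alpha}$ all check out. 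The one imprecision is your treatment of $\tau=0$: the main terms $\cA_{\varphi,m}^{\sigma,\pm}(\Xi)$ are defined on the fixed contour $(\sigma)$ and are never shifted, so the simple pole of $\Xi$ at $s=0$ only enters when shifting the contour of the \emph{remainder} integral; its contribution (a residue of size $O(|\xi|^{-K-2-a/2})$, or equivalently an indented/principal-value contour) belongs to the error term, not to the $m=0$ main term as you state.
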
 
\begin{proof}
This theorem is essentially the content of \cite[Theorem A.14]{altug2017}. The only difference is that we have to take care of the bound of $\Xi(s)$ in the error term. By going through the proofs in loc. cit., the error term is obtained by shifting the contour to $\sigma=2\tau$. Hence the conclusion is immediate.
\end{proof}

\begin{theorem}\label{thm:mainarchimedeantheorem2}
Let $\Xi(s)$ be a holomorphic function on the half plane $\Re s>0$. Also, we assume that $\Xi(s)\ll_\sigma \rme^{-\uppi|t|/2}$ for each fixed $\sigma$. Let $\varphi(x)=|1-x^2|^{\frac a2}\overline{\varphi}(x)$, where $\overline{\varphi}(x)$ is a smooth in and up to the boundary of $\supp(\varphi)\cap (\lopen -\infty,-1\ropen\cup\lopen 1,+\infty\ropen)$ with bounded support. Assume that around $x=\pm 1$ we have asymptotic expansions
\[
\varphi(\pm(1-x))\sim |x|^{\frac{a}{2}}\sum_{m=0}^{+\infty}c_m^\pm x^m.
\]
Then for any $\sigma,\tau\in \RR_{>0}$, $|\xi|\gg 1$, and $K\in \ZZ_{\geq 0}$ we have
\begin{align*}
    & \frac{1}{\dpii}\int_{(\sigma)}\frac{\Xi(s)}{C^s}\int_{|x|>1}\varphi(x)|1-x^2|^{\frac s2} \rme(-x\xi)\rmd x\rmd s \\
   =  & \sum_{\pm}\sum_{m=0}^{K}\frac{\rme(\mp \xi)(-1)^m\cA_{\varphi,m}^{\sigma,\pm}(\Xi)(\mp C^2\xi)}{(\mp\xi)^{m+1}[\mp \xi]^{\frac a2}}+ O\left(\sup_{\substack{\Re s=2\tau \\ |t|\gg 1}}|\Xi(s)||C^2\xi|^{-\tau}|\xi|^{-K-2-\frac{a}{2}}\right),
\end{align*}
where $\cA_{\varphi,m}^{\sigma,\pm}(\Xi)(x)$ is defined in the previous theorem.
The implied constant only depends on $\varphi,K,\sigma,\tau$ and $a$. If $\Xi(s)$ has a meromorphic extension to $\Re s>-2$, with has at most a simple pole at $s=0$ and holomorphic otherwise, we can take $\tau=0$.
\end{theorem}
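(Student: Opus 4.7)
The plan is to follow the proof of \autoref{thm:mainarchimedeantheorem1} closely, keeping careful track of the sign changes introduced by the fact that $1-x^2<0$ on the region of integration. Since $\overline{\varphi}$ is compactly supported in a subset of $\{|x|\geq 1\}$, I first partition $\varphi = \varphi_{\mathrm{near}} + \varphi_{\mathrm{far}}$ via a smooth bump function, where $\varphi_{\mathrm{near}}$ is supported in $(1,1+\kappa]\cup[-1-\kappa,-1)$ for small $\kappa>0$ and $\varphi_{\mathrm{far}}$ vanishes in a neighborhood of $\{\pm 1\}$. By \autoref{prop:archimedeannonsingular}, the inner integral of $\varphi_{\mathrm{far}}$ is $O_M((1+|\xi|)^{-M})$ for any $M$; combined with the rapid decay of $\Xi$ on vertical lines, the full contribution of $\varphi_{\mathrm{far}}$ is absorbed into the stated error term.

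For $\varphi_{\mathrm{near}}$ I handle each endpoint separately. Near $x=1$, substitute $x = 1+u$ with $u>0$, giving $|1-x^2|^{s/2} = u^{s/2}(u+2)^{s/2}$ and $\rme(-x\xi)=\rme(-\xi)\rme(-u\xi)$. Using the asymptotic $\varphi(1-y)\sim|y|^{a/2}\sum_m c_m^+ y^m$ with $y=-u$ produces
\[
\varphi(1+u) = \sum_{m=0}^{K}(-1)^m c_m^+ u^{m+a/2} + u^{K+1+a/2}R_K(u),
\]
with $R_K$ smooth on $[0,\kappa]$; the factor $(-1)^m$, absent in the analogous step of \autoref{thm:mainarchimedeantheorem1}, is the signature of the present theorem. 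After insertion, the $m$-th main term contributes
\[
(-1)^m c_m^+\rme(-\xi)\int_0^{\infty} u^{s/2+m+a/2}(u+2)^{s/2}\phi_\kappa(1+u)\rme(-u\xi)\,\rmd u,
\]
plus an admissible error from $R_K$ handled by repeated integration by parts in $u$. The symmetric substitution $x=-1-u$ near $x=-1$ produces the twin contributions with $c_m^-$, the exponential $\rme(+\xi)\rme(+u\xi)$, and the same $(-1)^m$.

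To recover the announced formula I then multiply by $\Xi(s)/C^s$, exchange the $u$- and $s$-integrals (justified by rapid decay of $\Xi$), insert the Mellin-Barnes representation of $(u+2)^{s/2}$, and apply $\int_0^{\infty}u^\alpha\rme(-u\xi)\,\rmd u = \Gamma(\alpha+1)(\dpii\xi)^{-\alpha-1}$ at the $x=1$ endpoint, and its sign-conjugate $(-\dpii\xi)^{-\alpha-1}$ at the $x=-1$ endpoint. The resulting expression is precisely $(-1)^m\cA_{\varphi,m}^{\sigma,\pm}(\Xi)(\mp C^2\xi)/(\mp\xi)^{m+1+a/2}$; the flip $\pm\to\mp$ in the argument of $\cA$ and in the denominator (compared with \autoref{thm:mainarchimedeantheorem1}) comes exactly from the difference between $(\dpii\xi)^{-\alpha-1}$ and $(-\dpii\xi)^{-\alpha-1}$. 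The error term is then produced by shifting the $s$-contour from $(\sigma)$ to $(2\tau)$ (or to $(0)$ and picking up the simple pole of $\Xi$ at $s=0$ under the meromorphic hypothesis), with the bound $\sup_{\Re s=2\tau}|\Xi(s)|$ controlling the shifted integrand and the polynomial factor $|C^2\xi|^{-\tau}|\xi|^{-K-2-a/2}$ coming from the explicit powers in the integrand.

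The main obstacle is bookkeeping: the two sign changes (the $(-1)^m$ from the Taylor expansion, and the flip $\pm\to\mp$ in the $\cA$-argument and denominator coming from the conjugate evaluation of the $u$-integral) must be tracked consistently through the substitution, the Mellin-Barnes rewriting, and the contour shift. Beyond these sign issues the analysis parallels \autoref{thm:mainarchimedeantheorem1} (and \cite[Theorem A.14]{altug2017}) essentially verbatim, since the localization, the handling of the remainder $R_K$, and the contour shift producing the error term are all purely analytic questions about the rapid decay of $\Xi$.
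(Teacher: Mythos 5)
Your proposal is correct and follows the same route as the paper's proof, which simply defers to the argument of \cite[Theorem A.15]{altug2017}: the localization near $x=\pm1$, the substitution $x=\pm(1+u)$, the evaluation of $\int_0^\infty u^\alpha\rme(\mp u\xi)\,\rmd u$ producing the flip $\pm\to\mp$ in the argument of $\cA$ and in the denominator, and the contour shift to $\Re s=2\tau$ (resp.\ past the simple pole at $s=0$) producing the error term are exactly the ingredients of that argument. The only slight imprecision is attributing the factor $(-1)^m$ entirely to the Taylor expansion of $\varphi$; it actually arises as $(-1)^{j+k}$ from combining the $(-1)^k$ in $\varphi(1+u)\sim u^{a/2}\sum_k(-1)^kc_k^+u^k$ with the sign discrepancy between the binomial expansion of $(u+2)^{s/2}$ and the $(-2)^j$ appearing in the definition of $c_m^\pm(s/2)$ — but the two effects combine to the stated $(-1)^m$, so the final formula you reach is the right one.
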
 
\begin{proof}
This theorem is essentially the content of \cite[Theorem A.15]{altug2017} and that the error term is obtained by shifting the contour to $\sigma=\tau/2$. 
\end{proof}
\begin{remark}
In \cite{altug2017}, the author claimed that the above two theorems holds for $\Xi$ that has rapid decay vertically. However, this is not true. In the proof of Theorem A.13 of loc. cit., there is an integral on $u\in(\tau)$ of the function
\[
\legendresymbol{\rmi}{2\uppi Z}^{m+1+\frac{a+u}{2}},
\]
which is $\rme^{-\uppi t/4}$ as $|t|\to +\infty$. Hence we need a stronger estimate. Fortunately, the function $\widetilde{F}(s)$ satisfies such stronger estimate.
\end{remark}

\begin{corollary}\label{cor:mainarchimedeancor}
Let $\Xi(s)$ be a meromophic function on the half plane $\Re s>-2$. Suppose that $\Xi(s)$ is holomorphic except for a pole at $s=0$.  Also, we assume that $\Xi(s)\ll_\sigma \rme^{-\uppi|t|/2}$ for each fixed $\sigma$. Let $\varphi(x)\in C_c^\infty(\RR)$. Assume that around $x=\pm 1$ we have asymptotic expansions
\[
\varphi(\pm(1-x))\sim \sum_{m=0}^{+\infty}c_m^\pm x^m.
\]
Then for any $\sigma,\varepsilon\in \RR_{>0}$, $|\xi|\gg 1$, and $M\in \ZZ_{>0}$ we have
\[
\frac{1}{\dpii}\int_{(\sigma)}\frac{\Xi(s)}{C^s}\int_{\RR}\varphi(x)|1-x^2|^{\frac s2} \rme(-x\xi)\rmd x\rmd s\ll \sup_{\substack{\Re s=0 \\ |t|\gg 1}}|\Xi(s)||\xi|^{-2}+\sup_{\substack{\Re s=-2+2\varepsilon \\ |t|\gg 1}}|\Xi(s)||C|^{2-2\varepsilon}.
\]
The implied constant only depends on $\varphi$ and $\varepsilon$.
\end{corollary}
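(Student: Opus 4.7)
The plan is to split the inner $x$-integral as $\int_{\RR} = \int_{-1}^{1} + \int_{|x|>1}$ and apply \autoref{thm:mainarchimedeantheorem1} and \autoref{thm:mainarchimedeantheorem2} respectively, both with $a = 0$ and $K = 0$. Since $\Xi$ extends meromorphically to $\Re s > -2$ with only a simple pole at $s = 0$, the ``$\tau=0$'' clause in both theorems is available. The two error terms it produces combine to
\[
O\!\left(\sup_{\substack{\Re s=0 \\ |t|\gg 1}}|\Xi(s)|\,|\xi|^{-2}\right),
\]
which is exactly the first summand on the right of the corollary.

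What remains is to handle the combined main terms. Writing out the $m=0$ contributions of the two theorems, they combine (after the cancellation of the $\pm 1/(\pm\xi)$ denominators) into
\[
\frac{1}{\xi}\sum_{\pm} \rme(\mp\xi)\bigl[\cA_{\varphi,0}^{\sigma,\pm}(\Xi)(C^2\xi) - \cA_{\varphi,0}^{\sigma,\pm}(\Xi)(-C^2\xi)\bigr].
\]
The crucial observation is that the bracketed difference, once the definition of $\cA$ is unfolded, carries on the level of the Mellin integrand an extra factor $1 - \rme^{\mp\rmi\uppi s/2}$, arising from the principal branch of $x^{-s/2}$ at opposite signs of the argument. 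This factor vanishes simply at $s=0$, exactly cancelling the simple pole of $\Xi$ there. The integrand defining the difference is therefore holomorphic on the strip $-2 + 2\varepsilon \leq \Re s \leq \sigma$, since $\Gamma(1+s/2)$ is also holomorphic in this range, and the Mellin contour of $\cA$ may be shifted freely from $\Re s = \sigma$ down to $\Re s = -2 + 2\varepsilon$.

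On the shifted contour, $|(C^2\xi)^{-s/2}| = |C^2\xi|^{1-\varepsilon}$, while $\Gamma(1+s/2)$ has vertical rapid decay by Stirling, the polynomial-growth factor $c_0^{\pm}(s/2)$ is benign, and $\Xi$ decays rapidly by hypothesis. The vertical $t$-integral therefore converges absolutely and is bounded by $\sup_{\Re s=-2+2\varepsilon,|t|\gg 1}|\Xi(s)|$. Combining with the prefactor $1/|\xi|$ from the main-term formula and using $|\xi|\gg 1$ to absorb the spurious $|\xi|^{-\varepsilon}$, the main terms contribute $O_\varepsilon(|C|^{2-2\varepsilon}\sup|\Xi|)$, which is the second summand in the bound.

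The main obstacle is the clean verification of the cancellation: one must fix consistent principal branches of $x^{-s/2}$ for the two signs of the argument and confirm that the resulting prefactor $1 - \rme^{\mp\rmi\uppi s/2}$ has a genuine simple zero at $s=0$ which, crucially, does not depend on $\Xi$ and so cancels the entire pole. Once this algebraic cancellation is in hand, everything else reduces to standard contour-shifting and absolute-convergence estimates driven by the assumed rapid vertical decay of $\Xi$ and Stirling's formula for $\Gamma$.
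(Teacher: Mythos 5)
Your proposal is correct and follows essentially the same route as the paper: split into $|x|<1$ and $|x|>1$, apply Theorems \ref{thm:mainarchimedeantheorem1} and \ref{thm:mainarchimedeantheorem2} with $a=K=\tau=0$, and shift the $\cA$-contour to $\Re s=-2+2\varepsilon$. The only (cosmetic) difference is how the cancellation at $s=0$ is organized: the paper computes the residue of each $\cA^{\sigma,\pm}(\Xi)(x)$ term, notes it equals $c_0^\pm(0)\res_{s=0}\Xi(s)$ independently of the argument $x$, and sees the four main terms sum to zero, whereas you form the difference $\cA(x)-\cA(-x)$ first and observe the factor $1-\rme^{\mp\rmi\uppi s/2}$ has a simple zero killing the pole — these are two phrasings of the same fact that $x^{-s/2}|_{s=0}=1$ for either sign of $x$.
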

\begin{proof}
By \autoref{thm:mainarchimedeantheorem1} and \autoref{thm:mainarchimedeantheorem2} with $a=0$, $K=0$ and $\tau=0$ (we can do this since $\Xi(s)$ has at most a simple pole at $s=0$), we have
\[
\frac{1}{\dpii}\int_{(\sigma)}\frac{\Xi(s)}{C^s}\int_{|x|<1}\varphi(x)|1-x^2|^{\frac s2} \rme(-x\xi)\rmd x\rmd s= \sum_{\pm}\frac{\rme(\mp \xi)\cA_{\varphi,0}^{\sigma,\pm}(\Xi)(\pm C^2\xi)}{\pm\xi}+ O\left(\sup_{\substack{\Re s=0 \\ |t|\gg 1}}|\Xi(s)||\xi|^{-2}\right)
\]
and
\[
\frac{1}{\dpii}\int_{(\sigma)}\frac{\Xi(s)}{C^s}\int_{|x|>1}\varphi(x)|1-x^2|^{\frac s2} \rme(-x\xi)\rmd x\rmd s=\sum_{\pm}\frac{\rme(\mp \xi)\cA_{\varphi,0}^{\sigma,\pm}(\Xi)(\mp C^2\xi)}{\mp\xi}+ O\left(\sup_{\substack{\Re s=0 \\ |t|\gg 1}}|\Xi(s)||\xi|^{-2}\right).
\]
Note that
\[
\cA_{\varphi,0}^{\sigma,\pm}(\Xi)(x)=\frac{1}{\dpii}\int_{(\sigma)}\Xi(s) c_0^\pm\legendresymbol{s}{2}\Gamma\left(1+\frac{s}{2}\right)x^{-\frac s2}\rmd s.
\]
The integrand is holomorphic on $\Re s>-2$ except for a simple pole at $s=0$. Hence we can shift the contour to $(-2+2\varepsilon)$ and by residue theorem we obtain
\begin{align*}
\cA_{\varphi,0}^{\sigma,\pm}(\Xi)(x)&=\frac{1}{\dpii}\int_{(2-2\varepsilon)}\Xi(s) c_0^\pm\legendresymbol{s}{2}\Gamma\left(1+\frac{s}{2}\right)x^{-\frac s2}\rmd s+\res_{s=0} c_0^\pm(s)\Xi(s)\\
&= c_0^\pm(0)\res_{s=0}\Xi(s)+O\left( \sup_{\substack{\Re s=-2+2\varepsilon \\ |t|\gg 1}}|\Xi(s)||x|^{1-\varepsilon} \right).
\end{align*}

Thus
\begin{align*}
&\frac{1}{\dpii}\int_{(\sigma)}\frac{\Xi(s)}{C^s}\int_{|x|<1}\varphi(x)|1-x^2|^{\frac s2} \rme(-x\xi)\rmd x\rmd s\\
=& c_0^\pm(0)\res_{s=0}\Xi(s)\sum_{\pm}\frac{\rme(\mp \xi)}{\pm\xi}+ O\left(\sup_{\substack{\Re s=-2+2\varepsilon \\ |t|\gg 1}}|\Xi(s)||C^2|^{1-\varepsilon}|\xi|^{-\varepsilon}+\sup_{\substack{\Re s=0 \\ |t|\gg 1}}|\Xi(s)||\xi|^{-2}\right)
\end{align*}
and
\begin{align*}
&\frac{1}{\dpii}\int_{(\sigma)}\frac{\Xi(s)}{C^s}\int_{|x|>1}\varphi(x)|1-x^2|^{\frac s2} \rme(-x\xi)\rmd x\rmd s\\
=& c_0^\pm(0)\res_{s=0}\Xi(s)\sum_{\pm}\frac{\rme(\mp \xi)}{\mp\xi}+ O\left(\sup_{\substack{\Re s=-2+2\varepsilon \\ |t|\gg 1}}|\Xi(s)||C^2|^{1-\varepsilon}|\xi|^{-\varepsilon}+\sup_{\substack{\Re s=0 \\ |t|\gg 1}}|\Xi(s)||\xi|^{-2}\right).
\end{align*}
By adding them together and observe that $|\xi|\gg 1$, we obtain our result.
\end{proof} 

\subsection{Global estimates}
In this subsection we prove \autoref{thm:mainfourierestimate} and \autoref{thm:mainfourierestimate2}. Note that $v_{q_i}(N)=\nu_i$. Hence the constant $\nu$ in the local computation becomes $\nu_i/2$ in our case. 

For the proof of these two theorems we omit the variables depending the implied constants, which will only depend on the variables $\varphi,\psi_i,\pm,\nu,M,\varepsilon,L_i,\kappa,\sigma$ and $a_i$ if occurred.

\smallskip

\begin{proof}[Proof of \autoref{thm:mainfourierestimate}]
By Mellin inversion formula, for $\sigma>0$ we have
\[
\bF_{\iota,\epsilon}(\xi,\eta)=\int_{X_\iota}\int_{Y_\epsilon}\varphi(x)\prod_{i=1}^{r}\psi_i(y_i)\left[\frac{1}{\dpii}\int_{(\sigma)}\frac{\widetilde{\Phi}(s)}{C^s}|x^2\mp 1|^{s/2}|y^2-4N|_q'^{s/2}\rmd s\right]\rme(-x\xi)\prod_{i=1}^{r}\rme_{q_i}(-y_i\eta_i)\rmd x\rmd y.
\]
Since $\widetilde{\Phi}(s)$ decreases rapidly vertically, we can interchange the integrals over $s$ and $x,y$. Therefore, we conclude that
\begin{align*}
\bF_{\iota,\epsilon}(\xi,\eta)&=\frac{1}{\dpii}\int_{(\sigma)}\frac{\widetilde{\Phi}(s)}{C^s}\left[ \int_{X_\iota}\varphi(x)|x^2\mp 1|^{s/2}\rme(-x\xi)\rmd x\prod_{i=1}^{r}\int_{Y_{\epsilon_i}}\psi_i(y_i)|y_i^2-4N|_{q_i}'^{s/2}\rme_{q_i}(-y_i\eta_i)\rmd y_i\right]\rmd s\\
&=\frac{1}{\dpii}\int_{(\sigma)}\frac{\widetilde{\Phi}(s)}{C^s}\left[ \int_{X_\iota}\overline{\varphi}(x)|x^2\mp 1|^{\frac{a_0+s}{2}}\rme(-x\xi)\rmd x\prod_{i=1}^{r}\int_{Y_{\epsilon_i}}\overline{\psi_i}(y_i)|y_i^2-4N|_{q_i}'^{\frac{a_i+s}{2}}\rme_{q_i}(-y_i\eta_i)\rmd y_i\right]\rmd s.
\end{align*}

Fix $\kappa\in \lopen 0,1/2\ropen$. Write 
\[
\overline{\varphi}(x)=\overline{\varphi}(x)(1-\phi_\kappa(x))+\overline{\varphi}(x)\phi_\kappa(x) =\varphi^\blacktriangle(x)+\varphi^\blacktriangledown(x)
\]
if $\sgn N=+1$ and
\[
\overline{\varphi}(x)=\varphi^\blacktriangle(x)
\]
if $\sgn N=-1$. Moreover, write
\[
\overline{\psi_i}(y_i)=\overline{\psi_i}(y_i)(1-\triv_{U_i})+\overline{\psi_i}(y_i)\triv_{U_i} =\psi_i^\blacktriangle(y_i)+\psi_i^\blacktriangledown(y_i)
\]
if $N$ has square roots in $\QQ_{q_i}$ and
\[
\overline{\psi_i}(y_i)=\psi_i^\blacktriangle(y_i)
\]
if $N$ has no square roots in $\QQ_{q_i}$, where
\[
U_i=\{y_i\in \QQ_{q_i}\ |\ |y_i-2\sqrt{N}|_p\leq p^{-L_i}\}\sqcup \{y_i\in \QQ_{q_i}\ |\ |y_i+2\sqrt{N}|_p\leq p^{-L_i}\}
\]
for $L_i$ such that $\overline{\varphi}(y_i)$ is constant on each of the two constituents of $U_i$ and that $L_i$ satisfies the technical conditions in \autoref{thm:ladicsingular}.

Thus we only need to estimate
\begin{equation}\label{eq:keyestimate}
\frac{1}{\dpii}\int_{(\sigma)}\frac{\widetilde{\Phi}(s)}{C^s}\left[ \int_{X_\iota}\varphi^{\blacklozenge_0}(x)|x^2\mp 1|^{\frac{a_0+s}{2}}\rme(-x\xi)\rmd x\prod_{i=1}^{r}\int_{Y_{\epsilon_i}}\psi_i^{\blacklozenge_i}(y_i)|y_i^2-4N|_{q_i}'^{\frac{a_i+s}{2}}\rme_{q_i}(-y_i\eta_i)\rmd y_i\right]\rmd s,
\end{equation}
where $\blacklozenge_i\in \{\blacktriangle,\blacktriangledown\}$.

Let
\[
\bI_0(s,\xi)=\int_{X_\iota}\varphi^{\blacklozenge_0}(x)|x^2\mp 1|^{\frac{a_0+s}{2}}\rme(-x\xi)\rmd x
\]
and
\[
\bI_i(s,\eta_i)=\int_{Y_{\epsilon_i}}\psi_i^{\blacklozenge_i}(y_i)|y_i^2-4N|_{q_i}'^{\frac{a_i+s}{2}}\rme_{q_i}(-y_i\eta_i)\rmd y_i.
\]
Then
\[
\eqref{eq:keyestimate}=\frac{1}{\dpii}\int_{(\sigma)}\frac{\widetilde{\Phi}(s)}{C^s}\bI_0(s,\xi) \prod_{i=1}^{r}\bI_i(s,\eta_i)\rmd s.
\]

For simplicity, we assume that $\blacklozenge_i=\blacktriangledown$ for $i=1,\dots,m$, and $\blacklozenge_i=\blacktriangle$ for all $i=m+1,\dots,r$. Hence by \autoref{prop:ladicsmooth} and \autoref{prop:ladicnonsingular}, for any $i=m+1,\dots,r$, $\bI_i(s,\eta_i)$ is an entire function for $s$, and for any $\tau_i>0$ we have
\[
\bI_i(s,\eta_i)\ll (1+|\eta_i|_{q_i})^{-\tau_i}.
\]

\underline{\emph{Case 1:}}\ \ $\blacklozenge_0=\blacktriangle$. 

By \autoref{prop:archimedeansmooth} and \autoref{prop:archimedeannonsingular}, $\bI_0(s,\xi)$ is an entire function for $s$, and for any $\tau>0$ we have 
\[
\bI_0(s,\xi)\ll (1+|\xi|)^{-\tau}.
\] 

Also, by \autoref{thm:ladicsingular}, we have
\[
\bI_i(s,\eta_i)\ll (1+|\eta_i|_{q_i})^{-1-\frac{a_i+\sigma}{2}}.
\]
for $i=1,\dots,m$.

By moving the contour to $\sigma=2M$ in \eqref{eq:keyestimate}, we obtain 
\begin{align*}
  \eqref{eq:keyestimate} & \ll \int_{(\sigma)}\frac{|\widetilde{\Phi}(s)|}{C^\sigma}(1+|\xi|)^{-\tau}\prod_{i=1}^{m} (1+|\eta_i|_{q_i})^{-1-\frac{a_i+\sigma}{2}} \prod_{i=m+1}^{r}(1+|\eta_i|_{q_i})^{-\tau_i} \\
 & \ll \left(|C|^2\llbracket\xi,\eta\rrbracket\right)^{-M} (1+|\xi|_\infty)^{-1-\frac{a_0}{2}}\prod_{i=1}^{r}(1+|\eta_i|_{q_i})^{-1-\frac{a_i}{2}}.
\end{align*}

\underline{\emph{Case 2:}}\ \ $\blacklozenge_0=\blacktriangledown$. 

We will consider the cases $|\xi|\ll 1$ and $|\xi|\gg 1$ separately.

\underline{\emph{Case 2.1:}} \ \ $|\xi|\ll 1$.

For any fixed $\sigma'>0$, we have $\Re a_0+\sigma'>-2$. Hence
\[
|\bI_0(s,\xi)|\leq\int_{X_\iota}|\varphi^{\blacktriangledown}(x)||x^2\mp 1|^{\frac{a_0+\sigma}{2}}\rmd x\ll_{\sigma'} 1.
\]
Therefore, a similar argument as in Case 1 shows that
\begin{align*}
  \eqref{eq:keyestimate} & \ll \int_{(\sigma)}\frac{|\widetilde{\Phi}(s)|}{C^\sigma}\prod_{i=1}^{m} (1+|\eta_i|_{q_i})^{-1-\frac{a_i+\sigma}{2}} \prod_{i=m+1}^{r}(1+|\eta_i|_{q_i})^{-\tau_i} \\
 & \ll \left(|C|^2\llbracket\xi,\eta\rrbracket\right)^{-M} (1+|\xi|_\infty)^{-1-\frac{a_0}{2}}\prod_{i=1}^{r}(1+|\eta_i|_{q_i})^{-1-\frac{a_i}{2}}.
\end{align*}

\underline{\emph{Case 2.2:}}\ \ $|\xi|\gg 1$.

This is the most complicated part and we will use the generalized results for the archimedean places. Since we are considering $\varphi^\blacktriangledown(x)$, we must have $\sgn N=1$. Thus $X_\iota=\lopen -1,1\ropen$ or $X_\iota=\lopen -\infty,-1\ropen \cup \lopen 1,+\infty\ropen$. By the definition of the bump function we know that $\varphi^\blacktriangledown(x)$ has the same asymptotic expansion as  $\varphi(x)$ near $\pm 1$.

For simplicity we only consider the case $X_\iota=\lopen -1,1\ropen$, another case is similar.
Let
\[
\Xi(s)=\widetilde{\Phi}(s)\prod_{i=1}^{r}\bI(s,\eta_i).
\]
Then we have
\[
\eqref{eq:keyestimate}=\frac{1}{\dpii}\int_{(\sigma)}\frac{\Xi(s)}{C^s}\bI_0(s,\xi)\rmd s=
     \frac{1}{\dpii}\int_{(\sigma)}\frac{\Xi(s)}{C^s}\int_{-1}^{1}\varphi^\blacktriangledown(x)|1-x^2|^{\frac s2} \rme(-x\xi)\rmd x\rmd s .
\]
By \autoref{thm:mainarchimedeantheorem1} with $K=0$ and $\tau=M$, we obtain
\[
\eqref{eq:keyestimate}=\sum_{\pm}\frac{\rme(\mp \xi)\cA_{\varphi,0}^{\sigma,\pm}(\Xi)(\pm C^2\xi)}{(\pm\xi)[\pm\xi]^{\frac a2}}+ O\left(\sup_{\substack{\Re s=2M \\ |t|\gg 1}}|\Xi(s)||C^2\xi|^{-M}|\xi|^{-2-\frac{a}{2}}\right).
\]

By moving the contour $(\sigma)$ to $(2M)$, we obtain
\[
\cA_{\varphi,0}^{\sigma,\pm}(\Xi)(x)=\frac{1}{\dpii}\int_{(2M)}\Xi(s) c_0^\pm\legendresymbol{s}{2}\Gamma\left(1+\frac{a+s}{2}\right)[x]^{-\frac s2}\rmd s \ll_{\varphi,M}  \sup_{\substack{\Re s=2M \\ |t|\gg 1}}|\Xi(s)||x|^{-M}
\]
since $\Gamma(s)$ has exponential decay vertically and $c_0^\pm$ has at most polynomial growth vertically. Therefore
\begin{align*}
  \eqref{eq:keyestimate} & \ll  \sup_{\substack{\Re s=2M \\ |t|\gg 1}}|\Xi(s)||C^2\xi|^{-M}|\xi|^{-1-\frac{a}{2}}+ \sup_{\substack{\Re s=2M \\ |t|\gg 1}}|\Xi(s)||C^2\xi|^{-M}|\xi|^{-2-\frac{a}{2}} \\
   & \ll \sup_{\substack{\Re s=2M \\ |t|\gg 1}}|\Xi(s)|(|C^2\xi|)^{-M}|\xi|^{-1-\frac{a}{2}}.
\end{align*}

By \autoref{prop:ladicsmooth}, \autoref{prop:ladicnonsingular}, and \autoref{thm:ladicsingular} we have
\[
|\Xi(s)|=|\widetilde{\Phi}(s)|\prod_{i=1}^{m}|\bI(s,\eta_i)|\prod_{i=m+1}^{r}|\bI(s,\eta_i)| \ll_{\psi_i,\tau_i,\nu_i,M} \prod_{i=1}^{m}(1+|\eta_i|_{q_i})^{-1-\frac{a_i}{2}-M}\prod_{i=m+1}^{r}(1+|\eta_i|_{q_i})^{-\tau_i}.
\]
for $\Re s=2M$.
This yields
\begin{align*}
  \eqref{eq:keyestimate} & \ll \prod_{i=1}^{m}(1+|\eta_i|_{q_i})^{-1-\frac{a_i}{2}-M}\prod_{i=m+1}^{r}(1+|\eta_i|_{q_i})^{-\tau_i} (|C|^2|\xi|)^{-M}|\xi|^{-1-\frac{a}{2}}\\
   &  \ll \left(|C|^2\llbracket\xi,\eta\rrbracket\right)^{-M} (1+|\xi|_\infty)^{-1-\frac{a_0}{2}}\prod_{i=1}^{r}(1+|\eta_i|_{q_i})^{-1-\frac{a_i}{2}}.\qedhere
\end{align*}
\end{proof}

\begin{proof}[Proof of \autoref{thm:mainfourierestimate2}]
The first statement is implied in \autoref{thm:mainfourierestimate} by summing over $\iota$ and $\epsilon$, so we only need to prove the second one.

By Mellin inversion formula, for $\sigma>0$ we have
\[
\bF_\epsilon(\xi,\eta)=\int_{\RR}\int_{Y_\epsilon}\varphi(x)\prod_{i=1}^{r}\psi_i(y_i)\left[\frac{1}{\dpii}\int_{(\sigma)}\frac{\widetilde{\Phi}(s)}{C^s}|x^2\mp 1|^{s/2}|y^2-4N|_q'^{s/2}\rmd s\right]\rme(-x\xi)\prod_{i=1}^{r}\rme_{q_i}(-y_i\eta_i)\rmd x\rmd y.
\]
Since $\widetilde{\Phi}(s)$ decreases rapidly vertically, we can interchange the integral over $s$ and $x,y$. Thus we obtain
\begin{align*}
\bF_\epsilon(\xi,\eta)&=\frac{1}{\dpii}\int_{(\sigma)}\frac{\widetilde{\Phi}(s)}{C^s}\left[ \int_{\RR}\varphi(x)|x^2\mp 1|^{s/2}\rme(-x\xi)\rmd x\prod_{i=1}^{r}\int_{Y_{\epsilon_i}}\psi_i(y_i)|y_i^2-4N|_{q_i}'^{s/2}\rme_{q_i}(-y_i\eta_i)\rmd y_i\right]\rmd s\\
&=\frac{1}{\dpii}\int_{(\sigma)}\frac{\widetilde{\Phi}(s)}{C^s}\left[ \int_{\RR}\overline{\varphi}(x)|x^2\mp 1|^{\frac{a_0+s}{2}}\rme(-x\xi)\rmd x\prod_{i=1}^{r}\int_{Y_{\epsilon_i}}\overline{\psi_i}(y_i)|y_i^2-4N|_{q_i}'^{\frac{a_i+s}{2}}\rme_{q_i}(-y_i\eta_i)\rmd y_i\right]\rmd s.
\end{align*}

Write 
\[
\overline{\varphi}(x)=\overline{\varphi}(x)(1-\phi_\kappa(x))+\overline{\varphi}(x)\phi_\kappa(x) =\varphi^\blacktriangle(x)+\varphi^\blacktriangledown(x)
\]
if $\sgn N=+1$ and
\[
\overline{\varphi}(x)=\varphi^\blacktriangle(x)
\]
if $\sgn N=-1$. Moreover, write
\[
\overline{\psi_i}(y_i)=\overline{\psi_i}(y_i)(1-\triv_{U_i})+\overline{\psi_i}(y_i)\triv_{U_i} =\psi_i^\blacktriangle(y_i)+\psi_i^\blacktriangledown(y_i)
\]
if $N$ has square roots in $\QQ_{q_i}$ and
\[
\overline{\psi_i}(y_i)=\psi_i^\blacktriangle(y_i)
\]
if $N$ has no square roots in $\QQ_{q_i}$, where $U_i$ is as defined in the previous theorem.

It therefore suffices to bound
\begin{equation}\label{eq:keyestimate2}
\frac{1}{\dpii}\int_{(\sigma)}\frac{\widetilde{\Phi}(s)}{C^s}\left[ \int_{\RR}\varphi^{\blacklozenge_0}(x)|x^2\mp 1|^{\frac{a_0+s}{2}}\rme(-x\xi)\rmd x\prod_{i=1}^{r}\int_{Y_{\epsilon_i}}\psi_i^{\blacklozenge_i}(y_i)|y_i^2-4N|_{q_i}'^{\frac{a_i+s}{2}}\rme_{q_i}(-y_i\eta_i)\rmd y_i\right]\rmd s,
\end{equation}
where $\blacklozenge_i\in \{\blacktriangle,\blacktriangledown\}$.

Let
\[
\bI_0(s,\xi)=\int_{\RR}\varphi^{\blacklozenge_0}(x)|x^2\mp 1|^{\frac{a_0+s}{2}}\rme(-x\xi)\rmd x
\]
and
\[
\bI_i(s,\eta_i)=\int_{Y_{\epsilon_i}}\psi_i^{\blacklozenge_i}(y_i)|y_i^2-4N|_{q_i}'^{\frac{a_i+s}{2}}\rme_{q_i}(-y_i\eta_i)\rmd y_i.
\]

\underline{\emph{Case 1:}}\ \ $\blacklozenge_0=\blacktriangle$. 

We have proved that 
\[
  \bF_{\iota,\epsilon}(\xi,\eta) \ll (1+|\xi|_\infty)^{-\tau}\prod_{i=1}^{r}(1+|\eta_i|_{q_i})^{-1-\frac{a_i}{2}}.
\]
Thus the result follows immediately by summing over $\iota$.

\underline{\emph{Case 2:}}\ \ $\blacklozenge_0=\blacktriangledown$. 

We will consider the cases $|\xi|\ll 1$ and $|\xi|\gg 1$ separately.

\underline{\emph{Case 2.1:}} \ \ $|\xi|\ll 1$.

In this case, $(1+|\xi|)^{-1-\frac{a_0}{2}}\asymp 1 \asymp (1+|\xi|)^{-2}$. Hence the conclusion follows immediately from \autoref{thm:mainfourierestimate} by summing over $\iota$.

\underline{\emph{Case 2.2:}}\ \ $|\xi|\gg 1$. 

Let
\[
\Xi(s)=\widetilde{\Phi}(s)\prod_{i=1}^{r}\bI(s,\eta_i).
\]
Then we have
\[
\eqref{eq:keyestimate2}=\frac{1}{\dpii}\int_{(\sigma)}\frac{\Xi(s)}{C^s}\bI_0(s,\xi)\rmd s=
     \frac{1}{\dpii}\int_{(\sigma)}\frac{\Xi(s)}{C^s}\int_{\RR}\varphi^\blacktriangledown(x)|1-x^2|^{\frac s2} \rme(-x\xi)\rmd x\rmd s .
\]
By \autoref{cor:mainarchimedeancor}, we obtain
\[
  \eqref{eq:keyestimate2}  \ll  \sup_{\substack{\Re s=0 \\ |t|\gg 1}}|\Xi(s)||\xi|^{-2}+\sup_{\substack{\Re s=-2+2\varepsilon \\ |t|\gg 1}}|\Xi(s)||C|^{2-2\varepsilon}.
\]

By \autoref{prop:ladicsmooth}, \autoref{prop:ladicnonsingular}, and \autoref{thm:ladicsingular} we have
\[
|\Xi(s)|=|\widetilde{\Phi}(s)|\prod_{i=1}^{m}|\bI(s,\eta_i)|\prod_{i=m+1}^{r}|\bI(s,\eta_i)| \ll \prod_{i=1}^{m}(1+|\eta_i|_{q_i})^{-\frac{a_i}{2}-\varepsilon}\prod_{i=m+1}^{r}(1+|\eta_i|_{q_i})^{-\tau_i}\\
\]
for $\Re s=-2+2\varepsilon$ and
\[
|\Xi(s)|\ll \prod_{i=1}^{m}(1+|\eta_i|_{q_i})^{-1-\frac{a_i}{2}}\prod_{i=m+1}^{r}(1+|\eta_i|_{q_i})^{-\tau_i}
\]
for $\Re s=0$. This yields
\begin{align*}
  \eqref{eq:keyestimate2} 
  \ll  & \prod_{i=1}^{m}(1+|\eta_i|_{q_i})^{-1-\frac{a_i}{2}}\prod_{i=m+1}^{r}(1+|\eta_i|_{q_i})^{-\tau_i}|\xi|^{-2} +\prod_{i=1}^{m}(1+|\eta_i|_{q_i})^{-\frac{a_i}{2}-\varepsilon}\prod_{i=m+1}^{r}(1+|\eta_i|_{q_i})^{-\tau_i}|C|^{2-2\varepsilon}\\
  \ll & \left[\left(|C|^2\prod_{i=1}^{r}(1+|\eta_i|_{q_i})\right)^{1-\varepsilon}+(1+|\xi|_\infty)^{-2}\right] \prod_{i=1}^{r}(1+|\eta_i|_{q_i})^{-1-\frac{a_i}{2}}
\end{align*}
since $1+|\xi|\asymp |\xi|$ in this case.
\end{proof}

\section{Generalized Kloosterman sum estimates}\label{sec:kloosterman}
In this section, we establish an estimate for $\Kl_{k,f}^S(\xi,m)$, where $k,f\in \ZZ_{(S)}^{>0}$ and $\xi,m\in \ZZ^S$.
The proofs are based on the results in \cite[Appendix B]{altug2017}.

Let $p$ be a prime. For $a\in\QQ$, we define $a_{(p)}=p^{v_p(a)}$ and  $a^{(p)}=a/a_{(p)}$.

\begin{proposition}\label{prop:prodkloostermanlocal}
Let $k,f\in \ZZ_{(S)}^{>0}$ and $\xi,m\in \ZZ^S$. Then we have
\[
\Kl_{k,f}^S(\xi,m)=\prod_{p}\Kl_{k_{(p)},f_{(p)}}^{(p)}\left(((kf^2)^{(p)})^{-1}\xi,m\right),
\]
where $((kf^2)^{(p)})^{-1}$ denotes the inverse of $(kf^2)^{(p)}$ modulo $(kf^2)_{(p)}$.
\end{proposition}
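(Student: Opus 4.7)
The plan is to decompose the sum defining $\Kl_{k,f}^{S}(\xi,m)$ into local pieces by the Chinese Remainder Theorem, matching the three ingredients of the summand, namely the congruence condition $f^2 \mid a^2-4m$, the Kronecker symbol, and the additive character, one at a time. Since $k,f\in\ZZ_{(S)}^{>0}$, the modulus $kf^2$ is coprime to $S$, so only primes $\ell\notin S$ can contribute nontrivially; for all other $\ell$ the local factor $\Kl_{1,1}^{(\ell)}$ equals $1$, so the product on the right is effectively finite.

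First I would factor $kf^2=\prod_{\ell}\ell^{u_\ell+2v_\ell}$ with $u_\ell=v_\ell(k)$ and $v_\ell=v_\ell(f)$, and invoke the ring isomorphism $\ZZ/kf^2\cong\prod_{\ell}\ZZ/\ell^{u_\ell+2v_\ell}$. A residue $a\bmod kf^2$ corresponds to a tuple $(a_\ell\bmod \ell^{u_\ell+2v_\ell})_\ell$, and the condition $f^2\mid a^2-4m$ splits into the conditions $\ell^{2v_\ell}\mid a_\ell^2-4m$ for each prime $\ell$. By CRT and multiplicativity of the Jacobi/Kronecker symbol in the upper argument,
\[
\legendresymbol{(a^2-4m)/f^2}{k}=\prod_{\ell}\legendresymbol{(a_\ell^2-4m)/\ell^{2v_\ell}}{\ell^{u_\ell}}.
\]

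For the character factor, I would use the adelic product formula $\rme(x)\prod_{p}\rme_p(x)=1$ for $x\in\QQ$. Since $kf^2$ is coprime to $S$, each factor $\rme_\ell(a\xi/kf^2)$ with $\ell\in S$ is trivial, so
\[
\rme\legendresymbol{a\xi}{kf^2}\rme_q\legendresymbol{a\xi}{kf^2}=\prod_{\ell\notin S}\rme_\ell\legendresymbol{-a\xi}{kf^2}.
\]
For each prime $\ell\mid kf^2$, the factor $(kf^2)^{(\ell)}$ is a unit in $\ZZ_\ell$, and replacing it by any lift of its inverse modulo $(kf^2)_{(\ell)}=\ell^{u_\ell+2v_\ell}$ changes $a\xi/kf^2$ by an element of $\ZZ_\ell$, on which $\rme_\ell$ is trivial. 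Combined with the CRT identification $a\equiv a_\ell\pmod{\ell^{u_\ell+2v_\ell}}$ this yields
\[
\rme_\ell\legendresymbol{-a\xi}{kf^2}=\rme_\ell\legendresymbol{-a_\ell\cdot((kf^2)^{(\ell)})^{-1}\xi}{\ell^{u_\ell+2v_\ell}}.
\]

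Assembling the three factorizations, the sum over $a\bmod kf^2$ becomes a product over $\ell$ of sums over $a_\ell\bmod \ell^{u_\ell+2v_\ell}$, each of which is by definition \eqref{eq:localgeneralizedkloosterman} the asserted local Kloosterman sum $\Kl_{\ell^{u_\ell},\ell^{v_\ell}}^{(\ell)}\bigl(((kf^2)^{(\ell)})^{-1}\xi,m\bigr)$. The main subtlety is the character bookkeeping: one must check that the ambiguity in lifting $((kf^2)^{(\ell)})^{-1}$ to an integer, and in choosing a representative $a_\ell$ of its residue class modulo $\ell^{u_\ell+2v_\ell}$, does not affect the value of $\rme_\ell$; everything else is routine application of CRT and multiplicativity.
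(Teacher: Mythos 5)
Your proof is correct and follows essentially the same route as the paper: factor the modulus, apply the Chinese Remainder Theorem to split the congruence condition, the Kronecker symbol, and the additive characters into local pieces, and identify each piece with the local sum $\Kl^{(\ell)}_{\ell^{u_\ell},\ell^{v_\ell}}$ (the paper matches against the equivalent form \eqref{eq:localgeneralizedkloostermaneq}, which is the same character manipulation you carry out). One small point: your justification of the identity $\rme(a\xi/kf^2)\rme_q(a\xi/kf^2)=\prod_{\ell\notin S}\rme_\ell(-a\xi/kf^2)$ --- namely that the factors $\rme_{q_i}(a\xi/kf^2)$ are trivial --- is not quite right, since $\xi\in\ZZ^S$ may have negative valuation at the primes $q_i$; the identity nevertheless holds for every $x\in\QQ$ because the product formula gives $\rme(x)=\prod_\ell\rme_\ell(-x)$ and the factors $\rme_{q_i}(x)\rme_{q_i}(-x)$ cancel, which is exactly how the paper derives \eqref{eq:localgeneralizedkloostermaneq}.
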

\begin{proof}
Let
\[
kf^2=\prod_{j=1}^{t}(kf^2)_{(p_j)}
\]
be the prime factorization. By Chinese remainder theorem, we have an isomorphism
\[
\begin{split}
   \varphi: \prod_{j=1}^{t}\ZZ/(kf^2)_{(p_j)} &\xrightarrow{\cong} \ZZ/kf^2 \\
     (a_1,\dots,a_t) &\mapsto \sum_{j=1}^{t} a_j(kf^2)^{(p_j)}((kf^2)^{(p_j)})^{-1}.
\end{split}
\]
Observe that $\varphi(a_1,\dots,a_t)\equiv a_j\pmod{(kf^2)_{(p_j)}}$. Moreover, 
\[
\frac{\varphi(a_1,\dots,a_t)\xi}{kf^2}=\frac{\xi}{kf^2}\sum_{j=1}^{t} a_j(kf^2)^{(p_j)}((kf^2)^{(p_j)})^{-1}=\sum_{j=1}^{t} \frac{a_j((kf^2)^{(p_j)})^{-1}\xi}{(kf^2)_{(p_j)}}
\]
and thus
\[
\rme\legendresymbol{\varphi(a_1,\dots,a_t)\xi}{kf^2}=\prod_{j=1}^{t} \rme\legendresymbol{a_j((kf^2)^{(p_j)})^{-1}\xi}{(kf^2)_{(p_j)}},\quad \rme_q\legendresymbol{\varphi(a_1,\dots,a_t)\xi}{kf^2}=\prod_{j=1}^{t} \rme_q\legendresymbol{a_j((kf^2)^{(p_j)})^{-1}\xi}{(kf^2)_{(p_j)}}.
\]
Hence by using the equivalent definition \eqref{eq:localgeneralizedkloostermaneq},
\begin{align*}
  \Kl_{k,f}^S(\xi,m) & =\sum_{j=1}^{t}\sum_{\substack{a_j \bmod (kf^2)_{(p_j)}\\ a_j^2-4m\equiv 0 \ (f_{(p_j)}^2)}}\prod_{j=1}^{t}\legendresymbol{(\varphi(a_1,\dots,a_t)^2-4m)/f^2}{k_{(p_j)}} \rme\legendresymbol{\varphi(a_1,\dots,a_t)\xi}{kf^2}\rme_{q} \legendresymbol{\varphi(a_1,\dots,a_t)\xi}{kf^2} \\
   & =\sum_{j=1}^{t}\sum_{\substack{a_j \bmod (kf^2)_{(p_j)}\\ a_j^2-4m\equiv 0 \ (f_{(p_j)}^2)}}\prod_{j=1}^{t}\legendresymbol{(a_j^2-4m)/f^2}{k_{(p_j)}} \rme\legendresymbol{a_j((kf^2)^{(p_j)})^{-1}\xi}{(kf^2)_{(p_j)}} \rme_q\legendresymbol{a_j((kf^2)^{(p_j)})^{-1}\xi}{(kf^2)_{(p_j)}}\\
   & = \prod_{j=1}^{t}\Kl_{k_{(p_j)},f_{(p_j)}}^{(p)}\left(((kf^2)^{(p_j)})^{-1}\xi,m\right).\qedhere
\end{align*}
\end{proof}
Now we use the results of \cite[Appendix B]{altug2017} to obtain the estimates. We remark that the proof of \cite[Appendix B]{altug2017} uses the Weil bound \cite{weil1948} of the Kloosterman sums. More precisely, if
\[
S(a,b;c)=\sum_{x\in (\ZZ/c\ZZ)^\times}\rme\legendresymbol{ax+bx^{-1}}{c},
\]
then 
\[
|S(a,b;c)|\leq 2\sqrt{\gcd(a,b,c)}\sqrt{c}.
\]

For $a\in \ZZ^S$ and $b\in \ZZ_{>0}$, we define
\[
\delta(a;b)=\begin{cases}
              1, & \text{if $x^2\equiv a\ (b)$ has a solution,}  \\
              0, & \text{otherwise}.
            \end{cases}
\]

\begin{proposition}\label{prop:kloostermanunramified}
Suppose that $\xi,m\in \ZZ^S$ and $p\notin S$. Then
\[
\Kl_{p^u,p^v}^{(p)}(\xi,m)\ll\delta(4m;p^{2v})\begin{dcases}
             p^{u+\min\{2v,v_{p}(m)\}/2}, & v_p(\xi)\geq u+\frac{1}{2}\min\{2v,v_{p}(m)\},  \\
              \frac{p^{u+\min\{2v,v_{p}(m)\}/2}}{\sqrt{p}}, & v_p(\xi)= u-1+\frac{1}{2}\min\{2v,v_{p}(m)\},  \\
              0, & \text{otherwise}.
            \end{dcases}
\]
\end{proposition}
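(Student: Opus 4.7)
The plan is to reduce the sum to a complete character sum of Gauss/Ramanujan type, and then read off the three regimes from the standard one-variable evaluation of such sums.

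First, note that the Jacobi symbol $\legendresymbol{(a^2-4m)/\ell^{2v}}{\ell^u}$ vanishes unless $(a^2-4m)/\ell^{2v}$ is an $\ell$-adic integer coprime to $\ell$, in particular unless $a^2 \equiv 4m \pmod{\ell^{2v}}$; existence of any such $a$ is precisely the factor $\delta(4m;\ell^{2v})$. Write $w = v_\ell(m)$. Since $\ell \neq 2$ (because $2 \in S$ and $\ell \notin S$), Hensel's lemma enumerates the roots mod $\ell^{2v}$ cleanly. If $w \geq 2v$, the roots are $a = \ell^v a'$ with $a'$ chosen (mod $\ell^v$) so that $(a')^2 - 4m/\ell^{2v}$ is a unit, giving $O(\ell^v)$ choices. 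If $w < 2v$, solvability forces $w = 2w'$ to be even, and each valid $a = \ell^{w'} a_0$ has $a_0 \equiv \pm \tilde{a} \pmod{\ell^{v-w'}}$ for a fixed unit lift $\tilde{a}$ of a square root of $4m/\ell^w$, giving $O(\ell^{w'})$ choices. In either regime the root count modulo $\ell^{2v}$ is $O(\ell^{\min(2v,w)/2})$.

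Next, fix such a root $a^\flat$ modulo $\ell^{2v}$ and parametrize $a = a^\flat + \ell^{2v} t$ with $t$ ranging over $\ZZ/\ell^u\ZZ$. A Taylor expansion yields
\[
\frac{a^2 - 4m}{\ell^{2v}} \equiv \beta + 2 a^\flat t \pmod{\ell^u}, \qquad \beta := \frac{(a^\flat)^2 - 4m}{\ell^{2v}},
\]
provided $2v \geq u$ (the complementary range needs the $\ell^{2v} t^2$ term retained but is handled by the same device). The phase factorizes as $\rme_\ell(-a^\flat\xi/\ell^{u+2v})\,\rme_\ell(-t\xi/\ell^u)$. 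Because $\ell$ is odd and $v_\ell(2a^\flat) = v_\ell(a^\flat)$ is explicit from the parametrization of Paragraph~1, the linear change of variables $s = \beta + 2 a^\flat t$ reduces the inner $t$-sum to a one-variable sum of the shape
\[
\sum_{s \bmod \ell^u}\legendresymbol{s}{\ell^u}\,\rme_\ell\!\left(-\frac{s\eta}{\ell^u}\right),
\]
where $\eta$ is a unit multiple of $\xi$ shifted by a known power of $\ell$ coming from $v_\ell(a^\flat)$.

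Finally, evaluate this one-variable sum by the standard reduction for the Jacobi character $\legendresymbol{\cdot}{\ell^u} = \legendresymbol{\cdot}{\ell}^u$, which has conductor $\ell$ or $1$ according to the parity of $u$. Writing $s = s_0 + \ell s_1$ and carrying out the $s_1$-sum by orthogonality produces a trichotomy: the sum has size $\asymp \ell^u$ when $v_\ell(\eta) \geq u$; reduces to a modulus-$\ell$ Gauss sum of size $\asymp \ell^{u-1/2}$ when $v_\ell(\eta) = u-1$; and vanishes identically when $v_\ell(\eta) \leq u-2$. Multiplying by the $O(\ell^{\min(2v,w)/2})$ outer roots and translating the conditions on $v_\ell(\eta)$ back into conditions on $v_\ell(\xi)$ via the substitution yields the claimed bound. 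The main obstacle is the valuation bookkeeping across the case split $w \geq 2v$ versus $w < 2v$: verifying that the thresholds on $v_\ell(\xi)$ land uniformly at $u + \tfrac12\min(2v,w)$ and at $u - 1 + \tfrac12\min(2v,w)$ in both regimes. Once this is handled, the argument proceeds in parallel with the computation in \cite[Appendix B]{altug2017}; the hypothesis $\ell \notin S$ ensures no new local phenomena at $\ell$ enter.
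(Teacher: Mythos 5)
Your outline diverges from the paper's proof, which is essentially a two-line reduction: choose integers $\xi'\equiv\xi$ and $m'\equiv m\pmod{\ell^{u+2v}}$, identify $\Kl^{(\ell)}_{\ell^u,\ell^v}(\xi,m)$ with $\Kl^{(\ell)}_{\ell^u,\ell^v}(\xi',m')$ via \autoref{prop:equalkloosterman}, and quote Corollary B.6 of Altu\u{g}. Attempting a self-contained derivation instead is legitimate in principle, but your argument has a genuine gap at its central step.

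The gap is in the range $u>2v$ — which includes the basic and hardest case $v=0$, $u\geq 1$ — where the quadratic term $\ell^{2v}t^2$ cannot be discarded and no linear change of variables turns the inner sum into a pure Gauss sum $\sum_s\legendresymbol{s}{\ell^u}\rme_\ell(-s\eta/\ell^u)$. For $v=0$ there is no root selection at all and the sum is $\sum_{a\bmod\ell^u}\legendresymbol{a^2-4m}{\ell^u}\rme_\ell(-a\xi/\ell^u)$; writing $\legendresymbol{a^2-4m}{\ell}=\#\{x: x^2\equiv a^2-4m\}-1$ and parametrizing the solutions of $a^2-x^2=4m$ by $a=\overline{2}(s+4m\bar{s})$ shows (for $u=1$ and $\ell\nmid 2m\xi$) that it equals a genuine Kloosterman sum $S(\overline{2}\xi,2m\xi;\ell)$ up to a Ramanujan sum. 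Its bound $\ll\sqrt{\ell}$ — exactly the middle case of the proposition with $\min\{2v,v_\ell(m)\}=0$ — is Weil's theorem, not an elementary Gauss-sum evaluation; the paper explicitly notes that Altu\u{g}'s Appendix B relies on the Weil bound $|S(a,b;c)|\leq 2\sqrt{\gcd(a,b,c)}\sqrt{c}$. Your phrase ``handled by the same device'' papers over precisely this point. A secondary issue: even when $2v\geq u$, the substitution $s=\beta+2a^\flat t$ is a bijection mod $\ell^u$ only when $a^\flat$ is a unit, which fails whenever $\min\{2v,v_\ell(m)\}>0$; the map then collapses onto a coset of $\ell^{v_\ell(a^\flat)}\ZZ/\ell^u\ZZ$ and interacts nontrivially with the additive character, so the claimed trichotomy does not follow without further bookkeeping. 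As written, the proposal does not establish the proposition.
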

\begin{proof}
Choose $\xi',m'\in \ZZ$ such that $\xi\equiv \xi'\pmod {p^{u+2v}}$ and $m\equiv m'\pmod {p^{u+2v}}$. By \cite[Corllary B.6]{altug2017}, we have
\[
\Kl_{p^u,p^v}^{(p)}(\xi',m')\ll\delta(4m';p^{2v})\begin{dcases}
              p^{u+\min\{2v,v_{p}(m')\}/2}, & v_p(\xi')\geq u+\frac{1}{2}\min\{2v,v_{p}(m')\},  \\
              \frac{p^{u+\min\{2v,v_{p}(m')\}/2}}{\sqrt{p}}, & v_p(\xi')= u-1+\frac{1}{2}\min\{2v,v_{p}(m')\},  \\
              0, & \text{otherwise}.
            \end{dcases}
\]
By \autoref{prop:equalkloosterman}, the left hand side of the two formulas are equal. A straightforward verification shows that the right hand side of them are equal.
\end{proof}

Having established the local case, we now proceed to the global estimate.

\begin{corollary}\label{cor:kloostermanfinal}
Suppose that $\xi,m\in \ZZ^S$ and $k,f\in \ZZ_{(S)}^{>0}$. Then for any $\varepsilon>0$,
\[
\Kl_{k,f}^S(\xi,m)
   \ll_\varepsilon  \delta(4m;f^2)\!\begin{dcases}
             \!(kf^2)^\varepsilon\sqrt{k\gcd(m^{(q)},f^2)}\sqrt{\gcd\!\left( \frac{\xi^{(q)}}{\sqrt{\gcd(m^{(q)},f^2)}},k\right)}, &\frac{k\sqrt{\gcd(m^{(q)},f^2)}}{\rad k}\mid \xi^{(q)},  \\
              0, &\text{otherwise},
            \end{dcases}
\]
where for $a\in \ZZ_{>0}$,
\[
\rad a=\prod_{p\mid a}p.
\]
\end{corollary}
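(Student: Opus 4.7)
I would combine the factorization of \autoref{prop:prodkloostermanlocal} with the local bound of \autoref{prop:kloostermanunramified}, then match exponents prime-by-prime. Write $u_\ell=v_\ell(k)$, $w_\ell=v_\ell(f)$, and $\mu_\ell=\tfrac12\min\{v_\ell(m),2w_\ell\}$ for each $\ell\notin S$; note that whenever $\delta(4m;f^2)=1$ the integer $\gcd(m^{(q)},f^2)$ is a square (same argument as in the proof of \autoref{lem:kloosterman}), so every $\mu_\ell$ is an integer. \autoref{prop:prodkloostermanlocal} gives
\[
\Kl_{k,f}^S(\xi,m)=\prod_{\ell\notin S}\Kl_{\ell^{u_\ell},\ell^{w_\ell}}^{(\ell)}\bigl(((kf^2)^{(\ell)})^{-1}\xi,\,m\bigr),
\]
and since $((kf^2)^{(\ell)})^{-1}$ is a unit at $\ell$, it preserves $v_\ell$ of the argument, so \autoref{prop:kloostermanunramified} applies at each of the finitely many primes dividing $kf^2$.

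Next I would translate the vanishing conditions globally. By the Chinese Remainder Theorem the conjunction $\prod_{\ell\mid f}\delta(4m;\ell^{2w_\ell})$ is $\delta(4m;f^2)$. The local nonvanishing requirement---$v_\ell(\xi)\geq u_\ell+\mu_\ell$ when $\ell\nmid k$ and $v_\ell(\xi)\geq u_\ell-1+\mu_\ell$ when $\ell\mid k$---is the prime-wise translation of the global divisibility $\tfrac{k\sqrt{\gcd(m^{(q)},f^2)}}{\rad k}\mid\xi^{(q)}$, since $v_\ell(k/\rad k)$ equals $u_\ell-1$ for $\ell\mid k$ and $0$ otherwise, $v_\ell(\sqrt{\gcd(m^{(q)},f^2)})=\mu_\ell$ for $\ell\notin S$, and $\xi^{(q)}$ agrees with $\xi$ at every $\ell\notin S$. (Note that no integer $v_\ell(\xi)$ lies strictly between $u_\ell-1+\mu_\ell$ and $u_\ell+\mu_\ell$, so the two local cases exhaust the nonvanishing range.)

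The heart of the argument, and the main bookkeeping obstacle, is verifying that the product of local bounds reproduces $\sqrt{k\gcd(m^{(q)},f^2)}\cdot\sqrt{\gcd\bigl(\xi^{(q)}/\sqrt{\gcd(m^{(q)},f^2)},\,k\bigr)}$ on the nose---the $\sqrt{\gcd(\xi^{(q)}/D,k)}$ factor is not visible a priori but is exactly what is needed to account for both local cases uniformly. A direct computation gives the $\ell$-adic valuation of this expression as
\[
\tfrac{u_\ell}{2}+\mu_\ell+\tfrac12\min\bigl\{v_\ell(\xi)-\mu_\ell,\,u_\ell\bigr\},
\]
which evaluates to $u_\ell+\mu_\ell$ in the first case of \autoref{prop:kloostermanunramified} (where the embedded minimum equals $u_\ell$) and to $u_\ell+\mu_\ell-\tfrac12$ in the second case (where it equals $u_\ell-1$), matching the local exponents exactly. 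Finally, the factor $(kf^2)^\varepsilon$ absorbs the product of the implied constants from \autoref{prop:kloostermanunramified} via the standard estimate $C^{\omega(kf^2)}\ll_\varepsilon(kf^2)^\varepsilon$.
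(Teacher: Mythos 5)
Your overall route -- factor $\Kl^S_{k,f}$ into local sums via \autoref{prop:prodkloostermanlocal}, apply \autoref{prop:kloostermanunramified} at each prime dividing $kf^2$, match valuations, and absorb the product of constants into $(kf^2)^\varepsilon$ via $O(1)^{\bm\omega(kf^2)}\ll_\varepsilon (kf^2)^\varepsilon$ -- is exactly the paper's (which simply defers to Altu\u{g}'s Corollary B.8 with the corrected constant-absorption step). Your exponent computation is correct: with $D=\sqrt{\gcd(m^{(q)},f^2)}$ one has $v_\ell(\sqrt{kD^2})=\tfrac{u_\ell}{2}+\mu_\ell$ and $v_\ell(\sqrt{\gcd(\xi^{(q)}/D,k)})=\tfrac12\min\{v_\ell(\xi)-\mu_\ell,u_\ell\}$, and the two cases of the local bound do give $u_\ell+\mu_\ell$ and $u_\ell+\mu_\ell-\tfrac12$ respectively. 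The CRT identity $\prod_{\ell\mid f}\delta(4m;\ell^{2w_\ell})=\delta(4m;f^2)$ and the squareness of $\gcd(m^{(q)},f^2)$ are also fine.

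There is, however, one genuine gap in your translation of the vanishing condition. You assert that for $\ell\nmid k$ the local sum vanishes unless $v_\ell(\xi)\geq \mu_\ell$, but \autoref{prop:kloostermanunramified} with $u_\ell=0$ still lists the case $v_\ell(\xi)=\mu_\ell-1$ as admissible, with the nonzero bound $\ell^{\mu_\ell-1/2}$. For a prime $\ell\mid\gcd(m^{(q)},f^2)$ with $\ell\nmid k$ and $\mu_\ell\geq 1$ this case is not vacuous, so the cited proposition alone does not yield the divisibility $D\mid\xi^{(q)}$ that the corollary's ``otherwise $0$'' clause encodes (and which is genuinely used later, in the change of variables of \autoref{lem:kloosterman}); moreover in that regime $\gcd(\xi^{(q)}/D,k)$ is not even a gcd of integers, so the stated bound cannot be read off. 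The gap is fillable by a direct observation you should add: when $u_\ell=0$ the Kronecker symbol is trivial and
\[
\Kl_{1,\ell^{2w_\ell}}^{(\ell)}(\xi,m)=\sum_{\substack{a\bmod \ell^{2w_\ell}\\ a^2\equiv 4m\,(\ell^{2w_\ell})}}\rme_\ell\legendresymbol{-a\xi}{\ell^{2w_\ell}}
\]
is a complete sum over the solution set $a=\ell^{\mu_\ell}(\pm r+\ell^{2w_\ell-2\mu_\ell}b)$, $b\bmod\ell^{\mu_\ell}$ (resp.\ $a=\ell^{w_\ell}b$ when $v_\ell(m)\geq 2w_\ell$), whose inner geometric sum over $b$ vanishes unless $\ell^{\mu_\ell}\mid\xi$. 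With that supplement the two local cases do reduce, prime by prime, to $\frac{k\sqrt{\gcd(m^{(q)},f^2)}}{\rad k}\mid\xi^{(q)}$, and the rest of your argument goes through.
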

\begin{proof}
This follows directly from \autoref{prop:kloostermanunramified} by going through the proof of \cite[Corllary B.8]{altug2017}. Note that the last claim in the proof of loc. cit. should be corrected. The right bound is
\[
\prod_{p\mid kf^2}O(1)=O(1)^{\bm{\omega}(kf^2)}=O(1)^{O(\log\log kf^2)}\ll_\varepsilon(kf^2)^\varepsilon,
\]
where $\bm{\omega}(n)$ denotes the number of prime divisors of $n$.
\end{proof}

\bibliography{ref.bib}
\bibliographystyle{amsalpha}

\end{document}